\documentclass[a4paper,11pt, reqno]{amsart}
\usepackage{amsfonts}
\usepackage{amsmath, array, bigints}
\usepackage{amssymb}
\usepackage{mathrsfs}
\usepackage[colorlinks]{hyperref}
 \usepackage{graphicx}
\usepackage{enumerate}
\usepackage[usenames]{color}
%\usepackage{colortbl}
%%%%%%%%%%%%%%%%%%%%%%%%%%
\setlength{\textwidth}{16.2cm}
\setlength{\textheight}{22.7cm}
\setlength{\topmargin}{0mm}
\setlength{\oddsidemargin}{3mm}
\setlength{\evensidemargin}{3mm}
\setlength{\footskip}{1cm}

%%%%%%%%%%%%%%%%%%%%%%%%%%%

\makeatletter
\newcommand{\setword}[2]{%
  \phantomsection
  #1\def\@currentlabel{\unexpanded{#1}}\label{#2}%
}
\makeatother
\newtheorem{thm}{Theorem}[section]
\newtheorem{cor}[thm]{Corollary}
\newtheorem{lem}[thm]{Lemma}
\newtheorem{prop}[thm]{Proposition}

\newtheorem{remark}{Remark}
\numberwithin{equation}{section}
%% A numbered theorem with a fancy name:
\usepackage[english]{babel} % To obtain English text with the blind text package
\usepackage{blindtext}

%% Numbered objects of "non-theorem" style (text roman):

\theoremstyle{definition}
\newtheorem{definition}[thm]{Definition}

\begin{document}

\allowdisplaybreaks % Allow equations to break across pages
%%%%%%%%%%%%%%%%%%%%%%%%%%%%%%%%%%%%%%%%%%%%%
%%%%%%%%%%%%%%%%%%%%%%%%%%%%%%%%%%%%%%%%%%%%%
 \title[Brezis-Nirenberg type problems driven by superposition operators ]{Brezis-Nirenberg type problems associated with nonlinear superposition operators of mixed fractional order}

 \author[Yergen Aikyn, Sekhar Ghosh, Vishvesh Kumar, and Michael Ruzhansky]{Yergen Aikyn, Sekhar Ghosh, Vishvesh Kumar, and Michael Ruzhansky}

\address[Yergen Aikyn]{Department of Mathematics: Analysis, Logic and Discrete Mathematics, Ghent University, Ghent, Belgium}
\email{aikynyergen@gmail.com}

\address[ Sekhar Ghosh]{Department of Mathematics, National Institute of Technology Calicut, Kozhikode, Kerala, India - 673601}
\email{sekharghosh1234@gmail.com / sekharghosh@nitc.ac.in}
\address[Vishvesh Kumar]{Department of Mathematics: Analysis, Logic and Discrete Mathematics, Ghent University, Ghent, Belgium}
\email{vishveshmishra@gmail.com / vishvesh.kumar@ugent.be}

\address[Michael Ruzhansky]{Department of Mathematics: Analysis, Logic and Discrete Mathematics, Ghent University, Ghent, Belgium\newline and \newline
School of Mathematical Sciences, Queen Marry University of London, United Kingdom}
\email{michael.ruzhansky@ugent.be}
\date{}

\begin{abstract}
This paper aims to study the Brezis-Nirenberg type problem  driven by the nonlinear superposition of operators of the form $$A_{\mu, p}u:=\int_{[0,1]}(-\Delta)_{p}^{s} u\,\, d \mu(s),$$
where $\mu$ denotes the signed measure over $[0, 1]$.

We consider nonlinear nonlocal equations associated with $A_{\mu, p}$, involving critical nonlinearity and lower-order perturbation. Using variational techniques, we establish existence results for the critical problem by employing weak lower semicontinuity arguments under general assumptions on the perturbation term. We discuss the multiplicity results when the perturbation term vanishes at the origin. Additionally, when the lower-order term is a pure power function, we examine the Brezis–Nirenberg–type problem using the mountain pass technique. Furthermore, we address the existence of solutions to subcritical problems associated with $A_{\mu, p}.$

Our findings are novel, even in the case of the sum of two distinct fractional $p$-Laplacians or a combination of a fractional $p$-Laplacian with a classical $p$-Laplacian. More generally, our framework is sufficiently broad to accommodate finite sums of different fractional $p$-Laplacians as well as cases involving fractional Laplacians with ``wrong" signs.

A key contribution of this study is the development of a unified approach that systematically addresses these problems by incorporating a broad class of operators and lower-order perturbation terms within a common theoretical framework. The results remain new even in the case of linear superposition of fractional operators of different orders.

\end{abstract}

\keywords{Elliptic Problems, Critical Problems, Superposition of Nonlocal Operators, Brezis-Nirenberg Problem, Mountain Pass Theorem}

\maketitle

\tableofcontents
%%%%%%%%%%%%%%%%%%%%%%%%%%%%%%%%%%%%%%%%%%%%%%%
\section{Introduction and main results}
%%%%%%%%%%%%%%%%%%%%%%%%%%%%%%%%%%%%%%%%%%%%%%%
\subsection{Motivation and state of the art}
The main purpose of this work is to study critical Brezis-Nirenberg problems involving a nonlocal operator obtained through the superposition of nonlinear fractional operators of different orders, which was introduced in a recent paper by Dipierro et al. \cite{DPSV}. The results presented in this paper are highly general in nature, yet they are novel, even in specific cases that can be derived as consequences of our comprehensive approach. 
\\
Brezis and Nirenberg \cite{BN83} studied the following nonlinear critical equation:
\begin{equation} \label{EucBNL}
\begin{cases}
&-\Delta u = |u|^{2^*-2} u+\lambda u \quad \text{in} \,\, \Omega,\\&
u=0 \quad \text{on} \,\, \partial \Omega,
\end{cases}
\end{equation}
for $N\geq 3$ and for the Sobolev critical exponent $2^* := \frac{2N}{N-2},$ where $\Omega$ is a smooth bounded domain of $\mathbb{R}^N$ and $\lambda$ is a real parameter. They established that if $N \geq 4$, then problem \eqref{EucBNL} admits a positive solution for $\lambda \in (0, \lambda_1)$, where $\lambda_1$ denotes the first eigenvalue of $(-\Delta)$ in $H^1_0(\Omega)$.
In the three-dimensional Euclidean space, they inferred the existence of a constant $\lambda_* \in (0, \lambda_1)$ such that for any $\lambda \in (\lambda_*, \lambda_1)$, problem \eqref{EucBNL} possesses a positive solution. Furthermore, they demonstrated that equation \eqref{EucBNL} has a positive solution if and only if $\lambda \in (\lambda_1/4, \lambda_1)$ when $\Omega$ is a ball.
In contrast, for $\lambda \notin (0, \lambda_1)$, it can be observed that using the Poho$\check{\text{z}}$aev identity, problem \eqref{EucBNL} does not have a positive solution. Capozzi et al. \cite{CFP85} established that for $N \geq 4$, equation \eqref{EucBNL} admits a nontrivial solution for every parameter $\lambda$.
    
The existence and non-existence of solutions for semi-linear equations are strongly influenced by their nonlinear terms. When these equations involve the critical Sobolev exponent, the standard variational method fails due to a lack of compactness. This issue arises in numerous variational problems in PDEs and (sub)-Riemannian geometry, which initially motivated the study of the Brezis-Nirenberg problem \eqref{EucBNL}. A prominent example directly linked to the Brezis-Nirenberg problem is the celebrated {\it (CR)-Yamabe problem} on (sub)-Riemannian manifolds. Specifically:

{\it Given an $N$-dimensional compact Riemannian manifold $(M, g)$, where $N \geq 3$, with scalar curvature $k := k(x)$, the Yamabe problem seeks to find a metric $\tilde{g}$ conformal to $g$ with constant scalar curvature $\tilde{k}$.}

In fact, by setting $\tilde{g} = u^{\frac{4}{N-2}} g$, where $u > 0$ is the conformal factor, the Yamabe problem can be reformulated as the following nonlinear critical equation involving the Laplace-Beltrami operator $\Delta_M$ on $(M, g)$:
\begin{equation}
-\frac{N-1}{N-2} \Delta_M u = \tilde{k} u^{2^*-1} - k(x) u.
\end{equation}

Moreover, the Brezis-Nirenberg problem \eqref{EucBNL} is connected to the existence of extremal functions for functional inequalities, as well as to the existence of non-minimal solutions for the Yang-Mills functions and $H$-systems (see \cite{BN83}). For motivation and more information on the sub-Riemannian version of the Brezis-Nirenberg problem, we refer to some recent works of the authors \cite{GKR22,  GKR24, GK25} and references therein. 

The nonlocal analogue of the Brezis-Nirenberg equation \eqref{EucBNL} was intensively investigated by Servadei \cite{SER13, SER14}, and Valdinoci \cite{SERV05, SERV13}. Specifically, they considered the following nonlocal problem:
\begin{equation} \label{EucBNNL}
\begin{cases}
(-\Delta)^s u = |u|^{2_s^*-2} u+\lambda u \quad &\text{in} \,\, \Omega,\\
u=0 \quad &\text{in} \,\, \mathbb{R}^N \backslash \Omega,
\end{cases}
\end{equation}
where  $s \in (0, 1)$ is fixed, $2_s^*:=\frac{2N}{N-2s},$ $N>2s,$ and  $(-\Delta)^s,$ is the well-known fractional Laplacian defined by:
\begin{equation*}
    (-\Delta)^su(x):= C_{N,s} \, P.V. \int_{\mathbb{R}^N} \frac{u(x)-u(y)}{|x-y|^{N+2s}} \,dy,\quad x\in \mathbb{R}^N.
\end{equation*} 
They completely extended the classical results of Brezis and Nirenberg \cite{BN83}, Capozzi et al. \cite{CFP85}, Zhang \cite{Zhang}, and Gazzola and Ruf \cite{GR97}.  One can summarize their finding as follows: The problem \eqref{EucBNNL} has a nontrivial weak solution in some appropriate Sobolev space $H^s(\mathbb{R}^N),$ whenever 
\begin{itemize}
    \item $N> 4s$ and $\lambda >0;$
    \item  $N=4s$ and $\lambda>0$ is different from the eigenvalue of $(-\Delta)^s;$
    \item $2s<N<4s$ and $\lambda$ is sufficiently large.
\end{itemize} 

This problem was later studied for the nonlinear nonlocal operator, namely, the fractional $p$-Laplacian, $1<p<\infty,$ defined as  
\begin{equation*}
    (-\Delta_p)^su(x):= 2 C_{N,p,s} \lim _{\varepsilon \backslash 0} \int_{\mathbb{R}^{N} \backslash B_{\varepsilon}(x)} \frac{|u(x)-u(y)|^{p-2}(u(x)-u(y))}{|x-y|^{N+s p}} d y,\quad x\in \mathbb{R}^N,
\end{equation*}
where 
\begin{equation}\label{def c}
C_{N,p,s}:=\frac{\frac{sp}{2}(1-s)2^{2s-1}}{\pi^{\frac{N-1}{2}}}\frac{\Gamma(\frac{N+ps}{2})}{\Gamma(\frac{p+1}{2})\Gamma(2-s)}    
\end{equation}
is the normalizing constant \cite{DGV21}. Owing to the nonlinear nature of the operator, additional technical difficulties naturally appear in the fractional (semilinear) case. This was successfully addressed by Mosconi et al. \cite{MPSY16}. To describe their result, let us denote the sequence of eigenvalues 
$$0<\lambda_{1,p}< \lambda_{2, p} \leq \ldots \leq \lambda_{k, p} \leq \ldots,$$
of the nonlocal eigenvalue problem 
\begin{equation}
    \begin{cases}
        (-\Delta_p)^s u = \lambda |u|^{p-2}u & \,\,\text{in}\,\,\,\, \Omega, \\
        u =0 & \text{in}\,\, \mathbb{R}^N \backslash \Omega,
    \end{cases}
\end{equation}
defined by using the $\mathbb{Z}_2$-cohomological index of Fadell and Robinowitz \cite{FR78}. They \cite{MPSY16} proved that the nonlocal nonlinear critical equation
\begin{equation}
    \begin{cases}
        (-\Delta_p)^s u = \lambda |u|^{p-2}u+|u|^{p_s^*-2} u & \text{in}\,\,\, \Omega, \\
        u =0 & \text{in}\,\, \mathbb{R}^N \backslash \Omega,
    \end{cases}
\end{equation} where $p_s^*:=\frac{pN}{N-ps}, \, N>ps,$ has a nontrivial weak solution in a suitable Sobolev space $X^p_s(\Omega)$ if the following conditions hold true:
\begin{itemize}
    \item  $N=sp^2$ and $\lambda <\lambda_{1, p};$
    \item $N>sp^2$ and $\lambda$ is not one of the eigenvalues $\lambda_{j, p}, j=1,2, \ldots;$
    \item  $\frac{N^2}{(N+s)} >sp^2;$
    \item  $\frac{N^3+s^3p^3}{N(N+s)}>sp^2$ and $\partial \Omega \in C^{1,1}.$
\end{itemize}

On the other hand, Mawhin and Molica Bisci \cite{MB: 2017} used a different approach to study the existence of solutions for critical elliptic equations other than the usually exploited methods such as the concentration compactness principle of Lions \cite{Lions85, Lions85-2}. They investigated the equation  \begin{equation} \label{pro1intro}
      \begin{cases} 
(-\Delta_{p})^s u= \mu |u|^{p_s^*-2}u+\lambda h(x, u) \quad &\text{in}\quad \Omega, \\
u=0\quad & \text{in}\quad \mathbb{R}^N\backslash \Omega,
\end{cases}
  \end{equation}
where   $\mu > 0, \lambda>0$ are real parameters and $h$ is a subcritical nonlinearity satisfying \begin{equation} \label{growthintro}
        |h(x, t)| \leq a_1+a_2|t|^{q-1}\quad \text{for}\,\,\text{a.e.}\,\,x\in \Omega,\,\, \text{and for all } t \in \mathbb{R},
    \end{equation}
    for some $a_1, a_2>0$ and $q \in [1, p_s^*)$. They proved that problem \eqref{pro1intro} has at least one weak solution for a suitable value of $\lambda>0$. The key ingredient in their proof was the weak lower semicontinuity results. This technique has already been used by Faraci and Farkas \cite{FF: 2015} and Squassina \cite{S: 2004} to study quasilinear $p$–Laplacian equations involving critical nonlinearities.

Recently, there has been growing interest in nonlinear problems driven by operators of mixed types, particularly in connection with the study of optimal animal foraging strategies (see, for example, \cite{DV21}).  From a mathematical perspective, this operator presents significant challenges owing to the interplay between nonlocal effects and lack of scaling invariance. There have been several fundamental developments in many directions, such as \cite{BDVV22, DFV24} and the references therein for more details.

In  \cite{BDVV22}, the authors initiated the Brezis-Nirenberg type problem for mixed local and nonlocal operators, $\mathcal{L}:=-\Delta +(-\Delta)^s.$ In fact, they considered the critical problem of the form
\begin{equation} \label{ln2}
    \begin{cases}
        \mathcal{L}u=u^{2^*-1}+\lambda u^p & \text{in}\,\,\, \Omega, \\
        u \gneqq  0 & \text{in}\,\, \Omega,\\
        u \equiv 0 & \text{in}\,\, \mathbb{R}^N \backslash \Omega,
    \end{cases}
\end{equation}
where $p \in [1, 2^*-1),$ $\lambda \in \mathbb{R}$ and $\Omega$ is an open and bounded set. Their first result states that if $\Omega$ is bounded and star-shaped, and $\lambda \leq 0,$ then \eqref{ln2} admits no solutions, irrespective of any value of $p \in [1, 2^*-1).$ For $p=1,$ the problem \eqref{ln2} possesses at least one solution if $\lambda \in (\lambda^*, \lambda_1),$ with $\lambda^* \in [\lambda_{1, s}, \lambda_1),$ where $\lambda_{1, s}$ and $\lambda_1$ are the first Dirichlet eigenvalues for the operators $(-\Delta)^s$ and $\mathcal{L} $, respectively. Moreover, there are no solutions to the problem \eqref{ln2} if $\lambda \geq \lambda_1$ and for every $0<\lambda \leq \lambda_{1, s},$ there are no solutions to \eqref{ln2} belonging to the suitable closed ball $\mathcal{B} \subset L^{2^*}(\mathbb{R}^N),$ see \cite[Theorem 1.4]{BDVV22}. However, for the superlinear perturbation, that is, $p \in (1, 2^*-1)$ and $N \geq 3,$ let us first set $\kappa_{s, n}:=\min\{2-2s, N-2\}$ and $\beta_{p, N}:=N- \frac{(p+1)(N-2)}{2}.$ Then it was proved \cite[Theorem 1.5]{BDVV22} that if $\kappa_{s, N}>\beta_{p, N}$ then the problem \eqref{ln2} has a solution for every $\lambda>0,$ and if $\kappa_{s, N} \leq \beta_{p, N}$ then problem \eqref{ln2} has a solution for $\lambda$ large enough. 
The problem \eqref{ln2} was further extended in \cite{DFV24} for the operator $\mathcal{L}_p= -\Delta_p+(-\Delta_p)^s$ and the existence and multiplicity of solutions were studied by combining the variational methods with some topological techniques, such as the Krasnoselskii genus and the Lusternik–Schnirelman category theory.

\subsection{Main results and discussion}  
To explain our main result, let us  consider the signed measure 
\begin{equation} \label{mudef}
    \mu:=\mu^{+}-\mu^{-}, 
\end{equation}
where the components $\mu^{+}$ and $\mu^{-}$ are two  (nonnegative) finite Borel measures on $[0,1]$ satisfying the following conditions: 
\begin{equation}\label{measure 1}
    \mu^{+}([\bar{s}, 1])>0,
\end{equation}
\begin{equation}\label{measure 2}
    \mu^{-}|_{[\bar{s}, 1]}=0,
\end{equation}
and
\begin{equation}\label{measure 3}
\mu^{-}([0, \bar{s}]) \leq \kappa \mu^{+}([\bar{s}, 1]) ,
\end{equation}
for some $\bar{s} \in(0,1]$ and $\kappa \geq 0.$
Then the main object of interest is the following nonlinear superposition fractional operator
\begin{equation}\label{superposition operator}
    A_{\mu, p} u:=\int_{[0,1]}(-\Delta)_{p}^{s} u d \mu(s).
\end{equation} 

Notably, the signed measure $\mu$ allows each operator $(-\Delta)_p^s$ to contribute to the definition of $A_{\mu, p}$  \eqref{superposition operator} with potentially different signs. However, in our setting,  assumptions \eqref{measure 1}–\eqref{measure 3} enable us to partition the interval $[0, 1]$ into two subintervals, where the right subinterval has a specific sign and dominates the left subinterval sufficiently.  Note that, since $\mu$ is a signed measure, each individual fractional $p$-Laplacian in \eqref{superposition operator} may contribute with potentially different signs. However, the conditions \eqref{measure 1}–\eqref{measure 3} ensure that the negative components of the signed measure $\mu$ are effectively ``absorbed" by the positive components. Consequently, the negative contributions in \eqref{superposition operator} do not influence the higher fractional exponent values.

The notation $(-\Delta)_p^s$ is conventionally assigned to the fractional $p$-Laplacian, defined for all $s \in (0,1)$ by
$$
(-\Delta)_{p}^{s} u(x):=2 C_{N,p,s} \lim _{\varepsilon \backslash 0} \int_{\mathbb{R}^{N} \backslash B_{\varepsilon}(x)} \frac{|u(x)-u(y)|^{p-2}(u(x)-u(y))}{|x-y|^{N+s p}} d y.
$$
The positive normalizing constant $C_{N,p,s}$ is chosen in such a way that, it provides consistent limits as $s \nearrow 1$ and as $s \searrow 0$, namely
\begin{align*}
& \lim _{s \searrow 0}(-\Delta)_{p}^{s} u=(-\Delta)_{p}^{0} u:=u, \\
& \lim _{s \nearrow 1}(-\Delta)_{p}^{s} u=(-\Delta)_{p}^{1} u:=-\Delta_{p} u=-\operatorname{div}\left(|\nabla u|^{p-2} \nabla u\right).
\end{align*}

The operator $A_{\mu, p}$ was introduced in \cite{DPSV}, which extended the construction of the linear case (i.e., $p=2$) discussed in \cite{DPSV2}. Particular cases for the operator in \eqref{superposition operator} are (minus) the $p$-Laplacian (corresponding to the choice of $\mu$ being the Dirac measure concentrated at 1), the fractional $p$-Laplacian $(-\Delta)_p^{s}$ (corresponding to the choice of $\mu$ being the Dirac measure concentrated at some fractional power $s \in (0, 1)$), the ``mixed order operator" $-\Delta_p+(-\Delta)_p^{s}$ (when $\mu$ is the sum of two Dirac measures $\delta_1+\delta_s,\,s \in (0, 1)$), etc.  {\it An intriguing aspect of the operators studied in this work is their ability to simultaneously handle nonlinear operators and an infinite (potentially uncountable) family of fractional operators. Moreover, some of these operators may carry the ``wrong sign", provided that a dominant part exists, determined by operators of higher fractional order.} At the end of this section, we present several concrete examples and discuss our main results for these specific settings in Section \ref{secexpa}.

Beyond their theoretical importance, these operators naturally arise in various applications, particularly in mathematical biology, where they model population dispersal under diverse diffusion strategies, such as Gaussian and L\'evy flights (see \cite{DV21, DPSV25}). In this context, the ability to handle complex operators with components exhibiting the ``wrong sign" is particularly intriguing for applications in biology and population dynamics. For instance, according to the Lévy flight foraging hypothesis, animal dispersal is often best represented as a superposition of (potentially fractional) operators of different orders, capturing the diverse foraging strategies adopted by individuals within a population (e.g., \cite{DV21, DPSV25}). In this framework, the inclusion of operators with the ``wrong sign" offers a natural means to model individuals who, instead of dispersing in search of food, exhibit a tendency to cluster for social interactions or mating purposes, resulting in patterns governed by a retrograde fractional heat equation.

%Operators that blend local and nonlocal components have been the focus of extensive research from multiple perspectives, including regularity theory (see \cite{BCCI12, CDV22, BDVV22, GK22, GL23, DFM}), existence and nonexistence results (see \cite{AC21}), viscosity solutions (see [JK05, BJK10]), symmetry properties (see [BVDV21]), and geometric and variational inequalities (see [BDVV23, BDVVa]).

Observe that, by assumption \eqref{measure 1}, there exists another fractional exponent $s_{\sharp} \in[\bar{s}, 1]$ such that 
\begin{equation}\label{measure 4}
\mu^{+}\left(\left[s_{\sharp}, 1\right]\right)>0. 
\end{equation}  

Subsequently, we will see that $s_\sharp$ also serves as a critical exponent. Notably, there is some flexibility in choosing $s_\sharp$ above. However, the results obtained will be stronger if $s_\sharp$ is chosen to be ``as large as possible" while still satisfying \eqref{measure 4}. In fact, we can simply take $s_\sharp:=\bar{s},$ but selecting a larger $s_\sharp,$ if feasible leads to both qualitative and quantitative improvements in the results.

In this paper, we considered two types of nonlinear elliptic problems with a critical exponent related to the operator $A_{\mu, p}$. First, let $\Omega$ be a bounded open subset of $\mathbb{R}^{N}$. We consider the following Brezis-Nirenberg problem:

\begin{equation}\label{problem 0.1}
 \left\{\begin{array}{cc}
    A_{\mu, p} u  =\lambda g(x,u) +\gamma |u|^{p_{s_\sharp}^{*}-2} u \text { in } \Omega,  \\
    u  = 0  \text { in } \mathbb{R}^{N} \backslash \Omega ,
\end{array}\right.
\end{equation}
where $\gamma$ and $\lambda$ are real positive parameters and $g: \Omega \times \mathbb{R} \rightarrow \mathbb{R}$ is a Carath\'eodory function, which means, $g$ is Lebesgue measurable in the first variable and continuous in the second variable,  verifying the following standard subcritical growth condition 
\begin{align}\label{cond on g 0}
\begin{split}
&\text{ there exist } a_{1}, a_{2}>0 \text{ and } q \in[1, p_{s_{\sharp}}^{*}), \text{ such that }\\
&|g(x, t)| \leq a_{1}+a_{2}|t|^{q-1} \text {, a.e. } x \in \Omega, \forall t \in \mathbb{R}. 
\end{split}
\end{align}
The exponent $p_{s_{\sharp}}^{*}$  defined as
\begin{equation}\label{critical exponent}
    p_{s_{\sharp}}^{*}:=\frac{N p}{N-s_{\sharp} p}
\end{equation}
is the fractional critical exponent related to the fractional exponent $s_{\sharp}$ for which \eqref{measure 4} holds true. 

To state our main results and study the problem, we introduce the fractional Sobolev space $X_{p}(\Omega)$ defined as the set of measurable functions $u: \mathbb{R}^{N} \rightarrow \mathbb{R}$ such that $u=0$ in $\mathbb{R}^{N} \backslash \Omega$ for which the following norm is finite 
\begin{equation}\label{norm on Xpinto}
  \|u\|_{X_p(\Omega)}:=\left(\int_{[0,1]}[u]_{s, p}^{p} d \mu^{+}(s)\right)^{1 / p}< +\infty, 
\end{equation}
where, for $s \in [0, 1],$ the Gagliardo norm $[u]_{s, p}$ is defined as
\begin{equation*}
    [u]_{s, p}:= \begin{cases}\|u\|_{L^{p}\left(\mathbb{R}^{N}\right)} & \text { if } s=0, \\ \left(C_{N, s, p} \iint_{\mathbb{R}^{2 N}} \frac{|u(x)-u(y)|^{p}}{|x-y|^{N+s p}} d x d y\right)^{1 / p} & \text { if } s \in(0,1), \\ \|\nabla u\|_{L^{p}\left(\mathbb{R}^{N}\right)} & \text { if } s=1.\end{cases}
\end{equation*}
We refer to Section \ref{pre} for more details and properties of these Sobolev spaces.

The first main result related to the problem \eqref{problem 0.1} is as follows.

\begin{thm}\label{main result wlsc 0}
    Let $ \Omega$ be a bounded subset of $\mathbb{R}^N$. Let $\mu=\mu^{+}-\mu^{-}$with $\mu^{+}$ and let $\mu^{-}$ satisfying \eqref{measure 1}-\eqref{measure 3} and let $s_{\sharp}$ be as in \eqref{measure 4}. Suppose $\frac{N}{s_{\sharp}}>p \geq 2$ and $q \in[1, p_{s_\sharp}^*)$, where $p_{s_\sharp}^*=\frac{p N}{N-{s_\sharp}p}$. Let $g: \Omega \times \mathbb{R} \rightarrow \mathbb{R}$ be a Carath\'eodory function that satisfies the subcritical growth condition \eqref{cond on g 0}. Then, for every $\gamma>0$, there exists $\Lambda_\gamma>0$ such that the problem \eqref{problem 0.1} admits at least one weak solution in the space $X_{p}(\Omega)$ for every $0<\lambda<\Lambda_\gamma$ which is a local minimum of the energy functional
$$
\mathcal{J}_{\gamma, \lambda}(u):=\frac{1}{p}\|u\|_{X_p(\Omega)}^{p}-\frac{1}{p} \int_{[0, \bar{s}]}[u]_{s, p}^{p} d \mu^{-}(s)-\lambda \int_{\Omega} \int_{0}^{u(x)} g(x, \tau) d \tau dx-\frac{\gamma}{p_{s_{\sharp}}^{*}} \int_{\Omega}|u|^{p_{s_\sharp}^{*}} d x
$$
for every $u \in X_{p}(\Omega)$.
Moreover, we provide explicit information on the interval $\left(0, \Lambda_\gamma\right)$ and show that 
\begin{equation*}
    \left(0, \Lambda_\gamma\right) \subset\left(0, \max _{r \geq 0} \frac{r^{p-1}-\gamma C_{p_{s_\sharp}^{*}}^{p_{s_\sharp}^{*}} r^{p_{s_\sharp}^{*}-1}}{a_{1} C_{p_{s_\sharp}^{*}}|\Omega|^{(p_{s_\sharp}^{*}-1) / p_{s_\sharp}^{*}}+a_{2} C_{p_{s_\sharp}^{*}}^{q}|\Omega|^{(p_{s_\sharp}^{*}-q) / p_{s_\sharp}^{*}} r^{q-1}}\right),
\end{equation*}
where $C_{p_{s_\sharp}^*}$ is the best constant in the continuous Sobolev embedding given by \eqref{Sobolev constant 2} and $|\Omega|$ is the Lebesgue measure of the open bounded subset $\Omega$ of $\mathbb{R}^N$.
\end{thm}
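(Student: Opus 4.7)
The plan is to produce the weak solution as an interior minimum of $\mathcal{J}_{\gamma,\lambda}$ on a closed ball $\overline{B}_r\subset X_p(\Omega)$, following the weak lower semicontinuity strategy of Mawhin--Molica Bisci, Squassina, and Faraci--Farkas. The novelty is in adapting that machinery to the superposition operator $A_{\mu,p}$: the negative part of the signed measure $\mu$ must be absorbed via \eqref{measure 3} together with the embeddings between fractional Sobolev spaces. The main obstacle is that the critical term $u\mapsto\int_\Omega|u|^{p^*_{s_\sharp}}\,dx$ is weakly upper but not lower semicontinuous; restricting to a ball small relative to $\gamma$ converts the Brezis--Lieb defect at the critical exponent into a nonnegative contribution, recovering weak lower semicontinuity on that ball.

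First I would verify that $\mathcal{J}_{\gamma,\lambda}\in C^1(X_p(\Omega);\mathbb{R})$ with Fr\'echet derivative
$$\langle\mathcal{J}_{\gamma,\lambda}'(u),v\rangle=\int_{[0,1]}\langle(-\Delta)_p^s u,v\rangle\,d\mu(s)-\lambda\int_\Omega g(x,u)v\,dx-\gamma\int_\Omega|u|^{p^*_{s_\sharp}-2}uv\,dx,$$
using \eqref{cond on g 0}, the continuous Sobolev embedding $X_p(\Omega)\hookrightarrow L^{p^*_{s_\sharp}}(\Omega)$ from Section \ref{pre} with constant $C_{p^*_{s_\sharp}}$, and the observation that \eqref{measure 3} combined with the embedding $W^{s_\sharp,p}(\mathbb{R}^N)\hookrightarrow W^{s,p}(\mathbb{R}^N)$ for $s\leq\bar s$ bounds $\int_{[0,\bar s]}[u]_{s,p}^p\,d\mu^-(s)$ by a constant multiple of $\|u\|_{X_p}^p$.

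To establish weak lower semicontinuity of $\mathcal{J}_{\gamma,\lambda}$ on $\overline{B}_r$, take $u_n\rightharpoonup u$ in $X_p(\Omega)$ with $\|u_n\|_{X_p}\leq r$. The compact embedding $X_p(\Omega)\hookrightarrow L^q(\Omega)$ for $q\in[1,p^*_{s_\sharp})$ gives, along a subsequence, $u_n\to u$ a.e.\ and strongly in every such $L^q$; hence $\int_\Omega G(x,u_n)\,dx\to\int_\Omega G(x,u)\,dx$, and the same compactness handles the lower-order Gagliardo pieces attached to $\mu^-$. Applying the Brezis--Lieb lemma to the integrand of $\|\cdot\|_{X_p}^p$ on $\mathbb{R}^{2N}\times[0,1]$ with measure $dx\,dy\,d\mu^+(s)$, and to $|u|^{p^*_{s_\sharp}}$ on $\Omega$, yields
$$\mathcal{J}_{\gamma,\lambda}(u_n)-\mathcal{J}_{\gamma,\lambda}(u)\geq \frac{1}{p}\|u_n-u\|_{X_p}^p-\frac{\gamma}{p^*_{s_\sharp}}\int_\Omega|u_n-u|^{p^*_{s_\sharp}}\,dx+o(1).$$
Sobolev bounds the critical integral by $C_{p^*_{s_\sharp}}^{p^*_{s_\sharp}}(2r)^{p^*_{s_\sharp}-p}\|u_n-u\|_{X_p}^p$, so for $r$ small enough (depending only on $\gamma$) the right-hand side is nonnegative, giving the desired weak lower semicontinuity. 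Since $p\geq 2$ makes $X_p(\Omega)$ reflexive, $\overline{B}_r$ is weakly compact and the infimum is attained at some $u_*\in\overline{B}_r$.

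Finally I would force $u_*$ into the open ball $B_r$ by restricting $\lambda$. If $\|u_*\|_{X_p}=r$, testing minimality against the inward variation $t\mapsto(1-t)u_*$ as $t\downarrow 0$ yields $\langle\mathcal{J}'_{\gamma,\lambda}(u_*),u_*\rangle\leq 0$. On the other hand, for every $u$ with $\|u\|_{X_p}=r$, H\"older, the subcritical growth of $g$, Sobolev, and \eqref{measure 3} together give
$$\langle\mathcal{J}'_{\gamma,\lambda}(u),u\rangle\geq r\Bigl[r^{p-1}-\gamma C_{p^*_{s_\sharp}}^{p^*_{s_\sharp}}r^{p^*_{s_\sharp}-1}-\lambda\bigl(a_1 C_{p^*_{s_\sharp}}|\Omega|^{(p^*_{s_\sharp}-1)/p^*_{s_\sharp}}+a_2 C_{p^*_{s_\sharp}}^q|\Omega|^{(p^*_{s_\sharp}-q)/p^*_{s_\sharp}}r^{q-1}\bigr)\Bigr].$$
Strict positivity of the bracket is precisely the condition $\lambda$ less than the ratio appearing in the theorem, and maximizing over admissible $r\geq 0$ yields the claimed inclusion for $\Lambda_\gamma$. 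Under this smallness assumption the two inequalities are incompatible, so $u_*\in B_r$; interior minimality forces $\mathcal{J}'_{\gamma,\lambda}(u_*)=0$, and $u_*$ is the desired weak solution of \eqref{problem 0.1}.
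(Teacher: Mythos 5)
Your proposal is correct in its overall architecture and yields the theorem, but it is a genuinely different route from the paper's in both of the two key steps, and it is worth spelling out the comparison.

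For the weak lower semicontinuity on a small ball, the paper does not use the Brezis--Lieb lemma for the Gagliardo norm. Instead it invokes the Lindqvist inequality $|b|^p-|a|^p\geq p|a|^{p-2}a(b-a)+2^{1-p}|a-b|^p$ (valid for $p\geq 2$), combined with the weak-convergence consequence \eqref{weak convergence conseq} obtained by testing \eqref{weak convergence 1} against $u_\infty$ and against the $u_k$'s. This produces the defect $\frac{2^{1-p}}{p}[\rho_p(u_j-u_\infty)]^p$ with the loss factor $2^{1-p}$. Your route, applying the Brezis--Lieb-type identity of Lemma \ref{B-L lemma} directly to $\|\cdot\|_{X_p}^p$, is cleaner: it produces the sharper defect $\frac{1}{p}\|u_n-u\|_{X_p}^p$ (so the admissible radius $\bar r_\gamma$ is larger by a factor $2^{(p-1)/(p^*_{s_\sharp}-p)}$), and notably it does not require $p\geq 2$ at all, since Lemma \ref{B-L lemma} is stated for all $p\in(1,\infty)$. (The paper states that the restriction $p\geq 2$ is imposed precisely because of the Lindqvist-based argument.) Also, your parenthetical ``since $p\geq 2$ makes $X_p(\Omega)$ reflexive'' is inaccurate: Lemma \ref{Uniform convexity} gives uniform convexity, hence reflexivity, for all $p\in(1,\infty)$. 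You do still need to argue, as the paper does via the Brezis--Mironescu interpolation estimate \eqref{eq4.12}, that the $\mu^-$ piece $\int_{[0,\bar s)}[u_n]_{s,p}^p\,d\mu^-(s)$ converges; compact embedding alone does not immediately control the Gagliardo seminorms $[u_n-u]_{s,p}$ for $s\in(0,\bar s)$, since they are not of the form $\|u_n-u\|_{L^r}$.

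For the interiority of the minimizer, the paper routes through Ricceri-type estimates: the functionals $\Phi_{p,s}$ and $\Psi_{\gamma,\lambda}$, the quantity $\Theta_{\gamma,\lambda}$, and Lemmas \ref{tech lem 1}--\ref{tech lem 3}, culminating in the strict inequality \eqref{contr inequality}. You instead use the direct comparison between $\langle\mathcal J'_{\gamma,\lambda}(u_*),u_*\rangle\leq 0$ from the inward variation $t\mapsto(1-t)u_*$ and a positive lower bound for $\langle\mathcal J'_{\gamma,\lambda}(u),u\rangle$ on the sphere. Both approaches work; yours is more elementary and avoids the technical lemmas entirely.

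There is, however, one concrete quantitative gap in your boundary estimate. Expanding
$$
\langle\mathcal J'_{\gamma,\lambda}(u),u\rangle=\|u\|_{X_p}^p-\int_{[0,\bar s]}[u]_{s,p}^p\,d\mu^-(s)-\lambda\int_\Omega g(x,u)u\,dx-\gamma\int_\Omega|u|^{p^*_{s_\sharp}}\,dx,
$$
the $\mu^-$-term is subtracted and is nonnegative, so on $\|u\|_{X_p}=r$ one must bound it from above using Lemma \ref{reabsorb}: $\int_{[0,\bar s]}[u]_{s,p}^p\,d\mu^-(s)\leq c_0\kappa\,r^p$. Hence the correct lower bound has leading term $(1-c_0\kappa)r^{p-1}$ in the bracket, not $r^{p-1}$ as you wrote (unless $\kappa=0$). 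After this correction the resulting threshold on $\lambda$ is slightly smaller, but it is still bounded above by $\max_{r\geq 0}h_\gamma(r)$, so the inclusion $(0,\Lambda_\gamma)\subset(0,\max_{r\geq 0}h_\gamma(r))$ claimed in the theorem remains valid and your strategy goes through once this factor is reinstated.
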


The novelty of our results lies in the generality of the assumptions on the lower-order term $g$ as well as the broader framework involving the superposition of operators. This approach not only unifies various existing results but also yields several new findings, even in the specific case of the operator $A_{\mu, p}$ and the lower-order term $g$. By relaxing restrictive conditions and adopting a more general framework, we developed a unified approach for applying variational techniques to a broader class of nonlocal problems.

It is a well-known fact that due to the presence of critical exponent the energy functional 
$$
\mathcal{J}_{\gamma, \lambda}(u):=\frac{1}{p}\|u\|_{X_p(\Omega)}^{p}-\frac{1}{p} \int_{[0, \bar{s}]}[u]_{s, p}^{p} d \mu^{-}(s)-\lambda \int_{\Omega} \int_{0}^{u(x)} g(x, \tau) d \tau dx-\frac{\gamma}{p_{s_{\sharp}}^{*}} \int_{\Omega}|u|^{p_{s_\sharp}^{*}} d x,
$$
associated with problem \eqref{problem 0.1} does not satisfy the Palais-Smale condition. The main reason for this is the lack of compactness due to the fact that the embedding $X_p(\Omega) \hookrightarrow L^{p_{s_\sharp}^*}(\Omega)$ is not compact. There are many other ways to avoid this difficulty, including using a weak lower semicontinuity. 
The proof of Theorem \ref{main result wlsc 0} is based on a weak lower semicontinuity result, which is stated in Proposition \ref{main prop on wlsc}. The idea is to show that, for every $\gamma>0$, the restriction of the functional
$$
u \mapsto \frac{1}{p} \int_{[0,1]}[u]_{s, p}^{p} d \mu^{+}(s)-\frac{1}{p} \int_{[0, \bar{s}]}[u]_{s, p}^{p} d \mu^{-}(s)-\frac{\gamma}{p_{s_\sharp}^{*}} \int_{\Omega}|u|^{p_{s_\sharp}^{*}} d x
$$
to a sufficiently small ball in the solution space $X_p(\Omega)$ is sequentially weakly lower semicontinuous. Then, the energy functional associated with equation \eqref{problem 0.1} is locally sequentially weakly lower semicontinuous. Consequently, the application of the direct minimization technique implies that for any $\gamma > 0$ and sufficiently small $\lambda$, the energy functional $\mathcal{J}_{\gamma, \lambda}$ admits a critical point (local minimum), which corresponds to a weak solution of problem \eqref{problem 0.1}.  

Let us also discuss some recent results involving the operator $A_{\mu, p}.$ In \cite{DPSV2}, the authors introduced the basic framework to study the variational problem of the operator $A_{\mu, 2}$. Subsequently, they studied the existence of a nontrivial solution of the critical growth equation with jumping nonlinearity. Dipierro el al. \cite{DPSV} consider the following nonlinear problem involving operator $A_{\mu, p}:$ 
\begin{equation} 
       \begin{cases} 
A_{\mu, p} u=   |u|^{p_{s_\sharp}^*-2}u+\lambda |u|^{p-2} u  \quad &\text{in}\quad \Omega, \\
u=0\quad & \text{in}\quad \mathbb{R}^N\backslash \Omega,
\end{cases}
\end{equation}
and proved the existence of $2m$-distinct $(m \geq 1)$ of solutions, provided that $\mu$ satisfies the conditions \eqref{measure 1}–\eqref{measure 3} and $\kappa$ is sufficiently small. They proved this using an abstract critical point theory based on $\mathbb{Z}_2$-cohomological index. They \cite{DPSV1} further generalized this framework by considering the superposition of both $s$ and $p.$ 

It is also worth noting that  the proof of Theorem \ref{main result wlsc 0} remains valid even when considering the nonlocal problem with a lower-order term $g: \Omega \times \mathbb{R} \rightarrow \mathbb{R}$ such that 
      \begin{equation}
          |g(x, t)| \leq a_1+a_2 |t|^{m-1}+a_3 |t|^{q-1}\quad \text{a. e.}\,\,\,x \in \Omega,\, \forall t \in \mathbb{R},
      \end{equation}
      for some positive constants $a_j, j=1,2,3$ and $1<m<p \leq q <p_{s_\sharp}^*$. This will be discussed in Theorem \ref{thm1.1dupl}.

We emphasize that in our analysis, the presence of possible contributions from the negative component $\mu^-$ of the measure $\mu$ in \eqref{mudef} makes the study of problem \eqref{problem 0.1} particularly challenging. Nevertheless, we overcome these difficulties and prove that problem \eqref{problem 0.1} admits a solution. We also point out that unlike the classical (quasilinear) case \cite{FF: 2015} and \cite{S: 2004}, where lower semicontinuity was established using a truncation argument, this approach is no longer applicable in our setting due to the nonlocal nature of problem \eqref{problem 0.1}. Therefore, we adopt a different method proposed in \cite{MB: 2017} instead of the truncation argument; however, it also fails to accommodate the case $p \in (1, 2). $ That is the reason we assume $p \geq 2$ in Theorem \ref{main result wlsc 0}.
\\

Although we have established the existence of a solution to problem \eqref{problem 0.1}, it is often desirable to seek nonnegative, nontrivial solutions. Indeed, if $g(x, 0)=0,$ then $0$ itself is a solution to \eqref{problem 0.1}. Therefore, it is crucial to show that $0$ is not a local minimum of functional $\mathcal{J}_{\gamma, \lambda}.$ This situation, where $g(x, 0)=0$, arises in the following results, making the existence of a nontrivial solution essential. With this objective in mind, in the next result, we analyze a specific case of problem \eqref{problem 0.1}, given by

\begin{equation} 
       \begin{cases} 
A_{\mu, p} u=  |u|^{p_{s_\sharp}^*-2}u+\lambda (|u|^{m-1}+|u|^{q-1}) \quad &\text{in}\quad \Omega, \\
u=0\quad & \text{in}\quad \mathbb{R}^N\backslash \Omega,
\end{cases}
    \end{equation}
    where $1 \leq m<p\leq q<p_{s_\sharp}^*=\frac{p N}{N-{s_\sharp}p}.$

In this scenario, we will deduce the existence of a nonnegative nontrivial weak solution. For this purpose we need to assume an additional condition on $\mu^-.$ For that let us assume that bounded domain $\Omega \in \mathbb{R}^n$ is contained in ball $B_R$ of radius $R.$ We choose $\bar{\kappa}:=\bar{\kappa}(N, R, \bar{s})$ small enough and we suppose that there exists $\delta \in (0, 1-\bar{s}]$ such that 
\begin{align} \label{extracondi}
    \mu^-([0, \bar{s})) \leq \bar{\kappa} \delta \mu^+([\bar{s}, 1-\delta]).
\end{align}

It is evident that condition \eqref{extracondi} implies condition \eqref{measure 3} when $\bar{\kappa} \delta \leq \kappa$. However, the converse does not hold. This is primarily because, in \eqref{measure 3}, the contribution on the right-hand side arises solely from the ``local component", as in the case where $\mu^+$ is a singular measure concentrated at $s=1$. On the other hand, condition \eqref{extracondi} requires that the contribution from the negative part $\mu^-$ of the measure $\mu$ on $[0,s^-)$ be properly compensated by the contribution from the ``purely nonlocal component"  of the positive part.
  In particular, we will use this property to prove Lemma \ref{energycomparison}. The same condition was employed by Dipierro et al. \cite{DPSV25}, specifically for the case $p=2$. Specifically, they used this condition to show the positivity of the first eigenfunction of the operator $A_{\mu, p}.$   Ideally, it would be desirable to establish the result without this additional condition; however, for now, we leave this as an open question. 

\begin{thm} \label{thmnonmain}
    Let $ \Omega$ be a bounded subset of $\mathbb{R}^N$. Let $\mu=\mu^{+}-\mu^{-}$ with $\mu^{+}$and $\mu^{-}$ satisfying \eqref{measure 1}-\eqref{measure 3} and \eqref{extracondi}, and let $s_{\sharp}$ be as in \eqref{measure 4}. Let $\frac{N}{s_{\sharp}}>p \geq 2$. Suppose that $m$ and $q$ are two real constants such that 
$$1 \leq m<p\leq q<p_{s_\sharp}^*=\frac{p N}{N-{s_\sharp}p}.$$
Then, there exists an open interval $\Lambda \subset (0, +\infty)$ such that, for every $\lambda \in \Lambda,$ the  nonlocal problem 
\begin{equation} \label{pro15.7intro}
       \begin{cases} 
A_{\mu, p} u=  |u|^{p_{s_\sharp}^*-2}u+\lambda (|u|^{m-1}+|u|^{q-1}) \quad &\text{in}\quad \Omega, \\
u=0\quad & \text{in}\quad \mathbb{R}^N\backslash \Omega,
\end{cases}
    \end{equation} has at least one nonnegative nontrivial weak solution $u_\lambda \in X^{p}(\Omega),$ which is a  local minimum of the energy functional 
     \begin{equation}
        \mathcal{J}_{ \lambda}(u):= \frac{1}{p}\|u\|_{X_p(\Omega)}^{p}-\frac{1}{p} \int_{[0, \bar{s}]}[u]_{s, p}^{p} d \mu^{-}(s) -\frac{1}{p_{s_\sharp}^*} \int_{\Omega} |u(x)|^{p_{s_\sharp}^*} d x-\frac{\lambda}{m} \int_{\Omega} |u(x)|^m  d x-\frac{\lambda}{q} \int_{\Omega} |u(x)|^q  d x.
    \end{equation}  
\end{thm}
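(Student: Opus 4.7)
My plan is to reduce problem \eqref{pro15.7intro} to the framework of Theorem \ref{main result wlsc 0} by passing to a truncated problem in which each nonlinear term is evaluated at $u^+$, and then to recover the nonnegativity and nontriviality of the resulting minimum. Concretely, I would work with the truncated energy
\[
\tilde{\mathcal{J}}_\lambda(u) := \frac{1}{p}\|u\|^p_{X_p(\Omega)} - \frac{1}{p}\int_{[0,\bar{s}]}[u]^p_{s,p}\,d\mu^-(s) - \frac{1}{p_{s_\sharp}^*}\int_\Omega (u^+)^{p_{s_\sharp}^*}\,dx - \frac{\lambda}{m}\int_\Omega (u^+)^m\,dx - \frac{\lambda}{q}\int_\Omega (u^+)^q\,dx.
\]
Its Carath\'eodory perturbation $g(x,t) = (t^+)^{m-1} + (t^+)^{q-1}$ satisfies the subcritical growth bound \eqref{cond on g 0} since $1 \leq m < q < p_{s_\sharp}^*$, so Theorem \ref{thm1.1dupl} (the two-term generalization of Theorem \ref{main result wlsc 0} announced just after its statement) with $\gamma = 1$ produces $\Lambda > 0$ such that for every $\lambda \in (0, \Lambda)$ the functional $\tilde{\mathcal{J}}_\lambda$ admits a local minimum $u_\lambda \in \overline{B_r} \subset X_p(\Omega)$.

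Nontriviality then follows from a Brezis--Nirenberg-type test: for any nonnegative $v_0 \in X_p(\Omega)\setminus\{0\}$, the expansion
\[
\tilde{\mathcal{J}}_\lambda(tv_0) = -\frac{\lambda t^m}{m}\|v_0\|^m_{L^m(\Omega)} + o(t^m) \qquad (t\to 0^+)
\]
holds because $m < p \leq q < p_{s_\sharp}^*$ forces all remaining terms to be of higher order in $t$. Picking $t>0$ small enough that both $tv_0\in\overline{B_r}$ and $\tilde{\mathcal{J}}_\lambda(tv_0) < 0$, I deduce $\tilde{\mathcal{J}}_\lambda(u_\lambda) < 0 = \tilde{\mathcal{J}}_\lambda(0)$, so $u_\lambda \not\equiv 0$.

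The hard part will be the nonnegativity. I would test the Euler--Lagrange identity of $\tilde{\mathcal{J}}_\lambda$ at $u_\lambda$ against $v = -u_\lambda^-$; as $u_\lambda^+$ and $u_\lambda^-$ have disjoint supports, the truncated nonlinear terms $(u_\lambda^+)^{p_{s_\sharp}^*-1}$, $(u_\lambda^+)^{m-1}$ and $(u_\lambda^+)^{q-1}$ all vanish against $u_\lambda^-$, leaving $\langle A_{\mu,p} u_\lambda, u_\lambda^-\rangle = 0$. Splitting $\mu = \mu^+ - \mu^-$ and applying the pointwise inequality
\[
|a-b|^{p-2}(a-b)(a^- - b^-) \leq -|a^- - b^-|^p \qquad (p\geq 2)
\]
to the $\mu^+$-part, this rearranges to
\[
\int_{[0,1]}[u_\lambda^-]^p_{s,p}\,d\mu^+(s)\;\leq\;-\int_{[0,\bar{s}]}\langle (-\Delta)_p^s u_\lambda,\; u_\lambda^-\rangle\,d\mu^-(s),
\]
and the right-hand side is controlled via H\"older by $\int_{[0,\bar{s}]}[u_\lambda]^{p-1}_{s,p}[u_\lambda^-]_{s,p}\,d\mu^-(s)$. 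The crux, and the reason for imposing \eqref{extracondi}, is that this $\mu^-$-contribution must be absorbed into the purely-nonlocal $\mu^+$-mass on $[\bar s, 1-\delta]$: Lemma \ref{energycomparison} is precisely the mechanism that effects this absorption, forcing $[u_\lambda^-]_{s,p} = 0$ for $\mu^+$-almost every $s$, hence $u_\lambda^- \equiv 0$. Once nonnegativity is established, the truncated equation coincides with \eqref{pro15.7intro}, and $u_\lambda$ is the sought nonnegative, nontrivial weak solution; the signed character of $\mu$ is the sole source of difficulty here, since in the purely local or single-fractional setting the $\mu^-$-term vanishes and the standard $u^-$-testing argument would close the argument immediately.
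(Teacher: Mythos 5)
Your overall architecture — truncate the nonlinearity at $u^+$, invoke Theorem \ref{thm1.1dupl} with $\gamma=1$ to produce a local minimizer $u_\lambda$ in a small ball, kill triviality by a $t^m$-expansion at the origin, and kill the negative part by testing the Euler--Lagrange identity against $u_\lambda^-$ — is the same as the paper's (Theorem \ref{subthm} combined with Theorem \ref{thm1.1dupl}). The nontriviality step is fine. But the nonnegativity step as you have written it does not close, and the gap is precisely where you expect it to be.

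The problem is that you bound the $\mu^+$-part from below by $\int_{[0,1]}[u_\lambda^-]_{s,p}^p\,d\mu^+(s)$ by \emph{dropping} the cross term $|u(x)-u(y)|^{p-2}\bigl(u^+(x)u^-(y)+u^-(x)u^+(y)\bigr)$ (that is what your pointwise inequality $|a-b|^{p-2}(a-b)(a^--b^-)\leq -|a^--b^-|^p$ does), and you then hit the $\mu^-$-part with H\"older, which produces the mixed quantity $\int_{[0,\bar s]}[u_\lambda]_{s,p}^{p-1}[u_\lambda^-]_{s,p}\,d\mu^-(s)$. Running Lemma \ref{reabsorb} (or H\"older again plus Lemma \ref{reabsorb}) on this only yields an inequality of the shape $\|u_\lambda^-\|_{X_p}^p\lesssim\kappa\,\|u_\lambda\|_{X_p}^{p-1}\|u_\lambda^-\|_{X_p}$, which is not homogeneous in $u_\lambda^-$ alone and does not force $u_\lambda^-\equiv 0$ unless $\kappa=0$. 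Lemma \ref{energycomparison} as stated — a comparison between the energies of $|u|$ and $u$ — also does not bite here; it is not of the right form to absorb a $\mu^-$-integral of a $u^-$-quantity into a $\mu^+$-integral of that quantity.

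What the paper actually uses is finer: keep the full identity $(a-b)(a^- - b^-) = -(a^- - b^-)^2 - (a^+b^- + a^-b^+)$ on \emph{both} sides, so the $\mu^+$-contribution and the $\mu^-$-contribution each split into a "square" piece and a "cross" piece with the \emph{same} pointwise integrand, and then invoke the \emph{pointwise kernel} inequality \eqref{eq3.19},
\[
\int_{(0,\bar s)}\frac{C_{N,p,s}}{|x-y|^{N+ps}}\,d\mu^-(s)\;<\;\int_{(0,1)}\frac{C_{N,p,s}}{|x-y|^{N+ps}}\,d\mu^+(s)\qquad\text{for a.e.\ }x\neq y,
\]
which is the intermediate estimate established in the \emph{proof} of Lemma \ref{energycomparison} (and is the true content of condition \eqref{extracondi}), not the lemma's conclusion. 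This pointwise bound lets you absorb the $\mu^-$-contribution term by term, after which the cross terms cancel and only $-\int_{[0,1]}\iint|u(x)-u(y)|^{p-2}(u^-(x)-u^-(y))^2\,dxdy\,d\mu^+\leq -\|u_\lambda^-\|_{X_p}^p$ survives, forcing $u_\lambda^-\equiv 0$. (The paper additionally truncates the $\mu^-$-energy as $[u^+]_{s,p}^p$ in its auxiliary functional $\mathcal{J}^+_{\gamma,\lambda}$ and uses $|u^+(x)-u^+(y)|^{p-2}\leq|u(x)-u(y)|^{p-2}$; your choice of keeping $[u]_{s,p}^p$ in the $\mu^-$-term still works with the same cross-term argument, and is in fact slightly cleaner there, but H\"older cannot be the tool.) You have the right intuition about \eqref{extracondi} being the key hypothesis — the gap is that you applied a lossy inequality on one side and a lossy inequality on the other, discarding the very structure that makes the absorption possible.
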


In Theorem \ref{thmnonmain}, the interval $\Lambda$ can be explicitly localised and given by  
\begin{equation}
  \Lambda \subset \left( 0,\,  \max_{r>0} \frac{r^{p-1}-\gamma C_{p_{s_\sharp}^{*}}^{p_{s_\sharp}^{*}} r^{p_{s_\sharp}^{*}-1} }{ C_{p_{s_\sharp}^{*}} |\Omega|^{\frac{p_{s_\sharp}^*-1}{p_{s_\sharp}^*}}+C_{p^*_{s_\sharp}}^m|\Omega|^{\frac{p_{s_\sharp}^*-m}{p_{s_\sharp}^*}} r^{m-1}+C_{p^*_{s_\sharp}}^q|\Omega|^{\frac{p_{s_\sharp}^*-q}{p_{s_\sharp}^*}} r^{q-1}} \right).
\end{equation}
\\
The proof of Theorem \ref{thmnonmain} relies on several auxiliary results. The first is Theorem \ref{subthm}, which determines the sign of the solution under an additional condition on $g$. The second is Theorem \ref{thm1.1dupl}, which extends Theorem \ref{main result wlsc 0} to the case in which $g$ is allowed to include a lower-order term.
\\

We now turn our attention to investigating the following parameter-dependent subcritical Brezis-Nirenberg type problem (that is, assuming $\gamma = 0$ in \eqref{problem 0.1}) associated with the operator $A_{\mu, p}$
\begin{equation}\label{problem 0.1.2}
\begin{cases}A_{\mu, p}  u=\lambda g(x,u) & \text { in } \quad \Omega, \\ 
u=0 & \text { in } \mathbb{R}^{N} \backslash \Omega,\end{cases}
\end{equation}
where $\lambda>0$ is a real parameter and $g$ is a subcritical nonlinearity satisfying \eqref{cond on g 0}. The main result related to problem \eqref{problem 0.1.2} is stated in the following theorem.

\begin{thm}\label{main result subcrit 0}
     Let $\Omega$ be a bounded open subset of $\mathbb{R}^{N}$. Let $\mu=\mu^{+}-\mu^{-}$with $\mu^{+}$and $\mu^{-}$satisfying \eqref{measure 1}-\eqref{measure 3} and let $s_{\sharp}$ be as in \eqref{measure 4}. Suppose that $1<p<q<p_{s_{\sharp}}^{*}$ and $s_{\sharp}p<N$, where $p_{s_{\sharp}}^{*}:=\frac{N p}{N-s_{\sharp}p}$. Let $g: \Omega \times \mathbb{R} \rightarrow \mathbb{R}$ be a Carath\'eodory function satisfying the subcritical growth condition \eqref{cond on g 0}. Furthermore, let
$$
0<\lambda<\frac{(q-p)^{\frac{q-p}{q-1}}(p-1)^{\frac{p-1}{q-1}}}{(a_{1} C_1)^{\frac{q-p}{q-1}}(a_{2} C_{2})^{\frac{p-1}{q-1}}(q-1) C_{p_{s_{\sharp}}^{*}}^{p}|\Omega|^{\frac{p_{s_{\sharp}}^{*}-q}{p_{s_{\sharp}}^{*}}\left(\frac{p-1}{q-1}\right)}|\Omega|^{\frac{p_{s_{\sharp}}^{*}-1}{p_{s_{\sharp}}^{*}}\left(\frac{q-p}{q-1}\right)}},
$$
where $C_{1}, C_{2}$ and $C_{p_{s_{\sharp}}^{*}}$ are the embedding constants of $L^{p_{s_{\sharp}}^{*}}(\Omega) \hookrightarrow L^{1}(\Omega), L^{p_{s_{\sharp}}^{*}}(\Omega) \hookrightarrow$ $L^{q}(\Omega)$, and $X_{p}(\Omega) \hookrightarrow L^{p_{s_{\sharp}}^{*}}(\Omega)$, respectively, and $|\Omega|$ is the Lebesgue measure of set $\Omega$. Then, the nonlocal elliptic problem \eqref{problem 0.1.2} admits a weak solution $u_{0, \lambda} \in X_{p}(\Omega)$ with 
$$
\|u_{0, \lambda}\|_{X_p(\Omega)}<\left(\frac{\lambda(q-1) a_{2} C_{2} C_{p_{s_{\sharp}}^{*}}^{q}|\Omega|^{({p_{s_{\sharp}}^{*}-q})/{p_{s_{\sharp}}^{*}}}}{p-1}\right)^{\frac{1}{p-q}}.
$$ 
\end{thm}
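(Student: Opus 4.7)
The plan is to apply the direct method of the calculus of variations to the energy functional
\[
\mathcal{J}_\lambda(u) := \frac{1}{p}\|u\|^{p}_{X_p(\Omega)} - \frac{1}{p}\int_{[0,\bar{s}]} [u]^{p}_{s,p}\, d\mu^{-}(s) - \lambda \int_\Omega \int_{0}^{u(x)} g(x,\tau)\, d\tau\, dx
\]
restricted to a closed ball of the reflexive Banach space $X_p(\Omega)$. Because the perturbation is purely subcritical ($q<p^{*}_{s_{\sharp}}$), no loss of compactness intervenes and the argument is substantially simpler than the one for Theorem \ref{main result wlsc 0}; what remains is to control the sign-changing measure $\mu$ and to carry out the optimization producing the admissible parameter range.

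First, I would verify that $\mathcal{J}_\lambda \in C^{1}(X_p(\Omega))$ with Gateaux derivative reproducing the weak formulation of \eqref{problem 0.1.2}, using \eqref{cond on g 0} together with the continuous Sobolev embedding $X_p(\Omega) \hookrightarrow L^{p^{*}_{s_{\sharp}}}(\Omega)$. Next, I would establish sequential weak lower semicontinuity of $\mathcal{J}_\lambda$ on $X_p(\Omega)$. The $\mu^{+}$-seminorm piece is convex and continuous, hence weakly lower semicontinuous. For the $\mu^{-}$-integral, every individual seminorm $[\,\cdot\,]_{s,p}$ with $s\in[0,\bar{s}]$ is strongly continuous on bounded sets of $X_p(\Omega)$ by the compact fractional Rellich-type embedding, and a dominated-convergence argument in the $s$-variable lifts this pointwise continuity to sequential weak continuity of the whole $\mu^{-}$-integral. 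Finally, the perturbation term is sequentially weakly continuous by Krasnoselskii's theorem together with the compact embeddings $X_p(\Omega) \hookrightarrow L^{1}(\Omega)$ and $X_p(\Omega) \hookrightarrow L^{q}(\Omega)$, both available because $q<p^{*}_{s_{\sharp}}$.

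I then fix a radius $r>0$ and consider the closed ball $B_r:=\{u\in X_p(\Omega):\|u\|_{X_p(\Omega)}\leq r\}$. Reflexivity makes $B_r$ weakly compact, so a minimizer $u_{0,\lambda}$ of $\mathcal{J}_\lambda$ on $B_r$ exists by weak lower semicontinuity. To promote $u_{0,\lambda}$ to a critical point of $\mathcal{J}_\lambda$ on the whole space (and hence to a weak solution), I will choose $r$ so that the minimum is strictly interior, which is ensured by arranging $\mathcal{J}_\lambda(u)>0=\mathcal{J}_\lambda(0)$ for every $u$ with $\|u\|_{X_p(\Omega)}=r$. Invoking \eqref{measure 3} together with the standard comparison of Gagliardo seminorms on bounded domains, the $\mu^{-}$-piece is absorbed as $\frac{1}{p}\int_{[0,\bar{s}]}[u]^{p}_{s,p}\, d\mu^{-}(s)\leq \frac{\kappa^{*}}{p}\|u\|^{p}_{X_p(\Omega)}$ for some $\kappa^{*}<1$, while H\"older and Sobolev give
\[
\lambda \int_\Omega \int_{0}^{u(x)} g(x,\tau)\, d\tau\, dx \leq \lambda a_1 C_1 C_{p^{*}_{s_{\sharp}}} \|u\|_{X_p(\Omega)} + \tfrac{\lambda a_2}{q}\, C_2^{\,q} C^{\,q}_{p^{*}_{s_{\sharp}}} \|u\|^{q}_{X_p(\Omega)}.
\]
Positivity of $\mathcal{J}_\lambda$ on $\partial B_r$ then reduces to a scalar inequality of the form $\alpha r^{p}>\lambda(A r + B r^{q})$ with explicit $\alpha,A,B$ depending on $a_1,a_2,C_1,C_2,C_{p^{*}_{s_{\sharp}}},|\Omega|$ and $\kappa^{*}$. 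Maximizing the resulting $\lambda$-threshold over $r>0$ is an elementary extremization whose optimal radius satisfies a relation of the form $r^{q-1}\propto (p-1)/(q-p)$; this produces the exponents $(p-1)/(q-1)$ and $(q-p)/(q-1)$ displayed in the theorem and simultaneously yields both the sharp upper bound on $\lambda$ and the announced bound on $\|u_{0,\lambda}\|_{X_p(\Omega)}$ (which is precisely the optimal $r$).

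The main technical obstacle I anticipate is extracting a workable quantitative form of the dominance \eqref{measure 3}, namely an estimate $\int_{[0,\bar{s}]}[u]^{p}_{s,p}\, d\mu^{-}(s) \leq \kappa^{*} \|u\|^{p}_{X_p(\Omega)}$ with $\kappa^{*}$ independent of $u$ and strictly less than $1$; once this is in hand, the remainder consists of the routine weakly lower semicontinuous minimization and the one-variable calculus exercise above.
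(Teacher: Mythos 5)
Your strategy — minimize $\mathcal{E}_\lambda$ on a closed ball and force the minimizer to be interior by making the energy strictly positive on the boundary sphere — is a genuinely different route from the one the paper actually follows. The paper (Section~\ref{sec6}) runs a Ricceri-type argument through Lemmas~\ref{tech lem 1.1} and~\ref{tech lem 2.1}: it imposes only the one-sided \emph{derivative} bound $\limsup_{\varepsilon\to 0^+}\Theta_\lambda(r_0,\varepsilon)/\varepsilon<r_0^{p-1}$ on the sup-function $r\mapsto\sup_{\Phi_{p,s}\leq r}\Psi_\lambda$, and from this produces a reference point $w_\lambda$ in the open ball, \emph{not necessarily the origin}, lying strictly below every boundary energy level. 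Your proposal compares to the origin instead, which amounts to the \emph{value} inequality $\sup_{\Phi_{p,s}\leq r}\Psi_\lambda<r^p/p$. Since $\Psi_\lambda(0)=0$, the derivative condition at $r_0$ is weaker than requiring the value gap at $r_0$, and this is precisely where the paper gains.

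This is not merely a stylistic difference; it changes the constants and makes your plan fall short of the statement. Carrying out your scalar optimization $\tfrac{1-\kappa^*}{p}r^p>\lambda(Ar+Br^q)$ yields a $\lambda$-window smaller than the theorem's by a multiplicative factor of order $(1-\kappa^*)\,q^{(p-1)/(q-1)}/p$, which is strictly below one whenever $q>p$ and $\kappa^*\geq 0$ (the factors $1/p$ and $1/q$ are exactly what differentiating $r^p/p$ and $r^q/q$ removes in the Ricceri computation via the function $\varphi_\lambda(r)=\lambda(A+Br^{q-1})-r^{p-1}$). Your claim that the optimizing radius ``is precisely'' the announced bound on $\|u_{0,\lambda}\|_{X_p(\Omega)}$ is also incorrect: your extremizer satisfies $r^{q-1}=(p-1)A/((q-p)B)$, a constant independent of $\lambda$, whereas the theorem's radius $r_{0,\lambda}=\bigl(\lambda(q-1)a_2C_2C^q_{p^*_{s_\sharp}}|\Omega|^{\cdot}/(p-1)\bigr)^{1/(p-q)}$ depends on $\lambda$ and is the minimizer of $\varphi_\lambda$, i.e.\ $r_{0,\lambda}^{q-p}=(p-1)/(\lambda(q-1)B)$ — a different exponent ($q-p$ vs.\ $q-1$). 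Finally, your absorption step $\tfrac{1}{p}\int_{[0,\bar s]}[u]_{s,p}^p\,d\mu^-\leq\tfrac{\kappa^*}{p}\|u\|^p_{X_p(\Omega)}$ with $\kappa^*<1$ is a smallness assumption on $c_0\kappa$ that the theorem does not make, and it leaves a $(1-\kappa^*)$ factor in the threshold absent from the statement. The overall spirit is right, but to recover the exact $\lambda$-window and the $\lambda$-dependent norm bound you need to switch from the value comparison at the sphere to the paper's derivative-of-the-sup argument.
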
 

The proof of Theorem \ref{main result subcrit 0} also follows the direct method of calculus of variations, similar to the approach used for Theorem \ref{main result wlsc 0}. The proof is relatively simple because no critical nonlinearity term is involved.

The following interesting result is a special case $(\lambda=1)$ of Theorem \ref{main result subcrit 0}. Specifically, it generalizes the classical local problem for $A_{\mu, p}=-\Delta$ studied in \cite{AC04} by utilizing the variational principle established by Ricceri \cite{Ric00}.

\begin{cor} 
     Let $\Omega$ be a bounded open subset of $\mathbb{R}^{N}$. Let $\mu=\mu^{+}-\mu^{-}$with $\mu^{+}$and $\mu^{-}$satisfying \eqref{measure 1}-\eqref{measure 3} and $s_{\sharp}$ be as in \eqref{measure 4}. Suppose that $1<p<q<p_{s_{\sharp}}^{*}$ and $s_{\sharp}p<N$, where $p_{s_{\sharp}}^{*}:=\frac{N p}{N-s_{\sharp}p}$. Let $g: \Omega \times \mathbb{R} \rightarrow \mathbb{R}$ be a Carath\'eodory function satisfying the subcritical growth condition
    \begin{equation} 
        |g(x, t)| \leq a_1+a_2|t|^{q-1}\quad \text{for}\,\,\text{a.e.}\,\,x\in \Omega,\,\, \forall t \in \mathbb{R}
    \end{equation}
    for some $a_1, a_2>0.$  Assume that $$ a_1^{\frac{q-p}{p-1}} a_2 < \frac{(q-p)^{\frac{q-p}{p-1}}(p-1)}{  C_1^{\frac{q-p}{p-1}} C_2 (q-1)^{\frac{q-1}{p-1}} C_{p_{s_{\sharp}}^{*}}^{p \left(\frac{q-1}{p-1} \right)} |\Omega|^{\frac{p_{s_\sharp}^*-q}{p_{s_\sharp}^*}} |\Omega|^{\frac{p_s^*-1}{p_{s_\sharp}^*} \left( \frac{q-p}{p-1} \right)} },$$
    where $C_1,$ $C_2$ and  $C_{p_{s_{\sharp}}^{*}}$ is the embedding constant of $L^{p_{s_\sharp}^*}(\Omega) \hookrightarrow L^1(\Omega),$ $L^{p_{s_\sharp}^*}(\Omega) \hookrightarrow L^q(\Omega),$ and  $X_p(\Omega) \hookrightarrow L^{p_{s_\sharp}^*}(\Omega),$ respectively,  and $|\Omega|$ is the Lebesgue measure of the set $\Omega.$ 
     Then, the following  nonlocal subcritical problem
    \begin{equation} 
       \begin{cases} 
A_{\mu, p} u=  g(x, u) \quad &\text{in}\quad \Omega, \\
u=0\quad & \text{in}\quad \mathbb{R}^N\backslash \Omega,
\end{cases}
    \end{equation} admits a weak solution $u_{0} \in X_p(\Omega)$ and 
    $$\|u_{0}\|_{X_p(\Omega)}< \left( \frac{(q-1) a_2 C_2 C_{p_{s_{\sharp}}^{*}}^q |\Omega|^{\frac{p_{s_\sharp}^*-q}{p_{s_\sharp}^*}}}{p-1} \right)^{\frac{1}{p-q}}.$$ 
\end{cor}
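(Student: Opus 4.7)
The plan is to deduce this corollary as an immediate specialization of Theorem \ref{main result subcrit 0} to the parameter value $\lambda=1$; no independent variational argument is needed. All hypotheses of the corollary (on $\mu$, on $p,q,s_\sharp$, on the domain $\Omega$, and the Carath\'eodory/subcritical growth assumption on $g$) match verbatim those of Theorem \ref{main result subcrit 0}, so the only point to verify is that the assumed smallness on the pair $(a_1,a_2)$ is exactly the translation of $\lambda=1$ lying inside the admissible interval produced by that theorem.

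First I would write out the admissibility condition from Theorem \ref{main result subcrit 0} with $\lambda$ replaced by $1$, namely
\begin{equation*}
(a_1 C_1)^{\frac{q-p}{q-1}}(a_2 C_2)^{\frac{p-1}{q-1}}(q-1)\,C_{p_{s_\sharp}^*}^{p}\,|\Omega|^{\frac{p_{s_\sharp}^*-q}{p_{s_\sharp}^*}\cdot\frac{p-1}{q-1}}|\Omega|^{\frac{p_{s_\sharp}^*-1}{p_{s_\sharp}^*}\cdot\frac{q-p}{q-1}} < (q-p)^{\frac{q-p}{q-1}}(p-1)^{\frac{p-1}{q-1}}.
\end{equation*}
Raising both sides to the power $\frac{q-1}{p-1}$ and isolating the product $a_1^{(q-p)/(p-1)}a_2$ on the left, I obtain exactly the inequality on $a_1^{(q-p)/(p-1)}a_2$ hypothesized in the corollary. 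Hence the hypothesis of the corollary is equivalent to the assertion that $\lambda=1$ belongs to the admissible range in Theorem \ref{main result subcrit 0}.

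Second, Theorem \ref{main result subcrit 0} then directly delivers a weak solution $u_0\in X_p(\Omega)$ of $A_{\mu,p}u=g(x,u)$, together with the quantitative bound on $\|u_0\|_{X_p(\Omega)}$; plugging $\lambda=1$ into the norm estimate given there produces precisely the bound
$$
\|u_0\|_{X_p(\Omega)}< \left(\frac{(q-1)\,a_2\,C_2\,C_{p_{s_\sharp}^*}^{q}\,|\Omega|^{(p_{s_\sharp}^*-q)/p_{s_\sharp}^*}}{p-1}\right)^{\frac{1}{p-q}}
$$
claimed in the corollary. There is essentially no obstacle in the argument; the only care needed is bookkeeping of the rational exponents $(q-p)/(q-1)$, $(p-1)/(q-1)$ and $(q-1)/(p-1)$ and of the two $|\Omega|$ factors when rearranging the inequality, which is purely algebraic.
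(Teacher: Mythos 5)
Your proposal is correct and matches the paper's approach exactly: the paper itself states this corollary as the $\lambda=1$ special case of Theorem \ref{main result subcrit 0} and does not give further details, while you have carried out the routine algebraic verification that setting $\lambda=1$ in the admissible range of that theorem and raising to the power $\frac{q-1}{p-1}$ yields precisely the stated smallness condition on $a_1^{(q-p)/(p-1)}a_2$, and that the norm bound follows by substituting $\lambda=1$.
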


Clearly, if $g$ in \eqref{cond on g 0} is a pure power term or satisfies the Ambrosetti-Rabinowitz-type condition, then the standard machinery of the Mountain Pass Theorem can be applied to establish the existence of a nontrivial solution to problem \eqref{problem 0.1}. In such cases, the associated energy functional satisfies the Palais-Smale (PS) condition, allowing variational methods to be used to obtain critical points. That is the purpose of our next result where  we consider the Brezis-Nirenberg-type problem
\begin{equation}\label{problem 0.2}
 \left\{\begin{array}{cc}
    A_{\mu, p} u  =\lambda |u|^{q-2}u +|u|^{p_{s_\sharp}^{*}-2} u \text { in } \Omega,  \\
    u  = 0  \text { in } \mathbb{R}^{N} \backslash \Omega,
\end{array}\right.
\end{equation}
where $\lambda$ is a real positive parameter and $1<p<q<p_{s_\sharp}^{*}$. The exponent $p_{s_{\sharp}}^{*}$ is the fractional critical exponent defined in \eqref{critical exponent}.

The main result for the problem \eqref{problem 0.2} is stated in the following theorem.

\begin{thm}\label{main result 0.2}
    Let $ \Omega$ be a bounded subset of $\mathbb{R}^N$. Let $\mu=\mu^{+}-\mu^{-}$with $\mu^{+}$and $\mu^{-}$satisfying \eqref{measure 1}-\eqref{measure 3} and $s_{\sharp}$ be as in \eqref{measure 4}. Assume $1<p<q<p_{s_{\sharp}}^*$ and $s_{\sharp}p<N$, where $p_{s_{\sharp}}^{*}:=\frac{N p}{N-s_{\sharp}p}$. Then, there exists $\lambda^*>0$ such that problem \eqref{problem 0.2} admits at least one nontrivial solution for all $\lambda \geq \lambda^*$, provided that $\kappa \in [0, \kappa_0]$ for a sufficiently small $\kappa_0$ depending on $N$, $\Omega,$ $p$, $\mu,$ $\lambda$ and $s_\sharp$.
\end{thm}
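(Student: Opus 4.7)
The plan is to apply the classical Mountain Pass Theorem of Ambrosetti--Rabinowitz to the energy functional associated with \eqref{problem 0.2}, namely
\begin{equation*}
I_\lambda(u) := \frac{1}{p}\|u\|_{X_p(\Omega)}^p - \frac{1}{p}\int_{[0,\bar s]}[u]_{s,p}^p\, d\mu^-(s) - \frac{\lambda}{q}\int_\Omega |u|^q\, dx - \frac{1}{p_{s_\sharp}^*}\int_\Omega |u|^{p_{s_\sharp}^*}\, dx,
\end{equation*}
whose critical points are precisely the weak solutions of \eqref{problem 0.2}. The first step is to verify the mountain-pass geometry. Using \eqref{measure 3} combined with the Gagliardo-norm comparison argument employed in Proposition \ref{main prop on wlsc}, the smallness of $\kappa$ gives
\begin{equation*}
\|u\|_{X_p(\Omega)}^p - \int_{[0,\bar s]}[u]_{s,p}^p\, d\mu^-(s) \geq (1 - C\kappa)\|u\|_{X_p(\Omega)}^p
\end{equation*}
for some $C = C(N,\Omega,p,\mu)$. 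Since $p < q < p_{s_\sharp}^*$ and the embedding $X_p(\Omega) \hookrightarrow L^r(\Omega)$ is continuous for every $r \in [1, p_{s_\sharp}^*]$, one obtains $I_\lambda(u) \geq \alpha > 0$ on a sphere $\|u\|_{X_p(\Omega)} = \rho$, while $I_\lambda(tv) \to -\infty$ as $t \to +\infty$ for any fixed nontrivial $v \in X_p(\Omega)$, because $p_{s_\sharp}^* > p$.

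The second step is a local Palais--Smale condition. A $(PS)_c$ sequence $(u_n)$ is bounded in $X_p(\Omega)$ through the standard interpolation identity $\theta I_\lambda(u_n) - \langle I_\lambda'(u_n), u_n \rangle$ with a suitable $\theta \in (q, p_{s_\sharp}^*)$, combined with the absorption of $\mu^-$ by $\mu^+$; a weak limit $u$ solves \eqref{problem 0.2}. Applying the Brezis--Lieb lemma to the $L^{p_{s_\sharp}^*}$ term and to each seminorm $[\,\cdot\,]_{s,p}^p$, together with the compact embedding $X_p(\Omega) \subset \subset L^q(\Omega)$ coming from $q < p_{s_\sharp}^*$, the question reduces to the remainder $w_n := u_n - u$: either $w_n \to 0$ strongly, or
\begin{equation*}
c \geq \frac{s_\sharp}{N}\bigl(\mu^+([s_\sharp,1])\bigr)^{N/(s_\sharp p)} S_{s_\sharp,p}^{N/(s_\sharp p)} =: c^*,
\end{equation*}
where $S_{s_\sharp,p}$ is the sharp constant in the fractional Sobolev inequality of order $s_\sharp$. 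Hence $I_\lambda$ satisfies $(PS)_c$ for every $c < c^*$.

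The third and most delicate step is to ensure that the mountain-pass level $c_\lambda$ lies strictly below $c^*$ when $\lambda$ is large. Rather than working with Talenti-type extremals, whose sharp form for the superposition operator is not available, I would fix an arbitrary nontrivial $v \in X_p(\Omega)$ and analyse the scalar function
\begin{equation*}
\phi_\lambda(t) := \frac{t^p}{p}\Bigl(\|v\|_{X_p(\Omega)}^p - \int_{[0,\bar s]}[v]_{s,p}^p\, d\mu^-(s)\Bigr) - \frac{\lambda t^q}{q}\int_\Omega |v|^q\, dx - \frac{t^{p_{s_\sharp}^*}}{p_{s_\sharp}^*}\int_\Omega |v|^{p_{s_\sharp}^*}\, dx.
\end{equation*}
A direct optimisation shows that its maximiser $t_\lambda$ satisfies $t_\lambda = O\bigl(\lambda^{-1/(q-p)}\bigr)$ as $\lambda \to \infty$, so that $\max_{t \geq 0}\phi_\lambda(t) = O\bigl(\lambda^{-p/(q-p)}\bigr) \to 0$. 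Picking $T > 0$ so that $I_\lambda(Tv) < 0$ and combining with this estimate, the mountain-pass path $t \mapsto tTv$, $t \in [0,1]$, yields $c_\lambda \leq \max_t \phi_\lambda < c^*$ for every $\lambda \geq \lambda^*$ with $\lambda^*$ chosen sufficiently large, depending on $N$, $\Omega$, $p$, $\mu$ and $s_\sharp$.

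Putting the three steps together, the Mountain Pass Theorem produces a critical point $u_\lambda$ at level $c_\lambda \in [\alpha, c^*)$, necessarily nontrivial since $c_\lambda > 0 = I_\lambda(0)$, establishing the theorem. The hardest part I anticipate is the clean identification of the threshold $c^*$ in the second step: the superposition structure of $A_{\mu,p}$ requires isolating the sharp fractional Sobolev constant at the dominant scale $s_\sharp$ while simultaneously controlling the contributions coming from all $s < s_\sharp$, both those with a positive weight in $\mu^+$ and the negative ones supported by $\mu^-$. The hypothesis \eqref{measure 3} with the smallness requirement $\kappa \in [0, \kappa_0]$ is precisely what makes this absorption possible, and the proof will genuinely rely on this quantitative trade-off.
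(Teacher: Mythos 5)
Your proposal follows exactly the same three-step mountain-pass strategy as the paper's proof (Lemmas \ref{MP geometry}, \ref{decay of minmax}, \ref{PS condition lem}): mountain-pass geometry via the $\kappa$-absorption of $\mu^-$ into $\mu^+$, a local Palais--Smale condition for levels below a Sobolev threshold $c^*$, and the decay $c_\lambda \to 0$ as $\lambda \to \infty$ obtained by analysing the path $t \mapsto tv$ for a fixed $v$, which you supplement with the explicit rate $c_\lambda = O(\lambda^{-p/(q-p)})$. The only small imprecisions are that the Ambrosetti--Rabinowitz parameter should lie in $(p,q]$ rather than $(q, p_{s_\sharp}^*)$ so that every coefficient in $\theta I_\lambda(u_n) - \langle I_\lambda'(u_n), u_n\rangle$ is nonnegative (your range leaves the $L^q$-term with a negative sign and would require an additional interpolation step to absorb it), and that your formula for $c^*$ only gives a lower bound for the paper's $\frac{s_\sharp}{N}\big((1-\theta_0)\mathcal{S}(p)\big)^{N/(s_\sharp p)}$, since it omits the domain-dependent constant of Lemma \ref{Sobolev emb} and the $\kappa$-dependent absorption factor --- neither of which endangers the conclusion, because any fixed positive threshold suffices once $c_\lambda \to 0$.
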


As mentioned earlier, the proof of Theorem \ref{main result 0.2} relies on the Mountain-Pass Lemma of Ambrosetti and Rabinowitz \cite{AR: 1973}. Specifically, we first demonstrate that the energy functional associated with problem \eqref{problem 0.2} satisfies the Mountain-Pass geometry and that the corresponding minimax levels fulfil an appropriate asymptotic property. The key step is to establish the Palais-Smale condition for the energy functional. As stated in Lemma \ref{PS condition lem}, it turns out that, similar to the classical case, the Palais-Smale condition holds only within a specific range of minimax levels, bounded by an appropriate constant. This limitation arises due to the lack of compactness in the Sobolev embedding, which is discussed in detail in Section \ref{pre}.

We emphasize that our main results (Theorem \ref{main result wlsc 0}, Theorem \ref{thmnonmain}, Theorem \ref{main result subcrit 0} and Theorem \ref{main result 0.2}) are not only new in their wide generality given the structure of the superposition operator $A_{\mu, p}$, but they also possess many
specific cases which are also new. Let us discuss some interesting specific cases.

\begin{itemize}
    \item If $\mu=\delta_1,$ then the operator $A_{\mu, p}$ is reduced to the usual $p$-Laplacian. For this situation, Theorem \ref{main result wlsc 0} (see also Theorem \ref{thm1.1dupl}) and Theorem \ref{thmnonmain} were considered in \cite{FF: 2015}. We deal with this case in Subsection \ref{subsec2.1}.
\\
    \item By taking $\mu=\delta_s$ for $s \in (0, 1),$ the operator $A_{\mu, p}$ turn out to be the fractional $p$-Laplacian $(-\Delta_p)^s.$ In this case,  Theorem \ref{main result wlsc 0} and Theorem \ref{thmnonmain} recover the main results of \cite{MB: 2017}. This will be presented in Subsection \ref{subsec2.2}.
\\
    \item  If $\mu=\delta_1+\delta_s,$ for some $s \in (0, 1)$ then $A_{\mu, p}$ boils down to the mixed local and nonlocal operator $-\Delta_p+(-\Delta_p)^s.$  Theorem \ref{main result wlsc 0}, Theorem \ref{thmnonmain}, Theorem \ref{main result subcrit 0} and Theorem \ref{main result 0.2} are new in this situation even for $p=2.$ For more details we refer to Subsection \ref{subsec2.3}.
    \\
    \item  The operator corresponding to the choice of measure $\mu:=\delta_{s_1}+\delta_{s_2}$ for some $s_1, s_2 \in (0, 1)$ is the superposition of two nonlocal operators with different order $(-\Delta_p)^{s_1}+(-\Delta_p)^{s_2}.$ Our results are also novel in this situation. This is a particular case of a more general situation of a convergent series of Dirac measures considered below. 
    \\
    \item Now, we consider the two cases of convergent series of Dirac measures. Let $1 \geq s_0>s_1>s_2> \ldots \geq 0.$ Consider the operator corresponding to the measure
    $$\mu:= \sum_{k=0}^{+\infty} a_k \delta_{s_k} \quad \text{with} \quad \sum_{k=0}^{+\infty} a_k \in (0, +\infty).$$
    provided that 
    \begin{itemize}
        \item[(a)] either $a_0>0$ and $a_k \geq 0$ for all $k \geq 1;$
    \item[(b)] or there exist $\kappa \geq 0$ and $\bar{k} \in \mathbb{N}$ such that 
    \begin{equation*} 
       \sum_{k=\bar{k}+1}^{+\infty} a_k \leq \kappa \sum_{k=0}^{\bar{k}} a_k,
    \end{equation*}
    and $a_k>0$   for all $k \in \{0, 1, \ldots, \bar{k}\}.$
     
    \end{itemize}
    In this case, the corresponding operator is $A_{p, \mu}:= \sum_{k=0}^{+\infty} a_k (-\Delta_p)^{s_k}.$ 
    Our main results
    Theorem \ref{main result wlsc 0}, Theorem \ref{main result subcrit 0}, Theorem \ref{main result 0.2} and Theorem \ref{thmnonmain} are new in the literature. We also mention here that we need to consider an extra assumption to discuss  Theorem \ref{thmnonmain} in the specific case mentioned in item (b) in view of condition \eqref{extracondi}. We deal with these results in Subsection \ref{subsec2.4}.
    \\
    \item Let $s \in (0, 1)$ and $\mu:=\delta_1-\alpha \delta_s$ with $\alpha \geq 0$ sufficiently small. The corresponding operator is $-\Delta_p-\alpha(-\Delta_p)^s.$  This represents a case where the measure $\mu$ changes sign, and the second term of the operator has the ``wrong sign." This scenario is also novel in the context of our main results. It is worth noting that smallness of $\alpha$ is only required to discuss the specific case of Theorem \ref{thmnonmain}  in which we consider ``wrong sign" operator $ -\Delta_{p}+ (-\Delta_{p})^{s_1}-\alpha (-\Delta_{p})^{s_2}$
for some $1>s_1>s_2>0$ corresponding to the measure $\mu=\delta_1+\delta_{s_1}-\alpha\delta_{s_2}.$ 
This case is discussed in Subsection \ref{subsec2.5}.
\\
    \item  When the measure $\mu$ is absolutely continuous with respect to the Lebesgue measure $ds$ on $[0, 1],$ we can consider the continuous superposition of operator. Specifically, by choosing $d\mu:= f(s) ds$ we obtain  the continuous superposition of the fractional $p$-Laplacian, which has the following form:
    $$ \int_0^1 f(s) (-\Delta_p)^s u\, ds,$$
    where $f$ is a measurable and non-identically zero function.  Our results are also new in this setting.   We will treat this case in Subsection \ref{subsec2.6}. 
\end{itemize}

It goes without saying that particular cases are not limited to the aforementioned examples. Indeed, one can also consider more complex operators arising from the superposition of two different types of operators.
$$ \sum_{k=1}^M a_k (-\Delta_p)^{s_k} u(x)+ \int_0^1 f(s) (-\Delta_p)^s u(x)\,ds.$$

We conclude this section with an overview of the paper’s organization. In the next section, we introduce the fundamentals of fractional Sobolev spaces necessary for studying the nonlocal problem associated with the operator $A_{p, \mu}.$ Section \ref{sec3} presents and proves key technical results, including the lower semicontinuity result, which is crucial for further analysis. In Section \ref{sec4}, we provide the proof of Theorem \ref{main result wlsc 0}. Section \ref{sec5} explores the implications and applications of Theorem \ref{main result wlsc 0}, including Theorem \ref{thmnonmain} along with their proofs.
In Section \ref{sec6}, we consider the nonlocal subcritical problem associated with the nonlinear superposition operators $A_{p, \mu}$ of mixed order and establish the proof of Theorem \ref{main result subcrit 0}. Section \ref{sec7} is dedicated to the Brezis-Nirenberg-type problem, where we apply the Mountain Pass Theorem and present the proof of Theorem \ref{main result 2}. Finally, the last section discusses specific instances of the main results established in this paper.
%%%%%%%%%%%%%%%%%%%%%%%%%%%%%%%%%%%%%%%%%%%%%%%%%%%%

%%%%%%%%%%%%%%%%%%%%%%%%%%%%%%%%%%%%%%%%%%%%%%%
\section{Preliminaries: Fractional Sobolev spaces and their embeddings} \label{pre}
%%%%%%%%%%%%%%%%%%%%%%%%%%%%%%%%%%%%%%%%%%%%%%%

The main objective of this section is to establish the functional analytic framework that focuses on the appropriate notions of fractional Sobolev spaces and their various properties, which are essential for studying our problem. For further details on this material, we refer to \cite{DPSV, DPSV1, DPSV2}. Additionally, we emphasize that some of the results discussed in this section are, to the best of our knowledge, new in the literature and we provide their proofs.

To begin with, for $s \in [0, 1]$ we define
\begin{equation*}
    [u]_{s, p}:= \begin{cases}\|u\|_{L^{p}\left(\mathbb{R}^{N}\right)} & \text { if } s=0, \\ \left(C_{N, s, p} \iint_{\mathbb{R}^{2 N}} \frac{|u(x)-u(y)|^{p}}{|x-y|^{N+s p}} d x d y\right)^{1 / p} & \text { if } s \in(0,1), \\ \|\nabla u\|_{L^{p}\left(\mathbb{R}^{N}\right)} & \text { if } s=1.\end{cases}
\end{equation*}
Here, $$C_{N,p,s}:=\frac{\frac{sp}{2}(1-s)2^{2s-1}}{\pi^{\frac{N-1}{2}}}\frac{\Gamma(\frac{N+ps}{2})}{\Gamma(\frac{p+1}{2})\Gamma(2-s)} $$ is the normalizing constant. 
Thanks to the normalizing constant $C_{N, s, p},$ we have that
$$
\lim _{s \searrow 0}[u]_{s, p}=[u]_{0, p} \quad \text { and } \quad \lim _{s \nearrow 1}[u]_{s, p}=[u]_{1, p}.
$$

We use the space $X_{p}(\Omega)$ defined as the set of measurable functions $u: \mathbb{R}^{N} \rightarrow \mathbb{R}$ such that $u=0$ in $\mathbb{R}^{N} \backslash \Omega$ for which the following norm is finite 
\begin{equation}\label{norm on Xp}
  \|u\|_{X_p(\Omega)} = \rho_{p}(u):=\left(\int_{[0,1]}[u]_{s, p}^{p} d \mu^{+}(s)\right)^{1 / p}< +\infty.
\end{equation}
This space was introduced in \cite{DPSV}. We recall the following results from \cite{DPSV}, which are essential for our study, without proof. 
\begin{lem}\cite[Lemma 4.2]{DPSV}\label{Uniform convexity}
    $X_{p}(\Omega)$ is a uniformly convex Banach space, for $1<p < \infty$.
\end{lem}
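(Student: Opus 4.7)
The plan is to realize $X_p(\Omega)$ as (isometrically isomorphic to) a closed linear subspace of an $L^p$-space over an auxiliary measure space. Since $L^p(Y,\nu)$ is uniformly convex for every $\sigma$-finite measure space $(Y,\nu)$ and every $1<p<\infty$ (Clarkson's inequalities), and since uniform convexity is inherited by closed linear subspaces, this will immediately yield the statement.

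To build the target space, I would split $[0,1]=\{0\}\cup(0,1)\cup\{1\}$ and set
$$
Y := \bigl(\{0\}\times\mathbb{R}^N\bigr)\;\sqcup\;\bigl((0,1)\times\mathbb{R}^{2N}\bigr)\;\sqcup\;\bigl(\{1\}\times\mathbb{R}^N\times\{1,\dots,N\}\bigr),
$$
equipping these three pieces respectively with $\mu^+(\{0\})\,dx$, with $C_{N,s,p}|x-y|^{-(N+sp)}\,dx\,dy\,d\mu^+(s)$, and with $\mu^+(\{1\})\,dx\,\#(di)$, giving a $\sigma$-finite measure $\nu$ on $Y$. I would then define the linear map $T\colon X_p(\Omega)\to L^p(Y,\nu)$ by
$$
T(u)(\xi):=\begin{cases}u(x),&\xi=(0,x),\\ u(x)-u(y),&\xi=(s,x,y),\ s\in(0,1),\\ \partial_i u(x),&\xi=(1,x,i).\end{cases}
$$
A Fubini computation together with the three cases in the definition of $[u]_{s,p}$ gives
$$
\|T(u)\|_{L^p(Y,\nu)}^p=\int_{[0,1]}[u]_{s,p}^p\,d\mu^+(s)=\|u\|_{X_p(\Omega)}^p,
$$
so $T$ is a linear isometry.

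To finish the argument it suffices to check that $X_p(\Omega)$ is complete, since the closedness of $T(X_p(\Omega))$ inside $L^p(Y,\nu)$ then follows from $T$ being an isometry onto its image. Given a Cauchy sequence $\{u_n\}\subset X_p(\Omega)$, the hypothesis \eqref{measure 1} guarantees that $\mu^+([\bar{s},1])>0$, so for some fractional exponent $s_*\geq\bar s$ the seminorms $[u_n]_{s_*,p}$ control the standard fractional Gagliardo seminorm with positive weight; combining this with the vanishing of $u_n$ outside $\Omega$ and the classical Sobolev embedding gives a subsequence converging a.e.\ and in $L^p_{\mathrm{loc}}$ to some limit $u$ vanishing outside $\Omega$. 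Passing to the limit in $T(u_n)$ and using Fatou through each of the three pieces of $Y$ then identifies $T(u)$ as the $L^p(Y,\nu)$-limit of $T(u_n)$, which gives $u\in X_p(\Omega)$ and $u_n\to u$ in $X_p(\Omega)$. Hence $T(X_p(\Omega))$ is closed in $L^p(Y,\nu)$, and uniform convexity is transferred from $L^p(Y,\nu)$ to $X_p(\Omega)$.

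The main obstacle will be handling the three qualitatively different forms of $[u]_{s,p}$ (pointwise value for $s=0$, double integral for $s\in(0,1)$, gradient for $s=1$) inside a single $L^p$-model, and justifying the completeness step when $\mu^+$ may be singular or supported only on $(0,1)$; the decomposition of $Y$ above is designed precisely to absorb all three regimes, while condition \eqref{measure 1} supplies the compactness needed to extract an a.e.\ limit of a Cauchy sequence.
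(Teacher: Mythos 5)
Your approach is the same one the paper relies on (and that is used in the cited source \cite[Lemma 4.2]{DPSV}): realize $X_p(\Omega)$ as an isometric copy of a linear subspace of an $L^p$ space, then invoke Clarkson's inequalities together with the fact that uniform convexity passes to closed subspaces. The paper actually spells out this very isometry explicitly in the proof of the separability lemma immediately below the statement, mapping $u\mapsto (a_u,b_u,c_u)\in L^p(\Omega)\times L^p(\mathbb{R}^N\times\mathbb{R}^N)\times L^p(\Omega)$; your auxiliary disjoint-union space $Y$ is isometrically the same construction, just packaged as a single $\sigma$-finite measure space rather than a triple product.

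One step in your completeness argument needs tightening. Almost-everywhere convergence of $u_n$ to $u$ gives a.e.\ convergence of $T(u_n)$ to $T(u)$ on the $\{0\}$ and $(0,1)$ pieces of $Y$, but it does \emph{not} give a.e.\ convergence of $\partial_i u_n$ to $\partial_i u$ on the $\{1\}$ piece, so ``using Fatou through each of the three pieces'' is not quite licit there. If $\mu^+(\{1\})>0$, observe instead that $[u_n-u_m]_{1,p}^p\le \mu^+(\{1\})^{-1}\|u_n-u_m\|^p_{X_p(\Omega)}$, so $\{\nabla u_n\}$ is Cauchy in $L^p(\Omega;\mathbb{R}^N)$ and converges to some $v\in L^p$, which one then identifies with $\nabla u$ in the distributional sense via $u_n\to u$ in $L^p_{\mathrm{loc}}$; this also supplies the correct passage to the limit on that piece. (If $\mu^+(\{1\})=0$ the piece is null and there is nothing to do.) With that repair the proof is correct.
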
 
\begin{lem}
      $X_{p}(\Omega)$ is a separable Banach space for $1 \leq p < \infty$.
\end{lem}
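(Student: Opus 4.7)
The plan is to handle the two ingredients---completeness and separability---in one sweep by realizing $X_{p}(\Omega)$ as the image of a linear isometry into an $L^{p}$-space over a $\sigma$-finite measure space. This viewpoint yields separability for every $1\leq p<\infty$ at once and also supplies the Banach property in the only range not already covered by Lemma~\ref{Uniform convexity}, namely $p=1$.

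To build the isometric embedding, I first decompose the positive measure as $\mu^{+}=\alpha\,\delta_{0}+\beta\,\delta_{1}+\tilde\mu^{+}$, where $\alpha:=\mu^{+}(\{0\})$, $\beta:=\mu^{+}(\{1\})$ and $\tilde\mu^{+}$ is the restriction of $\mu^{+}$ to the open interval $(0,1)$. On $Z:=\mathbb{R}^{2N}\times(0,1)$ I introduce the Borel measure
$$d\omega(x,y,s):=\frac{C_{N,s,p}}{|x-y|^{N+sp}}\,dx\,dy\,d\tilde\mu^{+}(s),$$
which is $\sigma$-finite because $Z$ is exhausted by sets of the form $\{|x|,|y|\leq n,\ |x-y|\geq 1/n,\ s\in[1/n,1-1/n]\}$, on each of which the kernel is bounded and $\tilde\mu^{+}$ has finite mass. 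Set
$$Y:=L^{p}(\mathbb{R}^{N},\alpha\,dx)\,\oplus\, L^{p}(Z,d\omega)\,\oplus\, L^{p}(\mathbb{R}^{N},\beta\,dx;\mathbb{R}^{N})$$
with the canonical $\ell^{p}$ product norm, and define $T:X_{p}(\Omega)\to Y$ by $T(u):=\bigl(u,\ (x,y,s)\mapsto u(x)-u(y),\ \nabla u\bigr)$. Fubini's theorem together with the definition \eqref{norm on Xp} of $\|\cdot\|_{X_{p}(\Omega)}$ directly gives $\|T(u)\|_{Y}^{p}=\|u\|_{X_{p}(\Omega)}^{p}$, so $T$ is a linear isometry.

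Separability then follows immediately: each factor of $Y$ is an $L^{p}$-space over a $\sigma$-finite measure space, hence separable for every $1\leq p<\infty$, so $Y$ is separable and the same is therefore true of the subspace $T(X_{p}(\Omega))$; transferring back through the isometry $T$ yields separability of $X_{p}(\Omega)$.

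For the Banach property at $p=1$ I take a Cauchy sequence $\{u_{n}\}\subset X_{1}(\Omega)$. Assumption \eqref{measure 1} guarantees that at least one Gagliardo seminorm $[\,\cdot\,]_{s,1}$ with $s\in[\bar s,1]$ contributes genuinely to $\|\cdot\|_{X_{1}(\Omega)}$, and a standard Sobolev-type embedding of the corresponding $W^{s,1}(\Omega)$ into $L^{1}(\Omega)$ on the bounded set $\Omega$ shows that $\{u_{n}\}$ is Cauchy in $L^{1}(\Omega)$; hence $u_{n}\to u$ in $L^{1}(\Omega)$ and, along a subsequence, a.e. Two successive applications of Fatou's lemma---first on $\mathbb{R}^{2N}$, fixing $s$, to bound $[u_{n}-u]_{s,1}$ by $\liminf_{m}[u_{n}-u_{m}]_{s,1}$, and then on $[0,1]$ with respect to $d\mu^{+}(s)$---give both $u\in X_{1}(\Omega)$ and $u_{n}\to u$ in $X_{1}(\Omega)$. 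The main obstacle I anticipate is not conceptual but notational: one has to decide, depending on where $\mu^{+}$ concentrates (at $\{1\}$, within $(0,1)$, or at $\{0\}$), which Sobolev-type embedding produces the $L^{1}(\Omega)$-Cauchy property, and stitch the cases together; none of the subcases introduces a new idea, but the bookkeeping is the only nontrivial part of the argument.
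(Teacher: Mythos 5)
Your isometric-embedding argument for separability is essentially the paper's: realize $X_p(\Omega)$ isometrically inside a product of $L^p$-spaces and invoke separability of the target. The only cosmetic difference is that you keep the fractional parameter $s$ as an explicit integration variable in the middle factor $L^p(\mathbb{R}^{2N}\times(0,1),d\omega)$, whereas the paper collapses the $s$-integration into a kernel weight on $L^p(\mathbb{R}^N\times\mathbb{R}^N)$; both give the same isometry and the same conclusion.

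You also attempt to supply completeness at $p=1$, which the paper's proof does not in fact address (its preceding uniform convexity Lemma~\ref{Uniform convexity} covers only $1<p<\infty$, and the isometry alone does not yield closedness of the image without already knowing completeness). This is a reasonable thing to want, but your outline has a real gap. The ``Fatou first on $\mathbb{R}^{2N}$, fixing $s$'' step works only for $s\in(0,1)$: at the endpoint $s=1$ one has $[u]_{1,1}=\|\nabla u\|_{L^1}$, and a.e.\ convergence of $u_n$ does not give a.e.\ convergence of $\nabla u_n$, so if $\mu^+(\{1\})>0$ that contribution needs a separate weak-derivative argument before any Fatou can be applied. Additionally, the phrase ``at least one Gagliardo seminorm contributes genuinely'' is imprecise when $\mu^+$ is atomless on $[\bar s,1]$; the correct mechanism is the uniform Sobolev estimate (Lemma~\ref{Sobolev emb}, quoted from \cite{DPSV}), which bounds $[u]_{\bar s,p}$ by a constant times $\|u\|_{X_p(\Omega)}$ --- but that result is stated for $p\in(1,N)$, so its validity at $p=1$ would need to be verified rather than assumed. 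You flag these as bookkeeping, but they are exactly where the $p=1$ completeness argument is nontrivial; as written it remains an outline rather than a proof.
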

\begin{proof} To prove the separability of $X_p(\Omega),$ we define a map $T:X_p(\Omega) \rightarrow L^p(\Omega) \times L^p(\mathbb{R}^N \times \mathbb{R}^N) \times L^p(\Omega)$ by 
$$T(u):= (a_u, b_u, c_u),$$
where 
$a_u(x):= \mu^+(\{1\})\nabla u(x),$ $c_u(x):= \mu^+(\{0\}) u(x),$ and 
$$b_u(x, y):= u(x)-u(y) \left( \int_{(0, 1)} \frac{1}{|x-y|^{n+ps}} d\mu^+(s) \right)^{\frac{1}{p}}.$$

    Thus, for every $u \in X_p(\Omega),$ we have 
    $$\|Tu \|_{L^p(\Omega) \times L^p(\mathbb{R}^N \times \mathbb{R}^N) \times L^p(\Omega)} = \|u\|_{X_p(\Omega)}.$$
    Therefore, $T$ is an isometry that maps $X_p(\Omega)$ to a closed subspace of $L^p(\Omega) \times L^p(\mathbb{R}^N \times \mathbb{R}^N) \times L^p(\Omega).$ This immediately implies the separability of the space $X_p(\Omega).$
\end{proof}

In this setting, the most important fact is that in view of conditions \eqref{measure 1}–\eqref{measure 3}, the negative components of the signed measure $\mu$ can be ``reabsorbed" into the positive ones. 
\begin{lem}\label{reabsorb} \cite[Proposition 4.1]{DPSV}
    Let $p \in(1, N)$ and assume that \eqref{measure 2} and \eqref{measure 3} hold.
Then, there exists $c_{0}=c_{0}(N, \Omega, p)>0$ such that, for any $u \in X_{p}(\Omega)$, we have
\begin{equation*}
    \int_{[0, \bar{s}]}[u]_{s, p}^{p} d \mu^{-}(s) \leq c_{0} \kappa \int_{[\bar{s}, 1]}[u]_{s, p}^{p} d \mu(s)=c_{0} \kappa \int_{[\bar{s}, 1]}[u]_{s, p}^{p} d \mu^+(s).
\end{equation*}
\end{lem}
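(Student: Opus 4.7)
\medskip
\noindent\textbf{Proof plan.} The strategy is to decouple the lemma into two independent pieces: a \emph{deterministic} (pointwise in $s$) comparison of Gagliardo-type seminorms for functions supported in a bounded set, and a \emph{measure-theoretic} step that transports this pointwise comparison into the desired inequality via \eqref{measure 2}--\eqref{measure 3}. Concretely, the key intermediate claim I would establish first is that there exists a constant $C_{1}=C_{1}(N,\Omega,p)>0$ such that for every $u\in X_p(\Omega)$ and every pair $(s,s')$ with $0\le s\le s'\le 1$,
\begin{equation}\label{plan:pointwise}
    [u]_{s,p}^{p}\;\le\;C_{1}\,[u]_{s',p}^{p}.
\end{equation}

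Granting \eqref{plan:pointwise}, the rest is a short averaging argument. I would fix $s\in[0,\bar s]$ and $s'\in[\bar s,1]$, integrate \eqref{plan:pointwise} against $d\mu^{-}(s)$ on the left and the normalized measure $d\mu^{+}(s')/\mu^{+}([\bar s,1])$ on the right (which is well-defined by \eqref{measure 1}), obtaining
\begin{equation*}
    \int_{[0,\bar s]}[u]_{s,p}^{p}\,d\mu^{-}(s)
    \;\le\; C_{1}\,\frac{\mu^{-}([0,\bar s])}{\mu^{+}([\bar s,1])}\int_{[\bar s,1]}[u]_{s',p}^{p}\,d\mu^{+}(s').
\end{equation*}
The ratio on the right is bounded by $\kappa$ via \eqref{measure 3}, giving the stated inequality with $c_0=C_1$. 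The final equality in the statement is then immediate from \eqref{measure 2}, since $\mu^{-}$ vanishes on $[\bar s,1]$ and hence $\mu^{+}$ and $\mu$ agree there.

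The real work is therefore in establishing \eqref{plan:pointwise}, and this is where I expect the main technical difficulty to lie. The natural approach is to split the Gagliardo double integral defining $[u]_{s,p}^{p}$ into the ``near'' region $\{|x-y|\le 1\}$ and the ``far'' region $\{|x-y|>1\}$. On the near region, the elementary bound $|x-y|^{-(N+sp)}\le|x-y|^{-(N+s'p)}$ (valid because $s\le s'$ and $|x-y|\le 1$), combined with the uniform comparability of the normalizing constants $C_{N,s,p}$ and $C_{N,s',p}$ on compact subsets of $[0,1]$, yields a bound by $C(N,p)[u]_{s',p}^{p}$. On the far region, one uses that $u$ vanishes outside the bounded set $\Omega$: the integrand is supported where at least one of $x,y$ lies in $\Omega$, and after estimating $|u(x)-u(y)|^p\lesssim |u(x)|^p+|u(y)|^p$, a direct polar-coordinates computation shows that the tail integral $\int_{|x-y|>1}|x-y|^{-(N+sp)}\,dy$ equals $|S^{N-1}|/(sp)$. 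Crucially, the explicit factor $s(1-s)$ appearing in $C_{N,s,p}$ (see \eqref{def c}) absorbs precisely this $1/(sp)$ singularity, producing a bound of the form $C(N,p)\|u\|_{L^p(\Omega)}^p$ that is uniform in $s\in[0,1]$.

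To close the argument for \eqref{plan:pointwise}, I would invoke a uniform fractional Poincar\'e inequality: for $u$ supported in the bounded set $\Omega$ and $s'\in[\bar s,1]$,
\begin{equation*}
    \|u\|_{L^p(\Omega)}^{p}\;\le\;C(\Omega,N,p,\bar s)\,[u]_{s',p}^{p},
\end{equation*}
which in turn follows from the continuous embedding $X_p(\Omega)\hookrightarrow L^p(\Omega)$ combined with a uniformity-in-$s'$ analysis of the Gagliardo constant (again using the $s(1-s)$ normalization). The hardest part is precisely verifying this uniformity; once it is in place, all estimates chain together and \eqref{plan:pointwise} follows with a constant depending only on $N$, $\Omega$, $p$, and $\bar s$. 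As the statement of the lemma absorbs $\bar s$ into the data of the problem, this completes the proof sketch.
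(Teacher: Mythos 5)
Your overall plan is sound, and the ``measure-theoretic step'' -- integrating a pointwise-in-$(s,s')$ comparison against $d\mu^{-}(s)$ on the left and the normalized $d\mu^{+}(s')/\mu^{+}([\bar s,1])$ on the right, then invoking \eqref{measure 3} -- is exactly the right averaging argument. Two remarks: first, you invoke \eqref{measure 1} for well-definedness of the normalization, which the lemma's hypotheses do not include; this is harmless because if $\mu^{+}([\bar s,1])=0$ then \eqref{measure 3} forces $\mu^{-}([0,\bar s])=0$ and the inequality is trivially $0\le 0$, but it should be stated. Second, and more importantly, your ``key intermediate claim'' \eqref{plan:pointwise} \emph{is precisely} Lemma~\ref{Sobolev emb} (Theorem~3.2 of \cite{DPSV}), which the paper already records as an available black box; invoking it directly closes the argument in two lines, with $c_0=C^p$.

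The genuine gap is in your attempted re-proof of that pointwise comparison. On the near region you bound the kernel $|x-y|^{-(N+sp)}\le|x-y|^{-(N+s'p)}$ and then divide by $C_{N,s',p}$ to recognize $[u]_{s',p}^p$, which produces the factor $C_{N,s,p}/C_{N,s',p}$. But $C_{N,s',p}$ carries the $(1-s')$ factor from \eqref{def c} and therefore vanishes as $s'\nearrow 1$, while $C_{N,s,p}$ stays bounded away from zero for $s\in[\bar s,\bar s]$ (the generic interior range); so this ratio is \emph{not} uniformly bounded for $s'$ near $1$, and the claimed constant $C(N,p)$ does not exist by this route. Relatedly, the endpoints $s=0$ and $s'=1$ are not Gagliardo integrals at all (they are $\|u\|_{L^p}$ and $\|\nabla u\|_{L^p}$, respectively), so a kernel-comparison argument cannot reach them; handling the full range $[0,1]$ uniformly, including these endpoints and the degeneration as $s'\to 1$, is the actual content of Theorem~3.2 of \cite{DPSV} and requires a different estimate near $s'=1$ (e.g.\ a Bourgain--Brezis--Mironescu-type control by $\|\nabla u\|_{L^p}$). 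Since the paper already supplies that result as Lemma~\ref{Sobolev emb}, you should cite it rather than rebuild it; with that substitution your proof is correct.
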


Furthermore, the following uniform Sobolev embedding result plays a crucial role in establishing embeddings for the space $X_p(\Omega)$.
\begin{lem}\label{Sobolev emb} \cite[Theorem 3.2]{DPSV}
    Let $\Omega$ be a bounded, open subset of $\mathbb{R}^{N}$ and $p \in(1, N)$.
Then, there exists $C=C(N, \Omega, p)>0$ such that, for every $s_1, s_2 \in[0,1]$ with $s_1 \leq s_2$ and every measurable function $u: \mathbb{R}^{N} \rightarrow \mathbb{R}$ with $u=0$ a.e. in $\mathbb{R}^{N} \backslash \Omega$, one has that
$$
[u]_{s_1, p} \leq C[u]_{s_2, p}.
$$
\end{lem}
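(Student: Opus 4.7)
My plan is to prove the comparison $[u]_{s_1,p} \leq C[u]_{s_2,p}$ by splitting the Gagliardo integral and reducing to a fractional Poincar\'e--Sobolev inequality on the bounded domain $\Omega$. First, I would dispose of the trivial cases $s_1 = s_2$ and $s_2 = 0$ (which forces $s_1 = 0$), so that one may assume $0 \leq s_1 < s_2 \leq 1$. I would also treat the endpoint case $s_1 = 0$ separately by directly invoking the fractional Poincar\'e--Sobolev inequality $\|u\|_{L^p(\Omega)} \leq C\,[u]_{s_2,p}$, available for $u$ supported in bounded $\Omega$, with constant uniform in $s_2 \in (0,1]$ thanks to the normalization $C_{N,p,s}$.

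For the generic case $0 < s_1 < s_2 < 1$, I would split the integrand defining $[u]_{s_1,p}^p$ at $|x-y|=R$, where $R := \mathrm{diam}(\Omega)+1$. On the near region $\{|x-y|\leq R\}$, the pointwise bound
\begin{equation*}
|x-y|^{-(N+s_1 p)} \leq R^{(s_2-s_1)p}\,|x-y|^{-(N+s_2 p)},
\end{equation*}
valid since $|x-y|\leq R$ and $s_1\leq s_2$, yields an upper bound by $(C_{N,p,s_1}/C_{N,p,s_2})\,R^{(s_2-s_1)p}\,[u]_{s_2,p}^p$. On the far region $\{|x-y|>R\}$, the elementary inequality $|u(x)-u(y)|^p\leq 2^{p-1}(|u(x)|^p+|u(y)|^p)$ combined with the compact support of $u$ in $\Omega$ gives
\begin{equation*}
C_{N,p,s_1}\iint_{|x-y|>R}\frac{|u(x)-u(y)|^p}{|x-y|^{N+s_1 p}}\,dx\,dy \leq \frac{2^{p}\,|S^{N-1}|\,C_{N,p,s_1}}{s_1\, p\, R^{s_1 p}}\,\|u\|_{L^p(\Omega)}^p.
\end{equation*}
The explicit factor $s(1-s)$ in $C_{N,p,s}$ renders $C_{N,p,s_1}/(s_1\,p\,R^{s_1 p})$ uniformly bounded in $s_1\in(0,1)$, and the residual $L^p$-norm is then absorbed by $[u]_{s_2,p}$ via the fractional Poincar\'e inequality established in the first step. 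The remaining boundary case $s_2 = 1$ would be treated separately by invoking the classical uniform estimate $[u]_{s_1,p}^p \leq C(N,p)\bigl(\|u\|_{L^p}^p + \|\nabla u\|_{L^p}^p\bigr)$ valid for compactly supported $W^{1,p}$ functions, combined with the classical Poincar\'e inequality to eliminate the $L^p$ term.

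The main obstacle I foresee is the uniform control of the ratio $C_{N,p,s_1}/C_{N,p,s_2}$ appearing in the near-region estimate. Although $C_{N,p,s}$ vanishes like $s(1-s)$ at both endpoints, which is precisely what produces the continuous transitions $[u]_{s,p}\to\|u\|_{L^p}$ as $s\to 0^+$ and $[u]_{s,p}\to\|\nabla u\|_{L^p}$ as $s\to 1^-$, the quotient $C_{N,p,s_1}/C_{N,p,s_2}$ is not uniformly bounded on $\{s_1 \leq s_2\}\subseteq[0,1]^{2}$ when $s_2$ approaches $1$ while $s_1$ remains bounded away from $1$. Circumventing this requires, in that regime, replacing the near-region bound by an estimate expressed directly in terms of $\|\nabla u\|_{L^p}$ rather than $[u]_{s_2,p}$, which is the reason the separate treatment of $s_2 = 1$ (and its neighborhood) becomes essential. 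A careful bookkeeping of the $s$-dependent normalizing factors throughout is what ultimately allows one to absorb all such divergences into a single constant $C$ depending only on $N$, $\Omega$, and $p$, as claimed.
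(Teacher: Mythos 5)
Since the paper only recalls this lemma from \cite[Theorem~3.2]{DPSV} without proof, there is no internal argument to compare against; judged on its own, your proposal has a genuine gap exactly at the point you flag. The near-region estimate produces the factor $C_{N,p,s_1}/C_{N,p,s_2}$, which diverges as $s_2\to 1^-$ with $s_1$ bounded away from $1$, and your proposed remedy does not repair this. You suggest treating ``a neighborhood of $s_2=1$'' by rerouting the near-region bound through $\|\nabla u\|_{L^p}$; but to recover the desired conclusion $[u]_{s_1,p}\leq C[u]_{s_2,p}$ you would then need $\|\nabla u\|_{L^p}\leq C[u]_{s_2,p}$, which is false for every fixed $s_2<1$ (take any $u\in W^{s_2,p}_0(\Omega)\setminus W^{1,p}_0(\Omega)$). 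Nor does optimizing the splitting radius $\delta$ help: minimizing
\begin{equation*}
\frac{C_{N,p,s_1}}{C_{N,p,s_2}}\,\delta^{(s_2-s_1)p}+\frac{c\,C_{N,p,s_1}}{s_1}\,\delta^{-s_1 p}
\end{equation*}
over $\delta>0$ produces a value proportional to $\bigl(C_{N,p,s_1}/C_{N,p,s_2}\bigr)^{s_1/s_2}$, which still blows up as $s_2\to 1^-$. The underlying reason is that the pointwise kernel comparison $|x-y|^{-(N+s_1p)}\leq R^{(s_2-s_1)p}|x-y|^{-(N+s_2p)}$ is maximally lossy precisely where the $s_2$-integral concentrates, namely near the diagonal, and this cannot be compensated by any choice of cutoff.

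The way to close the gap — and arguably the intended one, since the paper invokes it elsewhere in the proof of Proposition~\ref{main prop on wlsc} — is the Brezis--Mironescu uniform Gagliardo--Nirenberg interpolation \cite[Theorem~1]{BM2018}: for $0<s_1<s_2\leq 1$ and $u$ compactly supported,
\begin{equation*}
[u]_{s_1,p}\;\leq\; C\,\|u\|_{L^p}^{\,1-s_1/s_2}\,[u]_{s_2,p}^{\,s_1/s_2},
\end{equation*}
with $C=C(N,p)$ independent of $s_1,s_2$, precisely because of the $s(1-s)$ factor built into $C_{N,p,s}$. Combined with the uniform fractional Poincar\'e inequality $\|u\|_{L^p(\Omega)}\leq C_P\,[u]_{s_2,p}$ (which you rightly use for the far region and the $s_1=0$ endpoint), this yields the claim in one stroke, with no case split on $s_2$. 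Your far-region estimate, including the uniform control of $C_{N,p,s_1}/s_1$ near $s_1=0$, and your endpoint treatments of $s_1=0$ and $s_2=1$ are correct; it is only the generic near-region step that must be replaced by the interpolation argument.
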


Similar to \cite[Proposition 2.4]{DPSV2}, we can prove the following embedding results for the space $X_{p}(\Omega)$.

\begin{prop}\label{compact and cont embedding}
Assume that \eqref{measure 1}–\eqref{measure 3} hold. Let $s_{\sharp} \in[\bar{s}, 1]$ be as in \eqref{measure 4}.
Then, there exists a positive constant $\bar{c}=\bar{c}\left(N, \Omega, s_{\sharp}, p\right)$ such that, for any $u \in X_{p}(\Omega)$,
\begin{equation} \label{Xpusialemb}
[u]_{s_{\sharp},p} \leq \bar{c}\left(\int_{[0,1]}[u]_{s,p}^{p} \mathrm{~d} \mu^{+}(s)\right)^{\frac{1}{p}} .
\end{equation}
 
In particular, the embedding 
\begin{equation}\label{embedding}
    X_p(\Omega) \hookrightarrow L^{r}(\Omega)
\end{equation}
is continuous for any $r \in[1,p_{s_{\sharp}}^{*}]$ and compact for any $r \in[1,p_{s_{\sharp}}^{*})$.

\end{prop}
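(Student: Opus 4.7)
The proof reduces to establishing the single quantitative comparison \eqref{Xpusialemb}; once this is in hand, both the continuous and compact embeddings follow from the well-known scalar-exponent Sobolev theory at the distinguished level $s_\sharp$. The plan is to exploit the uniform seminorm comparison of Lemma \ref{Sobolev emb} together with the mass positivity \eqref{measure 4} to control $[u]_{s_\sharp, p}$ by the full $X_p$-norm.

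First, I would prove \eqref{Xpusialemb}. For any $s \in [s_\sharp, 1]$, applying Lemma \ref{Sobolev emb} with $s_1 = s_\sharp$ and $s_2 = s$ yields a constant $C = C(N, \Omega, p)$, independent of $s$, such that $[u]_{s_\sharp, p} \leq C\, [u]_{s, p}$. Raising to the $p$-th power and integrating over $[s_\sharp, 1]$ against $\mu^+$ gives
\[
[u]_{s_\sharp, p}^{p}\, \mu^+([s_\sharp, 1]) = \int_{[s_\sharp, 1]} [u]_{s_\sharp, p}^{p}\, d\mu^+(s) \leq C^{p} \int_{[s_\sharp, 1]} [u]_{s, p}^{p}\, d\mu^+(s) \leq C^{p} \int_{[0, 1]} [u]_{s, p}^{p}\, d\mu^+(s).
\]
Since $\mu^+([s_\sharp, 1]) > 0$ by \eqref{measure 4}, dividing produces \eqref{Xpusialemb} with $\bar{c} := C\, \mu^+([s_\sharp, 1])^{-1/p}$.

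Next, for the continuous embedding, I would combine \eqref{Xpusialemb} with the classical Sobolev inequality at the single exponent $s_\sharp$: there exists $C_{p_{s_\sharp}^*} > 0$ such that for every measurable $u$ vanishing outside $\Omega$ one has $\|u\|_{L^{p_{s_\sharp}^*}(\Omega)} \leq C_{p_{s_\sharp}^*}\, [u]_{s_\sharp, p}$. This is the classical Sobolev inequality when $s_\sharp = 1$ and the fractional Sobolev inequality (e.g. Di Nezza--Palatucci--Valdinoci) when $s_\sharp \in (0, 1)$. Chaining gives $\|u\|_{L^{p_{s_\sharp}^*}(\Omega)} \leq C_{p_{s_\sharp}^*}\, \bar{c}\, \|u\|_{X_p(\Omega)}$, and since $\Omega$ is bounded, H\"older's inequality extends this continuously to $L^r(\Omega)$ for every $r \in [1, p_{s_\sharp}^*]$.

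Finally, for compactness when $r \in [1, p_{s_\sharp}^*)$, any bounded sequence $(u_n) \subset X_p(\Omega)$ has $[u_n]_{s_\sharp, p}$ uniformly bounded by \eqref{Xpusialemb}, and since each $u_n$ vanishes outside the bounded set $\Omega$, the classical Rellich--Kondrachov theorem (for $s_\sharp = 1$) or its fractional counterpart (for $s_\sharp \in (0,1)$) produces a subsequence converging strongly in $L^r(\Omega)$. I do not anticipate a serious technical obstacle: everything is a reduction to known scalar-exponent theory, and the only genuinely new ingredient is the observation that the mass of $\mu^+$ at the top of the spectrum, combined with the universal comparison of Lemma \ref{Sobolev emb}, already absorbs contributions from every lower exponent in the superposition.
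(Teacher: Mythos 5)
Your proof is correct and coincides with the paper's own argument: the key inequality \eqref{Xpusialemb} is obtained exactly the same way (apply Lemma \ref{Sobolev emb} with $s_1=s_\sharp$, integrate in $s$ against $\mu^+$ over $[s_\sharp,1]$, divide by $\mu^+([s_\sharp,1])>0$), and the paper then simply cites the standard fractional Sobolev embedding theorems of \cite{NPV12} for the continuity and compactness that you spell out.
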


\begin{proof}
   By Lemma \ref{Sobolev emb}, used here with $s_{1}:=s_{\sharp}$ and $s_{2}:=s$, for all $s \in[s_{\sharp}, 1]$ we have that $[u]_{s_{\sharp}, p} \leq$ $c(N, \Omega, p)[u]_{s, p}$.

As a result,
$$
\mu^{+}\left(\left[s_{\sharp}, 1\right]\right)[u]_{s_{\sharp}, p}^{p} \leq c^{p}(N, \Omega, p) \int_{[s_{\sharp}, 1]}[u]_{s, p}^{p} \mathrm{~d} \mu^{+}(s) \leq c^{p}(N, \Omega, p) \int_{[0,1]}[u]_{s, p}^{p} \mathrm{~d} \mu^{+}(s) .
$$
This and \eqref{measure 4} yield the desired embedding \eqref{Xpusialemb}. Now, the continuous and compact embeddings results are a consequence of standard embedding theorems of fractional Sobolev spaces given in \cite{NPV12}.
\end{proof}

We now recall the following weak convergence results from \cite{DPSV}: This will be helpful to show the Palais-Smale condition for the functional $\mathcal{I}_{\lambda}.$

\begin{lem} \label{weak convergence}  \cite[ Lemma 5.8]{DPSV}
    Let $u_{n}$ be a bounded sequence in $X_{p}(\Omega)$.
Then, there exists $u \in X_{p}(\Omega)$ such that
\begin{gather}
\begin{split}
    \lim _{n \rightarrow+\infty} \int_{[0,1]}\left(\iint_{\mathbb{R}^{2 N}} \frac{\left|u_{n}(x)-u_{n}(y)\right|^{p-2}\left(u_{n}(x)-u_{n}(y)\right)(v(x)-v(y))}{|x-y|^{N+s p}} d x d y\right) d \mu^{ \pm}(s) \\
\quad=\int_{[0,1]}\left(\iint_{\mathbb{R}^{2 N}} \frac{|u(x)-u(y)|^{p-2}(u(x)-u(y))(v(x)-v(y))}{|x-y|^{N+s p}} d x d y\right) d \mu^{ \pm}(s)
\end{split}
\end{gather}
for any $v \in X_{p}(\Omega)$.
\end{lem}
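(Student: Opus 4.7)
The plan is to combine the reflexivity of $X_p(\Omega)$ with the isometric representation used in the separability argument, and then to invoke the classical fact that $L^q$-bounded sequences which converge almost everywhere also converge weakly in $L^q$ whenever $q>1$. First, by Lemma \ref{Uniform convexity} the space $X_p(\Omega)$ is uniformly convex and hence reflexive, so after passing to a subsequence there is $u \in X_p(\Omega)$ with $u_n \rightharpoonup u$ in $X_p(\Omega)$. Since $p<p^{*}_{s_\sharp}$, the compact embedding in Proposition \ref{compact and cont embedding} yields, along a further subsequence, $u_n \to u$ almost everywhere in $\mathbb{R}^N$; a Fubini-type argument then gives $u_n(x)\to u(x)$ and $u_n(y)\to u(y)$ for a.e.\ $(x,y)\in\mathbb{R}^{2N}$.

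Next, mimicking the separability proof, introduce the weighted-difference-quotient operator
\begin{equation*}
T(u)(x,y,s) := C_{N,s,p}^{1/p}\,\frac{u(x)-u(y)}{|x-y|^{(N+sp)/p}}\qquad\text{for }s\in(0,1),
\end{equation*}
suitably extended at $s=0$ (by $u$ itself) and at $s=1$ (by $\nabla u$). By construction $T$ is an isometry from $X_p(\Omega)$ into $L^p$ of the product measure $dx\,dy\,d\mu^{+}(s)$ on $\mathbb{R}^{2N}\times[0,1]$; thanks to Lemma \ref{reabsorb} the analogous $L^p$-boundedness holds with $d\mu^{+}$ replaced by $d\mu^{-}$ (restricted to $[0,\bar s]$). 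Therefore $u_n\rightharpoonup u$ in $X_p(\Omega)$ translates into $Tu_n \rightharpoonup Tu$ weakly in both the $\mu^{+}$- and the $\mu^{-}$-weighted $L^p$ product spaces, while the a.e.\ convergence above gives $Tu_n \to Tu$ almost everywhere on $\mathbb{R}^{2N}\times[0,1]$.

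The nonlinear quantity $|Tu_n|^{p-2}Tu_n$ has $L^{p'}$-norm equal to $\|u_n\|_{X_p(\Omega)}^{p-1}$ in the $\mu^{+}$-case (and bounded by a constant multiple of this in the $\mu^{-}$-case, via Lemma \ref{reabsorb}), so it is uniformly bounded in $L^{p'}$; moreover, it converges almost everywhere to $|Tu|^{p-2}Tu$. Since $p>1$, the standard fact that a.e.-convergent, $L^{p'}$-bounded sequences converge weakly in $L^{p'}$ yields $|Tu_n|^{p-2}Tu_n \rightharpoonup |Tu|^{p-2}Tu$ in each $L^{p'}$. Pairing this weak convergence against $Tv\in L^p$ (well-defined for every $v\in X_p(\Omega)$, again by Lemma \ref{reabsorb} in the $\mu^{-}$ case) produces precisely the two integral identities claimed in the lemma, one for $d\mu^{+}$ and one for $d\mu^{-}$.

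The principal technical obstacle will be the weak-convergence fact just used: a.e.\ convergence plus a uniform $L^{p'}$ bound must force weak convergence in $L^{p'}$. This is classical on $\sigma$-finite measure spaces (via Egorov together with a tightness argument and pointwise identification of the limit), but has to be verified here on a somewhat non-standard product measure space. A secondary point is the clean inclusion of the endpoints $s=0$ and $s=1$ in the isometry $T$, which is handled exactly as in the separability proof by means of a three-component decomposition. Lemma \ref{reabsorb} is the single structural ingredient that makes the whole argument run symmetrically for $\mu^{+}$ and $\mu^{-}$.
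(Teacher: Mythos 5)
Your a.e.-plus-$L^{p'}$-bound mechanism is sound for the Gagliardo kernel at all fractional orders $s\in[0,1)$: the Fubini argument indeed yields $u_n(x)-u_n(y)\to u(x)-u(y)$ for a.e.\ $(x,y)\in\mathbb{R}^{2N}$, and from there the weak $L^{p'}$ convergence of $|Tu_n|^{p-2}Tu_n$ (on the $\sigma$-finite product space) follows. The gap is at the endpoint $s=1$. There $T(u_n)=\nabla u_n$, and neither the weak convergence $u_n\rightharpoonup u$ in $X_p(\Omega)$ nor the compact embedding $X_p(\Omega)\hookrightarrow\hookrightarrow L^r(\Omega)$ forces $\nabla u_n\to\nabla u$ almost everywhere. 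A rapidly oscillating example such as $u_n(x)=n^{-1}\phi(nx_1)\eta(x)$, with $\phi$ $1$-periodic and chosen so that $\int_0^1|\phi'|^{p-2}\phi'\neq 0$, converges weakly to zero in $W^{1,p}_0$, yet $|\nabla u_n|^{p-2}\nabla u_n$ converges $L^{p'}$-weakly to a nonzero field, so the $s=1$ contribution to the pairing does not approach $\int_\Omega|\nabla u|^{p-2}\nabla u\cdot\nabla v\,dx$ with $u$ the weak limit. Hence the step in which you assert that $|Tu_n|^{p-2}Tu_n\to|Tu|^{p-2}Tu$ almost everywhere — and therefore weakly in $L^{p'}$ — fails on the atom $\{s=1\}$ whenever $\mu^+(\{1\})>0$, which the framework explicitly allows (e.g.\ $\mu=\delta_1$ or $\mu=\delta_1+\delta_s$).

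This is not a presentational gap but a missing idea: when $\mu^+$ charges the local endpoint, the function $u$ in the lemma cannot, in general, be identified via pointwise convergence as you do; one instead has to pass through the weak-$*$ limit $T\in X_p(\Omega)^*$ of the bounded sequence $A_{\mu^+,p}u_n$ and then solve $A_{\mu^+,p}u=T$, which rests on the surjectivity/strict monotonicity of $A_{\mu^+,p}$ (Browder--Minty), not on a.e.\ convergence. Your argument, built entirely around the isometry $T$ and the a.e.-convergence criterion, has no access to this. Note, on the other hand, that for the $\mu^-$ half of the statement there is no such problem: condition \eqref{measure 2} restricts $\mu^-$ to $[0,\bar{s})\subset[0,1)$, so $\mu^-(\{1\})=0$ and the troublesome slice never appears — your argument does cover that case. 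A correct proof should either restrict to $\mu^+(\{1\})=0$ or supply the monotone-operator argument for the gradient piece.
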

The following result is a Brezis-Lieb-type lemma in our setting. 
\begin{lem}\label{B-L lemma} \cite[ Lemma 5.9]{DPSV}
   Let $u_{n}$ be a bounded sequence in $X_{p}(\Omega)$. Suppose that $u_{n}$ converges to some $u$ a.e. in $\mathbb{R}^{N}$ as $n \rightarrow+\infty$. Then we have
\begin{equation}
\int_{[0,1]}[u]_{s, p}^{p} d \mu^{ \pm}(s)=\lim _{n \rightarrow+\infty}\left(\int_{[0,1]}\left[u_{n}\right]_{s, p}^{p} d \mu^{ \pm}(s)-\int_{[0,1]}\left[u_{n}-u\right]_{s, p}^{p} d \mu^{ \pm}(s)\right).
\end{equation}
\end{lem}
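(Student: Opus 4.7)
The plan is to apply the classical Brezis--Lieb lemma \emph{fiberwise in} $s$ and then transfer the convergence under the integral against $\mu^{\pm}$ via a quantitative form of Brezis--Lieb combined with dominated convergence. Throughout, I set
\[
\Psi_n(s):=\bigl|\,[u_n]_{s,p}^{p}-[u_n-u]_{s,p}^{p}-[u]_{s,p}^{p}\,\bigr|,
\]
so that the claim is equivalent to $\int_{[0,1]}\Psi_n(s)\,d\mu^{\pm}(s)\to 0$.

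First, I would check that $u\in X_{p}(\Omega)$, which is needed to make the right-hand side meaningful. Fatou's lemma applied to the Gagliardo kernel (and to $|u_n|^{p}$, $|\nabla u_n|^{p}$ at the endpoints $s=0,1$) gives $[u]_{s,p}^{p}\le\liminf_n [u_n]_{s,p}^{p}$ for every $s\in[0,1]$; a second application of Fatou in the variable $s$ against $\mu^{+}$ then yields $\|u\|_{X_p(\Omega)}^{p}\le\liminf_n\|u_n\|_{X_p(\Omega)}^{p}<\infty$. The $\mu^{-}$-integrability of $[u]_{s,p}^{p}$ then follows from Lemma~\ref{reabsorb}.

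Second, for each fixed $s$, the classical Brezis--Lieb lemma --- applied to $(x,y)\mapsto(u_n(x)-u_n(y))/|x-y|^{(N+sp)/p}$ on $\mathbb{R}^{2N}$ for $s\in(0,1)$, to $u_n$ for $s=0$, and to $\nabla u_n$ for $s=1$ --- yields $\Psi_n(s)\to 0$ pointwise in $s\in[0,1]$. The obstacle in invoking dominated convergence directly is that we lack a pointwise-in-$s$ bound on $[u_n]_{s,p}^{p}$: only the integral against $\mu^{\pm}$ is controlled. The standard workaround is the quantitative inequality
\[
\bigl|\,|a+b|^{p}-|a|^{p}-|b|^{p}\,\bigr|\le \varepsilon|a|^{p}+C_\varepsilon|b|^{p}\qquad(a,b\in\mathbb{R},\ \varepsilon>0),
\]
applied pointwise with $a=(u_n-u)(x)-(u_n-u)(y)$ and $b=u(x)-u(y)$ (with the obvious modifications at $s=0,1$) and then integrated against the measure $C_{N,s,p}|x-y|^{-N-sp}\,dx\,dy$. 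This produces, for every $\varepsilon>0$,
\[
\Psi_n(s)\le \varepsilon[u_n-u]_{s,p}^{p}+C_\varepsilon[u]_{s,p}^{p},
\]
so that the truncated quantity $\bigl(\Psi_n(s)-\varepsilon[u_n-u]_{s,p}^{p}\bigr)_{+}$ is pointwise dominated by the $\mu^{\pm}$-integrable function $C_\varepsilon[u]_{s,p}^{p}$ (by Step~1).

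Third, since these truncated quantities vanish pointwise in $s$ and are dominated in $n$, dominated convergence gives
\[
\int_{[0,1]}\bigl(\Psi_n(s)-\varepsilon[u_n-u]_{s,p}^{p}\bigr)_{+}\,d\mu^{\pm}(s)\longrightarrow 0.
\]
Combined with the uniform bound $\int_{[0,1]}[u_n-u]_{s,p}^{p}\,d\mu^{\pm}(s)\le 2^{p-1}\bigl(\|u_n\|_{X_p}^{p}+\|u\|_{X_p}^{p}\bigr)\le M$ (invoking Lemma~\ref{reabsorb} again for the $\mu^{-}$ case), this yields $\limsup_n\int_{[0,1]}\Psi_n\,d\mu^{\pm}\le\varepsilon M$, and letting $\varepsilon\to 0$ closes the argument. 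The main obstacle, as highlighted, is the absence of an \emph{a priori} pointwise bound in $s$ on $[u_n]_{s,p}^{p}$; the quantitative Brezis--Lieb inequality is precisely the tool that converts the integrated control into a pointwise-dominated truncation amenable to DCT.
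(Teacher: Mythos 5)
Your overall architecture (Brezis--Lieb fiberwise in $s$, then a quantitative truncation to pass the convergence through the $\mu^{\pm}$-integral) is a sound alternative to what is presumably the shortest route, namely a single application of the classical Brezis--Lieb lemma on the product space $[0,1]\times\mathbb{R}^{2N}$ with the measure $d\mu^{\pm}(s)\otimes C_{N,s,p}|x-y|^{-N-sp}\,dx\,dy$ (with the endpoints $s=0,1$ handled separately if $\mu^{\pm}$ has atoms there): the hypothesis $\sup_n\|u_n\|_{X_p(\Omega)}<\infty$ is exactly an $L^{p}$-bound on that product space, the $\mu^{-}$-part being controlled via Lemma~\ref{reabsorb}, so the conclusion drops out immediately with no need for fiberwise estimates.

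The problem is Step~2. You invoke the \emph{classical} Brezis--Lieb lemma at each fixed $s$ to conclude $\Psi_n(s)\to 0$, but that lemma requires $\sup_n[u_n]_{s,p}^{p}<\infty$ for the particular $s$ under consideration, and --- as you yourself note one sentence later --- the $X_p(\Omega)$-bound does \emph{not} supply a pointwise-in-$s$ bound; only the integral against $\mu^{\pm}$ is controlled. The same obstruction that forces you to introduce the truncation in Step~3 already blocks the fiberwise classical Brezis--Lieb in Step~2, so as written Step~2 is unjustified. There are two clean fixes. (a) Establish $\sup_n[u_n]_{s,p}^{p}<\infty$ for $\mu^{\pm}$-a.e.~$s$: combine Lemma~\ref{Sobolev emb} (with $s_1:=s$ and $s_2:=s'$ for any $s'\in(s,1]$ with $\mu^{+}([s',1])>0$) with the argument in the proof of Proposition~\ref{compact and cont embedding}; this yields a bound depending on $s$ but uniform in $n$, valid for every $s$ strictly below the essential supremum of $\mu^{+}$ (and at the essential supremum itself whenever $\mu^{+}$ has an atom there), hence for $\mu^{+}$-a.e.~$s$, and for $\mu^{-}$-a.e.~$s$ since $\operatorname{supp}\mu^{-}\subset[0,\bar{s}]\subset[0,s_{\sharp}]$. (b) Alternatively, skip the fiberwise classical Brezis--Lieb altogether and apply your quantitative inequality one layer deeper: at fixed $s$, the integrand $\bigl(W_n-\varepsilon|a_n|^{p}\bigr)_{+}$ (where $W_n$ is the pointwise Brezis--Lieb defect and $a_n=(u_n-u)(x)-(u_n-u)(y)$) converges to $0$ a.e.\ in $(x,y)$ and is dominated by $C_\varepsilon|u(x)-u(y)|^{p}$; DCT against $C_{N,s,p}|x-y|^{-N-sp}\,dx\,dy$ then gives $\bigl(\Psi_n(s)-\varepsilon[u_n-u]_{s,p}^{p}\bigr)_{+}\to 0$ for each $s$ with $[u]_{s,p}<\infty$, i.e.~for $\mu^{\pm}$-a.e.~$s$, which is precisely what your Step~3 DCT needs --- without ever requiring $\sup_n[u_n]_{s,p}^{p}<\infty$. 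Either fix closes the gap; as it stands, though, Step~2 silently assumes a fiberwise $L^{p}$-bound that you have not established. Finally, note that for $\mu^{+}(\{1\})>0$ the argument (in any of its forms) additionally needs $\nabla u_n\to\nabla u$ a.e., which is not part of the stated hypotheses and should be either justified or flagged.
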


The following lemma asserts that the normalizing constant $C_{N,p,s}$ as in \eqref{def c} is uniformly bounded for any $s\in(0,1)$.
\begin{lem}\label{lm 2.4}
    Let us consider
    \begin{equation}\label{2.10}
        \bar{\Gamma}_{N,p}:=\max_{s\in[0,1]}\frac{\Gamma(\frac{N+ps}{2})}{\Gamma(2-s)}~\text{and}~\underline{\Gamma}_{N,p}:=\min_{s\in[0,1]}\frac{\Gamma(\frac{N+ps}{2})}{\Gamma(2-s)}.
    \end{equation}
    Then, we have
    \begin{equation}\label{2.11}
        C_{N,p,s}\leq \bar{c}_{N,p}:=\frac{p\bar{\Gamma}_{N,p}}{\pi^{\frac{N-1}{2}}\Gamma(\frac{p+1}{2})}\in(0,\infty).
    \end{equation}
    In addition, let $\delta\in(0,\bar{s})$, where $\bar{s}$ is as in \eqref{measure 1}. If $s\in[\bar{s},1-\delta]$, then we have
     \begin{equation}\label{2.12}
        C_{N,p,s}\geq \underline{c}_{N,p}\delta,~\text{where}~\underline{c}_{N,p}:=\frac{p\bar{s}\delta\underline{\Gamma}_{N,p}}{4\pi^{\frac{N-1}{2}}\Gamma(\frac{p+1}{2})}\in(0,\infty).
    \end{equation}
\end{lem}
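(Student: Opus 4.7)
The plan is to factor $C_{N,p,s}$ into its elementary part and its Gamma-quotient part, then bound each piece separately on the relevant range of $s$. Writing
$$C_{N,p,s}=\frac{s(1-s)2^{2s-1}}{2}\cdot\frac{p}{\pi^{(N-1)/2}\Gamma((p+1)/2)}\cdot R(s),\qquad R(s):=\frac{\Gamma((N+ps)/2)}{\Gamma(2-s)},$$
the whole task reduces to controlling the scalar prefactor $\tfrac12 s(1-s)2^{2s-1}$ together with $R(s)$.

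Before the bounds, I would quickly verify that the quantities $\bar{\Gamma}_{N,p}$ and $\underline{\Gamma}_{N,p}$ in \eqref{2.10} are well-defined and strictly positive. For $s\in[0,1]$ we have $(N+ps)/2\geq N/2>0$ and $2-s\geq 1>0$, so both Gamma values are finite and strictly positive; continuity of $s\mapsto R(s)$ on the compact interval $[0,1]$ then produces finite maximum $\bar{\Gamma}_{N,p}$ and strictly positive minimum $\underline{\Gamma}_{N,p}$.

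For the upper bound \eqref{2.11}, I would use the trivial estimates $s(1-s)\leq 1/4$ and $2^{2s-1}\leq 2$ for $s\in[0,1]$, which give $\tfrac12 s(1-s)2^{2s-1}\leq 1/4\leq 1$. Combined with $R(s)\leq\bar{\Gamma}_{N,p}$ this yields
$$C_{N,p,s}\leq\frac{p\,\bar{\Gamma}_{N,p}}{\pi^{(N-1)/2}\Gamma((p+1)/2)}=\bar{c}_{N,p}.$$
For the lower bound \eqref{2.12} on $s\in[\bar{s},1-\delta]$, I would note $s\geq\bar{s}$, $1-s\geq\delta$, and $2^{2s-1}\geq 2^{-1}=\tfrac12$ (since $s\geq 0$); together with $R(s)\geq\underline{\Gamma}_{N,p}$ this gives
$$C_{N,p,s}\geq\frac{\bar{s}\,\delta}{4}\cdot\frac{p\,\underline{\Gamma}_{N,p}}{\pi^{(N-1)/2}\Gamma((p+1)/2)}=\frac{p\,\bar{s}\,\delta\,\underline{\Gamma}_{N,p}}{4\pi^{(N-1)/2}\Gamma((p+1)/2)},$$
which is the desired inequality (yielding $\underline{c}_{N,p}\,\delta$ as stated, with a $\delta$-free constant).

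There is no genuine obstacle here: the lemma is a compactness/continuity observation for the Gamma-quotient combined with crude elementary bounds on the polynomial-exponential prefactor. The only point deserving care is to record that both $\bar{\Gamma}_{N,p}$ and $\underline{\Gamma}_{N,p}$ are attained and positive, so that the bounds depend only on $N$ and $p$ as advertised and hold uniformly in $s$ over the prescribed ranges.
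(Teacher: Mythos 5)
Your proof is correct and takes essentially the same route as the paper, which is terser but carries the same content: the paper merely records that $\Gamma$ is continuous and positive on the relevant compact intervals and declares the result ``immediate from the definition of $C_{N,p,s}$'', leaving to the reader exactly the elementary prefactor bounds you spell out, namely $s(1-s)\leq\tfrac14$ and $\tfrac12\leq 2^{2s-1}\leq 2$ for the upper estimate, and $s\geq\bar s$, $1-s\geq\delta$, $2^{2s-1}\geq\tfrac12$ for the lower one on $[\bar s,1-\delta]$. You have also, correctly, pinpointed a typo in \eqref{2.12}: your computation gives
\begin{equation*}
C_{N,p,s}\;\geq\;\frac{p\,\bar s\,\delta\,\underline{\Gamma}_{N,p}}{4\pi^{(N-1)/2}\Gamma\!\left(\tfrac{p+1}{2}\right)},
\end{equation*}
so the displayed bound $C_{N,p,s}\geq\underline{c}_{N,p}\,\delta$ should come with the $\delta$-free constant $\underline{c}_{N,p}=\frac{p\bar s\underline{\Gamma}_{N,p}}{4\pi^{(N-1)/2}\Gamma((p+1)/2)}$; the extra $\delta$ appearing inside the paper's definition of $\underline{c}_{N,p}$ is spurious. (As written the paper's claim is the weaker $C_{N,p,s}\geq\mathrm{const}\cdot\delta^2$, which is still true since $\delta<1$, but the way the lemma is used in \eqref{eq3.23} inside the proof of Lemma \ref{energycomparison} actually requires the $\delta$-free constant you derived in order to cancel cleanly against the $\bar\kappa\delta$ in \eqref{extracondi} and the $\underline{c}_{N,p}\delta$ coming from \eqref{eq3.21}.)
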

\begin{proof}
Since, Gamma function is continuous in $(0,\infty)$ and $\Gamma(t)>0$ for all $t>0$, we have
\begin{align*}
    0&<\min_{s\in[0,1]}\Gamma(2-s)\leq\max_{s\in[0,1]}\Gamma(2-s)<\infty~\text{and}\\
     0&<\min_{s\in[0,1]}\Gamma(\frac{N+ps}{2})\leq\max_{s\in[0,1]}\Gamma(\frac{N+ps}{2})<\infty
\end{align*}
Therefore, the proof of immediate from \eqref{2.11}, \eqref{2.12} and the definition of $C_{N, s, p}$.
\end{proof}
It is noteworthy to mention here that using  Lemma \ref{lm 2.4}, we can choose $\bar{\kappa}$ in \eqref{extracondi} as
\begin{equation}\label{2.13}
   \bar{\kappa}\in\left[0, \frac{\bar{s}\underline{\Gamma}_{N,p}}{4\bar{\Gamma}_{N,p}\max\{1, (2R)^p\}} \right). 
\end{equation}

\begin{lem}\label{lm 2.5}
    Let $\bar{s}$ be as in \eqref{measure 1} and $S\in[\bar{s},1]$. Then for any $\tau\in(0,2R)$, we have
    \begin{equation}\label{2.14}
        \min\{1,(2R)^{p\bar{s}}\}\mu^+([\bar{s},S))\int_{(0,\bar{s})}\frac{d\mu^-(s)}{\tau^{ps}} \leq  \max\{(2R)^{p\bar{s}}, (2R)^{p}\}\mu^-((0,\bar{s}))\int_{[\bar{s},S)}\frac{d\mu^+(s)}{\tau^{ps}}.
    \end{equation}
\end{lem}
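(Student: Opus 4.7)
The plan is to reduce the estimate to a pointwise bound on \(\tau^{p(s'-s)}\) via the factorization \(\tau^{-ps}=\tau^{-ps'}\tau^{p(s'-s)}\), applied with \(s\in(0,\bar s)\) and \(s'\in[\bar s,S)\), so that the exponent \(s'-s\) always lies in \((0,1)\). For each fixed \(\tau\in(0,2R)\) I would split into two cases: if \(\tau\le 1\), then \(\tau^{p(s'-s)}\le 1\) because the exponent is positive; if \(\tau\ge 1\), then \(\tau^{p(s'-s)}\le\tau^p\le(2R)^p\) because \(s'-s\le 1\). Combining these two bounds gives the pointwise inequality
\[
\tau^{-ps}\le\max\{1,(2R)^p\}\,\tau^{-ps'}\quad\text{for all }s\in(0,\bar s),\ s'\in[\bar s,S).
\]

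The second step is to integrate this pointwise inequality against the product measure \(d\mu^-(s)\otimes d\mu^+(s')\) on \((0,\bar s)\times[\bar s,S)\) and apply Fubini's theorem, which yields
\[
\mu^+\bigl([\bar s,S)\bigr)\int_{(0,\bar s)}\frac{d\mu^-(s)}{\tau^{ps}}\le\max\{1,(2R)^p\}\,\mu^-\bigl((0,\bar s)\bigr)\int_{[\bar s,S)}\frac{d\mu^+(s)}{\tau^{ps}}.
\]

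The final step is the elementary identity
\[
\min\{1,(2R)^{p\bar s}\}\cdot\max\{1,(2R)^p\}=\max\{(2R)^{p\bar s},(2R)^p\},
\]
which I would verify by distinguishing the regimes \(2R\ge 1\) (both sides equal \((2R)^p\), using \(p\bar s\le p\)) and \(2R\le 1\) (both sides equal \((2R)^{p\bar s}\), using the same monotonicity of \(r\mapsto(2R)^r\) with base \(\le 1\)). Multiplying the inequality from the second step by \(\min\{1,(2R)^{p\bar s}\}\) and invoking this identity delivers \eqref{2.14}. There is no genuine obstacle in this proof; the only care needed is the bookkeeping of the four two-term extrema appearing in the statement, which is exactly what the last identity isolates.
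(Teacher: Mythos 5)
Your proof is correct and follows essentially the same route as the paper: both hinge on the pointwise comparison of $\tau^{-ps}$ with $\tau^{-ps'}$ for $s<\bar s\le s'$ using $\tau<2R$, followed by a double integration (the paper integrates sequentially in $\theta$ then $s$, you package it as one Fubini step). The only cosmetic difference is that you isolate the constants into the identity $\min\{1,(2R)^{p\bar s}\}\cdot\max\{1,(2R)^p\}=\max\{(2R)^{p\bar s},(2R)^p\}$ at the end, whereas the paper carries the $(2R/\tau)^{p\cdot}$ factors through the integrals and then bounds $(2R)^{ps}$ from below and $(2R)^{p\theta}$ from above.
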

\begin{proof}
Let $s\in(0,\bar{s})$ and $\theta\in[\bar{s},S)$. By using $\tau\in(0,2R)$, we get 
\begin{equation}\label{2.15}
    \left(\frac{2R}{\tau}\right)^{p\theta}>\left(\frac{2R}{\tau}\right)^{ps}.
\end{equation}
Therefore, integrating \eqref{2.15} with respect to $\mu^+$ in $\theta\in[\bar{s},S)$, we deduce that
\begin{equation}\label{2.16}
    \int_{[\bar{s},S)}\left(\frac{2R}{\tau}\right)^{p\theta} d\mu^+(\theta) \geq \mu^+([\bar{s},S))\left(\frac{2R}{\tau}\right)^{ps}.
\end{equation}
Again, integrating \eqref{2.16} with respect to $\mu^-$ in $s\in(0,\bar{s})$, we conclude that
\begin{equation}\label{2.17}
    \mu^-((0,\bar{s}))\int_{[\bar{s},S)}\left(\frac{2R}{\tau}\right)^{p\theta} d\mu^+(\theta)\geq \mu^+([\bar{s},S)) \int_{(0,\bar{s})}\left(\frac{2R}{\tau}\right)^{ps} d\mu^-(s).
\end{equation}
Now, using the monotonicity of the exponential function, we have
\begin{equation}\label{2.18}
    (2R)^{ps}\geq \min\{1,(2R)^{p\bar{s}}\}~\text{and}~ (2R)^{p\theta}\leq\max\{(2R)^{p\bar{s}}, (2R)^{p}\},
\end{equation}
for all $s\in(0,\bar{s})$ and $\theta\in[\bar{s},S)$. Therefore, using \eqref{2.17} and \eqref{2.18}, we obtain \eqref{2.14}. This completes the proof.
\end{proof}

The next lemma states that under the assumption \eqref{extracondi}, the Gagliardo-type energy of $|u|$ is less than that of $u$. The case $p=2$ was considered in \cite{DPSV25}. 
\begin{lem}\label{energycomparison}
Let $\mu$ satisfy \eqref{measure 1} and \eqref{measure 2} for some $\overline{s}\in (0,1)$. Let $R>0$ be such that $\Omega\subset B_R$ and let $\delta\in (0, 1-\overline{s}]$. Assume that \eqref{extracondi} holds. Then, for any $u\in X_p(\Omega)$, we have
\begin{equation}\label{eq2.15} 
\int_{[0,1]} [|u|]^p_s d\mu(s) \leq \int_{[0,1]} [u]^p_s d\mu(s).
\end{equation}
\end{lem}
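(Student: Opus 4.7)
The plan is to recast \eqref{eq2.15} as a pointwise (in $(x,y)$) comparison of kernel integrals, and then apply Lemma \ref{lm 2.5} with $\tau=|x-y|$. Writing $\mu=\mu^+-\mu^-$, the target is equivalent to
\begin{equation*}
\int_{[0,1]} F(s)\,d\mu^-(s) \;\leq\; \int_{[0,1]} F(s)\,d\mu^+(s), \qquad F(s) := [u]_{s,p}^p - [|u|]_{s,p}^p.
\end{equation*}
I would first check that $F(s)\geq 0$ for every $s\in[0,1]$: for $s\in(0,1)$ this is the pointwise bound $\bigl||a|-|b|\bigr|^p\leq|a-b|^p$; $F(0)=0$ is trivial; and $F(1)=0$ follows from Stampacchia's lemma $|\nabla|u||=|\nabla u|$ a.e. Consequently the endpoints $s=0,1$ contribute nothing, and by \eqref{measure 2} the $\mu^-$ integral is effectively over $(0,\bar{s})$.

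Next I would exchange the order of integration. Introducing
\begin{equation*}
w(x,y) := |u(x)-u(y)|^p - \bigl||u(x)|-|u(y)|\bigr|^p \;\geq\; 0,
\end{equation*}
the key support property is that $w(x,y)>0$ forces $u(x)u(y)<0$, hence both $x,y\in\Omega\subset B_R$, so $|x-y|\in(0,2R)$ on $\{w>0\}$. For $s\in(0,1)$ one has $F(s)=C_{N,p,s}\iint w(x,y)|x-y|^{-N-sp}\,dx\,dy$, so combining Fubini with the uniform bounds of Lemma \ref{lm 2.4} ($C_{N,p,s}\leq \bar c_{N,p}$ on $(0,\bar{s})$ and $C_{N,p,s}\geq \underline c_{N,p}\,\delta$ on $[\bar{s},1-\delta]$) gives
\begin{equation*}
\int_{[0,1]} F(s)\,d\mu^-(s) \leq \bar c_{N,p}\iint \frac{w(x,y)}{|x-y|^N}\left(\int_{(0,\bar{s})}\frac{d\mu^-(s)}{|x-y|^{sp}}\right)dx\,dy,
\end{equation*}
and, after restricting the $\mu^+$-integral to $[\bar{s},1-\delta]$,
\begin{equation*}
\int_{[0,1]} F(s)\,d\mu^+(s) \geq \underline c_{N,p}\,\delta\iint \frac{w(x,y)}{|x-y|^N}\left(\int_{[\bar{s},1-\delta]}\frac{d\mu^+(s)}{|x-y|^{sp}}\right)dx\,dy.
\end{equation*}

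To conclude, for every $(x,y)$ with $w(x,y)>0$ the number $\tau:=|x-y|$ lies in $(0,2R)$, so Lemma \ref{lm 2.5} with $S=1-\delta$ (the argument goes through verbatim for the closed interval $[\bar{s},1-\delta]$ at the endpoint $1-\delta$) supplies the pointwise estimate
\begin{equation*}
\int_{(0,\bar{s})}\frac{d\mu^-(s)}{\tau^{sp}} \;\leq\; \frac{\max\{(2R)^{p\bar{s}},(2R)^p\}}{\min\{1,(2R)^{p\bar{s}}\}}\cdot\frac{\mu^-((0,\bar{s}))}{\mu^+([\bar{s},1-\delta])}\int_{[\bar{s},1-\delta]}\frac{d\mu^+(s)}{\tau^{sp}}.
\end{equation*}
Plugging in reduces \eqref{eq2.15} to the single numerical inequality
\begin{equation*}
\bar c_{N,p}\,\frac{\max\{(2R)^{p\bar{s}},(2R)^p\}}{\min\{1,(2R)^{p\bar{s}}\}}\,\mu^-((0,\bar{s})) \;\leq\; \underline c_{N,p}\,\delta\,\mu^+([\bar{s},1-\delta]),
\end{equation*}
which, using $\underline c_{N,p}/\bar c_{N,p} = \bar{s}\,\underline\Gamma_{N,p}/(4\bar\Gamma_{N,p})$, the elementary estimate $\min\{1,(2R)^{p\bar{s}}\}/\max\{(2R)^{p\bar{s}},(2R)^p\}\geq 1/\max\{1,(2R)^p\}$, and $\mu^-((0,\bar{s}))\leq\mu^-([0,\bar{s}))$, is exactly \eqref{extracondi} under the admissible choice \eqref{2.13} of $\bar\kappa$.

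The main technical obstacle is the pointwise-in-$(x,y)$ comparison of the two measure integrals of $\tau^{-sp}$: since $\tau$ varies over $(0,2R)$ and the map $s\mapsto\tau^{-sp}$ is monotone in opposite directions depending on whether $\tau<1$ or $\tau>1$, no trivial domination is available. Lemma \ref{lm 2.5} is tailored precisely to absorb this, and the Dirichlet condition $u=0$ outside $\Omega$ is what supplies the bound $|x-y|<2R$ needed for its application.
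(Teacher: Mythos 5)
Your proposal is correct and follows essentially the same route as the paper: reverse triangle inequality for $F(s)\geq 0$, the identity of the endpoint seminorms $[|u|]_0=[u]_0$, $[|u|]_1=[u]_1$, Fubini to pass to the pointwise kernel $w(x,y)$, and then Lemmas \ref{lm 2.4} and \ref{lm 2.5} with $S=1-\delta$ and the admissible $\bar\kappa$ from \eqref{2.13}. You are in fact more careful on one point the paper leaves implicit: Lemma \ref{lm 2.5} requires $\tau=|x-y|\in(0,2R)$, and this holds on the relevant set precisely because $w(x,y)>0$ forces $u(x)u(y)<0$, hence $x,y\in\Omega\subset B_R$; the paper invokes the pointwise inequality \eqref{eq3.19} over all of $\mathbb{R}^{2N}$ without flagging this restriction, so your remark closes a small gap.
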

\begin{proof}
Thanks to Lemma \ref{reabsorb}, we conclude that for any $u\in X_p(\Omega)$, we have 
$$\int_{[0,1]} [u]^p_s d\mu^+(s)<\infty~\text{and}~\int_{[0,1]} [u]^p_s d\mu^-(s)<\infty.$$
Recall the reverse triangle inequality $||u(x)|-|u(y)||\leq |u(x)-u(y)|.$ Thus, we get
$$\int_{[0,1]} [|u|]^p_s d\mu^+(s)<\infty~\text{and}~\int_{[0,1]} [|u|]^p_s d\mu^-(s)<\infty.$$
Therefore, using $\mu=\mu^+-\mu^-$ and \eqref{measure 2}, we conclude that \eqref{eq2.15} holds if
\begin{equation}\label{eq2.16}
\int_{[0, 1]} \big([|u|]^p_s - [u]^p_s\big) d\mu^+(s) - \int_{[0, \overline{s})} \big([|u|]^p_s - [u]^p_s\big) d\mu^-(s)\leq 0.
\end{equation}
Since, $$[|u|]_0 =\| |u|\|_{L^p(\Omega)}=
\| u\|_{L^p(\Omega)}=[u]_0~\text{and}~[|u|]_1 =\| |\nabla |u||\|_{L^p(\Omega)}=\| |\nabla u|\|_{L^p(\Omega)}=[u]_1,$$
the inequality \eqref{eq2.16} holds if
\begin{equation}\label{eq2.17}
\int_{(0,1)} \big([|u|]^p_s - [u]^p_s\big) d\mu^+(s) - \int_{(0, \overline{s})} \big([|u|]^p_s - [u]^p_s\big) d\mu^-(s)\leq 0.
\end{equation}
Now, using $||u(x)|-|u(y)||\leq |u(x)-u(y)|$, we obtain
\begin{align}\label{eq3.18}
    \int_{(0,1)} &\big([|u|]^p_s - [u]^p_s\big) d\mu^+(s) - \int_{(0, \overline{s})} \big([|u|]^p_s - [u]^p_s\big) d\mu^-(s)\nonumber\\
    =&\int_{(0, 1)}C_{N,p,s}\iint_{\mathbb{R}^{2N}}\frac{||u(x)|-|u(y)||^p-|u(x)-u(y)|^p}{|x-y|^{N+ps}}\,dx\, dy\, d\mu^+(s)\nonumber\\
&- \int_{(0, \overline s)}C_{N,p,s}\iint_{\mathbb{R}^{2N}}\frac{||u(x)|-|u(y)||^p-|u(x)-u(y)|^p}{|x-y|^{N+ps}}\,dx\, dy\, d\mu^-(s)\nonumber\\
&=\iint_{\mathbb{R}^{2N}}[|u(x)-u(y)|^p-||u(x)|-|u(y)||^p]\nonumber\\
&\quad\times\left[-\int_{(0, 1)}\frac{C_{N,p,s}}{|x-y|^{N+ps}}d\mu^+(s)+\int_{(0, \overline s)}\frac{C_{N,p,s}}{|x-y|^{N+ps}}d\mu^-(s)\right]dxdy.
\end{align}
We now claim that 
\begin{equation}\label{eq3.19}
\int_{(0, 1)}\frac{C_{N,p,s}}{|x-y|^{N+ps}}d\mu^+(s)>\int_{(0, \overline s)}\frac{C_{N,p,s}}{|x-y|^{N+ps}}d\mu^-(s).
\end{equation}
Indeed, from \eqref{2.11}, we have
\begin{equation}\label{eq3.20}
\int_{(0, \overline s)} \frac{C_{N,p,s}}{|x-y|^{N+ps}} d\mu^-(s) \leq \frac{\overline{c}_{N,p}}{|x-y|^N} \int_{(0, \overline s)}\frac{d\mu^-(s)}{|x-y|^{ps}}.
\end{equation}
On the other hand, from \eqref{extracondi}, we get $\mu^+\big([\overline s, 1-\delta]\big)>0$, otherwise, $\mu^-\big( (0,\overline s)\big)=0$. Therefore, from \eqref{2.12}, we get
\begin{equation}\label{eq3.21}
\int_{(0, 1)} \frac{C_{N,p,s}}{|x-y|^{N+ps}} d\mu^+(s) \geq \int_{[\overline s, 1-\delta]} \frac{C_{N,p,s}}{|x-y|^{N+ps}} d\mu^+(s) \geq \frac{\underline{c}_{N,p}\delta}{|x-y|^N} \int_{[\overline s, 1-\delta]}\frac{d\mu^+(s)}{|x-y|^{ps}}.
\end{equation}
From Lemma \ref{lm 2.5} with $S:=1-\delta\in[\overline s,1)$ and \eqref{eq3.20}, we obtain
\begin{equation}\label{eq3.22}
    \int_{(0, \overline s)} \frac{c_{N,p,s}}{|x-y|^{N+ps}} d\mu^-(s) \leq \frac{\overline{c}_{N,p}
\,\max\{(2R)^{p\overline{s}},(2R)^p\}\mu^-( (0,\overline{s}))
}{\min\{1,(2R)^{p\overline{s}}\}\,\mu^+( [\overline{s},1-\delta])|x-y|^N} \int_{[ \overline s,1-\delta]}\frac{d\mu^+(s)}{|x-y|^{ps}}.
\end{equation}
Combining \eqref{eq3.22} with \eqref{eq3.21} and using Lemma \ref{lm 2.4}, we deduce that
\begin{align}\label{eq3.23}
    \int_{(0, \overline s)} \frac{C_{N,p,s}}{|x-y|^{N+ps}}d\mu^-(s) &\leq \frac{\overline{c}_{N,p}\max\{(2R)^{p\overline{s}},(2R)^p\}\,\mu^-((0,\overline{s}))}{\underline{c}_{N,p}\delta\,\min\{1,(2R)^{p\overline{s}}\}\mu^+([\overline{s},1-\delta])} \int_{(0,1)} \frac{C_{N,p,s}}{|x-y|^{N+ps}} d\mu^+(s)\nonumber\\
&<\frac{\overline{c}_{N,p}\underline{\Gamma}_{N,p}\overline{s}\max\{(2R)^{p\overline{s}},(2R)^p\}}{4\underline{c}_{N,p}\overline{\Gamma}_{N,p}\min\{1,(2R)^{p\overline{s}}\}} \int_{(0,1)} \frac{C_{N,p,s}}{|x-y|^{N+ps}}d\mu^+(s)\nonumber\\
&=\frac{\max\{(2R)^{p\overline{s}},(2R)^p\}}{\min\{1,(2R)^{p\overline{s}}\} \max\{1, (2R)^p\}} \int_{(0,1)} \frac{C_{N,p,s}}{|x-y|^{N+ps}} d\mu^+(s)\nonumber\\
&=\int_{(0,1)} \frac{C_{N,p,s}}{|x-y|^{N+ps}} d\mu^+(s).
\end{align}
Hence, from \eqref{eq3.23}, \eqref{eq3.18} and \eqref{eq2.17}, we conclude that \eqref{eq2.15} holds.
\end{proof}

%%%%%%%%%%%%%%%%%%%%%%%%%%%%%%%%%%%%%%%%%%%%%%%
\section{Some technical results: Weak lower semicontinuity properties} \label{sec3}
%%%%%%%%%%%%%%%%%%%%%%%%%%%%%%%%%%%%%%%%%%%%%%%
In this section, we will state and prove all the necessary preliminary results which will be crucial to study the critical elliptic problems related to the nonlocal operator $A_{\mu, p}$ in the subsequent sections. First, let us denote by
\begin{equation}\label{Sobolev constant 2}
C_{p_{s_\sharp}^{*}}:=\sup _{u \in X_{p}(\Omega) \backslash\{0\}} \frac{\|u\|_{L^{p_{s_{\sharp}}^{*}}(\Omega)}}{\left(\int_{[0,1]}[u]_{s, p}^{p} d \mu^{+}(s)\right)^{1/p}} ,
\end{equation}
the best constant of the continuous Sobolev embedding $X_{p}(\Omega) \hookrightarrow L^{p_{s_\sharp}^{*}}(\Omega)$ given by \eqref{embedding} of Proposition \ref{compact and cont embedding}.

As discussed in the introduction, the proof of Theorem \ref{main result wlsc 0} is based on a weak lower semicontinuity result. For we purpose,  we define the functional  
\begin{equation}\label{wlsc functional}
\mathcal{L}_{\gamma}(u):=\frac{1}{p}\left[\rho_{p}(u)\right]^{p}-\frac{1}{p} \int_{[0, \bar{s}]}[u]_{s, p}^{p} d \mu^{-}(s)-\frac{\gamma}{p_{s_{\sharp}}^{*}} \int_{\Omega}|u|^{p_{s_\sharp}^{*}} d x ,
\end{equation}
for every $u \in X_{p}(\Omega)$.

We now present the main result of this section concerning the weak lower semicontinuity of the functional \eqref{wlsc functional}.

\begin{prop}\label{main prop on wlsc}  Let $ \Omega$ be a bounded subset of $\mathbb{R}^N$. Let $\mu=\mu^{+}-\mu^{-}$with $\mu^{+}$and $\mu^{-}$satisfying \eqref{measure 1}-\eqref{measure 3} and $s_{\sharp}$ be as in \eqref{measure 4}.
    Assume that   $\frac{N}{s_{\sharp}}>p \geq 2$ and $p_{s_\sharp}^*=\frac{p N}{N-{s_\sharp}p}$. Let
\begin{equation*}
    \overline{B_{X_{p}(\Omega)}(0, r)}:=\left\{u \in X_{p}(\Omega):\left(\int_{[0,1]}[u]_{s, p}^{p} d \mu^{+}(s)\right)^{1 / p} \leq r\right\}
\end{equation*}
be a closed ball in the space $X_{p}(\Omega)$ centered at $0$ with radius $r>0$. Then for every $\gamma>0$, there exists $\bar{r}_{\gamma}>0$ such that the functional $\mathcal{L}_{\gamma}$ defined in \eqref{wlsc functional} is sequentially weakly lower semicontinuous on $\overline{B_{X_{p}(\Omega)}\left(0, \bar{r}_{\gamma}\right)}$.
\end{prop}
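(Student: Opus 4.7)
The strategy is to show that whenever $u_n \rightharpoonup u$ weakly in $X_p(\Omega)$ with $\rho_p(u_n)\leq \bar{r}_\gamma$, one has $\liminf_{n\to\infty}\mathcal{L}_\gamma(u_n)\geq \mathcal{L}_\gamma(u)$, where the radius $\bar{r}_\gamma>0$ is chosen sufficiently small depending on $\gamma$, the Sobolev constant $C_{p_{s_\sharp}^*}$, the reabsorption constant $c_0\kappa$, and $p$. Since $X_p(\Omega)$ is uniformly convex by Lemma \ref{Uniform convexity}, the norm $\rho_p$ is weakly lower semicontinuous, hence $\rho_p(u)\leq \bar{r}_\gamma$ as well. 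By the compact embedding $X_p(\Omega)\hookrightarrow L^r(\Omega)$ for $r<p_{s_\sharp}^*$ from Proposition \ref{compact and cont embedding}, after passing to a subsequence I may assume $u_n\to u$ a.e.\ in $\mathbb{R}^N$.

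Next, I would invoke the Brezis--Lieb-type identity of Lemma \ref{B-L lemma} applied to both $\mu^+$ and $\mu^-$, together with the classical Brezis--Lieb lemma applied to the critical $L^{p_{s_\sharp}^*}$ term (using that the sequence $u_n$ is bounded in $L^{p_{s_\sharp}^*}(\Omega)$ by the continuous embedding \eqref{Sobolev constant 2}). This yields the exact decomposition
\begin{equation*}
\mathcal{L}_\gamma(u_n)-\mathcal{L}_\gamma(u) = \frac{1}{p}\rho_p(u_n-u)^p - \frac{1}{p}\int_{[0,\bar{s}]}[u_n-u]_{s,p}^p\,d\mu^-(s) - \frac{\gamma}{p_{s_\sharp}^*}\int_\Omega |u_n-u|^{p_{s_\sharp}^*}\,dx + o(1).
\end{equation*}
Applying the reabsorption Lemma \ref{reabsorb} to $u_n-u$ bounds the $\mu^-$ integral by $c_0\kappa\,\rho_p(u_n-u)^p$, while the Sobolev constant \eqref{Sobolev constant 2} gives $\int_\Omega |u_n-u|^{p_{s_\sharp}^*}\,dx\leq C_{p_{s_\sharp}^*}^{p_{s_\sharp}^*}\rho_p(u_n-u)^{p_{s_\sharp}^*}$.

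Factoring out $\rho_p(u_n-u)^p$ leads to
\begin{equation*}
\mathcal{L}_\gamma(u_n)-\mathcal{L}_\gamma(u) \geq \rho_p(u_n-u)^p\left[\frac{1-c_0\kappa}{p} - \frac{\gamma\,C_{p_{s_\sharp}^*}^{p_{s_\sharp}^*}}{p_{s_\sharp}^*}\,\rho_p(u_n-u)^{p_{s_\sharp}^*-p}\right] + o(1),
\end{equation*}
and the triangle inequality gives $\rho_p(u_n-u)\leq 2\bar{r}_\gamma$. Any choice of $\bar{r}_\gamma>0$ with
\begin{equation*}
\bar{r}_\gamma^{\,p_{s_\sharp}^*-p} \leq \frac{p_{s_\sharp}^*(1-c_0\kappa)}{2^{p_{s_\sharp}^*-p}\,p\,\gamma\,C_{p_{s_\sharp}^*}^{p_{s_\sharp}^*}}
\end{equation*}
makes the bracketed factor nonnegative, which gives the desired weak lower semicontinuity on $\overline{B_{X_p(\Omega)}(0,\bar{r}_\gamma)}$. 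The main obstacle is precisely the critical $L^{p_{s_\sharp}^*}$ term: weak convergence in $X_p(\Omega)$ does not yield strong convergence in $L^{p_{s_\sharp}^*}(\Omega)$, so the critical term cannot be handled by standard compactness. The Brezis--Lieb identity circumvents this by splitting the energy exactly into contributions from the weak limit and the defect $u_n-u$, and the restriction to a small ball exploits the strict super-linearity $p_{s_\sharp}^*-p>0$ to ensure that the defect of the critical term is absorbed into the leading Gagliardo energy.
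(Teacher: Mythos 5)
Your decomposition is valid, and the route via the Brezis--Lieb identity (applied to $\mu^+$, $\mu^-$, and the critical $L^{p_{s_\sharp}^*}$ term) is a legitimate and in fact slightly cleaner alternative to the paper's use of Lindqvist's pointwise inequality $|b|^p-|a|^p\ge p|a|^{p-2}a(b-a)+2^{1-p}|a-b|^p$: your version produces the factor $\frac{1}{p}$ in front of $\rho_p(u_n-u)^p$ rather than $\frac{2^{1-p}}{p}$, and it does not actually use $p\ge 2$. However, there is a genuine gap in the way you treat the $\mu^-$ defect.

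After the Brezis--Lieb decomposition you estimate $\int_{[0,\bar s]}[u_n-u]_{s,p}^p\,d\mu^-(s)\le c_0\kappa\,\rho_p(u_n-u)^p$ via the reabsorption Lemma~\ref{reabsorb}, which leaves the factor $\frac{1-c_0\kappa}{p}$ in front of $\rho_p(u_n-u)^p$. For this to produce a nonnegative lower bound one needs $c_0\kappa<1$. But Proposition~\ref{main prop on wlsc} imposes no smallness on $\kappa$: conditions \eqref{measure 1}--\eqref{measure 3} only require $\kappa\ge 0$ (contrast Lemma~\ref{PS condition lem} and Theorem~\ref{main result 2}, where $\kappa\le\kappa_0$ small is an explicit hypothesis). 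So your argument proves the proposition only under an extra hypothesis it does not actually have. The paper's proof avoids this entirely by showing that the $\mu^-$ defect \emph{vanishes}: for each $s<\bar s$, the Brezis--Mironescu interpolation inequality gives $\|u_n-u\|_{s,p}\le C\|u_n-u\|_{L^p}^{\theta}\,\|u_n-u\|_{\bar s,p}^{1-\theta}$ with $\theta\in(0,1)$, and since $u_n\to u$ strongly in $L^p(\Omega)$ (compact embedding) while $\|u_n-u\|_{\bar s,p}$ stays bounded (Lemma~\ref{Sobolev emb}), one gets $\int_{[0,\bar s)}[u_n-u]_{s,p}^p\,d\mu^-(s)\to 0$. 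This removes the $\mu^-$ defect from the balance for \emph{every} $\kappa\ge 0$. If you replace your reabsorption step with this strong-convergence argument (keeping your Brezis--Lieb handling of the $\mu^+$ and critical terms), the proof becomes complete and in fact yields a marginally better radius $\bar r_\gamma$ than the paper's Lindqvist route.
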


\begin{proof}
Fix $\gamma>0$. Let $r>0$, and take a sequence $\left\{u_{j}\right\}_{j \in \mathbb{N}} \subset \overline{B_{X_{ p}(\Omega)}(0, r)}$ weakly convergent to some $u_{\infty} \in \overline{B_{X_{p}(\Omega)}(0, r)}$, i.e., we have
\begin{align}\label{weak convergence 1}
\begin{split}
    &\int_{[0,1]}\left(\iint_{\mathbb{R}^{2 N}}  \frac{C_{N, s, p}|u_j(x)-u_j(y)|^{p-2}\left(u_{j}(x)-u_{j}(y)\right)(v(x)-v(y))}{|x-y|^{N+p s}} d x d y \right) d\mu^{+}(s) \\
& \rightarrow \int_{[0,1]}\left(\iint_{\mathbb{R}^{2 N}} \frac{C_{N, s, p}|u_\infty(x)-u_\infty(y)|^{p-2}\left(u_{\infty}(x)-u_{\infty}(y)\right)(v(x)-v(y))}{|x-y|^{N+p s}} d x d y\right) d\mu^{+}(s)
\end{split}
\end{align}
as $j \rightarrow+\infty$, for every $v \in X_{p}(\Omega)$.
It follows  from the compact embedding as $p<p_{s_\sharp}^*$, that
\begin{equation}\label{eq3.11}
    u_j \rightarrow u_\infty~\text{in}~L^p(\Omega).
\end{equation}
Therefore, up to a subsequence
\begin{equation}\label{eq3.12}
    u_j(x) \rightarrow u_\infty(x)~\text{a.e in}~\Omega 
\end{equation}
and using a similar argument as in Lemma A.1 \cite{WM97}, we may assume that there exists $h\in L^p(\Omega)$ such that 
\begin{equation}\label{eq3.13}
    |u_j(x)|\leq h(x)~\text{a.e in}~\Omega, \forall\,k \in\mathbb{N}.
\end{equation}
Indeed from \eqref{eq3.12}, we may assume that there exists a subsequence $(w_j)$ of $(u_j)$ such that $$\|w_{k+1}-w_k\|_p\leq \frac{1}{2^k}~\forall\,k\in\mathbb{N}.$$
Define, $g(x)=w_1(x)+\sum_{k=1}^{\infty}\|w_{k+1}(x)-w_k(x)\|_p$. Thus $|w_j(x)|\leq h(x)$ and $|u_\infty(x)|\leq h(x).$
Testing \eqref{weak convergence 1} with $v=u_\infty$, we obtain

 \begin{align}\label{eq3.9}
     & \int_{[0, 1]} \left(\iint_{\mathbb{R}^{2N}} \frac{C_{N, s, p}|u_j(x)-u_j(y)|^{p-2}(u_j(x)-u_j(y)) (u_\infty(x)-u_\infty(y))}{| x-y|^{N+sp}} d x d y \right) d\mu^+(s) \nonumber\\& \rightarrow \int_{[0, 1]} \left( \iint_{\mathbb{R}^{2N}} \frac{C_{N, s, p}|u_\infty(x)-u_\infty(y)|^{p}}{| x-y|^{N+sp}} d x d y \right) d\mu^+(s).
 \end{align}
Notice that for $k=1, 2,3,\dots$, testing \eqref{weak convergence 1} with $v=u_k$ and taking $j\rightarrow\infty$, we get 
 \begin{align}\label{eq3.10}
     & \int_{[0, 1]} \left(\iint_{\mathbb{R}^{2N}} \frac{C_{N, s, p} |u_j(x)-u_j(y)|^{p-2}(u_j(x)-u_j(y)) (u_k(x)-u_k(y))}{|x-y|^{N+sp}} d x d y \right) d\mu^+(s) \nonumber\\& \rightarrow \int_{[0, 1]} \left(\iint_{\mathbb{R}^{2N}} \frac{C_{N, s, p}|u_\infty(x)-u_\infty(y)|^{p-2}(u_\infty(x)-u_\infty(y)) (u_k(x)-u_k(y))}{|x-y|^{N+sp}} d x d y\right) d\mu^+(s).
 \end{align}
Again from \eqref{eq3.11}, \eqref{eq3.12} and the Lebesgue dominated convergence theorem, we get
\begin{align}\label{eq3.14}
     & \int_{[0, 1]} \left(\iint_{\mathbb{R}^{2N}} \frac{C_{N, s, p} |u_j(x)-u_j(y)|^{p}}{|x-y|^{N+sp}} d x d y \right) d\mu^+(s) \\ \nonumber & \rightarrow \int_{[0, 1]} \left(\iint_{\mathbb{R}^{2N}} \frac{C_{N, s, p} |u_\infty(x)-u_\infty(y)|^{p}}{|x-y|^{N+sp}} d x d y \right) d\mu^+(s) .
 \end{align}
Therefore, 
 \begin{align}\label{weak convergence conseq}
     & \int_{[0, 1]} \left(\iint_{\mathbb{R}^{2N}} \frac{C_{N, s, p}|u_\infty (x)-u_\infty(y)|^{p-2}(u_\infty (x)-u_\infty(y)) (u_k(x)-u_k(y))}{|x-y|^{N+sp}} d x d y \right) d\mu^+(s) \nonumber\\& \rightarrow \int_{[0, 1]} \left(\iint_{\mathbb{R}^{2N}} \frac{C_{N, s, p}|u_\infty(x)-u_\infty(y)|^{p}}{|x-y|^{N+sp}} d x d y  \right) d\mu^+(s).
 \end{align}

To prove the result, it is enough to show that
\begin{equation}\label{wlsc}
L:=\liminf _{j \rightarrow+\infty}\left(\mathcal{L}_{\gamma}\left(u_{j}\right)-\mathcal{L}_{\gamma}\left(u_{\infty}\right)\right) \geq 0.
\end{equation}
We first show that 
\begin{equation} \label{eqicon46}
    \lim_{j\rightarrow\infty}\int_{[0,\bar{s})}[u_j]_{s, p}^{p} d \mu^{-}(s)=\int_{[0,\bar{s})}[u_{\infty}]_{s, p}^{p} d \mu^{-}(s).
\end{equation}
Note that the sequence $(u_j)$ is bounded in $X_p(\Omega)$, since it is weakly convergent to $u_{\infty}$ in $X_p(\Omega)$. Therefore, form Proposition \ref{compact and cont embedding}, we get 
\begin{equation}\label{eq4.11}
    u_j\rightarrow u_{\infty}~\text{strongly in}~L^r(\Omega),~\forall\,r\in[1,p_{s_{\sharp}}^*)~\text{and}~u_j\rightarrow u_{\infty}~\text{a.e. in }\Omega.
\end{equation}
In addition, by using  \cite[Theorem 1]{BM2018}, we obtain that, for any $s\in[0,\bar{s})$, there exists $\theta\in(0,1)$ such that
\begin{equation}\label{eq4.12}
    \|u_j-u_{\infty}\|_{s, p}\leq C\|u_j-u_{\infty}\|_{p}^{\theta}\|u_j-u_{\infty}\|_{\bar{s}, p}^{1-\theta}<\infty.
\end{equation}
Thus from \eqref{eq4.11} and \eqref{eq4.12}, we obtain
\begin{align}
    \lim_{j\rightarrow\infty}\int_{[0,\bar{s})}[u_j-u_{\infty}]_{s, p}^{p} d \mu^{-}(s)&\leq \lim_{j\rightarrow\infty}\int_{[0,\bar{s})}\|u_j-u_{\infty}\|_{s, p}^{p} d \mu^{-}(s)\nonumber\\
    &\leq C \mu^{-}([0,\bar{s}))\lim_{j\rightarrow\infty} \|u_j-u_{\infty}\|_{p}^{p\theta}\|u_j-u_{\infty}\|_{\bar{s}, p}^{p(1-\theta)}\nonumber\\
    &\leq C \mu^{-}([0,\bar{s}))\lim_{j\rightarrow\infty}\|u_j-u_{\infty}\|_{p}^{p\theta}\nonumber\\
    &=0.
\end{align} 
Now using the Brezis-Lieb type lemma (see Lemma \ref{B-L lemma}), we get
\begin{equation}\label{eq4.14}
    \lim_{j\rightarrow\infty}\int_{[0,\bar{s})}[u_j]_{s, p}^{p} d \mu^{-}(s)=\int_{[0,\bar{s})}[u_{\infty}]_{s, p}^{p} d \mu^{-}(s).
\end{equation}
Therefore, to prove \eqref{wlsc}, it is sufficient to deduce that
\begin{align*}
& \liminf _{j \rightarrow+\infty}\left\{\frac{1}{p}\left[\int_{[0,1]} [u_j]^p_{s,p}d \mu^{+}(s)-\int_{[0,1]} [u_\infty]^p_{s,p}d \mu^{+}(s)\right]\right. \\
& \left.\quad-\frac{\gamma}{p_{s_\sharp}^{*}}\left[\int_{\Omega}\left|u_{j}(x)\right|^{p_{s_\sharp}^{*}} d x-\int_{\Omega}\left|u_{\infty}(x)\right|^{p_{s_\sharp}^{*}} d x\right]\right\} \geq 0 .
\end{align*}
Let us recall the following inequality \cite[Lemma 4.2]{Lindqvist: 1990}:

\begin{equation}\label{useful ineq}
|b|^{p}-|a|^{p} \geq p|a|^{p-2} a(b-a)+2^{1-p}|a-b|^{p}, \quad  
\end{equation}
which holds for $\forall a, b \in \mathbb{R}$ and $p \geq 2$. Now, choosing
\begin{equation*}
    a:=u_{\infty}(x)-u_{\infty}(y) \quad \text { and } \quad b:=u_{j}(x)-u_{j}(y), \quad \forall x, y \in \mathbb{R}^{N}, \quad  j \in \mathbb{N},
\end{equation*}
and using \eqref{useful ineq} we get
\begin{align}\label{useful ineq appl}
\begin{split}
    & \frac{1}{p}\left[\int_{[0,1]} [u_j]^p_{s,p}d \mu^{+}(s)-\int_{[0,1]} [u_\infty]^p_{s,p}d \mu^{+}(s)\right]\\
& =\frac{1}{p}\int_{[0,1]} C_{N, s, p}\left[\iint_{\mathbb{R}^{2 N}} \frac{\left|u_{j}(x)-u_{j}(y)\right|^{p}}{|x-y|^{N+p s}} d x d y-\iint_{\mathbb{R}^{2 N}} \frac{\left|u_{\infty}(x)-u_{\infty}(y)\right|^{p}}{|x-y|^{N+p s}} d x d y\right]d \mu^{+}(s) \\
& \geq \int_{[0,1]} C_{N, s, p} \Bigg[\iint_{\mathbb{R}^{2 N}} \frac{\left|u_{\infty}(x)-u_{\infty}(y)\right|^{p-2}\left(u_{\infty}(x)-u_{\infty}(y)\right)\left(u_{j}(x)-u_{j}(y)\right)}{|x-y|^{N+p s}} d x d y  \\
&-\iint_{\mathbb{R}^{2 N}} \frac{\left|u_{\infty}(x)-u_{\infty}(y)\right|^{p}}{|x-y|^{N+p s}} d x d y\\
&+\frac{2^{1-p}}{p} \iint_{\mathbb{R}^{2 N}} \frac{\left|\left(u_{j}-u_{\infty}\right)(x)-\left(u_{j}-u_{\infty}\right)(y)\right|^{p}}{|x-y|^{N+p s}} d x d y \Bigg]d\mu^{+}(s). 
\end{split}
\end{align}
Moreover, by the Brezis-Lieb lemma \cite{BL: 1983}, we have 
\begin{equation}\label{BL lemma appl}
\liminf _{j \rightarrow+\infty}\left[\int_{\Omega}\left|u_{j}(x)\right|^{p_{s_\sharp}^{*}} d x-\int_{\Omega}\left|u_{\infty}(x)\right|^{p_{s_\sharp}^{*}} d x\right]=\liminf _{j \rightarrow+\infty} \int_{\Omega}\left|u_{j}(x)-u_{\infty}(x)\right|^{p_{s_\sharp}^{*}} d x. 
\end{equation}
Hence, applying \eqref{useful ineq appl}, \eqref{BL lemma appl} and \eqref{weak convergence conseq} in \eqref{wlsc}  we get
\begin{align}\label{final estim 1}
\begin{split}
    & \liminf _{j \rightarrow+\infty}\left(\mathcal{L}_{\gamma}\left(u_{j}\right)-\mathcal{L}_{\gamma}\left(u_{\infty}\right)\right) \\
& \geq \liminf _{j \rightarrow+\infty}\{ \frac{2^{1-p}}{p} \int_{[0,1]} \left( \iint_{\mathbb{R}^{2 N}} \frac{C_{N, s, p}\left|\left(u_{j}-u_{\infty}\right)(x)-\left(u_{j}-u_{\infty}\right)(y)\right|^{p}}{|x-y|^{N+p s}} d x d y \right) d \mu^{+}(s) \\
&-\frac{\gamma}{p_{s_\sharp}^{*}} \int_{\Omega}\left|u_{j}(x)-u_{\infty}(x)\right|^{p_{s_\sharp}^{*}} d x \} .
\end{split}
\end{align}
Now, using the fact that $\left\{u_{j}-u_{\infty}\right\}_{j \in \mathbb{N}} \subset \overline{B_{X_{p}(\Omega)}(0,2 r)}$ and the continuous embedding \eqref{embedding}, we obtain from \eqref{final estim 1} that
\begin{align}\label{final estim 2}
\begin{split}
    L & \geq \liminf _{j \rightarrow+\infty} \left\{ \frac{2^{1-p}}{p} [\rho_p(u_j-u_\infty)]^p - \frac{\gamma}{p_{s_\sharp}^{*}} \|u_j-u_\infty\|_{L^{p_{s_\sharp}^{*}}(\Omega)}^{p_{s_\sharp}^{*}}  \right\}\\
&\geq \liminf _{j \rightarrow+\infty}[\rho_p(u_j-u_\infty)]^p\left(\frac{2^{1-p}}{p}-\frac{\gamma C_{p_{s_\sharp}^{*}}^{p_{s_\sharp}^{*}}}{p_{s_\sharp}^{*}}[\rho_p(u_j-u_\infty)]^{p_{s_\sharp}^{*}-p}\right) \\
& \geq \liminf _{j \rightarrow+\infty}[\rho_p(u_j-u_\infty)]^p\left(\frac{2^{1-p}}{p}-\frac{\gamma C_{p_{s_\sharp}^{*}}^{p_{s_\sharp}^{*}} 2^{p_{s_\sharp}^{*}-p}}{p_{s_\sharp}^{*}} r^{p_{s_\sharp}^{*}-p}\right),
\end{split}
\end{align}
where $C_{p_{s_\sharp}^{*}}$ is the positive constant defined in \eqref{Sobolev constant 2}.
Consequently, for $r$ sufficiently small, such that,
\begin{equation*}
    0<r \leq \frac{1}{2}\left(\frac{2^{1-p} p_{s_\sharp}^{*}}{p \gamma C_{p_{s_\sharp}^{*}}^{p_{s_\sharp}^{*}}}\right)^{1 /(p_{s_\sharp}^{*}-p)}
\end{equation*}
inequality \eqref{wlsc} follows from \eqref{final estim 2}.
Therefore, for any  
\begin{equation*}
    \bar{r}_{\gamma} \in\left(0, \frac{1}{2}\left(\frac{2^{1-p} p_{s_\sharp}^{*}}{p \gamma C_{p_{s_\sharp}^{*}}^{p_{s_\sharp}^{*}}}\right)^{1 /(p_{s_\sharp}^{*}-p)}\right],
\end{equation*}
the functional $\mathcal{L}_{\gamma}$ is sequentially weak lower semicontinuous on $\overline{B_{X_{p}(\Omega)}\left(0, \bar{r}_{\gamma}\right)}$. This completes the proof. 
\end{proof}

Next, we prove some useful estimates essential for the proof of Theorem \ref{main result wlsc 0}.

%%%%%%%%%%%%%%%%%%%%%%%%%%%%%%%%%%%%%%%%%%%%%%%
Let us fix $\gamma, \lambda>0$ and define the functionals
\begin{equation*}
    \Phi_{p, s}(u):=\rho_p(u)=\left(\int_{[0,1]}\left(\iint_{\mathbb{R}^{2 N}} \frac{C_{N, s, p}|u(x)-u(y)|^{p}}{|x-y|^{N+p s}} d x d y\right) d \mu^{+}(s)\right)^{1 / p}
\end{equation*}
and
\begin{equation}\label{part 2 of decomp}
    \Psi_{\gamma, \lambda}(u):=\frac{1}{p} \int_{[0, \bar{s}]}[u]_{s, p}^{p} d \mu^{-}(s)+\frac{\gamma}{p_{s_\sharp}^{*}} \int_{\Omega}|u(x)|^{p_{s_\sharp}^{*}} d x+\lambda \int_{\Omega} G(x, u(x)) d x,
\end{equation}
for every $u \in X_{ p}(\Omega)$, where the potential $G$ is given by 
\begin{equation}
G(x, t):=\int_{0}^{t} g(x, \tau) d \tau, \quad \forall(x, t) \in \Omega \times \mathbb{R}. 
\end{equation}
 So, we have
\begin{equation}\label{functional as a sum}
    \mathcal{J}_{\gamma, \lambda}(u)=\frac{1}{p}\Phi_{p, s}(u)^p-\Psi_{\gamma, \lambda}(u),
\end{equation}
for every $u \in X_{ p}(\Omega)$.
Next, we set
\begin{equation*}
  \Theta_{\gamma, \lambda}(\eta, \zeta):=\sup _{v \in \Phi_{p, s}^{-1}([0, \eta])} \Psi_{\gamma, \lambda}(v)-\sup _{v \in \Phi_{p, s}^{-1}([0, \eta-\zeta])} \Psi_{\gamma, \lambda}(v),
\end{equation*}
for $0<\zeta<\eta$.  
\begin{lem}\label{tech lem 1}
    Let $p\in(1, +\infty)$ and $\gamma, \lambda>0$. Suppose that
\begin{equation}\label{cond 1.1}
\limsup _{\varepsilon \rightarrow 0^{+}} \frac{\Theta_{\gamma, \lambda}\left(r_{0}, \varepsilon\right)}{\varepsilon}<r^{p-1}_{0} 
\end{equation}
for some $r_{0}>0$. Then
\begin{equation}\label{cond 1.2}
\inf _{\sigma<r_{0}} \frac{\Theta_{\gamma, \lambda}\left(r_{0}, r_{0}-\sigma\right)}{r_{0}^{p}-\sigma^{p}}<\frac{1}{p}. 
\end{equation}
\end{lem}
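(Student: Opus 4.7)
The plan is to perform a change of variables $\zeta := r_0 - \sigma$ (so that $\zeta \to 0^+$ as $\sigma \to r_0^-$) and reduce the conclusion \eqref{cond 1.2} to the hypothesis \eqref{cond 1.1} by comparing the denominators $r_0^p - \sigma^p$ and $\varepsilon$ via a first-order Taylor expansion at $r_0$.

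First I would rewrite
\[
\frac{\Theta_{\gamma,\lambda}(r_0, r_0-\sigma)}{r_0^p - \sigma^p}
\;=\;
\frac{\Theta_{\gamma,\lambda}(r_0, \varepsilon)}{\varepsilon}\cdot\frac{\varepsilon}{r_0^p - (r_0-\varepsilon)^p},
\]
with $\varepsilon := r_0-\sigma$. A standard Taylor expansion gives $r_0^p - (r_0-\varepsilon)^p = p\,r_0^{p-1}\varepsilon + o(\varepsilon)$ as $\varepsilon \to 0^+$, and therefore
\[
\lim_{\varepsilon\to 0^+}\frac{\varepsilon}{r_0^p-(r_0-\varepsilon)^p}=\frac{1}{p\,r_0^{p-1}}.
\]

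Next, taking the limit superior as $\sigma \to r_0^-$ (equivalently $\varepsilon \to 0^+$) on both sides and using the hypothesis \eqref{cond 1.1}, I would obtain
\[
\limsup_{\sigma\to r_0^-}\frac{\Theta_{\gamma,\lambda}(r_0, r_0-\sigma)}{r_0^p - \sigma^p}
\;\leq\;
\Bigl(\limsup_{\varepsilon\to 0^+}\frac{\Theta_{\gamma,\lambda}(r_0,\varepsilon)}{\varepsilon}\Bigr)\cdot\frac{1}{p\,r_0^{p-1}}
\;<\;
r_0^{p-1}\cdot\frac{1}{p\,r_0^{p-1}}
\;=\;\frac{1}{p}.
\]
From the strict inequality in the $\limsup$, there exists $\sigma$ arbitrarily close to $r_0$ with $\sigma<r_0$ for which the quotient is strictly less than $1/p$, which is precisely \eqref{cond 1.2}.

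I do not foresee a serious obstacle: the argument is essentially a direct comparison of infinitesimals, with the ratio $\varepsilon/(r_0^p-\sigma^p)$ playing the key role (this is where the factor $p\,r_0^{p-1}$ and hence the exponent $p-1$ in \eqref{cond 1.1} originate). The only point to be careful about is that $\Theta_{\gamma,\lambda}$ is not assumed to be continuous or monotone in its second argument, but since we only manipulate $\limsup$ and do not require an honest limit, this causes no trouble.
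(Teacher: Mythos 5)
Your proof is correct and is essentially the paper's proof: you write the same product decomposition (the paper's factor $\frac{-\varepsilon}{r_0^p[(1-\varepsilon/r_0)^p-1]}$ is algebraically identical to your $\frac{\varepsilon}{r_0^p-(r_0-\varepsilon)^p}$), compute the same limit $\frac{1}{p r_0^{p-1}}$, and conclude from the strict inequality of the $\limsup$ that some $\sigma<r_0$ makes the quotient less than $1/p$. No gaps.
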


\begin{proof}
Let us observe that if $\varepsilon \in\left(0, r_{0}\right)$, then we have
\begin{equation*}
  \frac{\Theta_{\gamma, \lambda}\left(r_{0}, \varepsilon\right)}{r_{0}^{p}-\left(r_{0}-\varepsilon\right)^{p}}=\frac{\Theta_{\gamma, \lambda}\left(r_{0}, \varepsilon\right)}{\varepsilon} \frac{-\varepsilon}{r^{p}_{0}\left[\left(1-\varepsilon / r_{0}\right)^{p}-1\right]}  
\end{equation*}
and
\begin{equation*}
    \lim _{\varepsilon \rightarrow 0^{+}} \frac{-\varepsilon}{r^{p}_{0}\left[\left(1-\varepsilon / r_{0}\right)^{p}-1\right]}=\frac{1}{p r^{p-1}_{0}}.
\end{equation*}
Hence, from \eqref{cond 1.1} it follows that
\begin{equation}\label{cond 1.3}
\limsup _{\varepsilon \rightarrow 0^{+}} \frac{\Theta_{\gamma, \lambda}\left(r_{0}, \varepsilon\right)}{r_{0}^{p}-\left(r_{0}-\varepsilon\right)^{p}}<\frac{1}{p}. 
\end{equation}
Further, by \eqref{cond 1.3}, we assert that there exists a constant $\bar{\varepsilon}>0$ such that
\begin{equation*}
    \frac{\Theta_{\gamma, \lambda}\left(r_{0}, \varepsilon\right)}{r_{0}^{p}-\left(r_{0}-\varepsilon\right)^{p}}<\frac{1}{p}
\end{equation*}
for every $\varepsilon \in(0, \bar{\varepsilon})$. Then, setting $\sigma_{0}:=r_{0}-\varepsilon_{0}$ with $\varepsilon_{0} \in(0, \bar{\varepsilon})$, we obtain
\begin{equation*}
    \frac{\Theta_{\gamma, \lambda}\left(r_{0}, r_{0}-\sigma_{0}\right)}{r_{0}^{p}-\sigma_{0}^{p}}<\frac{1}{p},
\end{equation*}
verifying the inequality \eqref{cond 1.2}.
\end{proof}

We now recall a fundamental property of lower semicontinuous functions defined on a topological space $\left(M, \tau_{M}\right)$. Let $\bar{A}$ be the topological closure of the set $A\in M$ for the topology $\tau_{M}$. If $g: \bar{A} \rightarrow \mathbb{R}$ is a lower semicontinuous function, then we have
\begin{equation*}
    \sup _{x \in \bar{A}} g(x)=\sup _{x \in A} g(x) .
\end{equation*}

Let us take $A:=\{v\in X_p(\Omega) : \Phi_{p, s}(v)=\sigma \}$. Then the weak closure of $A$ is $\bar{A}:=\Phi_{p, s}^{-1}([0, \sigma])$ and the functional $\Psi_{\gamma, \lambda}$ is sequentially weakly lower semicontinuous on $\Phi_{p, s}^{-1}([0, \sigma])$. Consequently, it follows from the aforementioned property that
\begin{equation}\label{sup equality}
\sup _{v \in \Phi_{p, s}^{-1}[[0, \sigma])} \Psi_{\gamma, \lambda}(v)=\sup _{v \in A} \Psi_{\gamma, \lambda}(v) 
\end{equation}
for every $\sigma>0$. 

\begin{lem}\label{tech lem 2}
Let $p\in(1, +\infty)$ and $\gamma, \lambda>0$. Suppose that \eqref{cond 1.2} holds for some $r_{0}>0$. Then we have
\begin{equation}\label{cond 2.1}
\inf _{u \in \Phi_{p, s}^{-1}\left(\left[0, r_{0}\right)\right)} \frac{\sup _{v \in \Phi_{p, s}^{-1}\left(\left[0, r_{0}\right]\right)} \Psi_{\gamma, \lambda}(v)-\Psi_{\gamma, \lambda}(u)}{r_{0}^{p}-\Phi_{p, s}(u)^{p}}<\frac{1}{p}. 
\end{equation}
\end{lem}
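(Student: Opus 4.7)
The plan is to deduce \eqref{cond 2.1} from the hypothesis \eqref{cond 1.2} by choosing a near-maximizer of $\Psi_{\gamma,\lambda}$ over a strictly smaller sublevel set of $\Phi_{p,s}$ and then using it as the test element $u$ on the left-hand side of \eqref{cond 2.1}. The guiding idea is structural: shrinking the ``inner'' sublevel set from $\Phi_{p,s}^{-1}([0,r_0])$ to $\Phi_{p,s}^{-1}([0,\sigma_0])$ for some $\sigma_0<r_0$ reduces the numerator of the quotient in \eqref{cond 2.1} to essentially $\Theta_{\gamma,\lambda}(r_0, r_0-\sigma_0)$, while forcing the denominator to be at least $r_0^p - \sigma_0^p$; this is exactly the ratio controlled by \eqref{cond 1.2}.

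Concretely, I would first invoke \eqref{cond 1.2} to select $\sigma_0\in[0,r_0)$ such that
\[
\tau:=\frac{\Theta_{\gamma,\lambda}(r_0, r_0-\sigma_0)}{r_0^p-\sigma_0^p}<\frac{1}{p},
\]
and then fix $\varepsilon>0$ small enough that $\tau+\varepsilon/(r_0^p-\sigma_0^p)<1/p$. By the definition of supremum, there exists $u\in \Phi_{p,s}^{-1}([0,\sigma_0])$ with
\[
\Psi_{\gamma,\lambda}(u)>\sup_{v\in \Phi_{p,s}^{-1}([0,\sigma_0])}\Psi_{\gamma,\lambda}(v)-\varepsilon.
\]
The finiteness of this supremum is not a concern: it follows from the continuous embedding \eqref{embedding} together with the subcritical growth condition \eqref{cond on g 0} applied to the three summands composing $\Psi_{\gamma,\lambda}$ in \eqref{part 2 of decomp}.

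Since $\Phi_{p,s}(u)\le\sigma_0<r_0$, the element $u$ lies in $\Phi_{p,s}^{-1}([0,r_0))$, and $r_0^p-\Phi_{p,s}(u)^p\ge r_0^p-\sigma_0^p>0$. Combining these facts with the definition of $\Theta_{\gamma,\lambda}$ yields
\begin{align*}
\frac{\sup\limits_{v\in \Phi_{p,s}^{-1}([0,r_0])}\Psi_{\gamma,\lambda}(v)-\Psi_{\gamma,\lambda}(u)}{r_0^p-\Phi_{p,s}(u)^p}
&\le \frac{\sup\limits_{v\in \Phi_{p,s}^{-1}([0,r_0])}\Psi_{\gamma,\lambda}(v)-\Psi_{\gamma,\lambda}(u)}{r_0^p-\sigma_0^p}\\
&< \frac{\Theta_{\gamma,\lambda}(r_0, r_0-\sigma_0)+\varepsilon}{r_0^p-\sigma_0^p}
=\tau+\frac{\varepsilon}{r_0^p-\sigma_0^p}<\frac{1}{p},
\end{align*}
and taking the infimum over $u\in \Phi_{p,s}^{-1}([0,r_0))$ gives \eqref{cond 2.1}.

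The main (and essentially the only) obstacle is the possible non-attainment of the supremum defining $\Theta_{\gamma,\lambda}(r_0, r_0-\sigma_0)$, which is what forces the use of an approximate maximizer and an $\varepsilon$-cushion rather than a direct substitution. The strict inequality supplied by hypothesis \eqref{cond 1.2} is precisely what provides the slack needed to absorb this $\varepsilon$-error and still conclude strict inequality in \eqref{cond 2.1}; without strictness in \eqref{cond 1.2} the argument would only give $\le 1/p$.
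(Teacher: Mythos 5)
Your proof is correct and follows the same basic strategy as the paper: select $\sigma_0<r_0$ via \eqref{cond 1.2} so that $\Theta_{\gamma,\lambda}(r_0,r_0-\sigma_0)/(r_0^p-\sigma_0^p)<1/p$, then test the infimum in \eqref{cond 2.1} with a (near-)maximizer of $\Psi_{\gamma,\lambda}$ over $\Phi_{p,s}^{-1}([0,\sigma_0])$ and exploit $r_0^p-\Phi_{p,s}(u)^p\ge r_0^p-\sigma_0^p$. The only difference is technical but worth noting: the paper invokes \eqref{sup equality} to produce an exact maximizer $u_0$ with $\Phi_{p,s}(u_0)=\sigma_0$, whereas you use an $\varepsilon$-approximate maximizer. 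Since the paper itself records only that $\Psi_{\gamma,\lambda}$ is weakly \emph{lower} semicontinuous, the supremum over the weakly compact ball $\Phi_{p,s}^{-1}([0,\sigma_0])$ need not be attained, and \eqref{sup equality} gives equality of suprema rather than existence of a maximizer; your $\varepsilon$-cushion, absorbed by the strict inequality supplied by \eqref{cond 1.2}, patches this cleanly. One cosmetic remark: the hypothesis \eqref{cond 1.2} and the definition of $\Theta_{\gamma,\lambda}$ force $\sigma_0\in(0,r_0)$, not $[0,r_0)$, but this has no bearing on the argument.
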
 
\begin{proof}
In light of \eqref{cond 1.2} we have
\begin{equation}\label{cond 2.2}
\sup _{v \in \Phi_{p, s}^{-1}\left[\left[0, \sigma_{0}\right]\right)} \Psi_{\gamma, \lambda}(v)>\sup _{v \in \Phi_{p, s}^{-1}\left(\left[0, r_{0}\right]\right)} \Psi_{\gamma, \lambda}(v)-\frac{r_{0}^{p}-\sigma_{0}^{p}}{p}, 
\end{equation}
for some $0<\sigma_{0}<r_{0}$. Moreover, by \eqref{sup equality} there exists $u_{0} \in X_{ p}(\Omega)$ with $\Phi_{p, s}\left(u_{0}\right)=\sigma_{0}$, such that 
\begin{equation*}
\sup _{v \in \Phi_{p, s}^{-1}[[0, \sigma_{0}])} \Psi_{\gamma, \lambda}(v)=\sup _{v \in A} \Psi_{\gamma, \lambda}(v)= \Psi_{\gamma, \lambda}(u_0).
\end{equation*}
Consequently, from \eqref{cond 2.2} it follows that
\begin{equation}
\Psi_{\gamma, \lambda}\left(u_{0}\right)>\sup _{v \in \Phi_{p, s}^{-1}\left(\left[0, r_{0}\right]\right)} \Psi_{\gamma, \lambda}(v)-\frac{r_{0}^{p}-\Phi_{p, s}\left(u_{0}\right)^{p}}{p} .
\end{equation}
Thus, we get 
\begin{equation}
\inf _{u \in \Phi_{p, s}^{-1}\left(\left[0, r_{0}\right)\right)} \frac{\sup _{v \in \Phi_{p, s}^{-1}\left(\left[0, r_{0}\right]\right)} \Psi_{\gamma, \lambda}(v)-\Psi_{\gamma, \lambda}(u)}{r_{0}^{p}-\Phi_{p, s}(u)^{p}}<\frac{\sup _{v \in \Phi_{p, s}^{-1}\left(\left[0, r_{0}\right]\right)} \Psi_{\gamma, \lambda}(v)-\Psi_{\gamma, \lambda}\left(u_{0}\right)}{r_{0}^{p}-\Phi_{p, s}\left(u_{0}\right)^{p}}<\frac{1}{p}, 
\end{equation}
completing the proof.
\end{proof}

\begin{lem}\label{tech lem 3}
    Let $p\in(1, +\infty)$ and $\gamma, \lambda>0$. Assume that \eqref{cond 2.1} holds for some $r_{0}>0$. Then there exists $w \in \Phi_{p, s}^{-1}\left(\left[0, r_{0}\right)\right)$ such that
\begin{equation}\label{cond 3.1}
\mathcal{J}_{\gamma, \lambda}(w)=\frac{1}{p}[\rho_p(w)]^{p}-\Psi_{\gamma, \lambda}(w)<\frac{r_{0}^{p}}{p}-\Psi_{\gamma, \lambda}(u) 
\end{equation}
for every $u \in \Phi_{p, s}^{-1}\left(\left[0, r_{0}\right]\right)$.
\end{lem}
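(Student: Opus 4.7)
The plan is to derive the conclusion directly from the strict inequality assumed in \eqref{cond 2.1} by an elementary algebraic rearrangement; no additional functional-analytic tools are needed beyond what is already packaged into the hypothesis of the lemma. First, I would use the fact that the infimum in \eqref{cond 2.1} is \emph{strictly} below $1/p$: by the definition of infimum, this produces some $w \in \Phi_{p,s}^{-1}([0,r_0))$ for which the quotient itself lies below $1/p$, that is,
$$
\frac{\displaystyle\sup_{v \in \Phi_{p,s}^{-1}([0,r_0])}\Psi_{\gamma,\lambda}(v) - \Psi_{\gamma,\lambda}(w)}{r_0^{p} - \Phi_{p,s}(w)^{p}} < \frac{1}{p}.
$$
Since $\Phi_{p,s}(w) < r_0$, the denominator $r_0^{p} - \Phi_{p,s}(w)^{p}$ is strictly positive, so it can be cleared without reversing the inequality.

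Next, I would multiply through by $p(r_0^{p} - \Phi_{p,s}(w)^{p})$ and separate the terms at level $w$ from those at level $r_0$. Writing $M := \sup_{v \in \Phi_{p,s}^{-1}([0,r_0])}\Psi_{\gamma,\lambda}(v)$ and recalling that $\Phi_{p,s}(w) = \rho_p(w)$, this rearrangement yields
$$
\frac{1}{p}[\rho_p(w)]^{p} - \Psi_{\gamma,\lambda}(w) \;<\; \frac{r_0^{p}}{p} - M.
$$
This already contains the essence of \eqref{cond 3.1}, since the left-hand side is precisely $\mathcal{J}_{\gamma,\lambda}(w)$ in view of the decomposition \eqref{functional as a sum}.

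Finally, I would upgrade the right-hand side from the supremum $M$ to the pointwise value $\Psi_{\gamma,\lambda}(u)$ for an arbitrary $u \in \Phi_{p,s}^{-1}([0,r_0])$. This is immediate: the trivial bound $\Psi_{\gamma,\lambda}(u) \leq M$ gives $\frac{r_0^p}{p} - M \leq \frac{r_0^p}{p} - \Psi_{\gamma,\lambda}(u)$, and chaining the two inequalities yields
$$
\mathcal{J}_{\gamma,\lambda}(w) \;<\; \frac{r_0^{p}}{p} - \Psi_{\gamma,\lambda}(u)
$$
for every such $u$, which is exactly \eqref{cond 3.1}. I do not expect any real obstacle here: this lemma is a packaging step whose only purpose is to rewrite the variational inequality \eqref{cond 2.1} in the form \eqref{cond 3.1} that will be directly used later. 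The only points requiring care are keeping track of the sign of the denominator $r_0^{p} - \Phi_{p,s}(w)^{p}$ and preserving the strictness of the inequality when passing from the supremum to an individual $u$.
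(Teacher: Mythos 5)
Your argument is correct and follows essentially the same route as the paper: pick a near-infimizer $w$ from the strict inequality \eqref{cond 2.1}, clear the positive denominator, and then replace the supremum by the pointwise value $\Psi_{\gamma,\lambda}(u)$. The only cosmetic difference is the order of rearrangement — you isolate $\mathcal{J}_{\gamma,\lambda}(w)$ first and then compare the right-hand side, whereas the paper bounds $\Psi_{\gamma,\lambda}(u)$ by the supremum first and then rearranges — but the inequalities are identical.
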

\begin{proof}
By assumption, we have
\begin{equation*}
    \inf _{u \in \Phi_{p, s}^{-1}\left(\left[0, r_{0}\right)\right)} \frac{\sup _{v \in \Phi_{p, s}^{-1}\left(\left[0, r_{0}\right]\right)} \Psi_{\gamma, \lambda}(v)-\Psi_{\gamma, \lambda}(u)}{r_{0}^{p}-\Phi_{p, s}(u)^{p}}<\frac{1}{p},
\end{equation*}
for some $r_{0}>0$. From this, we deduce that there exists $w \in \Phi_{p, s}^{-1}\left(\left[0, r_{0}\right)\right)$ such that
\begin{equation*}
    \Psi_{\gamma, \lambda}(u) \leq \sup _{v \in \Phi^{-1}\left(\left[0, r_{0}\right]\right)} \Psi_{\gamma, \lambda}(v)<\Psi_{\gamma, \lambda}(w)+\frac{r_{0}^{p}-\Phi_{p, s}(w)^{p}}{p},
\end{equation*}
for every $u \in \Phi_{p, s}^{-1}\left(\left[0, r_{0}\right]\right)$. Thus, in view of \eqref{functional as a sum}, we conclude that
\begin{equation*}
    \mathcal{J}_{\gamma, \lambda}(w)<\frac{r_{0}^{p}}{p}-\Psi_{\gamma, \lambda}(u),
\end{equation*}
for every $u \in \Phi_{p, s}^{-1}\left(\left[0, r_{0}\right]\right)$. 
\end{proof}

%%%%%%%%%%%%%%%%%%%%%%%%%%%%%%%%%%%%%%%%%%%%%%%
\section{Proof of Theorem \ref{main result wlsc 0}} \label{sec4}
The aim of this section is to present a proof of Theorem \ref{main result wlsc 0} concerning the existence of solutions to the following critical problem
\begin{equation}\label{problem 1}
 \left\{\begin{array}{cc}
    A_{\mu, p} u  =\lambda g(x,u) +\gamma |u|^{p_{s_\sharp}^{*}-2} u \text { in } \Omega,  \\
    u  = 0  \text { in } \mathbb{R}^{N} \backslash \Omega ,
\end{array}\right.
\end{equation}
where $\gamma$ and $\lambda$ are real positive parameters and $p_{s_\sharp}^{*}$ is a critical exponent given in \eqref{critical exponent}. Furthermore, $g: \Omega \times \mathbb{R} \rightarrow \mathbb{R}$ is a Carath\'eodory function verifying the following standard subcritical growth condition:

\begin{align}\label{cond on g}
\begin{split}
&\text{ there exist } a_{1}, a_{2}>0 \text{ and } q \in[1, p_{s_{\sharp}}^{*}), \text{ such that }\\
&|g(x, t)| \leq a_{1}+a_{2}|t|^{q-1} \text {, a.e. } x \in \Omega, \forall t \in \mathbb{R} . 
\end{split}
\end{align}

Next, we give the definition of a weak solution to the problem \eqref{problem 1}.

\begin{definition}
    A weak solution to the problem \eqref{problem 1} is a function $u\in X_{p}(\Omega)$ such that\footnote{We make it clear that we are using $$\int_{[0,1]}\left(\iint_{\mathbb{R}^{2 N}}  \frac{C_{N, s, p}|u(x)-u(y)|^{p-2}(u(x)-u(y))(v(x)-v(y))}{|x-y|^{N+s p}} d x d y\right) d \mu^{+}(s),$$ or same for $\mu^-$ with an abuse of notation. More precisely, it must be written as \begin{align*}
   &  \int_{(0,1)}\left(\iint_{\mathbb{R}^{2 N}}  \frac{C_{N, s, p}|u(x)-u(y)|^{p-2}(u(x)-u(y))(v(x)-v(y))}{|x-y|^{N+s p}} d x d y\right) d \mu^{+}(s) \\&\quad\quad+\mu^+(\{0\})\int_\Omega u(x) v(x)\, dx+ \mu^+(\{1\}) \int_\Omega |\nabla u|^{p-2} \langle \nabla u, \nabla v \rangle\, dx.
\end{align*}   }
\begin{align*}
& \int_{[0,1]}\left(\iint_{\mathbb{R}^{2 N}}  \frac{C_{N, s, p}|u(x)-u(y)|^{p-2}(u(x)-u(y))(v(x)-v(y))}{|x-y|^{N+s p}} d x d y\right) d \mu^{+}(s) \\
& =\int_{[0, \bar{s}]}\left(\iint_{\mathbb{R}^{2 N}} \frac{C_{N, s, p}|u(x)-u(y)|^{p-2}(u(x)-u(y))(v(x)-v(y))}{|x-y|^{N+s p}} d x d y\right) d \mu^{-}(s) \\
& +\lambda \int_{\Omega}g(x, u) v d x+\gamma\int_{\Omega}|u|^{p_{s_\sharp}^{*}-2} u v d x,
\end{align*}
for all $v \in X_{p}(\Omega)$.
\end{definition}

Due to the variational structure, the solutions of problem \eqref{problem 1} correspond to the critical points of the functional $\mathcal{J}_{\gamma,\lambda}: X_{p}(\Omega) \rightarrow \mathbb{R}$ defined as
\begin{equation}\label{functional for problem 1}
\mathcal{J}_{\gamma,\lambda}(u):=\frac{1}{p}\left[\rho_{p}(u)\right]^{p}-\frac{1}{p} \int_{[0, \bar{s}]}[u]_{s, p}^{p} d \mu^{-}(s)-\lambda \int_{\Omega}  G(x, u(x)) dx-\frac{\gamma}{p_{s_{\sharp}}^{*}} \int_{\Omega}|u|^{p_{s_\sharp}^{*}} d x, 
\end{equation}
where $G$ is given by
\begin{equation}\label{int of g}
G(x, t):=\int_{0}^{t} g(x, \tau) d \tau, \quad \forall(x, t) \in \Omega \times \mathbb{R}. 
\end{equation}
Moreover, we have
\begin{align*}
\langle \mathcal{J}^{\prime}_{\gamma,\lambda}(u), v\rangle&= \int_{[0,1]}\left(\iint_{\mathbb{R}^{2 N}}  \frac{C_{N, s, p}|u(x)-u(y)|^{p-2}(u(x)-u(y))(v(x)-v(y))}{|x-y|^{N+s p}} d x d y\right) d \mu^{+}(s) \\
& -\int_{[0, \bar{s}]}\left(\iint_{\mathbb{R}^{2 N}} \frac{C_{N, s, p}|u(x)-u(y)|^{p-2}(u(x)-u(y))(v(x)-v(y))}{|x-y|^{N+s p}} d x d y\right) d \mu^{-}(s) \\
& -\lambda \int_{\Omega}g(x, u) v d x-\gamma \int_{\Omega}|u|^{p_{s_\sharp}^{*}-2} u v d x,
\end{align*}
for all $u, v \in X_{p}(\Omega)$, where $\langle \cdot, \cdot\rangle$ is the dual product between the space $X_{p}(\Omega)$ and its dual.

At this point, we are in a position to begin the proof of Theorem \ref{main result wlsc 0}. 

\begin{proof}[Proof of Theorem \ref{main result wlsc 0}]  First, let us fix $\gamma>0$ and define the following function $h_{\gamma}:(0, +\infty) \to \mathbb{R}$ given by
\begin{equation}\label{function h}
h_{\gamma}(r):=\frac{r^{p-1}-\gamma C_{p_{s_\sharp}^{*}}^{p_{s_\sharp}^{*}} r^{p_{s_\sharp}^{*}-1}}{a_{1} C_{p_{s_\sharp}^{*}}|\Omega|^{(p_{s_\sharp}^{*}-1) / p_{s_\sharp}^{*}}+a_{2} C_{p_{s_\sharp}^{*}}^{q}|\Omega|^{(p_{s_\sharp}^{*}-q) / p_{s_\sharp}^{*}} r^{q-1}}, \quad \forall r \geq 0,
\end{equation}
where, $\Omega$ is an open subset of $\mathbb{R}^{N}$ with smooth boundary $\partial \Omega$, $|\Omega|$ is the Lebesgue measure of $\Omega$, $q \in[1, p_{s_\sharp}^{*})$, with $p_{s_\sharp}^{*}:=p N /(N- s_\sharp p),$ $C_{p_{s_\sharp}^{*}}$ is the critical Sobolev constant given in \eqref{Sobolev constant 2}, and $a_{1}, a_{2}$ are the positive real constants from \eqref{cond on g}. It is easy to see that the function $h_{\gamma}$ has a global maximum.
Let $r_{\gamma, \max }>0$ be the global maximum of a function $h_{\gamma}$ defined in \eqref{function h}. Set $r_{0, \gamma}:=\min \left\{r_{\gamma, \max }, \bar{r}_{\gamma}\right\}$, where $\bar{r}_{\gamma}$ is defined in Proposition \ref{main prop on wlsc}. If we take 
\begin{equation*}
    \lambda < \Lambda_{\gamma}:=h_{\gamma}\left(r_{0, \gamma}\right),
\end{equation*}
then we can find some $r_{0, \gamma, \lambda} \in\left(0, r_{0, \gamma}\right)$ such that
\begin{equation}\label{cond 4.1}
\lambda<\frac{r^{p-1}_{0,\gamma,\lambda}-\gamma C_{p_{s_\sharp}^{*}}^{p_{s_\sharp}^{*}} r_{0,\gamma,\lambda}^{p_{s_\sharp}^{*}-1}}{a_{1} C_{p_{s_\sharp}^{*}}|\Omega|^{(p_{s_\sharp}^{*}-1) / p_{s_\sharp}^{*}}+a_{2} C_{p_{s_\sharp}^{*}}^{q}|\Omega|^{(p_{s_\sharp}^{*}-q) / p_{s_\sharp}^{*}} r_{0,\gamma,\lambda}^{q-1}}. 
\end{equation}
Now, for $0<\varepsilon<r_{0, \gamma, \lambda},$ define
\begin{equation*}
    \Lambda_{\lambda, \gamma}\left(\varepsilon, r_{0, \gamma, \lambda}\right):=\frac{\sup _{v \in \Phi_{p, s}^{-1}([0, r_{0, \gamma, \lambda}])} \Psi_{\gamma, \lambda}(v)-\sup _{v \in \Phi_{p, s}^{-1}([0, r_{0, \gamma, \lambda}-\varepsilon])} \Psi_{\gamma, \lambda}(v)}{\varepsilon}.
\end{equation*}
Then, by rescaling $v$ and using \eqref{cond on g} and \eqref{part 2 of decomp}, we obtain
\begin{align*}
    \Lambda_{\lambda, \gamma}\left(\varepsilon, r_{0, \gamma, \lambda}\right)& \leq \frac{1}{\varepsilon}\left|\sup _{v \in \Phi_{p, s}^{-1}([0, r_{0, \gamma, \lambda}])} \Psi_{\gamma, \lambda}(v)-\sup _{v \in \Phi_{p, s}^{-1}([0, r_{0, \gamma, \lambda}-\varepsilon])} \Psi_{\gamma, \lambda}(v)\right|\\
    &\leq \sup _{v \in \Phi_{p, s}^{-1}([0,1])} \int_{\Omega}\left|\int_{\left(r_{0, \gamma, \lambda}-\varepsilon\right) v(x)}^{r_{0, \gamma, \lambda} v(x)} \frac{\left|g_{\gamma, \lambda}(x, t)\right|}{\varepsilon} d t\right| d x,
\end{align*}
where $g_{\gamma, \lambda}(x, t):=\gamma|t|^{p_{s_\sharp}^{*}-2} t+\lambda g(x, t),$ for a.e. $x \in \Omega$ and for all $t \in \mathbb{R}$. Next, using the growth condition \eqref{cond on g}, and applying Hölder inequality and the continuous embedding \eqref{embedding}, we obtain
\begin{align*}
& \sup _{v \in \Phi_{p, s}^{-1}([0,1])} \int_{\Omega}\left|\int_{\left(r_{0, \gamma, \lambda}-\varepsilon\right) v(x)}^{r_{0, \gamma, \lambda} v(x)} \frac{\left|g_{\gamma, \lambda}(x, t)\right|}{\varepsilon} d t\right| d x \leq \frac{\gamma C_{p_{s_\sharp}^{*}}^{p_{s_\sharp}^{*}}}{p^{*}_{s_{\sharp}}}\left(\frac{r_{0, \gamma, \lambda}^{p_{s_\sharp}^{*}}-\left(r_{0, \gamma, \lambda}-\varepsilon\right)^{p_{s_\sharp}^{*}}}{\varepsilon}\right) \\
& \quad+\lambda\left(a_{1} C_{p_{s_\sharp}^{*}}|\Omega|^{(p_{s_\sharp}^{*}-1) / p_{s_\sharp}^{*}}+a_{2} \frac{C_{p_{s_\sharp}^{*}}^{q}}{q}\left(\frac{r_{0, \gamma, \lambda}^{q}-\left(r_{0, \gamma, \lambda}-\varepsilon\right)^{q}}{\varepsilon}\right)|\Omega|^{(p_{s_\sharp}^{*}-q) / p_{s_\sharp}^{*}}\right),
\end{align*}
where $C_{p_{s_\sharp}^{*}}$ denotes the embedding constant for $X_{p}(\Omega) \hookrightarrow L^{p_{s_\sharp}^{*}}(\Omega)$.
Hence, passing to the limsup as $\varepsilon \rightarrow 0^{+}$, and using \eqref{cond 4.1}, we get
\begin{align}\label{cond 4.2}
\begin{split}
    \limsup _{\varepsilon \rightarrow 0^{+}} \Lambda_{\lambda, \gamma}\left(\varepsilon, r_{0, \gamma, \lambda}\right)&<\gamma C_{p_{s_\sharp}^{*}}^{p_{s_\sharp}^{*}} r_{0, \gamma, \lambda}^{p_{s_\sharp}^{*}-1}+\lambda\left(a_{1} C_{p_{s_\sharp}^{*}}|\Omega|^{(p_{s_\sharp}^{*}-1) / p_{s_\sharp}^{*}}+a_{2} C_{p_{s_\sharp}^{*}}^{q}|\Omega|^{(p_{s_\sharp}^{*}-q) / p_{s_\sharp}^{*}} r_{0, \gamma, \lambda}^{q-1}\right)\\
    &<r^{p-1}_{0, \gamma, \lambda}.
\end{split}
\end{align}
By combining \eqref{cond 4.2} with Lemmas \ref{tech lem 1} and \ref{tech lem 2}, we deduce that
\begin{equation*}
    \inf _{u \in \Phi_{p, s}^{-1}([0, r_{0, \gamma, \lambda}))} \frac{\sup _{v \in \Phi_{p, s}^{-1}([0, r_{0, \gamma, \lambda}])} \Psi_{\gamma, \lambda}(v)-\Psi_{\gamma, \lambda}(u)}{r_{0, \gamma, \lambda}^{p}-\Phi_{p, s}(u)^{p}}<\frac{1}{p}.
\end{equation*}
Therefore, by Lemma \ref{tech lem 3} there exists $w_{\gamma, \lambda} \in \Phi_{p, s}^{-1}([0, r_{0, \gamma, \lambda}))$ such that
\begin{equation}\label{contr inequality}
    \mathcal{J}_{\gamma, \lambda}\left(w_{\gamma, \lambda}\right)=\frac{1}{p}[\rho_{p}(w_{\gamma, \lambda})]^{p}-\Psi_{\gamma, \lambda}\left(w_{\gamma, \lambda}\right)<\frac{r_{0, \gamma, \lambda}^{p}}{p}-\Psi_{\gamma, \lambda}(u),
\end{equation}
for every $u \in \Phi_{p, s}^{-1}([0, r_{0, \gamma, \lambda}])$. Since $r_{0, \gamma, \lambda}<\bar{r}_{\gamma}$, and recalling that 
\begin{equation*} \mathcal{J}_{\gamma, \lambda}(u) = \mathcal{L}_{\gamma}(u) - \lambda \int_{\Omega} G(x, u(x)) dx, \end{equation*} 
it follows from Proposition \ref{main prop on wlsc} that the energy functional $\mathcal{J}_{\gamma, \lambda}$ is sequentially weakly lower semicontinuous on $\Phi_{p, s}^{-1}([0, r_{0, \gamma, \lambda}])$. Hence, the restriction $\left.\mathcal{J}_{\gamma, \lambda}\right|_{\Phi_{p, s}^{-1}([0, r_{0, \gamma, \lambda}])}$ has a global minimum $u_{0, \gamma, \lambda} \in$ $\Phi_{p, s}^{-1}([0, r_{0, \gamma, \lambda}])$.

Finally, we claim that $u_{0, \gamma, \lambda}$ belongs to $\Phi_{p, s}^{-1}([0, r_{0, \gamma, \lambda}))$. Indeed, suppose that  $\rho_{p}(u_{0, \gamma, \lambda})=r_{0, \gamma, \lambda}$. Then, by \eqref{contr inequality}, we have
\begin{equation*}
    \mathcal{J}_{\gamma, \lambda}\left(u_{0, \gamma, \lambda}\right)=\frac{r_{0, \gamma, \lambda}^{p}}{p}-\Psi_{\gamma, \lambda}\left(u_{0, \gamma, \lambda}\right)>\mathcal{J}_{\gamma, \lambda}\left(w_{\gamma, \lambda}\right),
\end{equation*}
which is a contradiction with the fact that $u_{0, \gamma, \lambda}$ is a global minimum for $\left.\mathcal{J}_{\gamma, \lambda}\right|_{\Phi_{p, s}^{-1}([0, r_{0, \gamma, \lambda}])}$. Therefore, we conclude that $u_{0, \gamma, \lambda} \in X_{p}(\Omega)$ is a local minimum for the energy functional $\mathcal{J}_{\gamma, \lambda}$ with
\begin{equation*}
   \rho_{p}(u_{0, \gamma, \lambda})<r_{0, \gamma, \lambda},
\end{equation*}
and hence, a weak solution to the problem \eqref{problem 1}. This completes the proof of Theorem \ref{main result wlsc 0}.
\end{proof}

\section{Some implications and the proof of Theorem \ref{thmnonmain}} \label{sec5}

In this section, we discuss applications and implications of Theorem \ref{main result wlsc 0}, along with the proof of Theorem \ref{thmnonmain}. The first application of Theorem \ref{main result wlsc 0} establishes the sign of the solution under an additional condition on the Carath\'eodory function $g$, given by \eqref{cond on g 0}.

\begin{thm} \label{subthm}
     Let $ \Omega$ be a bounded subset of $\mathbb{R}^N$. Let $\mu=\mu^{+}-\mu^{-}$ with $\mu^{+}$and $\mu^{-}$ satisfying \eqref{measure 1}-\eqref{measure 3} and \eqref{extracondi}, and let $s_{\sharp}$ be as in \eqref{measure 4}. Suppose that $\frac{N}{s_{\sharp}}>p \geq 2$ and $q \in[1, p_{s_\sharp}^*)$, where $p_{s_\sharp}^*=\frac{p N}{N-{s_\sharp}p}$. Let $g: \Omega \times \mathbb{R} \rightarrow \mathbb{R}$ be a Carath\'eodory function satisfying the subcritical growth condition
    \begin{equation} \label{growth1}
        |g(x, t)| \leq a_1+a_2|t|^{q-1}\quad \text{for}\,\,\text{a.e.}\,\,x\in \Omega,\,\, \forall\, t \in \mathbb{R},
    \end{equation}
    for some $a_1, a_2>0$   and $g(x, 0)=0$ for a.e. $x \in \Omega.$ We also assume that
     \begin{equation} \label{growth1-1}
        \liminf\limits_{t\rightarrow 0^+}\frac{G(x,t)}{t^p}=+\infty,~\text{ for a.e. } x\in\Omega, ~\text{where}~ G(x, t):=\int_0^t g(x, \tau)\, d \tau,
    \end{equation} holds.
    Then, for every $\gamma>0$ and sufficiently small $\lambda>0,$  the following nonlocal problem
    \begin{equation} \label{pro12}
       \begin{cases} 
A_{\mu, p} u= \gamma |u|^{p_{s_\sharp}^*-2}u+\lambda g(x, u) \quad &\text{in}\quad \Omega, \\
u=0\quad & \text{in}\quad \mathbb{R}^n\backslash \Omega,
\end{cases}
    \end{equation} has a nontrivial nonnegative weak solution $u_{\gamma, \lambda} \geq 0$ and a nontrivial nonpositive weak solution $v_{\gamma, \lambda} \leq 0$ in $X_p(\Omega).$  
    
\end{thm}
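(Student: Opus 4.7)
The plan is to apply the variational machinery of Theorem \ref{main result wlsc 0} to a truncated auxiliary problem tailored to produce a nonnegative solution, and then use the additional hypotheses \eqref{extracondi} and \eqref{growth1-1} to show that the resulting local minimum is genuinely nontrivial and nonnegative. Introduce the Carath\'eodory truncation $\hat g(x,t):=g(x,t)$ for $t\geq 0$ and $\hat g(x,t):=0$ for $t<0$, which is continuous in $t$ at the origin (since $g(x,0)=0$) and inherits the subcritical growth \eqref{growth1}. Setting $\hat G(x,t):=\int_0^t \hat g(x,\tau)\,d\tau$, consider the modified equation $A_{\mu,p}u=\gamma(u^+)^{p_{s_\sharp}^{*}-1}+\lambda\hat g(x,u)$ with energy
\begin{equation*}
\mathcal{J}^+_{\gamma,\lambda}(u):=\frac{1}{p}[\rho_p(u)]^p-\frac{1}{p}\int_{[0,\bar s]}[u]^p_{s,p}\,d\mu^-(s)-\lambda\int_\Omega\hat G(x,u)\,dx-\frac{\gamma}{p_{s_\sharp}^{*}}\int_\Omega(u^+)^{p_{s_\sharp}^{*}}\,dx.
\end{equation*}
Since $\int_\Omega (u^+)^{p_{s_\sharp}^{*}}\leq\int_\Omega|u|^{p_{s_\sharp}^{*}}$, the weak lower semicontinuity proof of Proposition \ref{main prop on wlsc} carries over verbatim, and the $h_\gamma$-maximization argument from the proof of Theorem \ref{main result wlsc 0} produces, for each $\gamma>0$ and every sufficiently small $\lambda>0$, a local minimizer $u_{\gamma,\lambda}\in\overline{B_{X_p(\Omega)}(0,r_0)}$ of $\mathcal{J}^+_{\gamma,\lambda}$.

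To prove nontriviality of $u_{\gamma,\lambda}$, I would evaluate $\mathcal{J}^+_{\gamma,\lambda}$ on the family $t\phi$ for $\phi\in C_c^\infty(\Omega)$ with $\phi\geq 0$, $\phi\not\equiv 0$, and $t>0$ small. The Gagliardo part contributes $t^pK_1/p$, where $K_1:=[\rho_p(\phi)]^p-\int_{[0,\bar s]}[\phi]^p_{s,p}\,d\mu^-$ is bounded below by $(1-c_0\kappa)\int_{[\bar s,1]}[\phi]^p_{s,p}\,d\mu^+>0$ via Lemma \ref{reabsorb}, the critical term is a negative $O(t^{p_{s_\sharp}^{*}})$, and an Egorov-type argument exploiting \eqref{growth1-1} shows $t^{-p}\int_\Omega\hat G(x,t\phi)\,dx\to+\infty$ along some $t_n\searrow 0$ (the subcritical bound $|G|\lesssim|t|+|t|^q$ on the small exceptional set is controlled by tuning $t_n$ to the Egorov level). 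Consequently $\mathcal{J}^+_{\gamma,\lambda}(t_n\phi)<0$ for large $n$, and minimality then forces $\mathcal{J}^+_{\gamma,\lambda}(u_{\gamma,\lambda})<0=\mathcal{J}^+_{\gamma,\lambda}(0)$, hence $u_{\gamma,\lambda}\not\equiv 0$.

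The main obstacle is showing $u_{\gamma,\lambda}\geq 0$ a.e. Because the last two terms of $\mathcal{J}^+_{\gamma,\lambda}$ depend only on $u^+$, one has
\begin{equation*}
\mathcal{J}^+_{\gamma,\lambda}(u_{\gamma,\lambda}^+)-\mathcal{J}^+_{\gamma,\lambda}(u_{\gamma,\lambda})=\frac{1}{p}\Bigl(\int_{[0,1]}[u_{\gamma,\lambda}^+]^p_{s,p}\,d\mu(s)-\int_{[0,1]}[u_{\gamma,\lambda}]^p_{s,p}\,d\mu(s)\Bigr).
\end{equation*}
The key observation is that the proof of Lemma \ref{energycomparison} extends verbatim with $|u|$ replaced by $u^+$: only the sign-agnostic pointwise bound $|u^+(x)-u^+(y)|\leq|u(x)-u(y)|$ and the kernel estimate of Lemma \ref{lm 2.5} are invoked, yielding $\int[u_{\gamma,\lambda}^+]^p_{s,p}\,d\mu\leq\int[u_{\gamma,\lambda}]^p_{s,p}\,d\mu$. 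Since moreover $\rho_p(u_{\gamma,\lambda}^+)\leq\rho_p(u_{\gamma,\lambda})$ places $u_{\gamma,\lambda}^+$ in the same ball, minimality forces equality throughout. Tracing through the proof of Lemma \ref{energycomparison}, the strict negativity of the kernel bracket (under \eqref{extracondi} with $\bar\kappa$ as in \eqref{2.13}) then forces $|u_{\gamma,\lambda}(x)-u_{\gamma,\lambda}(y)|=|u_{\gamma,\lambda}^+(x)-u_{\gamma,\lambda}^+(y)|$ for a.e. $(x,y)\in\mathbb{R}^{2N}$; a brief case analysis on the signs of $u_{\gamma,\lambda}(x),u_{\gamma,\lambda}(y)$ (the two sides differ strictly when these have opposite signs) shows that $u_{\gamma,\lambda}$ must have constant sign a.e. The scenario $u_{\gamma,\lambda}\leq 0$ a.e.\ is ruled out by the fact that then $(u^+)^{p_{s_\sharp}^{*}-1}\equiv 0$ and $\hat g(x,u_{\gamma,\lambda})\equiv 0$, so the Euler--Lagrange equation collapses to $A_{\mu,p}u_{\gamma,\lambda}=0$; testing with $u_{\gamma,\lambda}$ and invoking the coercivity $(1-c_0\kappa)[\rho_p(u_{\gamma,\lambda})]^p\leq\int[u_{\gamma,\lambda}]^p_{s,p}\,d\mu$ supplied by Lemma \ref{reabsorb} (valid for $c_0\kappa<1$, consistent with the smallness of $\bar\kappa$) forces $u_{\gamma,\lambda}\equiv 0$, contradicting nontriviality. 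Thus $u_{\gamma,\lambda}\geq 0$ a.e. The nontrivial nonpositive counterpart $v_{\gamma,\lambda}\leq 0$ is produced by the symmetric construction applied to $-u$ via the truncation $\tilde g(x,t):=-g(x,-t)$ for $t\geq 0$ (and $0$ otherwise) with critical term $(u^-)^{p_{s_\sharp}^{*}-1}$, the nontriviality step using the analogous behaviour of $G$ as $t\to 0^-$.
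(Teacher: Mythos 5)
The paper's own proof modifies the auxiliary functional differently than you do: in the $\mu^-$ part it places $[u^+]^p_{s,p}$, not $[u]^p_{s,p}$. Your version
$$
\mathcal{J}^+_{\gamma,\lambda}(u):=\frac{1}{p}[\rho_p(u)]^p-\frac{1}{p}\int_{[0,\bar s]}[u]^p_{s,p}\,d\mu^-(s)-\lambda\int_\Omega\hat G(x,u)\,dx-\frac{\gamma}{p_{s_\sharp}^{*}}\int_\Omega(u^+)^{p_{s_\sharp}^{*}}\,dx
$$
keeps $[u]^p_{s,p}$, and then your nonnegativity step compares $\mathcal{J}^+_{\gamma,\lambda}(u^+)$ with $\mathcal{J}^+_{\gamma,\lambda}(u)$, reducing the matter to the inequality $\int_{[0,1]}[u^+]^p_{s,p}\,d\mu\leq\int_{[0,1]}[u]^p_{s,p}\,d\mu$, which you assert follows from Lemma \ref{energycomparison} ``verbatim.'' That step has a genuine gap. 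The proof of Lemma \ref{energycomparison} does not invoke only the pointwise bound and Lemma \ref{lm 2.5}: it also uses the \emph{equalities} $[|u|]_0=[u]_0$ and $[|u|]_1=[u]_1$ to discard the atomic contributions at $s=0$ and $s=1$ before applying the kernel estimate, which only controls the open interval. For $u^+$ those identities become strict inequalities, $[u^+]_0\le[u]_0$ and $[u^+]_1\le[u]_1$, and the $s=0$ contribution to $\int_{[0,1]}\bigl([u^+]^p_{s,p}-[u]^p_{s,p}\bigr)\,d\mu$ is $\bigl(\mu^+(\{0\})-\mu^-(\{0\})\bigr)\bigl([u^+]^p_0-[u]^p_0\bigr)$, which is \emph{positive} whenever $\mu^-(\{0\})>\mu^+(\{0\})$. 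Nothing in \eqref{measure 1}--\eqref{measure 3} or \eqref{extracondi} rules this out; for instance $\mu^+=\delta_1+\delta_{\bar s}$, $\mu^-=\alpha\delta_0$ with $\alpha>0$ small satisfies all the hypotheses and has $\mu^-(\{0\})>\mu^+(\{0\})=0$. So the claimed inequality, and hence the minimality comparison between $u^+$ and $u$, is not established.

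The paper avoids this entirely: because the $\mu^-$ term of its auxiliary functional depends only on $u^+$, testing the Euler--Lagrange identity with $v=u^-$ produces, for the $\mu^-$ part, integrands built from $|u^+(x)-u^+(y)|^{p-2}(u^+(x)-u^+(y))(u^-(x)-u^-(y))$ together with the $s=0$ contribution $\mu^-(\{0\})\int_\Omega(u^+)^{p-1}u^-\,dx$, which vanishes identically since $u^+u^-\equiv 0$ (and $\mu^-(\{1\})=0$ by \eqref{measure 2}). The algebraic identities $(a-b)(a^--b^-)=-(a^--b^-)^2-(a^+b^-+a^-b^+)$ and $(a^+-b^+)(a^--b^-)=-(a^+b^-+a^-b^+)$, combined with $|u^+(x)-u^+(y)|\le|u(x)-u(y)|$, $p\ge 2$, and the kernel estimate \eqref{eq3.19}, then give directly $0\le-\|u^-\|_{X_p(\Omega)}^p$, hence $u^-\equiv 0$. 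This is the essential idea you are missing: by pushing the positive-part structure into the $\mu^-$ term of the functional, the problematic endpoint mass never appears. (If you insist on your functional, you would at least need an auxiliary Poincar\'e-type estimate to absorb the $s=0$ defect into the Gagliardo gaps at $s\in[\bar s,1]$, and a separate treatment of the equality case when $\mu^+$ carries no mass in $(0,1)$, since the ``strict negativity of the kernel bracket'' you invoke is vacuous there.) Your nontriviality argument and the rule-out of the $u\le 0$ branch, as well as the Egorov-type refinement of the $\liminf$ hypothesis, are fine and in fact slightly more careful than the paper's on the uniformity of $\delta$ in $x$.
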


\begin{proof} 
   We begin by establishing the existence of a solution to problem \eqref{pro12}. Since we are concerned with nonnegative solutions, it suffices to consider the function
    $$G_+(x, t):=\int_0^t g_+(x, \tau)\, d \tau,$$
    where $$g_+(x, \tau):=\begin{cases}
        g(x, \tau) &\text{if}\quad \tau >0, \\
        0&\text{if} \quad \tau \leq 0,
    \end{cases}$$ for almost every $x \in \Omega$ and $\tau \in \mathbb{R}.$
It is easy to see that $g_+$ is a Carath\'eodory function and satisfies the condition \eqref{growth1} with $g_+(x, 0)=0.$

Now, we define the functional $\mathcal{J}_{\gamma,\lambda}^+:X_p(\Omega) \rightarrow \mathbb{R}$ as 

    \begin{equation}
        \mathcal{J}_{\gamma,\lambda}^+(u):=\frac{1}{p}\left[\rho_{p}(u)\right]^{p}-\frac{1}{p} \int_{[0, \bar{s}]}[u^+]_{s, p}^{p} d \mu^{-}(s)-\lambda \int_{\Omega} G_+(x, u(x)) d x-\frac{\gamma}{p_{s_{\sharp}}^{*}} \int_{\Omega}(u(x)^+)^{p_{s_\sharp}^{*}} d x
    \end{equation}
    for every $u \in X^{s,p}_0(\Omega).$
    Note that $ \mathcal{J}_{\gamma, \lambda}^+$ is Fr\'echet differentiable and therefore, we have 
     \begin{align} \label{M1}
   \nonumber   &\langle  ( \mathcal{J}_{\gamma, \lambda}^+)^{'} (u), v \rangle \\ &:= \int_{[0,1]}\left(\iint_{\mathbb{R}^{2 N}}  \frac{C_{N, s, p}|u(x)-u(y)|^{p-2}(u(x)-u(y))(v(x)-v(y))}{|x-y|^{N+s p}} d x d y\right) d \mu^{+}(s) \nonumber \\
& -\int_{[0, \bar{s}]}\left(\iint_{\mathbb{R}^{2 N}} \frac{C_{N, s, p}|u^+(x)-u^+(y)|^{p-2}(u^+(x)-u^+(y))(v(x)-v(y))}{|x-y|^{N+s p}} d x d y\right) d \mu^{-}(s)\nonumber \\&-\gamma \int_{\Omega} (u(x)^+)^{p_{s_\sharp}^*-1}  \varphi(x) d x-\lambda \int_{\Omega} g_+(x, u(x)) \varphi(x) d x,
       \end{align} for every $v \in X^{s,p}_0(\Omega).$ Then, as an application of Theorem \ref{main result wlsc 0}, for every $\gamma>0$ and for sufficiently small $\lambda>0,$ there exists a critical point $u_{\gamma, \lambda} \in X_p(\Omega)$ of  $\mathcal{J}_{\gamma, \lambda}^+$ and therefore, a weak solution of the problem \eqref{pro12} with $g_+$.

       It remains to show that the obtained solution $u_{\gamma, \lambda}$ is nonnegative on $\Omega.$ For this purpose, let us take $v$ as 
       $v:=u_{\gamma, \lambda}^{-}:=\max\{-u_{\gamma, \lambda}, 0\}.$ We note $u_{\gamma, \lambda} \in X_p(\Omega) $ and $||u_{\gamma, \lambda}(x)|-|u_{\gamma, \lambda}(y)|| \leq |u_{\gamma, \lambda}(x)-u_{\gamma, \lambda}(y)|$ imply that $|u_{\gamma, \lambda}| \in X_p(\Omega).$  Since $|u_{\gamma, \lambda}|$ and $u_{\gamma, \lambda}$ belong to $X_p(\Omega),$ it is clear, using the fact that $X_p(\Omega)$ is a vector space,  that $u_{\gamma, \lambda}^{-} \in X_p(\Omega).$ 
Thus, we test \eqref{M1} with $v= u_{\gamma, \lambda}^{-}$  and use the fact that $u_{\gamma, \lambda}$ is a critical point of $\mathcal{J}_{\gamma, \lambda}^+,$ that is, $\langle  (\mathcal{J}_{\gamma, \lambda}^+)^{'} (u), v \rangle=0$ for all $v \in X_p(\Omega)$, to  obtain that 

 \begin{align} \label{ruz5.5}
   \nonumber   0=&\langle  ( \mathcal{J}_{\gamma, \lambda}^+)^{'} (u_{\gamma, \lambda}), u_{\gamma, \lambda}^{-} \rangle \\ &= \int_{[0,1]}\left(\iint_{\mathbb{R}^{2 N}}  \frac{C_{N, s, p}|u_{\gamma, \lambda}(x)-u_{\gamma, \lambda}(y)|^{p-2}(u_{\gamma, \lambda}(x)-u_{\gamma, \lambda}(y))(u_{\gamma, \lambda}^{-}(x)-u_{\gamma, \lambda}^{-}(y))}{|x-y|^{N+s p}} d x d y\right) d \mu^{+}(s) \nonumber \\
& -\int_{[0, \bar{s}]}\left(\iint_{\mathbb{R}^{2 N}} \frac{C_{N, s, p}|u_{\gamma, \lambda}^+(x)-u_{\gamma, \lambda}^+(y)|^{p-2}(u_{\gamma, \lambda}^+(x)-u_{\gamma, \lambda}^+(y))(u_{\gamma, \lambda}^{-}(x)-u_{\gamma, \lambda}^{-}(y))}{|x-y|^{N+s p}} d x d y\right) d \mu^{-}(s),
       \end{align}
      using the definition of $g_+$ with the fact the $u_{\gamma, \lambda}^{-}=0$ when $u_{\gamma, \lambda} \geq 0.$
      Now, we use the inequality $(a-b)(a^--b^-) = -(a^- -b^-)^2-(a^+b^-+a^-b^+)$ in the first integral of \eqref{ruz5.5} and $(a^+-b^+)(a^--b^-)=-(a^+b^-+b^+a^-)$ in the second integral of \eqref{ruz5.5}, \eqref{eq3.19} and $|a^{\pm}-b^{\pm}|  \leq |a-b|$ to get
      \begin{align*}
          0=&\langle  ( \mathcal{J}_{\gamma, \lambda}^+)^{'} (u_{\gamma, \lambda}), u_{\gamma, \lambda}^{-} \rangle  \\&=  -\int_{[0,1]}\left(\iint_{\mathbb{R}^{2 N}}  \frac{C_{N, s, p}|u_{\gamma, \lambda}(x)-u_{\gamma, \lambda}(y)|^{p-2}(u_{\gamma, \lambda}^{-}(x)-u_{\gamma, \lambda}^{-}(y))^2}{|x-y|^{N+s p}} d x d y\right) d \mu^{+}(s) \nonumber\\& -\int_{[0,1]}\left(\iint_{\mathbb{R}^{2 N}}  \frac{C_{N, s, p}|u_{\gamma, \lambda}(x)-u_{\gamma, \lambda}(y)|^{p-2}(u_{\gamma, \lambda}^+(x)u_{\gamma, \lambda}^{-}(y)+u^+_{\gamma, \lambda}(y) u_{\gamma, \lambda}^{-}(x))}{|x-y|^{N+s p}} d x d y\right) d \mu^{+}(s) \nonumber  \\
& +\int_{[0, \bar{s}]}\left(\iint_{\mathbb{R}^{2 N}} \frac{C_{N, s, p}|u_{\gamma, \lambda}^+(x)-u_{\gamma, \lambda}^+(y)|^{p-2}(u_{\gamma, \lambda}^+(x)u_{\gamma, \lambda}^{-}(y)+u^+_{\gamma, \lambda}(y) u_{\gamma, \lambda}^{-}(x))}{|x-y|^{N+s p}} d x d y\right) d \mu^{-}(s)\nonumber \\&\leq   -\int_{[0,1]}\left(\iint_{\mathbb{R}^{2 N}}  \frac{C_{N, s, p}|u_{\gamma, \lambda}(x)-u_{\gamma, \lambda}(y)|^{p-2}(u_{\gamma, \lambda}^{-}(x)-u_{\gamma, \lambda}^{-}(y))^2}{|x-y|^{N+s p}} d x d y\right) d \mu^{+}(s) \nonumber\\& -\int_{[0,1]}\left(\iint_{\mathbb{R}^{2 N}}  \frac{C_{N, s, p}|u_{\gamma, \lambda}(x)-u_{\gamma, \lambda}(y)|^{p-2}(u_{\gamma, \lambda}^+(x)u_{\gamma, \lambda}^{-}(y)+u^+_{\gamma, \lambda}(y) u_{\gamma, \lambda}^{-}(x))}{|x-y|^{N+s p}} d x d y\right) d \mu^{+}(s) \nonumber \\
& +\int_{[0, 1]}\left(\iint_{\mathbb{R}^{2 N}} \frac{C_{N, s, p}|u_{\gamma, \lambda}(x)-u_{\gamma, \lambda}(y)|^{p-2}(u_{\gamma, \lambda}^+(x)u_{\gamma, \lambda}^{-}(y)+u^+_{\gamma, \lambda}(y) u_{\gamma, \lambda}^{-}(x))}{|x-y|^{N+s p}} d x d y\right) d \mu^{+}(s)\nonumber \\&=  -\int_{[0,1]}\left(\iint_{\mathbb{R}^{2 N}}  \frac{C_{N, s, p}|u_{\gamma, \lambda}(x)-u_{\gamma, \lambda}(y)|^{p-2}(u_{\gamma, \lambda}^{-}(x)-u_{\gamma, \lambda}^{-}(y))^2}{|x-y|^{N+s p}} d x d y\right) d \mu^{+}(s) \nonumber\\& \leq  -\int_{[0,1]}\left(\iint_{\mathbb{R}^{2 N}}  \frac{C_{N, s, p}|u_{\gamma, \lambda}^-(x)-u_{\gamma, \lambda}^-(y)|^{p-2}(u_{\gamma, \lambda}^{-}(x)-u_{\gamma, \lambda}^{-}(y))^2}{|x-y|^{N+s p}} d x d y\right) d \mu^{+}(s) \nonumber\\&=
          - \|u_{\gamma, \lambda}^{-}\|_{X_p(\Omega)}^p,
      \end{align*}
      \noindent which implies that 
\begin{align}
     \|u_{\gamma, \lambda}^{-}\|_{X_p(\Omega)} = 0,
\end{align} which further shows that $u_{\gamma, \lambda}^{-}(x)=0$ for a.e. $x \in \Omega$ leading to the conclusion that $u_{\gamma, \lambda}(x)\geq 0$ for a.e. $x \in \Omega.$  
We will now prove that $u_{\gamma, \lambda} \not\equiv 0$ in $\Omega$. Consider, $w\in C_c^\infty(\Omega)$ such that $w\geq 0$ and $w\not\equiv 0$. Since, $\liminf\limits_{t\rightarrow 0^+}\frac{G(x,t)}{t^p}=+\infty$ from \eqref{growth1-1}, then for all $M>0$ there exists a $\delta>0$ such that 
$$G(x,t)\geq M t^p,~\forall\, t\in(0,\delta).$$
Thus choosing $t\in(0,\frac{\delta}{\sup_\Omega w})$, we get
\begin{align*}
      \mathcal{J}_{\gamma, \lambda}^+ (tw)&=\frac{1}{p}\left[\rho_{p}(tw)\right]^{p}-\frac{1}{p} \int_{[0, \bar{s}]}[tw^+]_{s, p}^{p} d \mu^{-}(s)-\frac{\gamma}{p_{s_\sharp}^*}\int_\Omega|tw|^{p_{s_\sharp}^*}dx-\lambda \int_\Omega G^+(x,tw)dx\\
    &\leq \frac{1}{p}\left[\rho_{p}(tw)\right]^{p}-\frac{1}{p} \int_{[0, \bar{s}]}[tw^+]_{s, p}^{p} d \mu^{-}(s)-\lambda M \|w\|^p_{L^p(\Omega)}t^p-\frac{\gamma}{p_{s_\sharp}^*}t^{p_{s_\sharp}^*}\int_\Omega|w|^{p_{s_\sharp}^*}dx\\
    &<0~\text{for sufficiently large } M>0.
\end{align*}
Therefore, $0$ cannot be a local minimum of $\mathcal{J}_{\gamma, \lambda}^+$ and hence $u_{\gamma, \lambda}(x)$ is nontrivial, nonnegative weak solution to the problem \eqref{pro12}.

For the existence of a nonpositive weak solution, we work with the functional $\mathcal{J}_{\mu, \lambda}^-:X_p(\Omega) \rightarrow \mathbb{R}$ as 

    \begin{equation}
        \mathcal{J}_{\gamma, \lambda}^-(u):= \frac{1}{p}\left[\rho_{p}(u)\right]^{p}-\frac{1}{p} \int_{[0, \bar{s}]}[u^-]_{s, p}^{p} d \mu^{-}(s) -\frac{\gamma}{p_{s_\sharp}^*} \int_{\Omega} (u(x)^-)^{p_{s_\sharp}^*} d x-\lambda \int_{\Omega} G_-(x, u(x)) d x,
    \end{equation} for every $u \in X_p(\Omega).$ Here the function $G_-$ is given by
    $$G_-(x, t):=\int_0^t g_-(x, \tau)\, d \tau,$$
    where $$g_-(x, \tau):=\begin{cases}
        g(x, \tau) &\text{if}\quad \tau <0 \\
        0&\text{if} \quad \tau \geq 0,
    \end{cases}$$ for almost everywhere $x \in \Omega$ and $\tau \in \mathbb{R}.$ Following the same approach as for nonnegative solutions and applying Theorem \ref{main result wlsc 0}, we deduce that for every $\gamma>0$ and sufficiently small $\lambda$, problem \eqref{pro12} admits a nontrivial, nonpositive weak solution. This completes the proof of the theorem.
      \end{proof}
    
\begin{remark}
   It is important to note that condition \eqref{growth1-1} is essential for ensuring the existence of a nontrivial solution to \eqref{pro12}. Consequently, any equivalent condition to \eqref{growth1-1} that also satisfies \eqref{growth1} can be assumed to guarantee the existence of a nontrivial solution.
\end{remark} 
     Before proving Theorem \ref{thmnonmain}, we observe that the proof of Theorem \ref{main result wlsc 0} remains valid even when considering the nonlocal problem with a lower-order term $g: \Omega \times \mathbb{R} \rightarrow \mathbb{R}$ such that 
      \begin{equation}
          |g(x, t)| \leq a_1+a_2 |t|^{m-1}+a_3 |t|^{q-1}\quad \text{a. e.}\,\,\,x \in \Omega,\, \forall t \in \mathbb{R},
      \end{equation}
      for some positive constants $a_j, j=1,2,3$ and $1<m<p \leq q <p_{s_\sharp}^*.$ In fact, we have the following result: 
      \begin{thm} \label{thm1.1dupl}
    Let $ \Omega$ be a bounded subset of $\mathbb{R}^N$. Let $\mu=\mu^{+}-\mu^{-}$ with $\mu^{+}$and $\mu^{-}$ satisfying \eqref{measure 1}-\eqref{measure 3} and \eqref{extracondi}, and let $s_{\sharp}$ be as in \eqref{measure 4}. Let $\frac{N}{s_{\sharp}}>p \geq 2$. Suppose that $m$ and $q$ are two real constants such that 
\begin{equation} \label{growndpm}
    1 \leq m<p\leq q<p_{s_\sharp}^*=\frac{p N}{N-{s_\sharp}p}.
\end{equation} Let $g:\Omega \times \mathbb{R} \rightarrow \mathbb{R}$ be a Carath\'eodory function such that 
    \begin{equation} \label{growthdupl}
       |g(x, t)| \leq a_1+a_2 |t|^{m-1}+a_3 |t|^{q-1}\quad \text{a.e. }\,x \in \Omega,\, \forall\, t \in \mathbb{R},
    \end{equation}
    for some $a_1, a_2, a_3>0.$ Then, for every $\gamma>0,$ there exists   $\Sigma_\gamma>0$ such that the following nonlocal problem
    \begin{equation} \label{pro1dump}
       \begin{cases} 
A_{\mu, p} u= \gamma |u|^{p_s^*-2}u+\lambda g(x, u) \quad &\text{in}\quad \Omega, \\
u=0\quad & \text{in}\quad \mathbb{R}^N\backslash \Omega,
\end{cases}
    \end{equation} admits at least one  weak solution on $u_\lambda \in X^{s,p}_0(\Omega)$ for every $0<\lambda < \Sigma_\gamma,$ which is local minimum of the energy functional 
    \begin{equation} \label{endupl}
        \mathcal{J}_{\gamma, \lambda}(u):=\frac{1}{p}\left[\rho_{p}(u)\right]^{p}-\frac{1}{p} \int_{[0, \bar{s}]}[u]_{s, p}^{p} d \mu^{-}(s)-\lambda \int_{\Omega} \int_{0}^{u(x)} g(x, \tau) d \tau dx-\frac{\gamma}{p_{s_{\sharp}}^{*}} \int_{\Omega}|u|^{p_{s_\sharp}^{*}} d x,
    \end{equation} for every $u \in X_p(\Omega).$
\end{thm}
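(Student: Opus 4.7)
The strategy is to mimic step by step the proof of Theorem \ref{main result wlsc 0}, adapting the estimates to accommodate the additional middle-growth term $a_2|t|^{m-1}$ in \eqref{growthdupl}. The key observation is that Proposition \ref{main prop on wlsc}, which supplies the weak lower semicontinuity radius $\bar{r}_\gamma$, depends only on the critical nonlinearity $|u|^{p_{s_\sharp}^*}$ and the Gagliardo-type functional, not on the subcritical perturbation. It therefore applies without modification, and the proof reduces to redoing the selection of $\lambda$ with a slightly enriched denominator.

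Concretely, for fixed $\gamma > 0$ I would introduce
$$\widetilde{h}_\gamma(r) := \frac{r^{p-1} - \gamma C_{p_{s_\sharp}^*}^{p_{s_\sharp}^*} r^{p_{s_\sharp}^*-1}}{a_1 C_{p_{s_\sharp}^*}|\Omega|^{\frac{p_{s_\sharp}^*-1}{p_{s_\sharp}^*}} + a_2 C_{p_{s_\sharp}^*}^m|\Omega|^{\frac{p_{s_\sharp}^*-m}{p_{s_\sharp}^*}} r^{m-1} + a_3 C_{p_{s_\sharp}^*}^q|\Omega|^{\frac{p_{s_\sharp}^*-q}{p_{s_\sharp}^*}} r^{q-1}}, \quad r \geq 0.$$
Since $1 \leq m < p \leq q < p_{s_\sharp}^*$, the numerator is positive on a nondegenerate interval while the denominator is continuous and bounded away from zero there (the summand $a_2 r^{m-1}$ stays bounded at $r=0$ because $m \geq 1$ and is of strictly sub-leading degree at infinity because $m < q$). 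Hence $\widetilde{h}_\gamma$ admits a strictly positive global maximum at some $r_{\gamma,\max} > 0$. Setting $r_{0,\gamma} := \min\{r_{\gamma,\max}, \bar{r}_\gamma\}$ and $\Sigma_\gamma := \widetilde{h}_\gamma(r_{0,\gamma})$, for every $\lambda \in (0,\Sigma_\gamma)$ one can select $r_{0,\gamma,\lambda} \in (0,r_{0,\gamma})$ satisfying the three-term analogue of \eqref{cond 4.1}.

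The next step is to bound $\Lambda_{\lambda,\gamma}(\varepsilon, r_{0,\gamma,\lambda})$. The potential $G(x,t) = \int_0^t g(x,\tau)\,d\tau$ now satisfies $|G(x,t)| \leq a_1 |t| + \tfrac{a_2}{m}|t|^m + \tfrac{a_3}{q}|t|^q$, so H\"older's inequality combined with the continuous Sobolev embedding \eqref{embedding} produces three subcritical contributions of orders $r_{0,\gamma,\lambda}^0$, $r_{0,\gamma,\lambda}^{m-1}$, and $r_{0,\gamma,\lambda}^{q-1}$. Passing to the limsup as $\varepsilon \to 0^+$ and invoking the analogue of \eqref{cond 4.1} yields
$$\limsup_{\varepsilon \to 0^+} \Lambda_{\lambda,\gamma}(\varepsilon, r_{0,\gamma,\lambda}) < r_{0,\gamma,\lambda}^{p-1}.$$
Lemmas \ref{tech lem 1}, \ref{tech lem 2}, and \ref{tech lem 3} then apply verbatim, producing $w_{\gamma,\lambda} \in \Phi_{p,s}^{-1}([0,r_{0,\gamma,\lambda}))$ satisfying the strict inequality \eqref{contr inequality}. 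Since $r_{0,\gamma,\lambda} < \bar{r}_\gamma$, the functional $\mathcal{J}_{\gamma,\lambda}$ defined in \eqref{endupl} is sequentially weakly lower semicontinuous on $\Phi_{p,s}^{-1}([0,r_{0,\gamma,\lambda}])$, so direct minimization yields a global minimizer $u_\lambda$ on this closed ball. The usual contradiction argument using \eqref{contr inequality} rules out $\rho_p(u_\lambda) = r_{0,\gamma,\lambda}$, forcing $u_\lambda$ to lie in the interior and hence to be a local minimum of $\mathcal{J}_{\gamma,\lambda}$ on $X_p(\Omega)$, which is the desired weak solution of \eqref{pro1dump}.

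The only delicate point is verifying that the richer denominator of $\widetilde{h}_\gamma$ still admits a positive global maximum; this is a straightforward single-variable calculus exercise using the constraints $1 \leq m < p \leq q < p_{s_\sharp}^*$. Apart from this bookkeeping, no new ideas are required beyond those of Theorem \ref{main result wlsc 0}, and this is the main reason why the theorem is stated as an immediate extension rather than requiring an independent argument.
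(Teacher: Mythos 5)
Your proposal matches the paper's proof of Theorem~\ref{thm1.1dupl} essentially line by line: same choice of the three-term rational function $h_\gamma$, same observation that Proposition~\ref{main prop on wlsc} needs no modification, same H\"older/Sobolev estimate on $\Lambda_{\lambda,\gamma}$ producing the three subcritical contributions, and the same application of Lemmas~\ref{tech lem 1}--\ref{tech lem 3} followed by the contradiction argument to place the minimizer in the open ball. The argument is correct and takes the same approach.
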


\begin{proof} The proof of this theorem closely follows that of Theorem \ref{main result wlsc 0}. However, for the sake of completeness, we provide the detailed proof.
    For  a fixed $\gamma>0,$ let us define the following  function $h_\gamma:(0, \infty) \rightarrow \mathbb{R}$ given by
\begin{equation} \label{rationfun2}
    h_{\gamma}(r):=\frac{r^{p-1}-\gamma C_{p_{s_\sharp}^{*}}^{p_{s_\sharp}^{*}} r^{p_{s_\sharp}^{*}-1}}{a_{1} C_{p_{s_\sharp}^{*}}|\Omega|^{(p_{s_\sharp}^{*}-1) / p_{s_\sharp}^{*}}+a_2C_{p_{s_\sharp}^{*}}^{m}|\Omega|^{(p_{s_\sharp}^{*}-m) / p_{s_\sharp}^{*}} r^{m-1}+a_{3} C_{p_{s_\sharp}^{*}}^{q}|\Omega|^{(p_{s_\sharp}^{*}-q) / p_{s_\sharp}^{*}} r^{q-1}}, \quad \forall r \geq 0,
\end{equation}
where $C_{p_{s_\sharp}^{*}}$ is the best constant in the fractional Sobolev inequality given by \eqref{Sobolev constant 2}. Since $p_s^*>q \geq p>m \geq 1,$
we note that $\lim_{r \rightarrow 0} h_\gamma(r)=0$  and  $\lim_{r \rightarrow \infty} h_\gamma(r)=-\infty.$ Arguing similarly to the proof of Theorem \ref{main result wlsc 0}, we deduce that $h_\gamma$ has a global maximum, that is,  there exists a  $r_{\gamma, \max}$ such that 
$$h_\gamma(r_{\gamma, \max})=\max_{r>0} h_\gamma(r).$$

Set $$r_{0, \gamma}:=\min\{r_{\gamma, \max}, \bar{r}_\gamma\},$$ where $\bar{r}_\gamma$ is defined by Proposition \ref{main prop on wlsc}. By taking any $\lambda < \Sigma_\gamma:= g_\gamma(r_{0, \gamma})$ we can find $r_{0, \gamma, \lambda} \in (0, r_{0, \gamma})$ such that 
\begin{equation} \label{S42a}
    \lambda< \frac{r_{0, \gamma, \lambda}^{p-1}-\gamma C_{p_{s_\sharp}^{*}}^{p_{s_\sharp}^{*}} r_{0, \gamma, \lambda}^{p_{s_\sharp}^{*}-1}}{a_{1} C_{p_{s_\sharp}^{*}}|\Omega|^{(p_{s_\sharp}^{*}-1) / p_{s_\sharp}^{*}}+a_2C_{p_{s_\sharp}^{*}}^{m}|\Omega|^{(p_{s_\sharp}^{*}-m) / p_{s_\sharp}^{*}} r_{0, \gamma, \lambda}^{m-1}+a_{3} C_{p_{s_\sharp}^{*}}^{q}|\Omega|^{(p_{s_\sharp}^{*}-q) / p_{s_\sharp}^{*}} r_{0, \gamma, \lambda}^{q-1}}.
\end{equation}

Now, we take $0<\epsilon< r_{0, \gamma, \lambda}$ and define
\begin{equation*}
    \Lambda_{\lambda, \gamma}\left(\varepsilon, r_{0, \gamma, \lambda}\right):=\frac{\sup _{v \in \Phi_{p, s}^{-1}([0, r_{0, \gamma, \lambda}])} \Psi_{\gamma, \lambda}(v)-\sup _{v \in \Phi_{p, s}^{-1}([0, r_{0, \gamma, \lambda}-\varepsilon])} \Psi_{\gamma, \lambda}(v)}{\varepsilon}.
\end{equation*}
Then, by rescaling $v$ and using \eqref{int of g} and \eqref{part 2 of decomp}, we obtain 
\begin{align*}
    \Lambda_{\lambda, \gamma}\left(\varepsilon, r_{0, \gamma, \lambda}\right)& \leq \frac{1}{\varepsilon}\left|\sup _{v \in \Phi_{p, s}^{-1}([0, r_{0, \gamma, \lambda}])} \Psi_{\gamma, \lambda}(v)-\sup _{v \in \Phi_{p, s}^{-1}([0, r_{0, \gamma, \lambda}-\varepsilon])} \Psi_{\gamma, \lambda}(v)\right|\\
    &\leq \sup _{v \in \Phi_{p, s}^{-1}([0,1])} \int_{\Omega}\left|\int_{\left(r_{0, \gamma, \lambda}-\varepsilon\right) v(x)}^{r_{0, \gamma, \lambda} v(x)} \frac{\left|f_{\gamma, \lambda}(x, t)\right|}{\varepsilon} d t\right| d x,
\end{align*}
where $f_{\lambda, \gamma}(x, t):=\gamma |t|^{p_s^*-1}+\lambda g(x, t)$ for a.e. $x \in \Omega$ and for all $t \in \mathbb{R}.$ 
Next, using the subcritical growth condition \eqref{growndpm} of $g$ along with the subelliptic continuous embedding $X_p(\Omega) \hookrightarrow L^{p_{s_\sharp}^*}(\Omega)$ and the H\"older inequality, we get
\begin{align*}
    \Lambda_{\lambda, \gamma}\left(\varepsilon, r_{0, \gamma, \lambda}\right)&\leq \sup _{v \in \Phi_{p, s}^{-1}([0,1])} \int_{\Omega}\left|\int_{\left(r_{0, \gamma, \lambda}-\varepsilon\right) v(x)}^{r_{0, \gamma, \lambda} v(x)} \frac{\left|f_{\gamma, \lambda}(x, t)\right|}{\varepsilon} d t\right| d x\\& \leq \frac{1}{\epsilon} \sup _{v \in \Phi_{p, s}^{-1}([0,1])} \int_{\Omega}\Big|\int_{\left(r_{0, \gamma, \lambda}-\varepsilon\right) v(x)}^{r_{0, \gamma, \lambda} v(x)} \left[ \gamma |t|^{p_{s_\sharp}^*-1} +\lambda (a_1+ a_2|t|^{m-1}+a_3|t|^{q-1})) \right] d t \Big| d x \\&\leq \frac{\gamma}{p_{s_\sharp}^*} \sup _{v \in \Phi_{p, s}^{-1}([0,1])} \|v\|_{L^{p_{s_\sharp}^*}(\Omega)}^{p_{s_\sharp}^*} \left( \frac{r_{0,\gamma, \lambda}^{p_{s_\sharp}^*}-(r_{0,\gamma, \lambda}-\epsilon)^{p_{s_\sharp}^*}}{\epsilon} \right)+ \lambda \sup _{v \in \Phi_{p, s}^{-1}([0,1])} \Bigg( a_1  \|v\|_{L^{1}(\Omega)}  \\&+\frac{a_2}{m}  \|v\|_{L^{m}(\Omega)}^m \left( \frac{r_{0,\gamma, \lambda}^{m}-(r_{0,\gamma, \lambda}-\epsilon)^{m}}{\epsilon} \right)+\frac{a_3}{q}  \|v\|_{L^{q}(\Omega)}^q \left( \frac{r_{0,\gamma, \lambda}^{q}-(r_{0,\gamma, \lambda}-\epsilon)^{q}}{\epsilon} \right)\Bigg) \\&\leq \frac{\gamma C_{p_{s_\sharp}^*}^{p_{s_\sharp}^*}}{p_{s_\sharp}^*} \left( \frac{r_{0,\gamma, \lambda}^{p_{s_\sharp}^*}-(r_{0,\gamma, \lambda}-\epsilon)^{p_{s_\sharp}^*}}{\epsilon} \right)\\&+ \lambda \Bigg( a_1 C_{p_{s_\sharp}^*} |\Omega|^{\frac{p_{s_\sharp}^*-1}{p_{s_\sharp}^*}}+a_2 \frac{C_{p_{_\sharp}^*}^m}{m} \left( \frac{r_{0,\gamma, \lambda}^{m}-(r_{0,\gamma, \lambda}-\epsilon)^{m}}{\epsilon} \right)|\Omega|^{\frac{p_{s_\sharp}^*-m}{p_{s_\sharp}^*}}\\&\quad \quad\quad\quad+a_3 \frac{C_{p_{s_\sharp}^*}^q}{q} \left( \frac{r_{0,\gamma, \lambda}^{q}-(r_{0,\gamma, \lambda}-\epsilon)^{q}}{\epsilon} \right)|\Omega|^{\frac{p_{s_\sharp}^*-q}{p_{s_\sharp}^*}}\Bigg),
\end{align*} where $C_{p_{s_\sharp}^*}$ is the best constant in the embedding $X_p(\Omega) \hookrightarrow L^{p_{s_\sharp}^*}(\Omega).$ 

Therefore, 
\begin{align} \label{S44a}
 \nonumber   \limsup_{\epsilon \rightarrow 0} \Lambda_{\lambda, \gamma}(\epsilon, r_{0,\gamma, \lambda}) &\leq \gamma C_{p_{s_\sharp}^*}^{p_{s_\sharp}^*} r_{0,\gamma, \lambda}^{p_{s_\sharp}^*-1} + \lambda \Bigg( a_1 C_{p_{s_\sharp}^*} |\Omega|^{\frac{p_{s_{\sharp}}^*-1}{p_{s_\sharp}^*}}+a_2 C_{p_{s_\sharp}^*}^{m} r_{0,\gamma, \lambda}^{m-1}|\Omega|^{\frac{p_{s_\sharp}^*-m}{p_{s_\sharp}^*}}\nonumber\\&\quad\quad\quad\quad+a_3 C_{p_{s_\sharp}^*}^q r_{0,\gamma, \lambda}^{q-1}|\Omega|^{\frac{p_{s_\sharp}^*-q}{p_{s_\sharp}^*}}\Bigg)
    < r_{0, \gamma, \lambda}^{p-1},
\end{align} 
by \eqref{S42a}. 

Combining \eqref{S44a} with Lemma \ref{tech lem 1} and Lemma \ref{tech lem 2}, we deduce that 
\begin{equation*}
    \inf _{u \in \Phi_{p, s}^{-1}([0, r_{0, \gamma, \lambda}))} \frac{\sup _{v \in \Phi_{p, s}^{-1}([0, r_{0, \gamma, \lambda}])} \Psi_{\gamma, \lambda}(v)-\Psi_{\gamma, \lambda}(u)}{r_{0, \gamma, \lambda}^{p}-\Phi_{p, s}(u)^{p}}<\frac{1}{p}.
\end{equation*}
Therefore, from  Lemma \ref{tech lem 3} there exists $w_{\gamma, \lambda} \in \Phi_{p, s}^{-1}([0, r_{0, \gamma, \lambda}))$ such that
\begin{equation}\label{P323}
    \mathcal{J}_{\gamma, \lambda}\left(w_{\gamma, \lambda}\right)=\frac{1}{p}[\rho_{p}(w_{\gamma, \lambda})]^{p}-\Psi_{\gamma, \lambda}\left(w_{\gamma, \lambda}\right)<\frac{r_{0, \gamma, \lambda}^{p}}{p}-\Psi_{\gamma, \lambda}(u),
\end{equation}
for every $u \in \Phi_{p, s}^{-1}([0, r_{0, \gamma, \lambda}])$. Since $r_{0, \gamma, \lambda}<\bar{r}_{\gamma}$, and recalling that 
\begin{equation*} \mathcal{J}_{\gamma, \lambda}(u) = \mathcal{L}_{\gamma}(u) - \lambda \int_{\Omega} G(x, u(x)) dx, \end{equation*} 
it follows from Proposition \ref{main prop on wlsc} that the energy functional $\mathcal{J}_{\gamma, \lambda}$ is sequentially weakly lower semicontinuous on  $\Phi_{p, s}^{-1}([0, r_{0, \gamma, \lambda}))=\overline{B_{X_p(\Omega)}(0, r_{0, \gamma, \lambda})}.$ Thus, the restriction $\mathcal{J}_{\gamma, \lambda}|_{\overline{B_{X_p(\Omega)}(0, r_{0, \gamma, \lambda})}}$ has a global minimum $u_{0, \gamma, \lambda}$ in $\overline{B_{X_p(\Omega)}(0, r_{0, \gamma, \lambda})}.$ Now, we claim that $u_{0, \gamma, \lambda} \in B_{X_p(\Omega)}(0, r_{0, \gamma, \lambda}).$ To prove this, suppose that $\Phi_{p, s}(u_{0, \gamma, \lambda}):=\|u_{0, \gamma, \lambda}\|_{X_p(\Omega)}=r_{0, \gamma, \lambda}$. Then by \eqref{P323} we have 
\begin{equation*}
    \mathcal{J}_{\gamma, \lambda}\left(u_{0, \gamma, \lambda}\right)=\frac{r_{0, \gamma, \lambda}^{p}}{p}-\Psi_{\gamma, \lambda}\left(u_{0, \gamma, \lambda}\right)>\mathcal{J}_{\gamma, \lambda}\left(w_{\gamma, \lambda}\right),
\end{equation*}
contradicting the fact that $u_{0, \gamma, \lambda}$ is a global minimum for $\mathcal{J}_{\gamma, \lambda}|_{\overline{B_{X_p(\Omega)}(0, r_{0, \gamma, \lambda})}}.$ Therefore, we conclude that $u_{0, \gamma,\lambda} \in X_p(\Omega)$ is a local minimum for the energy functional $\mathcal{J}_{\gamma, \lambda}$ with 
$$\Phi_{p, s}(u_{0, \gamma, \lambda}):=\|u_{0, \gamma, \lambda}\|_{X_p(\Omega)}<r_{0, \gamma, \lambda}$$ and 
hence, a weak solution to the problem \eqref{pro1dump}. This completes the proof.
\end{proof}
\begin{thm} 
    Let $ \Omega$ be a bounded subset of $\mathbb{R}^N$. Let $\mu=\mu^{+}-\mu^{-}$ with $\mu^{+}$and $\mu^{-}$ satisfying \eqref{measure 1}-\eqref{measure 3} and \eqref{extracondi}, and let $s_{\sharp}$ be as in \eqref{measure 4}. Let $\frac{N}{s_{\sharp}}>p \geq 2$. Suppose that $m$ and $q$ are two real constants such that 
$$1 \leq m<p\leq q<p_{s_\sharp}^*=\frac{p N}{N-{s_\sharp}p}.$$
Then, there exists an open interval $\Lambda \subset (0, +\infty)$ such that, for every $\lambda \in \Lambda,$ the  nonlocal problem 
\begin{equation} \label{pro15.7intro0}
       \begin{cases} 
A_{\mu, p} u=  |u|^{p_{s_\sharp}^*-2}u+\lambda (|u|^{m-1}+|u|^{q-1}) \quad &\text{in}\quad \Omega, \\
u=0\quad & \text{in}\quad \mathbb{R}^N\backslash \Omega,
\end{cases}
    \end{equation} has at least one nonnegative nontrivial weak solution $u_\lambda \in X^{p}(\Omega),$  which is a  local minimum of the energy functional 
     \begin{equation}
        \mathcal{J}_{ \lambda}(u):= \frac{1}{p}\left[\rho_{p}(u)\right]^{p}-\frac{1}{p} \int_{[0, \bar{s}]}[u]_{s, p}^{p} d \mu^{-}(s) -\frac{1}{p_{s_\sharp}^*} \int_{\Omega} |u(x)|^{p_{s_\sharp}^*} d x-\frac{\lambda}{m} \int_{\Omega} |u(x)|^m  d x-\frac{\lambda}{q} \int_{\Omega} |u(x)|^q  d x.
    \end{equation} 
\end{thm}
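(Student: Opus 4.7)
The plan is to combine Theorem \ref{thm1.1dupl} (which yields a local minimum of the energy under the extended growth condition) with the sign-preserving argument from Theorem \ref{subthm}. Set $g(x,t):=|t|^{m-1}\mathrm{sgn}(t)+|t|^{q-1}\mathrm{sgn}(t)$ and $G(x,t):=\tfrac{|t|^m}{m}+\tfrac{|t|^q}{q}$, so that $g$ is Carath\'eodory, $g(x,0)=0$, and $|g(x,t)|\le |t|^{m-1}+|t|^{q-1}$; thus $g$ fits \eqref{growthdupl} with $a_1=0$, $a_2=a_3=1$. Define the truncated nonlinearity $g_+(x,t):=g(x,t)$ for $t>0$ and $g_+(x,t):=0$ for $t\le 0$, together with $G_+(x,t):=\int_0^t g_+(x,\tau)\,d\tau$, and introduce
\begin{align*}
\mathcal{J}_\lambda^+(u)&:=\frac{1}{p}\rho_p(u)^p-\frac{1}{p}\int_{[0,\bar s]}[u^+]_{s,p}^p\,d\mu^-(s)\\
&\qquad -\frac{1}{p_{s_\sharp}^*}\int_\Omega (u^+)^{p_{s_\sharp}^*}\,dx-\lambda\int_\Omega G_+(x,u(x))\,dx.
\end{align*}
Applying Theorem \ref{thm1.1dupl} (with $\gamma=1$) to this functional supplies the explicit open interval $\Lambda\subset(0,+\infty)$ announced in the statement and yields, for every $\lambda\in\Lambda$, a local minimum $u_\lambda\in X_p(\Omega)$ of $\mathcal{J}_\lambda^+$ with $\|u_\lambda\|_{X_p(\Omega)}<r_{0,\lambda}$.

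To show $u_\lambda\ge 0$ a.e., observe first that $|u_\lambda|\in X_p(\Omega)$ by the reverse triangle inequality, hence $u_\lambda^-=\tfrac{|u_\lambda|-u_\lambda}{2}\in X_p(\Omega)$ is an admissible test function. Because $u_\lambda^-$ vanishes where $u_\lambda>0$, the contributions from the $p_{s_\sharp}^*$ term and the $g_+$ term in $\langle(\mathcal{J}_\lambda^+)'(u_\lambda),u_\lambda^-\rangle=0$ drop out. Using the identities $(a-b)(a^--b^-)=-(a^--b^-)^2-(a^+b^-+a^-b^+)$ and $(a^+-b^+)(a^--b^-)=-(a^+b^-+a^-b^+)$, together with the strict pointwise control of $\mu^-$ by $\mu^+$ proved as \eqref{eq3.19} in Lemma \ref{energycomparison} (which is precisely where \eqref{extracondi} enters), I reproduce the chain of estimates from the proof of Theorem \ref{subthm} and conclude $\|u_\lambda^-\|_{X_p(\Omega)}^p\le 0$. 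Hence $u_\lambda^-\equiv 0$, so $u_\lambda\ge 0$; moreover $\mathcal{J}_\lambda^+(u_\lambda)=\mathcal{J}_\lambda(u_\lambda)$, which makes $u_\lambda$ a nonnegative weak solution of \eqref{pro15.7intro0}.

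For nontriviality, fix $w\in C_c^\infty(\Omega)$ with $w\ge 0$, $w\not\equiv 0$. Since $(tw)^+=tw$ and $[tw]_{s,p}=t[w]_{s,p}$ for $t>0$,
$$\mathcal{J}_\lambda^+(tw)\le\frac{t^p}{p}\rho_p(w)^p-\frac{\lambda t^m}{m}\|w\|_{L^m(\Omega)}^m-\frac{\lambda t^q}{q}\|w\|_{L^q(\Omega)}^q-\frac{t^{p_{s_\sharp}^*}}{p_{s_\sharp}^*}\|w\|_{L^{p_{s_\sharp}^*}(\Omega)}^{p_{s_\sharp}^*}.$$
Because $m<p$, the $-\lambda t^m$ term dominates $+t^p$ as $t\to 0^+$, so $\mathcal{J}_\lambda^+(tw)<0=\mathcal{J}_\lambda^+(0)$ for $t>0$ small. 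Hence $0$ cannot be a local minimum of $\mathcal{J}_\lambda^+$, forcing $u_\lambda\not\equiv 0$.

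The main obstacle is the sign-preserving step: the strict inequality \eqref{eq3.19}, which is exactly the reabsorption of $\mu^-$ into the ``purely nonlocal component'' of $\mu^+$ guaranteed by the extra hypothesis \eqref{extracondi}, is what collapses the chain of estimates for $\langle(\mathcal{J}_\lambda^+)'(u_\lambda),u_\lambda^-\rangle$ into $-\|u_\lambda^-\|_{X_p(\Omega)}^p\le 0$. Without \eqref{extracondi} the negative contribution $\int_{[0,\bar s]}[u_\lambda^-]_{s,p}^p\,d\mu^-(s)$ can no longer be absorbed, and one cannot control the sign of $u_\lambda$, which is precisely the open issue flagged by the authors.
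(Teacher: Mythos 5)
Your proof follows essentially the same route as the paper: pass to the truncated functional $\mathcal{J}_\lambda^+$, invoke Theorem \ref{thm1.1dupl} (whose proof transfers verbatim to the truncated energy) to obtain a local minimum, use the test-with-$u_\lambda^-$ argument from Theorem \ref{subthm} together with the strict inequality \eqref{eq3.19} from Lemma \ref{energycomparison} (enabled by \eqref{extracondi}) to conclude nonnegativity, and rule out the trivial solution by evaluating $\mathcal{J}_\lambda^+(tw)$ for small $t>0$, where the $-\lambda t^m/m$ term dominates because $m<p$. The only cosmetic difference is your choice $g(x,t)=|t|^{m-1}\mathrm{sgn}(t)+|t|^{q-1}\mathrm{sgn}(t)$ in place of the paper's $|t|^{m-1}+|t|^{q-1}$, which agrees with it on $t\ge 0$ (and in fact cleans up the borderline case $m=1$ where the paper's $g$ technically has $g(x,0)=1$); both the argument and the conclusion are unchanged.
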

\begin{proof} We first observe that the statement and proof of Theorem \ref{subthm} remain valid if $g:\Omega \times \mathbb{R} \rightarrow \mathbb{R}$ is a Carath\'eodory function such that 
    \begin{equation} \label{growthdupl1}
       |g(x, t)| \leq a_1+a_2 |t|^{m-1}+a_3 |t|^{q-1}\quad \text{a.e.}\,\,\,x \in \Omega,\, \forall t \in \mathbb{R},
    \end{equation}
    for some $a_1, a_2, a_3>0.$   In our case we take $g(x, t)=|t|^{m-1}+|t|^{q-1},$ which satisfies  $g(x, 0)=0$ for a.e. $x \in \Omega$ making  $u \equiv 0$ a  trivial solution of \eqref{pro15.7intro0}. The proof of this theorem follows from the fact that if zero is not a local minimum of the energy functional  $\mathcal{J}_{\lambda},$ then any weak solution of problem \eqref{pro15.7intro0} obtained via Theorem \ref{thm1.1dupl} is necessarily nontrivial, as a consequence of Theorem \ref{subthm}.
   In particular, we note that zero is not a local minimum   for the  functional $\mathcal{J}_{ \lambda}^+$ given by 
 \begin{equation}
        \mathcal{J}_{\lambda}^+(u):= \frac{1}{p}\left[\rho_{p}(u)\right]^{p}-\frac{1}{p} \int_{[0, \bar{s}]}[u^+]_{s, p}^{p} d \mu^{-}(s) -\frac{1}{p_{s_\sharp}^*} \int_{\Omega} (u(x)^+)^{p_{s_\sharp}^*} d x-\lambda \int_{\Omega} G_+(x, u(x)) d x,
    \end{equation} for every $u \in X_p(\Omega),$ where $$G_+(x, u):= \begin{cases}
         \frac{u^m}{m}+\frac{u^q}{q}\quad \text{if} \quad u>0, \\
         0 \quad \text{if} \quad u \leq 0.
    \end{cases}$$
    To deduce that zero is not a local minimum, we fix a nonnegative function $u_0 \in X^{s, p}_0(\Omega) \backslash \{0\}.$ Then, for a small enough $\tau>0,$ we have 
   \begin{align*}
         \mathcal{J}_{\lambda}^+(\tau u_0)&= \frac{\tau^p}{p} \left[\rho_{p}(u)\right]^{p}-\frac{\tau^p}{p} \int_{[0, \bar{s}]}[u^+]_{s, p}^{p} d \mu^{-}(s)\\& -\frac{\tau^{p_{s_\sharp}^*}}{p_{s_\sharp}^*} \int_{\Omega} (u_0(x)^+)^{p_{s_\sharp}^*} d x- \frac{\lambda \tau^m}{m} \int_{\Omega} u_0(x)^m\, d x-\frac{\lambda \tau^q}{q} \int_{\Omega} u_0(x)^q\, d x<0,
        \end{align*}
     as $1 \leq m<p\leq q<p_s^*.$  This completes the proof of the theorem. \end{proof}

\section{Nonlocal subcritical problem associated with the nonlinear superposition\\ operators of mixed order} \label{sec6}

In this section, we consider the nonlocal subcritical problem associated with the nonlinear superposition operators $A_{\mu, p}$ of mixed order and apply the methodologies developed for the critical problems in the previous section. The proofs are considerably simpler compared to the critical case, owing to the subcritical nature of the problem. Specifically, we will investigate the nonlocal problem
\begin{equation}\label{problem 1.1}
\begin{cases}A_{\mu,p} u=\lambda g(x,u) & \text { in } \quad \Omega, \\ 
u=0 & \text { in } \mathbb{R}^{N} \backslash \Omega, \end{cases}
\end{equation}
where $\lambda>0$ is a real parameter and $g$ is a subcritical nonlinearity satisfying
\begin{align}\label{cond on g1}
\begin{split}
&\text{ there exist } a_{1}, a_{2}>0 \text{ and } q \in[1, p_{s_{\sharp}}^{*}), \text{ such that }\\
&|g(x, t)| \leq a_{1}+a_{2}|t|^{q-1} \text {, a.e. } x \in \Omega, \forall t \in \mathbb{R} . 
\end{split}
\end{align}
 Note that the problem \eqref{problem 1.1} is the problem \eqref{problem 1} with $\gamma=0$, that is, without critical nonlinearity. 

The energy functional $\mathcal{E}_{\lambda}: X_{p}(\Omega) \rightarrow \mathbb{R}$ associated to the problem \eqref{problem 1.1} is defined by
\begin{equation}\label{functional for problem 1.1}
\mathcal{E}_{\lambda}(u):=\frac{1}{p}\left[\rho_{p}(u)\right]^{p}-\frac{1}{p} \int_{[0, \bar{s}]}[u]_{s, p}^{p} d \mu^{-}(s)-\lambda \int_{\Omega}G(x, u) d x, 
\end{equation}
for every $u \in X_{p}(\Omega)$, where $G$ is given by \eqref{int of g}.
As a consequence of our assumption \eqref{cond on g} regarding the growth of $g$, the functional $\mathcal{E}_{\lambda}$ belongs to $C^{1}\left(X_{p}(\Omega), \mathbb{R}\right)$ and its derivative at $u \in X_{p}(\Omega)$ is expressed as
\begin{align*}
\left\langle\mathcal{E}_{\lambda}^{\prime}(u), v\right\rangle &= \int_{[0,1]}\left(\iint_{\mathbb{R}^{2 N}}  \frac{C_{N, s, p}|u(x)-u(y)|^{p-2}(u(x)-u(y))(v(x)-v(y))}{|x-y|^{N+s p}} d x d y\right) d \mu^{+}(s) \\
&-\int_{[0,\bar{s}]}\left(\iint_{\mathbb{R}^{2 N}}  \frac{C_{N, s, p}|u(x)-u(y)|^{p-2}(u(x)-u(y))(v(x)-v(y))}{|x-y|^{N+s p}} d x d y\right) d \mu^{-}(s) \\
& -\lambda \int_{\Omega}g(x, u) v d x
\end{align*}
for every $v \in X_{p}(\Omega)$. 
\begin{definition}
    A weak solution to the problem \eqref{problem 1.1} is a function $u\in X_{p}(\Omega)$ such that
\begin{align*}
& \int_{[0,1]}\left(\iint_{\mathbb{R}^{2 N}}  \frac{C_{N, s, p}|u(x)-u(y)|^{p-2}(u(x)-u(y))(v(x)-v(y))}{|x-y|^{N+s p}} d x d y\right) d \mu^{+}(s) \\
&=\int_{[0,\bar{s}]}\left(\iint_{\mathbb{R}^{2 N}}  \frac{C_{N, s, p}|u(x)-u(y)|^{p-2}(u(x)-u(y))(v(x)-v(y))}{|x-y|^{N+s p}} d x d y\right) d \mu^{-}(s)\\
& +\lambda \int_{\Omega}g(x, u) v d x
\end{align*}
for all $v \in X_{p}(\Omega)$.
\end{definition}
Observe that the weak solutions of problem \eqref{problem 1.1} correspond to the critical points of the functional $\mathcal{E}_{\lambda}$ defined in \eqref{functional for problem 1.1}.

Analogous to the previous section, for a fixed $\lambda>0$ we define the functional
\begin{equation*}
    \Phi_{p, s}(u):=\rho_p(u)=\left(\int_{[0,1]}\left(\iint_{\mathbb{R}^{2 N}} \frac{C_{N, s, p}|u(x)-u(y)|^{p}}{|x-y|^{N+p s}} d x d y\right) d \mu^{+}(s)\right)^{1 / p}
\end{equation*}
and
\begin{equation}\label{part 2 of decomposition 1}
    \Psi_{ \lambda}(u):=\frac{1}{p} \int_{[0, \bar{s}]}[u]_{s, p}^{p} d \mu^{-}(s)+\lambda \int_{\Omega} G(x, u) d x,
\end{equation}
for every $u \in X_{ p}(\Omega)$, where the potential $G$ is given by \begin{equation}\label{int of g1}
G(x, t):=\int_{0}^{t} g(x, \tau) d \tau, \quad \forall(x, t) \in \Omega \times \mathbb{R}. 
\end{equation} Consequently, we have
\begin{equation}\label{functional as a sum 1}
    \mathcal{E}_{ \lambda}(u)=\frac{1}{p}\Phi_{p, s}(u)^p-\Psi_{\lambda}(u),
\end{equation}
for every $u \in X_{ p}(\Omega)$. Next, we set
\begin{equation*}
  \Theta_{\lambda}(\eta, \zeta):=\sup _{v \in \Phi_{p, s}^{-1}([0, \eta])} \Psi_{ \lambda}(v)-\sup _{v \in \Phi_{p, s}^{-1}([0, \eta-\zeta])} \Psi_{ \lambda}(v),
\end{equation*}
for $0<\zeta<\eta$.

Note that, due to condition \eqref{cond on g1}, the functional $\Psi_{ \lambda}$ is well-defined and sequentially weakly (upper) continuous. As a result, the functional $\mathcal{E}_{\lambda}$ is sequentially weakly lower semicontinuous on $X_{p}(\Omega)$. Moreover, the functional $\mathcal{E}_{\lambda}$ is also coercive as a consequence of the continuous embedding \eqref{embedding}, the identity \eqref{eqicon46} and the fact that $1<q<p$. 

The following two lemmas will play a crucial role in the subsequent analysis. We state them here without proofs since they can be proved in the same way as Lemma \ref{tech lem 1} and Lemma \ref{tech lem 2}, respectively.

\begin{lem}\label{tech lem 1.1}
    Let $p\in(1, +\infty)$ and $ \lambda>0$. Suppose that
\begin{equation}\label{cond 1.1.1}
\limsup _{\varepsilon \rightarrow 0^{+}} \frac{\Theta_{\lambda}\left(r_{0}, \varepsilon\right)}{\varepsilon}<r^{p-1}_{0} 
\end{equation}
for some $r_{0}>0$. Then
\begin{equation}\label{cond 1.2.2}
\inf _{\sigma<r_{0}} \frac{\Theta_{ \lambda}\left(r_{0}, r_{0}-\sigma\right)}{r_{0}^{p}-\sigma^{p}}<\frac{1}{p}. 
\end{equation}
\end{lem}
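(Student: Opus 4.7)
The plan is to adapt verbatim the argument used to prove Lemma~\ref{tech lem 1}, since setting $\gamma=0$ in that lemma collapses the two statements. The core idea is that the hypothesis controls the ratio $\Theta_{\lambda}(r_{0},\varepsilon)/\varepsilon$, while the conclusion involves the ratio $\Theta_{\lambda}(r_{0},r_{0}-\sigma)/(r_{0}^{p}-\sigma^{p})$; so I need to interpolate between these two denominators using the substitution $\sigma = r_{0}-\varepsilon$ and a one-variable Taylor expansion of $t\mapsto t^{p}$ at $t=r_{0}$.

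Concretely, for $\varepsilon\in(0,r_{0})$ I would first write the identity
\begin{equation*}
\frac{\Theta_{\lambda}(r_{0},\varepsilon)}{r_{0}^{p}-(r_{0}-\varepsilon)^{p}}
= \frac{\Theta_{\lambda}(r_{0},\varepsilon)}{\varepsilon}\cdot \frac{-\varepsilon}{r_{0}^{p}\bigl[(1-\varepsilon/r_{0})^{p}-1\bigr]},
\end{equation*}
and then compute
\begin{equation*}
\lim_{\varepsilon\to 0^{+}} \frac{-\varepsilon}{r_{0}^{p}\bigl[(1-\varepsilon/r_{0})^{p}-1\bigr]} = \frac{1}{p\,r_{0}^{p-1}}.
\end{equation*}
Combining this with the hypothesis \eqref{cond 1.1.1}, which asserts that $\limsup_{\varepsilon\to 0^{+}}\Theta_{\lambda}(r_{0},\varepsilon)/\varepsilon < r_{0}^{p-1}$, yields
\begin{equation*}
\limsup_{\varepsilon\to 0^{+}} \frac{\Theta_{\lambda}(r_{0},\varepsilon)}{r_{0}^{p}-(r_{0}-\varepsilon)^{p}} < \frac{r_{0}^{p-1}}{p\,r_{0}^{p-1}} = \frac{1}{p}.
\end{equation*}

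From this strict inequality in the limsup, I can pick some $\bar\varepsilon>0$ such that for every $\varepsilon\in(0,\bar\varepsilon)$ the ratio stays below $1/p$. Fixing any such $\varepsilon_{0}\in(0,\bar\varepsilon)$ and setting $\sigma_{0}:=r_{0}-\varepsilon_{0}\in[0,r_{0})$, I obtain
\begin{equation*}
\frac{\Theta_{\lambda}(r_{0},r_{0}-\sigma_{0})}{r_{0}^{p}-\sigma_{0}^{p}} < \frac{1}{p},
\end{equation*}
and passing to the infimum over $\sigma<r_{0}$ delivers \eqref{cond 1.2.2}.

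There is no real obstacle: the argument is a direct one-variable computation, and I am not using anything beyond the definition of $\Theta_{\lambda}$ and the differentiability of $t^{p}$. The only very minor point to double-check is that the identity and the limit computation do not require any hypothesis on $g$ beyond what guarantees that $\Theta_{\lambda}$ is finite on the relevant range, which is already ensured by \eqref{cond on g1} together with the embedding \eqref{embedding}. Hence the proof is essentially a transcription of the proof of Lemma~\ref{tech lem 1} with the single functional $\Psi_{\lambda}$ in place of $\Psi_{\gamma,\lambda}$.
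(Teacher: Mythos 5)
Your proposal is correct and coincides with the paper's own argument: the paper explicitly states that Lemma~\ref{tech lem 1.1} is proved "in the same way as Lemma~\ref{tech lem 1}", and your transcription of that proof, replacing $\Psi_{\gamma,\lambda}$ by $\Psi_{\lambda}$, is exactly what is intended. The identity, the limit computation giving $1/(p\,r_{0}^{p-1})$, and the choice $\sigma_{0}=r_{0}-\varepsilon_{0}$ all match the paper's proof of Lemma~\ref{tech lem 1}.
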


\begin{lem}\label{tech lem 2.1}
Let $p\in(1, +\infty)$ and $\lambda>0$. Suppose \eqref{cond 1.2.2} holds for some $r_{0}>0$. Then we have
\begin{equation}\label{cond 2.1.1}
\inf _{u \in \Phi_{p, s}^{-1}\left(\left[0, r_{0}\right)\right)} \frac{\sup _{v \in \Phi_{p, s}^{-1}\left(\left[0, r_{0}\right]\right)} \Psi_{\lambda}(v)-\Psi_{ \lambda}(u)}{r_{0}^{p}-\Phi_{p, s}(u)^{p}}<\frac{1}{p}. 
\end{equation}
\end{lem}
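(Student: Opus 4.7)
The plan is to mirror the proof of Lemma \ref{tech lem 2} with $\Psi_\lambda$ replacing $\Psi_{\gamma,\lambda}$; the argument is in fact cleaner in the subcritical setting, because the absence of a critical power makes $\Psi_\lambda$ sequentially weakly continuous on bounded subsets of $X_p(\Omega)$ rather than merely semicontinuous. The hypothesis \eqref{cond 1.2.2} unpacks to the existence of some $\sigma_0 \in (0, r_0)$ with
$$\sup_{v \in \Phi_{p,s}^{-1}([0, r_0])} \Psi_\lambda(v) - \sup_{v \in \Phi_{p,s}^{-1}([0, \sigma_0])} \Psi_\lambda(v) < \frac{r_0^p - \sigma_0^p}{p},$$
so it suffices to produce a point $u_0 \in X_p(\Omega)$ with $\Phi_{p,s}(u_0) = \sigma_0$ at which the second supremum is attained.

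To secure $u_0$, I would first verify the weak continuity of $\Psi_\lambda$ on bounded sets. The term $\int_{[0,\bar s]}[u]_{s,p}^p\, d\mu^-(s)$ is weakly continuous on bounded sets by identity \eqref{eqicon46}, and the $G$-term is weakly continuous because of the subcritical growth condition \eqref{cond on g1} combined with the compact embeddings provided by Proposition \ref{compact and cont embedding}. Since $X_p(\Omega)$ is reflexive (Lemma \ref{Uniform convexity}), the closed ball $\Phi_{p,s}^{-1}([0, \sigma_0])$ is weakly compact, so $\Psi_\lambda$ attains its supremum there. The analog of \eqref{sup equality}, which applies since $\Psi_\lambda$ is (in particular) sequentially weakly lower semicontinuous, then allows one to select the maximizer on the sphere, namely $\Phi_{p,s}(u_0) = \sigma_0$; because $\sigma_0 < r_0$ this witness lies in $\Phi_{p,s}^{-1}([0, r_0))$.

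Substituting $\Psi_\lambda(u_0) = \sup_{\Phi_{p,s}^{-1}([0, \sigma_0])} \Psi_\lambda$ and $\Phi_{p,s}(u_0)^p = \sigma_0^p$ into the displayed inequality, then dividing by the positive quantity $r_0^p - \Phi_{p,s}(u_0)^p$, yields
$$\frac{\sup_{v \in \Phi_{p,s}^{-1}([0, r_0])} \Psi_\lambda(v) - \Psi_\lambda(u_0)}{r_0^p - \Phi_{p,s}(u_0)^p} < \frac{1}{p},$$
and passing to the infimum over $u \in \Phi_{p,s}^{-1}([0, r_0))$ delivers \eqref{cond 2.1.1}. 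The only non-routine point is confirming that both \eqref{sup equality} and the attainment of the supremum transfer to the present functional $\Psi_\lambda$; both facts are immediate once weak continuity of $\Psi_\lambda$ on bounded sets is recorded, so no genuine obstacle arises.
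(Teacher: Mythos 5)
Your proof is correct and replicates the argument used for Lemma \ref{tech lem 2}, which is exactly what the paper indicates should be done for Lemma \ref{tech lem 2.1}: unpack \eqref{cond 1.2.2} for a $\sigma_0\in(0,r_0)$, invoke \eqref{sup equality} to produce a witness $u_0$ with $\Phi_{p,s}(u_0)=\sigma_0$, and substitute. Your added observation that the subcritical $\Psi_\lambda$ is weakly continuous on bounded sets (not merely weakly lower semicontinuous) is a valid refinement that makes the attainment step transparent, though strictly speaking attainment on the sphere is not even needed: since the inequality obtained from \eqref{cond 1.2.2} is strict, one may instead pick $u_0$ on the sphere with $\Psi_\lambda(u_0)$ sufficiently close to $\sup_{\Phi_{p,s}^{-1}([0,\sigma_0])}\Psi_\lambda$ and let the strict inequality absorb the slack.
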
 

The main result of this section is stated below.

\begin{thm}\label{main result subcrit}
    Let $\Omega$ be a bounded open subset of $\mathbb{R}^{N}$. Let $\mu=\mu^{+}-\mu^{-}$with $\mu^{+}$and $\mu^{-}$satisfying \eqref{measure 1}-\eqref{measure 3} and let $s_{\sharp}$ be as in \eqref{measure 4}. Suppose that $1<p<q<p_{s_{\sharp}}^{*}$ and $s_{\sharp}p<N$, where $p_{s_{\sharp}}^{*}:=\frac{N p}{N-s_{\sharp}p}$. Let $g: \Omega \times \mathbb{R} \rightarrow \mathbb{R}$ be a Carath\'eodory function satisfying the subcritical growth condition \eqref{cond on g1}. Furthermore, let
$$
0<\lambda<\frac{(q-p)^{\frac{q-p}{q-1}}(p-1)^{\frac{p-1}{q-1}}}{(a_{1} C_1)^{\frac{q-p}{q-1}}(a_{2} C_{2})^{\frac{p-1}{q-1}}(q-1) C_{p_{s_{\sharp}}^{*}}^{p}|\Omega|^{\frac{p_{s_{\sharp}}^{*}-q}{p_{s_{\sharp}}^{*}}\left(\frac{p-1}{q-1}\right)}|\Omega|^{\frac{p_{s_{\sharp}}^{*}-1}{p_{s_{\sharp}}^{*}}\left(\frac{q-p}{q-1}\right)}},
$$
where $C_{1}, C_{2}$ and $C_{p_{s_{\sharp}}^{*}}$ are the embedding constants of $L^{p_{s_{\sharp}}^{*}}(\Omega) \hookrightarrow L^{1}(\Omega), L^{p_{s_{\sharp}}^{*}}(\Omega) \hookrightarrow$ $L^{q}(\Omega)$, and $X_{p}(\Omega) \hookrightarrow L^{p_{s_{\sharp}}^{*}}(\Omega)$, respectively, and $|\Omega|$ is the Lebesgue measure of the set $\Omega$. Then, the nonlocal elliptic problem \eqref{problem 1.1} admits a weak solution $u_{0, \lambda} \in X_{p}(\Omega)$ and we have
$$
\rho_{p}(u_{0, \lambda})<\left(\frac{\lambda(q-1) a_{2} C_{2} C_{p_{s_{\sharp}}^{*}}^{q}|\Omega|^{({p_{s_{\sharp}}^{*}-q})/{p_{s_{\sharp}}^{*}}}}{p-1}\right)^{\frac{1}{p-q}}.
$$
\end{thm}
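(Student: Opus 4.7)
The plan is to apply the direct method of calculus of variations in a manner that closely parallels the proof of Theorem \ref{main result wlsc 0}, considerably simplified by the absence of any critical nonlinear term. I would fix as the candidate radius
\[r_{0,\lambda}:=\left(\frac{\lambda(q-1)\,a_{2}C_{2}C_{p_{s_{\sharp}}^{*}}^{q}|\Omega|^{(p_{s_{\sharp}}^{*}-q)/p_{s_{\sharp}}^{*}}}{p-1}\right)^{1/(p-q)},\]
which is precisely the bound claimed for $\rho_{p}(u_{0,\lambda})$, and produce $u_{0,\lambda}$ as a local minimum of $\mathcal{E}_{\lambda}$ strictly inside the ball $B_{X_{p}(\Omega)}(0,r_{0,\lambda})$.

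The first technical step is to verify condition \eqref{cond 1.1.1} of Lemma \ref{tech lem 1.1} at $r_{0,\lambda}$. Arguing exactly as in the proof of Theorem \ref{main result wlsc 0} with $\gamma=0$---rescaling the test function, invoking the subcritical growth \eqref{cond on g1}, H\"older's inequality, and the continuous embeddings $X_{p}(\Omega)\hookrightarrow L^{p_{s_{\sharp}}^{*}}(\Omega)\hookrightarrow L^{r}(\Omega)$ for $r\in\{1,q\}$---one obtains
\[\limsup_{\varepsilon\to 0^{+}}\frac{\Theta_{\lambda}(r_{0,\lambda},\varepsilon)}{\varepsilon}\leq \lambda\, a_{1}C_{1}C_{p_{s_{\sharp}}^{*}}|\Omega|^{(p_{s_{\sharp}}^{*}-1)/p_{s_{\sharp}}^{*}}+\lambda\, a_{2}C_{2}C_{p_{s_{\sharp}}^{*}}^{q}|\Omega|^{(p_{s_{\sharp}}^{*}-q)/p_{s_{\sharp}}^{*}}\,r_{0,\lambda}^{q-1}.\]
The definition of $r_{0,\lambda}$ is tailored so that the second summand on the right equals $\tfrac{p-1}{q-1}\,r_{0,\lambda}^{p-1}$, and the explicit threshold on $\lambda$ in the statement is exactly what is needed to force the first summand to be strictly less than the remaining gap $\tfrac{q-p}{q-1}\,r_{0,\lambda}^{p-1}$. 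Hence $\limsup\frac{\Theta_{\lambda}(r_{0,\lambda},\varepsilon)}{\varepsilon}<r_{0,\lambda}^{p-1}$.

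Once condition \eqref{cond 1.1.1} is secured, Lemmas \ref{tech lem 1.1} and \ref{tech lem 2.1} applied in succession (with the obvious adaptation of the proof of Lemma \ref{tech lem 3} to the subcritical setting) yield a point $w\in\Phi_{p,s}^{-1}([0,r_{0,\lambda}))$ with
\[\mathcal{E}_{\lambda}(w)<\frac{r_{0,\lambda}^{p}}{p}-\Psi_{\lambda}(u)\qquad\text{for every }u\in\Phi_{p,s}^{-1}([0,r_{0,\lambda}]).\]
The functional $\mathcal{E}_{\lambda}$ is sequentially weakly lower semicontinuous on $X_{p}(\Omega)$: the $\mu^{-}$-contribution is actually weakly continuous by \eqref{eqicon46}, while the $g$-term is weakly continuous thanks to the compactness of the embedding $X_{p}(\Omega)\hookrightarrow L^{q}(\Omega)$ from Proposition \ref{compact and cont embedding} (available since $q<p_{s_{\sharp}}^{*}$). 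Consequently, the restriction of $\mathcal{E}_{\lambda}$ to the weakly closed ball $\overline{B_{X_{p}(\Omega)}(0,r_{0,\lambda})}$ attains a minimum $u_{0,\lambda}$. If $\rho_{p}(u_{0,\lambda})=r_{0,\lambda}$ were to hold, then taking $u=u_{0,\lambda}$ in the displayed inequality would give $\mathcal{E}_{\lambda}(u_{0,\lambda})>\mathcal{E}_{\lambda}(w)$, contradicting minimality; hence $\rho_{p}(u_{0,\lambda})<r_{0,\lambda}$, the point $u_{0,\lambda}$ is a local minimizer (and thus a critical point) of $\mathcal{E}_{\lambda}$ on $X_{p}(\Omega)$, and is therefore a weak solution to \eqref{problem 1.1} satisfying the stated norm bound.

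The main obstacle is the exponent bookkeeping in the verification step: one must confirm that the specific choice of $r_{0,\lambda}$ (namely, the stated norm bound) together with the explicit upper threshold on $\lambda$ exactly produces the strict inequality $\limsup_{\varepsilon\to 0^{+}}\frac{\Theta_{\lambda}(r_{0,\lambda},\varepsilon)}{\varepsilon}<r_{0,\lambda}^{p-1}$. This reduces to elementary but delicate algebra around the first-order identity $\lambda(q-1)\,a_{2}C_{2}C_{p_{s_{\sharp}}^{*}}^{q}|\Omega|^{(p_{s_{\sharp}}^{*}-q)/p_{s_{\sharp}}^{*}}r_{0,\lambda}^{q-1}=(p-1)\,r_{0,\lambda}^{p-1}$ built into the definition of $r_{0,\lambda}$, and it is the only place where the particular form of the $\lambda$-threshold is pinned down.
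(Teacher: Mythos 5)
Your proposal is correct and follows essentially the same route as the paper: verify the limsup condition of Lemma \ref{tech lem 1.1} at the radius $r_{0,\lambda}$ via the rescaling/H\"older/embedding estimate, apply Lemmas \ref{tech lem 1.1}--\ref{tech lem 2.1} (together with the inline analogue of Lemma \ref{tech lem 3}), and minimize $\mathcal{E}_\lambda$ on the weakly closed ball of radius $r_{0,\lambda}$. The only cosmetic difference is that you verify the strict inequality directly from the first-order identity defining $r_{0,\lambda}$, whereas the paper introduces the auxiliary function $\varphi_\lambda(r)$ and notes that its infimum, attained at $r_{0,\lambda}$, is negative under the stated $\lambda$-threshold — algebraically the same computation.
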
 

\begin{proof}
    First, we fix $\lambda$ such that
\begin{equation}\label{sub proof 1}
0<\lambda<\frac{(q-p)^{\frac{q-p}{q-1}}(p-1)^{\frac{p-1}{q-1}}}{(a_{1} C_1)^{\frac{q-p}{q-1}}(a_{2} C_{2})^{\frac{p-1}{q-1}}(q-1) C_{p_{s_{\sharp}}^{*}}^{p}|\Omega|^{\frac{p_{s_{\sharp}}^{*}-q}{p_{s_{\sharp}}^{*}}\left(\frac{p-1}{q-1}\right)}|\Omega|^{\frac{p_{s_{\sharp}}^{*}-1}{p_{s_{\sharp}}^{*}}\left(\frac{q-p}{q-1}\right)}}. 
\end{equation}

Next, we take $0<\varepsilon<r$ and define
\begin{equation*}
\Lambda_{\lambda}(\varepsilon, r):=\frac{\sup _{v \in \Phi_{p, s}^{-1}([0, r])} \Psi_{ \lambda}(v)-\sup _{v \in \Phi_{p, s}^{-1}([0, r-\varepsilon])} \Psi_{ \lambda}(v)}{\varepsilon}. 
\end{equation*}
By rescaling $v$ we obtain that
$$
\begin{aligned}
\Lambda_{\lambda}(\varepsilon, r) & \leq \frac{1}{\varepsilon}\left|\sup _{v \in \Phi_{p, s}^{-1}([0, r])} \Psi_{ \lambda}(v)-\sup _{v \in \Phi_{p, s}^{-1}([0, r-\varepsilon])} \Psi_{ \lambda}(v)\right| \\
& \leq \sup _{v \in \Phi_{p, s}^{-1}([0, 1])} \int_{\Omega}\left|\int_{(r-\varepsilon) v(x)}^{r v(x)} \frac{\lambda g(x, t)}{\varepsilon} d t\right| d x .
\end{aligned}
$$
Subsequently, by applying the subcritical growth condition \eqref{cond on g} for $g$ together with the continuous embedding $X_{p}(\Omega) \hookrightarrow L^{p_{s_{\sharp}}^{*}}(\Omega)$, and Hölder inequality for exponents $\frac{p_{s_{\sharp}}^{*}}{p_{s_{\sharp}}^{*}-q}$ and $\frac{p_{s_{\sharp}}^{*}}{q}$, we get
$$
\begin{aligned}
\Lambda_{\lambda}(\varepsilon, r) & \leq \sup _{v \in \Phi_{p, s}^{-1}([0, 1])} \int_{\Omega}\left|\int_{(r-\varepsilon) v(x)}^{r v(x)} \frac{\lambda g(x, t)}{\varepsilon} d t\right| d x \\
& \left.\left.\leq \frac{1}{\varepsilon} \sup _{v \in \Phi_{p, s}^{-1}([0, 1])} \int_{\Omega} \right\rvert\, \int_{(r-\varepsilon) v(x)}^{r v(x)} \lambda\left(a_{1}+a_{2}|t|^{q-1}\right)\right) d t \mid d x \\
& \leq \lambda \sup _{v \in \Phi_{p, s}^{-1}([0, 1])}\left(a_{1}\|v\|_{L^{1}(\Omega)}+\frac{a_{2}}{q}\left(\frac{r^{q}-(r-\varepsilon)^{q}}{\varepsilon}\right)\|v\|_{L^{q}(\Omega)}^{q}\right) \\
& \leq \lambda\left(a_{1} C_{1} C_{p_{s_{\sharp}}^{*}}|\Omega|^{\frac{p_{s_{\sharp}}^{*}-1}{p_{s_{\sharp}}^{*}}}+a_{2} C_{2} \frac{C_{p_{s_{\sharp}}^{*}}^{q}}{q}\left(\frac{r^{q}-(r-\varepsilon)^{q}}{\varepsilon}\right)|\Omega|^{\frac{p_{s_{\sharp}}^{*}-q}{p_{s_{\sharp}}^{*}}}\right),
\end{aligned}
$$
where $C_{1}, C_{2}$ and $C_{p_{s_{\sharp}}^{*}}$ are the embedding constants. Therefore,
\begin{equation}\label{sub proof 2}
\limsup _{\varepsilon \rightarrow 0} \Lambda_{\lambda}(\varepsilon, r) \leq \lambda\left(a_{1} C_{1} C_{p_{s_{\sharp}}^{*}}|\Omega|^{\frac{p_{s_{\sharp}}^{*}-1}{p_{s_{\sharp}}^{*}}}+a_{2} C_{2} C_{p_{s_{\sharp}}^{*}}^{q} r^{q-1}|\Omega|^{\frac{p_{s_{\sharp}}^{*}-q}{p_{s_{\sharp}}^{*}}}\right).
\end{equation}

Let us consider the real-valued function
$$
\varphi_{\lambda}(r)=\lambda\left(a_{1} C_{1} C_{p_{s_{\sharp}}^{*}}|\Omega|^{\frac{p_{s_{\sharp}}^{*}-1}{p_{s_{\sharp}}^{*}}}+a_{2} C_{2} C_{p_{s_{\sharp}}^{*}}^{q} r^{q-1}|\Omega|^{\frac{p_{s_{\sharp}}^{*}-q}{p_{s_{\sharp}}^{*}}}\right)-r^{p-1}
$$
for every $r>0$. It is easy to see that $\inf _{r>0} \varphi_{\lambda}(r)$ is attained at
$$
r_{0, \lambda}:=\left(\frac{\lambda(q-1) a_{2} C_{2} C_{p_{s_{\sharp}}^{*}}^{q}|\Omega|^{\frac{p_{s_{\sharp}}^{*}-q}{p_{s_{\sharp}}^{*}}}}{p-1}\right)^{\frac{1}{p-q}}.
$$
Moreover, from \eqref{sub proof 1}, we observe that
$$
\inf _{r>0} \varphi_{\lambda}(r)=\varphi_{\lambda}\left(r_{0, \lambda}\right)<0.
$$
Consequently, we deduce from \eqref{sub proof 2} that
\begin{equation}\label{sub proof 3}
\limsup _{\varepsilon \rightarrow 0} \Lambda_{\lambda}(\varepsilon, r) \leq r_{0, \lambda}^{p-1}.
\end{equation}

Combining \eqref{sub proof 3} with Lemma \ref{tech lem 1.1} and Lemma \ref{tech lem 2.1}, we deduce that
\begin{equation*}
\inf _{u \in \Phi_{p, s}^{-1}([0, r_{0, \lambda}))} \frac{\sup _{v \in \Phi_{p, s}^{-1}([0, r_{0, \lambda}])} \Psi_{\lambda}(v)-\Psi_{ \lambda}(u)}{r_{0, \lambda}^{p}-\Phi_{p, s}(u)^{p}}<\frac{1}{p}. 
\end{equation*}
The above relation implies that there exists $w_{\lambda} \in \Phi_{p, s}^{-1}([0, r_{0, \lambda}))$ such that
\begin{equation*}
\Psi_{\lambda}(u) \leq \sup _{v \in \Phi_{p, s}^{-1}([0, r_{0, \lambda}])} \Psi_{\lambda}(v)<\Psi_{\lambda}(w_{\lambda})+\frac{r_{0, \lambda}^{p}-\Phi_{p, s}(w_{\lambda})^{p}}{p}
\end{equation*}
for every $u \in \Phi_{p, s}^{-1}([0, r_{0, \lambda}])$. Therefore, we conclude that
\begin{equation}\label{sub proof 4}
\mathcal{E}_{\lambda}\left(w_{\lambda}\right)=\frac{1}{p} \Phi_{p, s}(w_{\lambda})^{p}-\Psi_{\lambda}\left(w_{\lambda}\right)<\frac{r_{0, \lambda}^{p}}{p}-\Psi_{\lambda}(u) 
\end{equation}
for every $u \in \Phi_{p, s}^{-1}([0, r_{0, \lambda}])$. 

Since the energy functional $\mathcal{E}_{\lambda}$ is sequentially weakly lower semicontinuous and coercive, by the direct method of variational calculus \cite[Theorem 1.2]{Struwe: 2008}, we conclude that its restriction to $\Phi_{p, s}^{-1}([0, r_{0, \lambda}])$ has a global minimum $u_{0, \lambda}$ in $\Phi_{p, s}^{-1}([0, r_{0, \lambda}])$. Note that $u_{0, \lambda} \in \Phi_{p, s}^{-1}([0, r_{0, \lambda}])$. Indeed, suppose that
$$
\Phi_{p, s}(u_{0, \lambda}):=\rho_p(u_{0, \lambda})=r_{0, \lambda}.
$$
Then by \eqref{sub proof 4} we have
$$
\mathcal{E}_{\lambda}\left(u_{0, \lambda}\right)=\frac{1}{p} r_{0, \lambda}^{p}-\Psi_{\lambda}\left(u_{0, \lambda}\right)>\mathcal{E}_{\lambda}\left(w_{\lambda}\right),
$$
contradicting the fact that $u_{0, \lambda}$ is a global minimum for $\mathcal{E}_{\lambda}$. Therefore, we conclude that $u_{0, \lambda} \in X_{p}(\Omega)$ is a local minimum for the energy functional $\mathcal{E}_{\lambda}$ with
$$
\Phi_{p, s}\left(u_{0, \lambda}\right):=\rho_p(u_{0, \lambda})<r_{0, \lambda}
$$
and hence, a weak solution to the problem \eqref{problem 1.1}. This completes the proof.
\end{proof}

%%%%%%%%%%%%%%%%%%%%%%%%%%%%%%%%%%%%%%%%%%%%%%%
\section{Brezis-Nirenberg type problem via mountain pass technique} \label{sec7}
%%%%%%%%%%%%%%%%%%%%%%%%%%%%%%%%%%%%%%%%%%%%%%%
The main aim of this section is to investigate the following Brezis-Nirenberg-type problem
\begin{equation}\label{problem 2}
 \left\{\begin{array}{cc}
    A_{\mu, p} u  =\lambda |u|^{q-2}u +|u|^{p_{s_\sharp}^{*}-2} u \text { in } \Omega,  \\
    u  = 0  \text { in } \mathbb{R}^{N} \backslash \Omega,
\end{array}\right.
\end{equation}
where $\lambda$ is a real positive parameter and $p<q<p_{s_\sharp}^{*}$. The exponent $p_{s_{\sharp}}^{*}$ is the fractional critical exponent defined in \eqref{critical exponent}.

\begin{definition}\label{weaksol MP}
    A weak solution of problem \eqref{problem 2} is a function $u \in X_{p}(\Omega)$ such that 
\begin{align*}
& \int_{[0,1]}\left(\int_{\mathbb{R}^{2 N}} \int_{[0, s)} \frac{C_{N, s, p}|u(x)-u(y)|^{p-2}(u(x)-u(y))(v(x)-v(y))}{|x-y|^{N+s p}} d x d y\right) d \mu^{+}(s) \\
& =\int_{[0, \bar{s}]}\left(\iint_{\mathbb{R}^{2 N}} \frac{C_{N, s, p}|u(x)-u(y)|^{p-2}(u(x)-u(y))(v(x)-v(y))}{|x-y|^{N+s p}} d x d y\right) d \mu^{-}(s) \\
& +\lambda \int_{\Omega}|u|^{q-2} u v d x+\int_{\Omega}|u|^{p_{s_\sharp}^{*}-2} u v d x,
\end{align*}
for all $v \in X_{p}(\Omega)$.
\end{definition}

The solutions of the problem \eqref{problem 2} coincide with the critical points of the functional $\mathcal{I}_{\lambda}: X_{p}(\Omega) \rightarrow \mathbb{R}$ given by
\begin{equation}\label{functional MP}
\mathcal{I}_{\lambda}(u):=\frac{1}{p}\left[\rho_{p}(u)\right]^{p}-\frac{1}{p} \int_{[0, \bar{s}]}[u]_{s, p}^{p} d \mu^{-}(s)-\frac{\lambda}{q} \int_{\Omega}|u|^{q} d x-\frac{1}{p_{s_{\sharp}}^{*}} \int_{\Omega}|u|^{p_{s_\sharp}^{*}} d x. 
\end{equation}

Moreover, we have
\begin{align*}
\langle \mathcal{I}^{\prime}_{\lambda}(u), v\rangle&= \int_{[0,1]}\left(\iint_{\mathbb{R}^{2 N}}  \frac{C_{N, s, p}|u(x)-u(y)|^{p-2}(u(x)-u(y))(v(x)-v(y))}{|x-y|^{N+s p}} d x d y\right) d \mu^{+}(s) \\
& -\int_{[0, \bar{s}]}\left(\iint_{\mathbb{R}^{2 N}} \frac{C_{N, s, p}|u(x)-u(y)|^{p-2}(u(x)-u(y))(v(x)-v(y))}{|x-y|^{N+s p}} d x d y\right) d \mu^{-}(s) \\
& -\lambda \int_{\Omega}|u|^{q-2}u v d x- \int_{\Omega}|u|^{p_{s_\sharp}^{*}-2} u v d x,
\end{align*}
for all $u, v \in X_{p}(\Omega)$.

The main result about the existence of a nontrivial solution to the problem \eqref{problem 2} is stated below.

\begin{thm}\label{main result 2}
  Let $ \Omega$ be a bounded subset of $\mathbb{R}^N$. Let $\mu=\mu^{+}-\mu^{-}$with $\mu^{+}$and $\mu^{-}$satisfying \eqref{measure 1}-\eqref{measure 3} and let $s_{\sharp}$ be as in \eqref{measure 4}. Assume $1<p<q<p_{s_{\sharp}}^*$ and $s_{\sharp}p<N$, where $p_{s_{\sharp}}^{*}:=\frac{N p}{N-s_{\sharp}p}$. Then, there exists $\lambda^*$ such that the problem \eqref{problem 2} admits at least a nontrivial solution for all $\lambda \geq \lambda^*$, provided that $\kappa \in [0, \kappa_0]$  with $\kappa_0$ being sufficiently small.
\end{thm}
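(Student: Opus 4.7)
The plan is to apply the Mountain Pass Theorem of Ambrosetti-Rabinowitz to the functional $\mathcal{I}_\lambda$ defined in \eqref{functional MP}, verifying three ingredients: the mountain pass geometry, a Palais-Smale condition holding strictly below an explicit compactness threshold $c^*$, and the upper bound $c_\lambda < c^*$ for $\lambda$ large. For the geometry, Lemma \ref{reabsorb} lets us reabsorb $\int_{[0,\bar{s}]} [u]_{s,p}^p \, d\mu^-(s)$ into $(c_0\kappa)\rho_p(u)^p$; combined with the embedding $X_p(\Omega) \hookrightarrow L^r(\Omega)$ from Proposition \ref{compact and cont embedding} and the Sobolev constant \eqref{Sobolev constant 2} one obtains
$$\mathcal{I}_\lambda(u) \geq \tfrac{1-c_0\kappa}{p}\|u\|_{X_p(\Omega)}^p - \tfrac{\lambda C_q^q}{q}\|u\|_{X_p(\Omega)}^q - \tfrac{C_{p_{s_\sharp}^*}^{p_{s_\sharp}^*}}{p_{s_\sharp}^*}\|u\|_{X_p(\Omega)}^{p_{s_\sharp}^*}.$$
Taking $\kappa_0 < 1/c_0$ makes the leading coefficient positive, and since $p < q < p_{s_\sharp}^*$ this forces $\mathcal{I}_\lambda(u) \geq \alpha_\lambda > 0$ on a sphere of radius $\rho_\lambda$; meanwhile, for any fixed $u_0 \in X_p(\Omega)\setminus\{0\}$, the critical term produces $\mathcal{I}_\lambda(tu_0) \to -\infty$ as $t \to \infty$, yielding the second mountain pass point $e$.

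The central difficulty is establishing the Palais-Smale condition at every level $c \in (0, c^*)$. For a PS sequence $(u_n)$ at level $c$, the combination $\mathcal{I}_\lambda(u_n) - q^{-1}\langle \mathcal{I}'_\lambda(u_n), u_n\rangle$ together with Lemma \ref{reabsorb} yields two positive coefficients (one for $\rho_p(u_n)^p$, one for $\|u_n\|_{L^{p_{s_\sharp}^*}}^{p_{s_\sharp}^*}$) because $p < q < p_{s_\sharp}^*$, from which boundedness of $(u_n)$ in $X_p(\Omega)$ follows. Passing to a subsequence, $u_n \rightharpoonup u$ in $X_p(\Omega)$ and $u_n \to u$ in $L^r(\Omega)$ for every $r \in [1, p_{s_\sharp}^*)$, and Lemma \ref{weak convergence} identifies $u$ as a weak solution in the sense of Definition \ref{weaksol MP}. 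Setting $w_n := u_n - u$, the classical Brezis-Lieb lemma on the $L^{p_{s_\sharp}^*}$-piece combined with Lemma \ref{B-L lemma} on the Gagliardo seminorms decouples the limits and gives
$$\rho_p(w_n)^p - \int_{[0,\bar{s}]}[w_n]_{s,p}^p \, d\mu^-(s) = \|w_n\|_{L^{p_{s_\sharp}^*}}^{p_{s_\sharp}^*} + o(1), \qquad c - \mathcal{I}_\lambda(u) = \Bigl(\tfrac{1}{p} - \tfrac{1}{p_{s_\sharp}^*}\Bigr)L + o(1),$$
where $L := \lim \|w_n\|_{L^{p_{s_\sharp}^*}}^{p_{s_\sharp}^*}$. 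Reabsorbing the $\mu^-$-term and applying the Sobolev inequality \eqref{Sobolev constant 2} forces a dichotomy: either $L = 0$, hence $w_n \to 0$ strongly, or $L \geq L_0 := \bigl((1-c_0\kappa)/C_{p_{s_\sharp}^*}^{p_{s_\sharp}^*}\bigr)^{p_{s_\sharp}^*/(p_{s_\sharp}^*-p)}$. Since the Nehari-type identity applied to the critical point $u$ shows $\mathcal{I}_\lambda(u) \geq 0$ (again using $p < q < p_{s_\sharp}^*$), the second alternative implies $c \geq c^* := \bigl(\tfrac{1}{p} - \tfrac{1}{p_{s_\sharp}^*}\bigr)L_0$, contradicting $c < c^*$. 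This yields the Palais-Smale lemma advertised in the introduction as Lemma \ref{PS condition lem}.

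It remains to push $c_\lambda$ below $c^*$ for $\lambda$ large. Fixing any $u_0 \in X_p(\Omega) \setminus \{0\}$, I would estimate $c_\lambda$ along the segment $\gamma(t) = tu_0$: writing $\mathcal{I}_\lambda(tu_0) = (t^p/p)A - (\lambda t^q/q)B - (t^{p_{s_\sharp}^*}/p_{s_\sharp}^*)D$ with $A, B, D > 0$ independent of $\lambda$, the maximizer $t_\lambda$ obeys $A = \lambda B t_\lambda^{q-p} + D t_\lambda^{p_{s_\sharp}^*-p}$, so $t_\lambda \leq (A/(\lambda B))^{1/(q-p)} \to 0$ as $\lambda \to \infty$. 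Consequently $\max_{t \geq 0}\mathcal{I}_\lambda(tu_0) \leq (t_\lambda^p/p)A \to 0$, so $c_\lambda \to 0$, and the threshold $\lambda^*$ is selected so that $c_\lambda < c^*$ for every $\lambda \geq \lambda^*$. The Mountain Pass Theorem then delivers a critical point $u_\lambda$ at level $c_\lambda > 0 = \mathcal{I}_\lambda(0)$, which is the desired nontrivial weak solution of \eqref{problem 2}. The principal obstacle is the Palais-Smale bookkeeping: the signed character of $\mu$ interacts delicately with the nonlocal Brezis-Lieb splitting, and controlling $\int_{[0,\bar{s}]} [w_n]_{s,p}^p \, d\mu^-(s)$ via Lemma \ref{reabsorb} is exactly what pins down $c^*$ and calibrates the smallness of $\kappa_0$ in terms of $N$, $\Omega$, $p$, $\mu$, $\lambda$, and $s_\sharp$.
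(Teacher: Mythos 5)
Your proposal follows the same Mountain Pass route as the paper's Lemmas \ref{MP geometry}--\ref{PS condition lem} and the proof of Theorem \ref{main result 2}: verify the geometry via Lemma \ref{reabsorb} and the embedding \eqref{embedding}, prove the PS condition below a threshold $c^*$ by a Brezis--Lieb splitting, and send $c_\lambda\to 0$ by tracking the maximizer $t_\lambda$ along a fixed ray. Structurally the arguments are the same, but your bookkeeping in the PS step is somewhat cleaner than the paper's: you observe that the weak limit $u$ is itself a critical point of $\mathcal{I}_\lambda$, so the Nehari identity gives $\mathcal{I}_\lambda(u)\geq 0$, and then $c-\mathcal{I}_\lambda(u) = \left(\tfrac{1}{p}-\tfrac{1}{p_{s_\sharp}^*}\right)L$ together with the dichotomy ``$L=0$ or $L\geq L_0$'' forces the contradiction directly when $c<c^*$. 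The paper instead routes the same information through a chain of identities involving $cN/s_\sharp$ and an auxiliary parameter $\theta_0\in(0,1)$, requiring
$1-c_0\kappa - \left((1-\theta_0)^{N/s_\sharp p}/(1-c_0\kappa)\right)^{s_\sharp p/(N-s_\sharp p)}>0$
for small $\kappa$; in your version $1-c_0\kappa$ naturally plays the role of $1-\theta_0$ and the auxiliary parameter disappears. Your derivation of boundedness via $\mathcal{I}_\lambda(u_n) - q^{-1}\langle\mathcal{I}'_\lambda(u_n),u_n\rangle$ is also a slightly more direct rearrangement of the paper's two-step estimate (testing with $v=-u_n$ then invoking the energy level), but mathematically equivalent.

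One algebra slip to flag: you assert $L_0 = \left((1-c_0\kappa)/C_{p_{s_\sharp}^*}^{p_{s_\sharp}^*}\right)^{p_{s_\sharp}^*/(p_{s_\sharp}^*-p)}$, but chasing through $L\geq(1-c_0\kappa)\lim\rho_p(w_n)^p$ and $L\leq C_{p_{s_\sharp}^*}^{p_{s_\sharp}^*}\lim\rho_p(w_n)^{p_{s_\sharp}^*}$ gives $L^{1-p/p_{s_\sharp}^*}\geq(1-c_0\kappa)C_{p_{s_\sharp}^*}^{-p}$, so the Sobolev constant should enter with exponent $p$, not $p_{s_\sharp}^*$, inside the bracket. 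The correct bound is $L_0 = \left((1-c_0\kappa)/C_{p_{s_\sharp}^*}^{p}\right)^{p_{s_\sharp}^*/(p_{s_\sharp}^*-p)} = \left((1-c_0\kappa)\mathcal{S}(p)\right)^{N/s_\sharp p}$, and the resulting $c^* = \tfrac{s_\sharp}{N}\left((1-c_0\kappa)\mathcal{S}(p)\right)^{N/s_\sharp p}$ is then precisely the paper's $c^*$ with $\theta_0 = c_0\kappa$. This is a constant bookkeeping error, not a logical gap; the argument stands once corrected.
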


The proof of Theorem \ref{main result 2} is based on the Mountain-Pass Lemma of Ambrosetti and Rabinowitz \cite{AR: 1973}. First, we show that the functional defined in \eqref{functional MP} satisfies the Mountain-Pass geometry.
\begin{lem}\label{MP geometry}
    Let $\lambda >0$. Then, 

    1) There exist $r >0$ and $\beta>0$ such that $\mathcal{I_\lambda}(u)\geq \beta$ for any $u\in X_{p}(\Omega)$, with $\rho_p(u)=r$. 

    2) There exists a function $\phi \in X_{p}(\Omega)$ such that $\rho_p(\phi)\geq r$ and $\mathcal{I_\lambda}(\phi)\leq \beta$ for some $r, \beta >0.$
\end{lem}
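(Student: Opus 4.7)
The plan is to verify the two geometric conditions directly by dominating the contribution of the negative measure $\mu^-$ via the reabsorption Lemma \ref{reabsorb}, then exploiting the gap of exponents $p<q<p_{s_\sharp}^{*}$ via the continuous embedding of Proposition \ref{compact and cont embedding}.

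For part (1), I first invoke Lemma \ref{reabsorb} to obtain
\begin{equation*}
\int_{[0,\bar{s}]}[u]_{s,p}^{p}\, d\mu^{-}(s)\le c_{0}\kappa\int_{[\bar{s},1]}[u]_{s,p}^{p}\,d\mu^{+}(s)\le c_{0}\kappa\,[\rho_{p}(u)]^{p},
\end{equation*}
so that, for $\kappa$ small enough that $1-c_{0}\kappa>0$,
\begin{equation*}
\mathcal{I}_{\lambda}(u)\ge \frac{1-c_{0}\kappa}{p}[\rho_{p}(u)]^{p}-\frac{\lambda}{q}\|u\|_{L^{q}(\Omega)}^{q}-\frac{1}{p_{s_\sharp}^{*}}\|u\|_{L^{p_{s_\sharp}^{*}}(\Omega)}^{p_{s_\sharp}^{*}}.
\end{equation*}
Then, since $q\in(p,p_{s_\sharp}^{*})$, the continuous embedding $X_{p}(\Omega)\hookrightarrow L^{r}(\Omega)$ for $r\in\{q,p_{s_\sharp}^{*}\}$ (Proposition \ref{compact and cont embedding}) yields embedding constants $C_{q},C_{p_{s_\sharp}^{*}}>0$ such that, writing $t:=\rho_{p}(u)$,
\begin{equation*}
\mathcal{I}_{\lambda}(u)\ge \frac{1-c_{0}\kappa}{p}\,t^{p}-\frac{\lambda C_{q}^{q}}{q}\,t^{q}-\frac{C_{p_{s_\sharp}^{*}}^{p_{s_\sharp}^{*}}}{p_{s_\sharp}^{*}}\,t^{p_{s_\sharp}^{*}}.
\end{equation*}
Because $p<q<p_{s_\sharp}^{*}$, the function $t\mapsto \frac{1-c_{0}\kappa}{p}t^{p}-\frac{\lambda C_{q}^{q}}{q}t^{q}-\frac{C_{p_{s_\sharp}^{*}}^{p_{s_\sharp}^{*}}}{p_{s_\sharp}^{*}}t^{p_{s_\sharp}^{*}}$ is strictly positive for all sufficiently small $t>0$; picking any such $t=r$ and setting $\beta$ to be the corresponding positive value gives the claim.

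For part (2), I fix any $\phi_{0}\in X_{p}(\Omega)\setminus\{0\}$ (for instance a nonnegative test function supported in $\Omega$) and consider $\phi=t\phi_{0}$ for $t>0$. Homogeneity gives
\begin{equation*}
\mathcal{I}_{\lambda}(t\phi_{0})=\frac{t^{p}}{p}\!\left([\rho_{p}(\phi_{0})]^{p}-\int_{[0,\bar s]}[\phi_{0}]_{s,p}^{p}\,d\mu^{-}(s)\right)-\frac{\lambda t^{q}}{q}\|\phi_{0}\|_{L^{q}(\Omega)}^{q}-\frac{t^{p_{s_\sharp}^{*}}}{p_{s_\sharp}^{*}}\|\phi_{0}\|_{L^{p_{s_\sharp}^{*}}(\Omega)}^{p_{s_\sharp}^{*}}.
\end{equation*}
Since $p_{s_\sharp}^{*}>q>p$ and $\|\phi_{0}\|_{L^{p_{s_\sharp}^{*}}(\Omega)}>0$, the last (critical) term dominates as $t\to+\infty$, so $\mathcal{I}_{\lambda}(t\phi_{0})\to-\infty$. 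Consequently, one can select $t=t_{0}$ large enough that simultaneously $\rho_{p}(t_{0}\phi_{0})=t_{0}\rho_{p}(\phi_{0})\ge r$ and $\mathcal{I}_{\lambda}(t_{0}\phi_{0})\le 0\le \beta$; the function $\phi:=t_{0}\phi_{0}$ then meets the requirements.

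The only delicate point is the reabsorption in part (1): one needs $c_{0}\kappa<1$ in order to retain a positive coefficient in front of $[\rho_{p}(u)]^{p}$, which is precisely the reason why the hypothesis $\kappa\in[0,\kappa_{0}]$ (for a suitably small $\kappa_{0}$) is imposed throughout Theorem \ref{main result 2}. Once the norm $\rho_{p}$ is genuinely coercive on the $p$-homogeneous term, the rest is routine comparison between polynomials of different orders.
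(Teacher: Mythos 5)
Your proof is correct and follows essentially the same route as the paper: part (1) uses Lemma \ref{reabsorb} to absorb the $\mu^-$ contribution, then converts the lower bound into a single-variable polynomial in $t=\rho_p(u)$ via the Sobolev embeddings of Proposition \ref{compact and cont embedding}, which is positive for small $t$; part (2) lets the critical term dominate along a ray $t\phi_0$ as $t\to\infty$. The only cosmetic difference is that the paper normalizes $\rho_p(v)=1$ in part (2), whereas you keep a general $\phi_0$, which changes nothing.
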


\begin{proof}
    1) Fix any $u\in X_{p}(\Omega)$. From the embedding \eqref{embedding}, Lemma \ref{reabsorb}, and the positivity of $\lambda$, it follows that 
    \begin{align*}
    \mathcal{I_\lambda}(u) & =\frac{1}{p}[\rho_p(u)]^{p}-\frac{1}{p} \int_{[0, \bar{s}]}[u]_{s, p}^{p} d \mu^{-}(s)-\frac{\lambda}{q}\|u\|_{L^q(\Omega)}^{q} -\frac{1}{p_{s_\sharp}^{*}}\|u\|_{L^{p_{s_\sharp}^{*}}(\Omega)}^{p_{s_\sharp}^{*}}\\
    & \geq \left(\frac{1}{p}-\frac{c_0\kappa}{p}\right)[\rho_p(u)]^{p}-\frac{\lambda}{q}\|u\|_{L^q(\Omega)}^{q}  -\frac{1}{p_{s_\sharp}^{*}}\|u\|_{L^{p_{s_\sharp}^{*}}(\Omega)}^{p_{s_\sharp}^{*}}\\
    & \geq \left(\frac{1}{p}-\frac{c_0\kappa}{p}\right)[\rho_p(u)]^{p}-\frac{\lambda}{q}C_q^q [\rho_p(u)]^{q} -\frac{1}{p_{s_\sharp}^{*}}C_{p_{s_\sharp}^{*}}^{p_{s_\sharp}^{*}}[\rho_p(u)]^{p_{s_\sharp}^{*}}.
\end{align*}
Thus, by setting
\begin{equation*}
    \eta_{\lambda}(t)=\left(\frac{1}{p}-\frac{c_0\kappa}{p}\right) t^p-\frac{\lambda}{q}C_q^q t^q-\frac{1}{p_{s_\sharp}^{*}}C_{p_{s_\sharp}^{*}}^{p_{s_\sharp}^{*}} t^{p_{s_\sharp}^*},
\end{equation*}
we can find some $r \in(0,1]$ sufficiently small such  that $\max _{t \in[0,1]} \eta_{\lambda}(t)=\eta_{\lambda}(r)>0$, as $p<q<p^*$ provided that we choose $\kappa >0$ small enough so that $0<c_0\kappa<1$. Therefore, it follows that  $\mathcal{J}_{\lambda}(u) \geq \beta=\eta_{\lambda}(r)>0$ for any $u \in X_{p}(\Omega)$, with $\rho_p (u)=r$. 

2) Take a nonnegative function $v\in X_{p}(\Omega)$, with $\rho_{p}(v)=1$ and $t>0$. Then we have
\begin{align*}
    \mathcal{I}_\lambda(tv) & = \frac{t^p}{p}[\rho_{p}(v)]^{p}-\frac{t^p}{p} \int_{[0, \bar{s}]}[v]_{s, p}^{p} d \mu^{-}(s)- \frac{\lambda t^q}{q} \|v\|_{L^q(\Omega)}^q -\frac{t^{p_{s_\sharp}^{*}}}{p_{s_\sharp}^{*}}\|v\|_{L^{p_{s_\sharp}^{*}}(\Omega)}^{p_{s_\sharp}^{*}}\\
    & = \frac{t^p}{p}\left(1-\int_{[0, \bar{s}]}[v]_{s, p}^{p} d \mu^{-}(s)\right)- \frac{\lambda t^q}{q} \|v\|_{L^q(\Omega)}^q -\frac{t^{p_{s_\sharp}^{*}}}{p_{s_\sharp}^{*}}\|v\|_{L^{p_{s_\sharp}^{*}}(\Omega)}^{p_{s_\sharp}^{*}}.\\
\end{align*}
Consequently, since $p<q<p_{s_\sharp}^{*}$, passing to the limit as $t\to +\infty$, we get that $$I_\lambda(tv) \to -\infty.$$ Hence, by taking $\phi=\bar{t}v$, with $\bar{t}>0$ sufficiently large, we get that $\mathcal{I}_{\lambda}(\phi)<0$ and $\rho_p (\phi) \geq 2$.   
\end{proof}

Notice that the function $\phi$, obtained in Lemma \ref{MP geometry} at some $\lambda_0>0$, satisfies $\mathcal{I}_{\lambda}(\phi)<0$ and $\rho_p (\phi) \geq 2> r$ for all $\lambda \geq \lambda_0$, since $r \in(0,1]$.

In order to study the compactness property of the functional $\mathcal{I}_{\lambda}$,  we employ the Palais-Smale condition at a suitable mountain pass level $c_{\lambda}$. Specifically, we fix $\lambda>0$ and define
\begin{equation}\label{minmax levels}
    c_{\lambda}=\inf _{\xi \in \Gamma} \max _{\tau \in[0,1]} \mathcal{I}_{\lambda}(\xi(\tau)),
\end{equation}
where
\begin{equation*}
    \Gamma=\left\{\xi \in C\left([0,1], X_p(\Omega)\right): \xi(0)=0, \xi(1)=\phi\right\} .
\end{equation*}

From Lemma \ref{MP geometry}, we immediately deduce that $c_{\lambda}>0$.

Recall that  a sequence $\left(u_k\right)_k \subset X_p(\Omega)$ is called a Palais-Smale (PS) sequence for $\mathcal{I}_{\lambda}$ at level $c_{\lambda} \in \mathbb{R}$ if
$$
\mathcal{I}_{\lambda}\left(u_k\right) \rightarrow c_{\lambda} \quad \text { and } \quad \mathcal{I}_{\lambda}^{\prime}\left(u_k\right) \rightarrow 0 \quad \text { as } k \rightarrow \infty .
$$

Furthermore, $\mathcal{I}_{\lambda}$ satisfies the PS condition at level $c_{\lambda}$ if every PS sequence $\left(u_k\right)_k$ at level $c_{\lambda}$ admits a convergent subsequence in $X_p(\Omega)$.

Next, we establish an asymptotic property of the level $c_{\lambda},$ which is crucial in the proof of the Theorem \ref{main result 2}. The proof is standard and we include it here for the sake of completeness.

\begin{lem}\label{decay of minmax}
Let $p<q<p_{s_\sharp}^{*}$. Then,
\begin{equation*}
    \lim _{\lambda \rightarrow \infty} c_\lambda=0,
\end{equation*}
where $c_\lambda$ is defined \eqref{minmax levels}.
\end{lem}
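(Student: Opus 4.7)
The plan is to exploit the admissible linear path $\xi_0(\tau):=\tau\phi$ for $\tau\in[0,1]$, where $\phi$ is the function produced in Lemma~\ref{MP geometry} (so that $\xi_0(0)=0$, $\xi_0(1)=\phi$, and $\xi_0\in\Gamma$). This gives the upper bound
\[
0<c_\lambda\le\max_{\tau\in[0,1]}\mathcal{I}_\lambda(\tau\phi)=:\max_{\tau\in[0,1]}h_\lambda(\tau),
\]
the positivity coming from Lemma~\ref{MP geometry}, and the task reduces to a scalar analysis of $h_\lambda$ as $\lambda\to\infty$.

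First, I would expand
\[
h_\lambda(\tau)=a\tau^{p}-\lambda\,b\,\tau^{q}-c\,\tau^{p_{s_\sharp}^{*}},
\]
with
\(a:=\tfrac{1}{p}\bigl([\rho_p(\phi)]^{p}-\int_{[0,\bar s]}[\phi]_{s,p}^{p}\,d\mu^{-}(s)\bigr)\),
\(b:=\tfrac{1}{q}\|\phi\|_{L^{q}(\Omega)}^{q}>0\) and
\(c:=\tfrac{1}{p_{s_\sharp}^{*}}\|\phi\|_{L^{p_{s_\sharp}^{*}}(\Omega)}^{p_{s_\sharp}^{*}}>0\).
A preliminary point to settle is $a>0$; this follows from Lemma~\ref{reabsorb} under the smallness assumption $c_{0}\kappa<1$ already invoked in Lemma~\ref{MP geometry}.

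Next, I would examine the derivative
\[
h_\lambda'(\tau)=\tau^{p-1}\bigl(pa-\lambda qb\,\tau^{q-p}-p_{s_\sharp}^{*}c\,\tau^{p_{s_\sharp}^{*}-p}\bigr).
\]
Since $p<q<p_{s_\sharp}^{*}$, the bracketed factor is strictly decreasing in $\tau>0$, equal to $pa>0$ at the origin and tending to $-\infty$, so it has a unique zero $\tau_\lambda>0$ yielding the global maximum of $h_\lambda$ on $[0,+\infty)$. The critical-point identity gives $\tau_\lambda^{q-p}\le pa/(\lambda qb)$; hence $\tau_\lambda\to 0^{+}$ as $\lambda\to\infty$, and discarding the negative contributions leads to
\[
h_\lambda(\tau_\lambda)\le a\tau_\lambda^{p}\le a\Bigl(\frac{pa}{\lambda qb}\Bigr)^{p/(q-p)}\xrightarrow[\lambda\to\infty]{}0.
\]

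Finally, for $\lambda$ large enough one has $\tau_\lambda<1$, so that $\max_{[0,1]}h_\lambda=h_\lambda(\tau_\lambda)$, and combining this with the lower bound $c_\lambda>0$ gives $c_\lambda\to 0$ by squeezing. The argument is essentially routine; the only mild technicality is securing $a>0$, which is why the smallness condition on $\kappa$ is needed. Note that the critical term $c\tau^{p_{s_\sharp}^{*}}$ plays no adversarial role here, because it only adds a further negative contribution to $h_\lambda$ and therefore helps the estimate rather than hinders it.
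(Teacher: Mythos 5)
Your proposal is correct and follows essentially the same strategy as the paper: test the mountain--pass value with the linear path $\tau\mapsto\tau\phi$, locate the maximizer $\tau_\lambda$ via the condition $\langle\mathcal{I}_\lambda'(\tau_\lambda\phi),\phi\rangle=0$, show $\tau_\lambda\to 0$, and then bound $c_\lambda$ by the quadratic-in-$\tau_\lambda$ leading term. The one place you genuinely streamline the argument is the convergence $\tau_\lambda\to 0$: you read off directly from the critical-point identity that $\tau_\lambda^{q-p}\le pa/(\lambda qb)$, which is an explicit, $\lambda$-dependent rate, whereas the paper first proves boundedness of $\{\tau_\lambda\}$ (using a fixed $\lambda_0$ rather than $\lambda$ in the estimate) and then argues by contradiction along subsequences that the limit must vanish. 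Your version is cleaner and yields a quantitative decay $c_\lambda=O(\lambda^{-p/(q-p)})$ for free; otherwise the two proofs coincide, including the appeal to Lemma~\ref{reabsorb} with $c_0\kappa<1$ to guarantee $a>0$.
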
 

\begin{proof}
    Fix $\lambda_0>0$. Let $\phi \in X_{p}(\Omega)$ be the function obtained by Lemma \ref{MP geometry}, depending possibly on $\lambda_0$. Hence $\mathcal{I}_{\lambda}$ satisfies the mountain pass geometry at $0$ and $\phi$ for all $\lambda \geq \lambda_0$. Thus there exists $\tau_{\lambda}>0$ such that $\mathcal{I}_{\lambda}\left(\tau_{\lambda} \phi\right)=\max _{\tau \geq 0} \mathcal{I}_{\lambda}(\tau \phi)$ and so $\left\langle\mathcal{I}_{\lambda}^{\prime}\left(\tau_{\lambda} \phi\right), \phi\right\rangle=0$. Then, we have
    \begin{align*}
\langle \mathcal{I}^{\prime}_{\lambda}(\tau_{\lambda} \phi), \phi\rangle&=\tau_{\lambda}^{p-1}\Bigg[\int_{[0,1]}\left(\iint_{\mathbb{R}^{2 N}}  \frac{C_{N, s, p}|\phi(x)-\phi(y)|^{p}}{|x-y|^{N+s p}} d x d y\right) d \mu^{+}(s) \\
& -\int_{[0, \bar{s}]}\left(\iint_{\mathbb{R}^{2 N}} \frac{C_{N, s, p}|\phi(x)-\phi(y)|^{p}}{|x-y|^{N+s p}} d x d y\right) d \mu^{-}(s) \Bigg]\\
& \quad \quad \quad -\lambda \tau_{\lambda}^{q-1} \int_{\Omega}|\phi|^{q} d x- \tau_{\lambda}^{p_{s_\sharp}^{*}-1} \int_{\Omega}|\phi|^{p_{s_\sharp}^{*}} d x\\
&= \tau_{\lambda}^{p-1}  (\rho_p(\phi))^p- \tau_{\lambda}^{p-1} \int_{[0, \bar{s}]}[\phi]_{s,p}^{p} d \mu^{-}(s) \\&\quad \quad \quad-\lambda \tau_{\lambda}^{q-1} \int_{\Omega}|\phi|^{q} d x- \tau_{\lambda}^{p_{s_\sharp}^{*}-1} \int_{\Omega}|\phi|^{p_{s_\sharp}^{*}} d x\\ &=0,
\end{align*}
and therefore,
\begin{align}\label{bounded ineq lambda 0}
    \tau_{\lambda}^{p-1} (\rho_p(\phi))^p&=\tau_{\lambda}^{p-1} \int_{[0, \bar{s}]}[\phi]_{s,p}^{p} d \mu^{-}(s) +\lambda \tau_{\lambda}^{q-1} \int_{\Omega}|\phi|^{q} d x+ \tau_{\lambda}^{p_{s_\sharp}^{*}-1} \int_{\Omega}|\phi|^{p_{s_\sharp}^{*}} d x \nonumber \\&\geq \lambda_0 \tau_{\lambda}^{q-1} \int_{\Omega}|\phi|^{q} d x.
\end{align}
Now, dividing both sides of \eqref{bounded ineq lambda 0} by $\tau_{\lambda}^{p-1},$ we obtain
\begin{equation*}
     (\rho_p(\phi))^p\geq \lambda_0 \tau_{\lambda}^{q-p} \int_{\Omega}|\phi|^{q} d x.
\end{equation*}
Consequently $\left\{\tau_{\lambda}\right\}_{\lambda \geq \lambda_0}$ is bounded in $\mathbb{R}$, since $p<q,$ $\int_{\Omega}|\phi|^{q} d x>0$ and $\phi$ depends only on $\lambda_0$ by Lemma \ref{MP geometry}.

Take now a sequence $\left(\lambda_k\right)_k \subset\left[\lambda_0, \infty\right)$ such that $\lambda_k \rightarrow \infty$ as $k \rightarrow \infty$. Clearly $\left(\tau_{ \lambda_k}\right)_k$ is bounded in $\mathbb{R}$. Thus, there exist a number $\ell \geq 0$ and a subsequence, still relabeled as $\left(\lambda_k\right)_k$, such that
\begin{equation*}
    \lim _{k \rightarrow \infty} \tau_{\lambda_k}=\ell .
\end{equation*}

From \eqref{bounded ineq lambda 0} there exists $L$ such that
\begin{equation}\label{bound lambda 0 1}
    \tau_{\lambda_k}^{p-1} \int_{[0, \bar{s}]}[\phi]_{s,p}^{p} d \mu^{-}(s)+\lambda_k \tau_{\lambda_k}^{q-1} \int_{\Omega}|\phi|^{q} d x+ \tau_{\lambda_k}^{p_{s_\sharp}^{*}-1} \int_{\Omega}|\phi|^{p_{s_\sharp}^{*}} d x \leq L
\end{equation}
for any $k \in \mathbb{N}$. We claim that $\ell=0$. Indeed, if $\ell>0$ we obtain
\begin{equation*}
    \lim _{k \rightarrow \infty}\left(\tau_{\lambda_k}^{p-1} \int_{[0, \bar{s}]}[\phi]_{s,p}^{p} d \mu^{-}(s)+\lambda_k \tau_{\lambda_k}^{q-1} \int_{\Omega}|\phi|^{q} d x+ \tau_{\lambda_k}^{p_{s_\sharp}^{*}-1} \int_{\Omega}|\phi|^{p_{s_\sharp}^{*}} d x\right)=\infty,
\end{equation*}
which contradicts \eqref{bound lambda 0 1}. Hence $\ell=0$ and
\begin{equation}\label{limit of lambda 0}
    \lim _{\lambda \rightarrow \infty} \tau_{\lambda}=0,
\end{equation}
since the sequence $\left(\lambda_k\right)_k$ is arbitrary. 

Consider now the path $\xi(\tau)=\tau \phi, \tau \in[0,1]$, belonging to $\Gamma$. By Lemma \ref{MP geometry} and \eqref{limit of lambda 0} we get
\begin{equation*}
    0<c_{\lambda} \leq \max _{\tau \in[0,1]} \mathcal{I}_{\lambda}(\tau \phi) \leq \mathcal{I}_{\lambda}\left(\tau_{\lambda} \phi\right) \leq \frac{1}{p} \tau_{\lambda}^p (\rho_p(\phi))^p \rightarrow 0
\end{equation*}
as $\lambda \rightarrow \infty$, since $\phi$ depends only on $\lambda_0$. This completes the proof of the lemma.
\end{proof}

Now we are in a position to prove the PS condition for the functional $\mathcal{I}_{\lambda}$. The proof is similar to that of \cite[Proposition 5.10]{DPSV}.
\begin{lem}\label{PS condition lem}
    Let $\theta_{0} \in(0,1)$ and
\begin{equation}\label{bound for level}
c^{*}:=\frac{s_{\sharp}}{N}\left(\left(1-\theta_{0}\right) \mathcal{S}(p)\right)^{N / s_{\sharp} p}, 
\end{equation}
where $$\mathcal{S}(p):=\inf_{\|u\|_{L^{p_{s_\sharp}^{*}}(\Omega)}=1} \int_{[0,1]}[u]_{s, p}^{p} d \mu^{+}(s)$$ is the Sobolev constant equivalently defined by \eqref{Sobolev constant 2}.
Then, there exists $\kappa_{0}>0$, depending on $N, \Omega, p, s_{\sharp}$ and $\theta_{0}$, such that if $\kappa \in\left[0, \kappa_{0}\right]$ and $c \in$ $\left(0, c^{*}\right)$, then the functional $\mathcal{I}_\lambda$ in \eqref{functional MP} satisfies the PS condition at level $c.$
\end{lem}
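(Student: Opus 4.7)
The plan is to follow the now-classical Brezis--Nirenberg scheme, adapted to the superposition operator $A_{\mu,p}$, with careful reabsorption of the signed-measure part via Lemma \ref{reabsorb}. Let $(u_k)_k \subset X_p(\Omega)$ be a PS sequence at level $c \in (0,c^*)$. First, I would establish boundedness of $(u_k)_k$ in $X_p(\Omega)$. Computing $\mathcal{I}_\lambda(u_k) - \tfrac{1}{p_{s_\sharp}^*}\langle \mathcal{I}'_\lambda(u_k), u_k\rangle$ gives
\begin{equation*}
\Bigl(\tfrac{1}{p}-\tfrac{1}{p_{s_\sharp}^*}\Bigr)\Bigl([\rho_p(u_k)]^p-\int_{[0,\bar{s}]}[u_k]_{s,p}^p\,d\mu^-(s)\Bigr)+\lambda\Bigl(\tfrac{1}{p_{s_\sharp}^*}-\tfrac{1}{q}\Bigr)\|u_k\|_{L^q(\Omega)}^q=c+o(1)+o(1)\rho_p(u_k).
\end{equation*}
Using Lemma \ref{reabsorb} to bound $\int_{[0,\bar{s}]}[u_k]_{s,p}^p d\mu^-(s)\le c_0\kappa\,[\rho_p(u_k)]^p$ and choosing $\kappa_0$ so that $1-c_0\kappa_0>0$, together with the embedding $X_p(\Omega)\hookrightarrow L^q(\Omega)$ for $q<p_{s_\sharp}^*$, I will obtain $\sup_k\rho_p(u_k)<\infty$.

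Next, by reflexivity and the compact embedding in Proposition \ref{compact and cont embedding}, along a subsequence $u_k\rightharpoonup u$ in $X_p(\Omega)$, $u_k\to u$ in $L^r(\Omega)$ for every $r\in[1,p_{s_\sharp}^*)$, and $u_k\to u$ a.e. Passing to the limit in $\langle \mathcal{I}'_\lambda(u_k),v\rangle=o(1)$ using Lemma \ref{weak convergence} for the nonlocal terms, the Vitali/dominated convergence for the subcritical $L^q$ term, and standard a.e.\ convergence with uniform boundedness of $|u_k|^{p_{s_\sharp}^*-2}u_k$ in $L^{(p_{s_\sharp}^*)'}(\Omega)$ for the critical term, I deduce that $u$ is a weak solution in the sense of Definition \ref{weaksol MP}; in particular $\langle \mathcal{I}'_\lambda(u),u\rangle=0$.

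Then I would decompose $v_k:=u_k-u$, which satisfies $v_k\rightharpoonup 0$ in $X_p(\Omega)$ and $v_k\to 0$ a.e. The Brezis--Lieb type Lemma \ref{B-L lemma} yields
\begin{equation*}
\int_{[0,1]}[u_k]_{s,p}^p\,d\mu^{\pm}(s)=\int_{[0,1]}[u]_{s,p}^p\,d\mu^{\pm}(s)+\int_{[0,1]}[v_k]_{s,p}^p\,d\mu^{\pm}(s)+o(1),
\end{equation*}
and the classical Brezis--Lieb lemma gives the analogous splitting for $\|\cdot\|_{L^{p_{s_\sharp}^*}}^{p_{s_\sharp}^*}$. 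Combining these with the compact convergence in $L^q$, the identity $\mathcal{I}_\lambda(u_k)\to c$ and the fact that $\mathcal{I}_\lambda(u)\ge 0$ (which follows from $\langle\mathcal{I}'_\lambda(u),u\rangle=0$, a subtraction of the $(1/p_{s_\sharp}^*)$-multiple, and Lemma \ref{reabsorb} with $\kappa\le \kappa_0$), I will obtain, writing $a_k:=[\rho_p(v_k)]^p-\int_{[0,\bar{s}]}[v_k]_{s,p}^p\,d\mu^-(s)$ and $b_k:=\|v_k\|_{L^{p_{s_\sharp}^*}(\Omega)}^{p_{s_\sharp}^*}$,
\begin{equation*}
\tfrac{1}{p}a_k-\tfrac{1}{p_{s_\sharp}^*}b_k=c-\mathcal{I}_\lambda(u)+o(1)\le c+o(1),\qquad a_k-b_k=o(1).
\end{equation*}

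The main obstacle is the last step: upgrading $a_k-b_k=o(1)$ to $a_k,b_k\to 0$ using the Sobolev constant $\mathcal{S}(p)$ together with the level restriction $c<c^*$. Here I again invoke Lemma \ref{reabsorb} to get $a_k\ge(1-c_0\kappa)[\rho_p(v_k)]^p$, so by the definition of $\mathcal{S}(p)$ and the embedding \eqref{Sobolev constant 2},
\begin{equation*}
(1-c_0\kappa)\,\mathcal{S}(p)\,b_k^{p/p_{s_\sharp}^*}\le (1-c_0\kappa)[\rho_p(v_k)]^p\le a_k=b_k+o(1).
\end{equation*}
Assuming $b_k\to \ell>0$ along a subsequence, passing to the limit gives $(1-c_0\kappa)\mathcal{S}(p)\le \ell^{1-p/p_{s_\sharp}^*}$, whence $\ell\ge ((1-c_0\kappa)\mathcal{S}(p))^{N/s_\sharp p}$. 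But then
\begin{equation*}
c=\lim_{k}\Bigl(\tfrac{1}{p}a_k-\tfrac{1}{p_{s_\sharp}^*}b_k\Bigr)+\mathcal{I}_\lambda(u)\ge \Bigl(\tfrac{1}{p}-\tfrac{1}{p_{s_\sharp}^*}\Bigr)\ell=\tfrac{s_\sharp}{N}\ell\ge \tfrac{s_\sharp}{N}\bigl((1-c_0\kappa)\mathcal{S}(p)\bigr)^{N/s_\sharp p}.
\end{equation*}
Choosing $\kappa_0$ so small that $1-c_0\kappa_0\ge 1-\theta_0$ yields $c\ge c^*$, contradicting $c<c^*$. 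Hence $b_k\to 0$, so $a_k\to 0$, and using Lemma \ref{reabsorb} once more $[\rho_p(v_k)]^p\to 0$, i.e.\ $u_k\to u$ strongly in $X_p(\Omega)$, as required.
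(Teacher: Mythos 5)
Your proposal is correct and follows essentially the same route as the paper's proof of Lemma \ref{PS condition lem}: boundedness of the PS sequence through the reabsorption Lemma \ref{reabsorb}, extraction of a weak/compact limit, the Brezis--Lieb splittings (Lemma \ref{B-L lemma} for the Gagliardo energies and the classical one in $L^{p_{s_\sharp}^*}(\Omega)$), and the dichotomy enforced by the Sobolev constant $\mathcal{S}(p)$ combined with the level restriction $c<c^*$. The paper does not phrase the last step as a contradiction against $b_k\to\ell>0$; it instead derives an exact expression for $cN/s_\sharp$ and shows directly that $(1-c_0\kappa)\mathcal{S}(p)^{p_{s_\sharp}^*/p}-[\rho_p(\widetilde u_n)]^{p_{s_\sharp}^*-p}$ has a positive liminf for $\kappa$ small. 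Your contradiction formulation is equivalent and arguably cleaner, but it is not a genuinely different method.

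One slip you should repair: the parenthetical justification of $\mathcal{I}_\lambda(u)\ge 0$ via subtracting the $(1/p_{s_\sharp}^*)$-multiple of $\langle\mathcal{I}'_\lambda(u),u\rangle$ does not work, since
\begin{equation*}
\mathcal{I}_\lambda(u)-\tfrac{1}{p_{s_\sharp}^*}\langle\mathcal{I}'_\lambda(u),u\rangle=\Bigl(\tfrac1p-\tfrac1{p_{s_\sharp}^*}\Bigr)\Bigl([\rho_p(u)]^p-\int_{[0,\bar s]}[u]_{s,p}^p\,d\mu^-(s)\Bigr)-\lambda\Bigl(\tfrac1q-\tfrac1{p_{s_\sharp}^*}\Bigr)\|u\|_{L^q(\Omega)}^q
\end{equation*}
is the difference of two nonnegative quantities and so is not manifestly nonnegative. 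Instead, substitute $\langle\mathcal{I}'_\lambda(u),u\rangle=0$, i.e.\ $[\rho_p(u)]^p-\int_{[0,\bar s]}[u]_{s,p}^p\,d\mu^-(s)=\lambda\|u\|_{L^q(\Omega)}^q+\|u\|_{L^{p_{s_\sharp}^*}(\Omega)}^{p_{s_\sharp}^*}$, directly into the definition of $\mathcal{I}_\lambda(u)$ to obtain
\begin{equation*}
\mathcal{I}_\lambda(u)=\lambda\Bigl(\tfrac1p-\tfrac1q\Bigr)\|u\|_{L^q(\Omega)}^q+\Bigl(\tfrac1p-\tfrac1{p_{s_\sharp}^*}\Bigr)\|u\|_{L^{p_{s_\sharp}^*}(\Omega)}^{p_{s_\sharp}^*}\ge 0,
\end{equation*}
which is nonnegative without any appeal to Lemma \ref{reabsorb} or the smallness of $\kappa$. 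With this correction your argument is complete.
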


\begin{proof}
    Take $c \in\left(0, c^{*}\right)$ and let $(u_{n})_{n} \subset X_{p}(\Omega)$ be a PS sequence for $\mathcal{I}_{\lambda}$, that is, 
\begin{align}\label{level c}
\begin{split}
    \lim _{n \rightarrow+\infty} \mathcal{I}_{\lambda}\left(u_{n}\right) & =\lim _{n \rightarrow+\infty} \Bigg[\frac{1}{p}\left[\rho_{p}\left(u_{n}\right)\right]^{p}-\frac{1}{p} \int_{[0, \bar{s}]}\left[u_{n}\right]_{s, p}^{p} d \mu^{-}(s)\\
    &-\frac{\lambda}{q} \int_{\Omega}\left|u_{n}\right|^{q} d x-\frac{1}{p_{s_{\sharp}}^{*}} \int_{\Omega}\left|u_{n}\right|^{p_{s_{\sharp}}^{*}} d x \Bigg]=c
\end{split}
\end{align}
and
\begin{align}\label{derivative}
\begin{split}
    & \quad \lim _{n \rightarrow+\infty} \sup_{v \in X_p(\Omega)} \langle \mathcal{I}^{\prime}_{\lambda}\left(u_{n}\right), v\rangle  \\
&=  \lim _{n \rightarrow+\infty} \sup_{v \in X_p(\Omega)} \Bigg[ \int_{[0,1]}\left( \iint_{\mathbb{R}^{2 N}} \frac{C_{N, s, p}\left|u_{n}(x)-u_{n}(y)\right|^{p-2}\left(u_{n}(x)-u_{n}(y)\right)(v(x)-v(y))}{|x-y|^{N+s p}} d x d y\right) d \mu^{+}(s) \\
& -\int_{[0, \bar{s}]}\left( \iint_{\mathbb{R}^{2 N}} \frac{C_{N, s, p}\left|u_{n}(x)-u_{n}(y)\right|^{p-2}\left(u_{n}(x)-u_{n}(y)\right)(v(x)-v(y))}{|x-y|^{N+s p}} d x d y\right) d \mu^{-}(s) \\
& -\lambda \int_{\Omega}\left|u_{n}\right|^{p-2} u_{n} v d x-\int_{\Omega}\left|u_{n}\right|^{p_{s_\sharp}^{*}-2} u_{n} v d x\Bigg]=0. 
\end{split}
\end{align}
If we put $v:=-u_{n}$ in \eqref{derivative}, we obtain
\begin{align*}
0 & \leq \lim _{n \rightarrow+\infty} \Bigg[\int_{[0,1]}\left(C_{N, s, p} \iint_{\mathbb{R}^{2 N}} \frac{\left|u_{n}(x)-u_{n}(y)\right|^{p}}{|x-y|^{N+s p}} d x d y\right) d \mu^{+}(s) \\
& -\int_{[0, \bar{s}]}\left(C_{N, s, p} \iint_{\mathbb{R}^{2 N}} \frac{\left|u_{n}(x)-u_{n}(y)\right|^{p}}{|x-y|^{N+s p}} d x d y\right) d \mu^{-}(s) \\
& -\lambda \int_{\Omega}\left|u_{n}\right|^{q} d x-\int_{\Omega}\left|u_{n}\right|^{p_{s_\sharp}^{*}} d x\Bigg] \\
& =  \lim _{n \rightarrow+\infty} p \mathcal{I}_{\lambda}\left(u_{n}\right)+\lim _{n \rightarrow+\infty} \Bigg[\lambda\left(\frac{p}{q}-1\right) \|u_n\|_{L^q(\Omega)}^q + \left( \frac{p}{p_{s_\sharp}^{*}}-1\right)\|u_n\|_{L^{p_{s_\sharp}^{*}}(\Omega)}^{p_{s_\sharp}^{*}} \Bigg].
\end{align*}
Consequently, by \eqref{level c},
\begin{equation}
\lim _{n \rightarrow+\infty} \Bigg[\lambda\left(\frac{1}{p}-\frac{1}{q}\right) \|u_n\|_{L^q(\Omega)}^q + \left( \frac{1}{p}-\frac{1}{p_{s_\sharp}^{*}}\right)\|u_n\|_{L^{p_{s_\sharp}^{*}}(\Omega)}^{p_{s_\sharp}^{*}}\Bigg] \leq c.
\end{equation}
Furthermore, by Lemma \ref{reabsorb}, we have
\begin{equation*}
    \int_{[0, \bar{s}]}\left[u_{n}\right]_{s, p}^{p} d \mu^{-}(s) \leq c_{0} \kappa \int_{[\bar{s}, 1]}\left[u_{n}\right]_{s, p}^{p} d \mu^{+}(s) \leq c_{0} \kappa\left[\rho_{p}\left(u_{n}\right)\right]^{p},
\end{equation*}
for some $c_0:=c_0(N, \Omega, p).$
Therefore,
\begin{equation}\label{main ineq in proof}
\left[\rho_{p}\left(u_{n}\right)\right]^{p}-\int_{[0, \bar{s}]}\left[u_{n}\right]_{s, p}^{p} d \mu^{-}(s) \geq\left(1-c_0 \kappa\right)\left[\rho_{p}\left(u_{n}\right)\right]^{p}.
\end{equation}
From this and \eqref{level c}, we obtain that $\rho_{p}\left(u_{n}\right)$ is bounded uniformly in $n$, provided that $1-$ $c_0 \kappa>0$. Consequently, by Lemma \ref{Uniform convexity} and Proposition \ref{compact and cont embedding}, there exists $u \in X_{p}(\Omega)$ such that, up to a subsequence,
\begin{align}\label{convergences}
\begin{split}
    & u_{n} \rightharpoonup u \text { in } X_{p}(\Omega) \\
& u_{n} \rightarrow u \text { in } L^{r}(\Omega) \text { for any } r \in [1, p_{s_{\sharp}}^{*}),  \\
& u_{n} \rightarrow u \text { a.e. in } \Omega .
\end{split}
\end{align}

It remains to show that $u_{n} \rightarrow u$ in $X_{p}(\Omega)$ as $n \rightarrow+\infty$. To this end, we define $\widetilde{u}_{n}:=u_{n}-u$. Then, by \cite[Theorem 1]{BL: 1983}, we have that
\begin{equation}\label{ex eq 0}
    \|u\|_{L^{p_{s_\sharp}^{*}}(\Omega)}^{p_{s_\sharp}^{*}}=\lim _{n \rightarrow+\infty}\left(\|u_n\|_{L^{p_{s_\sharp}^{*}}(\Omega)}^{p_{s_\sharp}^{*}}-\|\widetilde{u}_n\|_{L^{p_{s_\sharp}^{*}}(\Omega)}^{p_{s_\sharp}^{*}}\right).
\end{equation}
Moreover, Lemma \ref{B-L lemma} implies that
\begin{equation}\label{ex eq 1}
\int_{[0,1]}[u]_{s, p}^{p} d \mu^{ \pm}(s)=\lim _{n \rightarrow+\infty} \left(\int_{[0,1]}\left[u_{n}\right]_{s, p}^{p} d \mu^{ \pm}(s)-\int_{[0,1]}\left[\widetilde{u}_{n}\right]_{s, p}^{p} d \mu^{ \pm}(s)\right).
\end{equation}
Next, putting $v:=u$ in Definition \ref{weaksol MP} yields
\begin{equation}\label{v=u}
\left[\rho_{p}(u)\right]^{p}-\int_{[0, \bar{s}]}[u]_{s, p}^{p} d \mu^{-}(s)=\lambda\|u\|_{L^q(\Omega)}^{q}+\|u\|_{L^{p_{s_\sharp}^{*}}(\Omega)}^{p_{s_\sharp}^{*}}.
\end{equation}
Similarly, testing identity \eqref{derivative} with $v:= \pm u_{n}$,
\begin{equation*}
\lim _{n \rightarrow+\infty} \left(\left[\rho_{p}(u_n)\right]^{p}-\int_{[0, \bar{s}]}[u_n]_{s, p}^{p} d \mu^{-}(s)-\lambda\|u_n\|_{L^q(\Omega)}^{q}-\|u_n\|_{L^{p_{s_\sharp}^{*}}(\Omega)}^{p_{s_\sharp}^{*}}\right)=0.
\end{equation*}
Together with \eqref{convergences}, this implies that
\begin{equation}\label{ex eq 2}
\lim _{n \rightarrow+\infty} \left(\left[\rho_{p}(u_n)\right]^{p}-\int_{[0, \bar{s}]}[u_n]_{s, p}^{p} d \mu^{-}(s)-\|u_n\|_{L^{p_{s_\sharp}^{*}}(\Omega)}^{p_{s_\sharp}^{*}}\right)=\lambda\|u\|_{L^q(\Omega)}^{q}.
\end{equation}
Then, in light of \eqref{v=u}, we deduce that 
\begin{align}\label{ex eq 00}
\begin{split}
    \lim _{n \rightarrow+\infty} &\left(\left[\rho_{p}(u_n)\right]^{p}-\int_{[0, \bar{s}]}[u_n]_{s, p}^{p} d \mu^{-}(s)-\|u_n\|_{L^{p_{s_\sharp}^{*}}(\Omega)}^{p_{s_\sharp}^{*}}\right)\\
&=\left[\rho_{p}(u)\right]^{p}-\int_{[0, \bar{s}]}[u]_{s, p}^{p} d \mu^{-}(s)-\|u\|_{L^{p_{s_\sharp}^{*}}(\Omega)}^{p_{s_\sharp}^{*}}.
\end{split}
\end{align}
Hence, applying \eqref{ex eq 0} and \eqref{ex eq 1}, we obtain from \eqref{ex eq 00} that
\begin{equation*}
\lim _{n \rightarrow+\infty} \left(\left[\rho_{p}(\widetilde{u}_n)\right]^{p}-\int_{[0, \bar{s}]}[\widetilde{u}_n]_{s, p}^{p} d \mu^{-}(s)-\|\widetilde{u}_n\|_{L^{p_{s_\sharp}^{*}}(\Omega)}^{p_{s_\sharp}^{*}}\right)=0.
\end{equation*}
By combining this with the definition of the Sobolev constant in \eqref{Sobolev constant 2}, we obtain 
\begin{equation*}
    \lim _{n \rightarrow+\infty}\left[\left[\rho_{p}\left(\widetilde{u}_{n}\right)\right]^{p}-\int_{[0, \bar{s}]}\left[\widetilde{u}_{n}\right]_{s, p}^{p} d \mu^{-}(s)-\left(\frac{1}{\mathcal{S}(p)} \int_{[0,1]}\left[\widetilde{u}_{n}\right]_{s, p}^{p} d \mu^{+}(s)\right)^{p_{s_\sharp}^{*} / p} \right]\leq 0,
\end{equation*}
that is
\begin{equation*}
  \lim _{n \rightarrow+\infty}\left[\left[\rho_{p}\left(\widetilde{u}_{n}\right)\right]^{p}-\int_{[0, \bar{s}]}\left[\widetilde{u}_{n}\right]_{s, p}^{p} d \mu^{-}(s)-\frac{\left[\rho_{p}\left(\widetilde{u}_{n}\right)\right]^{p_{s_\sharp}^{*}}}{(\mathcal{S}(p))^{p_{s_\sharp}^{*} / p}}\right] \leq 0.  
\end{equation*}
Now, applying \eqref{main ineq in proof} to $\widetilde{u}_{n}$ instead of $u_{n}$, we get
\begin{equation*}
    \lim _{n \rightarrow+\infty}\left[\left(1-c_0 \kappa\right)\left[\rho_{p}\left(\widetilde{u}_{n}\right)\right]^{p}-\frac{\left[\rho_{p}\left(\widetilde{u}_{n}\right)\right]^{p_{s_\sharp}^{*}}}{(\mathcal{S}(p))^{p_{s_\sharp}^{*} / p}}\right] \leq 0,
\end{equation*}
which is equivalent to
\begin{equation}\label{main ineq 2}
\lim _{n \rightarrow+\infty}\left[\rho_{p}\left(\widetilde{u}_{n}\right)\right]^{p}\left[\left(1-c_0 \kappa\right)(\mathcal{S}(p))^{p_{s_\sharp}^{*} / p}-\left[\rho_{p}\left(\widetilde{u}_{n}\right)\right]^{p_{s_{\sharp}}^{*}-p}\right] \leq 0 . 
\end{equation}

Now, observe that in order to complete the proof, it is enough to show that
$$\left(1-c_0 \kappa\right)(\mathcal{S}(p))^{p_{s_\sharp}^{*} / p}-\left[\rho_{p}\left(\widetilde{u}_{n}\right)\right]^{p_{s_{\sharp}}^{*}-p}> 0.$$
Applying \eqref{ex eq 2}, we obtain from \eqref{level c} that
\begin{align*}
c & =\lim _{n \rightarrow+\infty} \Bigg[\frac{1}{p}\left[\rho_{p}\left(u_{n}\right)\right]^{p}-\frac{1}{p} \int_{[0, \bar{s}]}\left[u_{n}\right]_{s, p}^{p} d \mu^{-}(s)-\frac{\lambda}{q} \int_{\Omega}\left|u_{n}\right|^{q} d x-\frac{1}{p_{s_\sharp}^{*}} \int_{\Omega}\left|u_{n}\right|^{p_{s_\sharp}^{*}} d x \Bigg]\\
& =\lim _{n \rightarrow+\infty}\Bigg[\left(\frac{1}{p}-\frac{1}{p_{s_\sharp}^{*}}\right)\left(\left[\rho_{p}\left(u_{n}\right)\right]^{p}-\int_{[0, \bar{s}]}\left[u_{n}\right]_{s, p}^{p} d \mu^{-}(s)\right)-\frac{\lambda}{q} \int_{\Omega}\left|u_{n}\right|^{q} d x\Bigg]+\frac{\lambda}{p_{s_{\sharp}}^{*}} \int_{\Omega}|u|^{q} d x .
\end{align*}
This and \eqref{convergences} yield that
\begin{equation*}
    c=\lim _{n \rightarrow+\infty} \frac{s_{\sharp}}{N}\left(\left[\rho_{p}\left(u_{n}\right)\right]^{p}-\int_{[0, \bar{s}]}\left[u_{n}\right]_{s, p}^{p} d \mu^{-}(s)\right)-\lambda\left(\frac{1}{q}-\frac{1}{p_{s_{\sharp}}^{*}}\right) \int_{\Omega}|u|^{q} d x .
\end{equation*}
Consequently, by invoking \eqref{ex eq 1}, we conclude that
\begin{align*}
\frac{c N}{s_{\sharp}}&=\lim _{n \rightarrow+\infty}\left(\left[\rho_{p}(u)\right]^{p}+\left[\rho_{p}\left(\widetilde{u}_{n}\right)\right]^{p}-\int_{[0, \bar{s}]}[u]_{s, p}^{p} d \mu^{-}(s)-\int_{[0, \bar{s}]}\left[\widetilde{u}_{n}\right]_{s, p}^{p} d \mu^{-}(s)\right)\\
&-\lambda \frac{ N}{s_{\sharp}}\left(\frac{1}{q}-\frac{1}{p_{s_{\sharp}}^{*}}\right) \int_{\Omega}|u|^{q} d x.
\end{align*}
Then, by \eqref{v=u},
\begin{align*}
    \frac{c N}{s_{\sharp}}&=\lim _{n \rightarrow+\infty}\left(\left[\rho_{p}\left(\widetilde{u}_{n}\right)\right]^{p}-\int_{[0, \bar{s}]}\left[\widetilde{u}_{n}\right]_{s, p}^{p} d \mu^{-}(s)\right)+\int_{\Omega}|u|^{p_{s_{\sharp}}^{*}} d x\\
    &+\lambda\left(1- \frac{ N}{s_{\sharp}}\left(\frac{1}{q}-\frac{1}{p_{s_{\sharp}}^{*}}\right)\right) \int_{\Omega}|u|^{q} d x.
\end{align*}
Since $1- \frac{ N}{s_{\sharp}}\left(\frac{1}{q}-\frac{1}{p_{s_{\sharp}}^{*}}\right)>0$, applying \eqref{main ineq in proof} again for $\widetilde{u}_{n}$ instead of $u_{n}$, we get
\begin{align*}
\frac{c N}{s_{\sharp}} & \geq \lim _{n \rightarrow+\infty}\left(1-c_0 \kappa\right)\left[\rho_{p}\left(\widetilde{u}_{n}\right)\right]^{p}+\|u\|_{L^{p_{s_\sharp}^{*}}(\Omega)}^{p_{s}^{*}}+\lambda\left(1- \frac{ N}{s_{\sharp}}\left(\frac{1}{q}-\frac{1}{p_{s_{\sharp}}^{*}}\right)\right) \|u\|_{L^q(\Omega)}^{q} \\
& \geq \lim _{n \rightarrow+\infty}\left(1-c_0 \kappa\right)\left[\rho_{p}\left(\widetilde{u}_{n}\right)\right]^{p} .
\end{align*}
Furthermore, by \eqref{bound for level} we have
\begin{equation*}
    \left(\left(1-\theta_{0}\right) \mathcal{S}(p)\right)^{N / s_{\sharp} p}=\frac{c^{*} N}{s_{\sharp}}>\frac{c N}{s_{\sharp}} \geq \lim _{n \rightarrow+\infty}\left(1-c_0 \kappa\right)\left[\rho_{p}\left(\widetilde{u}_{n}\right)\right]^{p},
\end{equation*}
and consequently
\begin{align*}
& \liminf _{n \rightarrow+\infty}\left(\left(1-c_0 \kappa\right)(\mathcal{S}(p))^{p_{s_{\sharp}}^{*} / p}-\left[\rho_{p}\left(\widetilde{u}_{n}\right)\right]^{p_{s_{\sharp}}^{*}-p}\right) \\
& \quad=\left(1-c_0 \kappa\right)(\mathcal{S}(p))^{N /\left(N-s_{\sharp} p\right)}-\limsup _{n \rightarrow+\infty}\left[\rho_{p}\left(\widetilde{u}_{n}\right)\right]^{s_{\sharp} p^{2} /\left(N-s_{\sharp} p\right)} \\
& \quad \geq\left(1-c_0 \kappa\right)(\mathcal{S}(p))^{N /\left(N-s_{\sharp} p\right)}-\left(\frac{\left(\left(1-\theta_{0}\right) \mathcal{S}(p)\right)^{N / s_{\sharp} p}}{1-c_0 \kappa}\right)^{s_{\sharp} p /\left(N-s_{\sharp} p\right)} \\
& \quad=\left[1-c_0 \kappa-\left(\frac{\left(1-\theta_{0}\right)^{N / s_{\sharp} p}}{1-c_0 \kappa}\right)^{s_{\sharp} p /\left(N-s_{\sharp} p\right)}\right](\mathcal{S}(p))^{N /\left(N-s_{\sharp} p\right)}.
\end{align*}
Since
\begin{align*}
   & \lim _{\kappa \rightarrow 0} \left[1-c_0 \kappa-\left(\frac{\left(1-\theta_{0}\right)^{N / s_{\sharp} p}}{1-c_0 \kappa}\right)^{s_{\sharp} p /\left(N-s_{\sharp} p\right)}\right] \\&\quad\quad\quad=1-\left(1-\theta_{0}\right)^{N /\left(N-s_{\sharp} p\right)}>0,
\end{align*}
we deduce that
\begin{equation*}
    \liminf _{n \rightarrow+\infty}\left(\left(1-c_0 \kappa\right)(\mathcal{S}(p))^{p_{s_{\sharp}}^{*} / p}-\left[\rho_{p}\left(\widetilde{u}_{n}\right)\right]^{p_{s_{\sharp}^{*}}^{*}-p}\right)>0,
\end{equation*}
for sufficiently small $\kappa$ such that $c_0\kappa<1.$
By combining this with \eqref{main ineq 2}, we conclude that 
\begin{equation*}
    \lim _{n \rightarrow+\infty} \rho_{p}\left(\widetilde{u}_{n}\right)=0,
\end{equation*}
which results in $u_{n} \rightarrow u$ in $X_{p}(\Omega)$ as $n \rightarrow+\infty$.
\end{proof}

Finally, the proof of Theorem \ref{main result 2} is concluded as follows.

\begin{proof}[Proof of Theorem \ref{main result 2}]
     By Lemma \ref{MP geometry} it follows that the functional $\mathcal{I}_\lambda$ verifies the geometry of the mountain pass lemma, meaning that the functional $\mathcal{I}_\lambda$ admits a Palais-Smale sequence at level $c_\lambda$. From Lemma \ref{PS condition lem} we deduce that $\mathcal{I}_\lambda$ satisfies the PS condition for some levels $c$ satisfying $c<c^{*}:=\frac{s_{\sharp}}{N}\left(\left(1-\theta_{0}\right) \mathcal{S}(p)\right)^{N / s_{\sharp} p}$ provided that $\kappa$ is sufficiently small. But, according to Lemma \ref{decay of minmax}, we have verified the existence of $\lambda^{*}$ such that $c_\lambda<c^{*}:=\frac{s_{\sharp}}{N}\left(\left(1-\theta_{0}\right) \mathcal{S}(p)\right)^{N / s_{\sharp} p}$ for all $\lambda \geq \lambda^{*}$.  Thus, for these values of $\lambda$ and $\kappa$, the functional $\mathcal{I}_\lambda$ satisfies the PS condition at the level $c_\lambda$. Moreover, since $\mathcal{I}_\lambda(u)=c_\lambda>0=\mathcal{I}_\lambda(0)$, we have $u\not\equiv 0$. Thus, $u$ is a nontrivial solution to the problem \eqref{problem 2}. 
\end{proof}

%%%%%%%%%%%%%%%%%%%%%%%%%%%%%%%%%%%%%%%%
%%%%%%%%%%%%%%%%%%%%%%%%%%%%%%%%%%%%%%%%%%%%%%%%%
\section{Examples and applications} \label{secexpa}

In this section, we apply our main results from the previous sections to a variety of interesting examples, yielding new findings on existence, depending on the specific choice of the measure $\mu.$  We present particular cases of Theorems \ref{main result wlsc 0} and \ref{thmnonmain}. Similarly, one can discuss the applications of Theorems \ref{main result subcrit 0} and \ref{main result 2} in these specific settings. 
We will use the common notation $\mathbb{X}(\Omega)$ for the (fractional) Sobolev spaces involved in each individual case. This notation varies for each example, depending on the operator involved, and must be appropriately defined using $X_p(\Omega).$

\subsection{The $p$-Laplacian}\label{subsec2.1}
To begin, let us showcase the results of classical $p$-Laplacian and fractional $p$-Laplacian. In fact, with a specific choice of the measure $\mu^+$, Theorem \ref{main result wlsc 0} and Theorem \ref{thmnonmain} recover the main results of \cite{FF: 2015}.
\begin{cor} Let $ \Omega$ be a bounded subset of $\mathbb{R}^N$. Suppose that $N>p \geq 2$ and $q \in[1, p^*)$, where $p^*=\frac{p N}{N-p}$. Let $g: \Omega \times \mathbb{R} \rightarrow \mathbb{R}$ be a Carath\'eodory function satisfying the subcritical growth condition \begin{align*}
\begin{split}
&\text{ there exist } a_{1}, a_{2}>0 \text{ and } q \in[1, p^{*}), \text{ such that }\\
&|g(x, t)| \leq a_{1}+a_{2}|t|^{q-1} \text {, a.e. } x \in \Omega, \forall t \in \mathbb{R} . 
\end{split}
\end{align*} Then, for every $\gamma>0$, there exists $\Lambda_\gamma>0$ such that the problem 
\begin{equation}
 \begin{cases}
      -\Delta_{p} u  = \lambda g(x,u) +\gamma |u|^{p^{*}-2} u \quad & \text { in } \Omega,  \\ \quad
    u  = 0  &\text { in } \mathbb{R}^{N} \backslash \Omega, 
 \end{cases}
\end{equation}
admits at least one weak solution in the space $\mathbb{X}(\Omega)$ for every $0<\lambda<\Lambda_\gamma$ which is a local minimum of the energy functional
$$
\mathcal{J}_{\gamma, \lambda}(u):=\frac{1}{p} \int_{\Omega} |\nabla u|^p-\lambda \int_{\Omega} \int_{0}^{u(x)} g(x, \tau) d \tau dx-\frac{\gamma}{p^{*}} \int_{\Omega}|u|^{p^{*}} d x
$$
for every $u \in \mathbb{X}(\Omega)$.
    \end{cor}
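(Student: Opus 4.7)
The plan is to derive the corollary as a direct specialization of Theorem \ref{main result wlsc 0} by making the canonical choice $\mu^{+}:=\delta_{1}$ and $\mu^{-}:=0$, so that $\mu=\delta_{1}$. First, I would verify that all the structural hypotheses on the signed measure are automatically satisfied. Concretely, for any $\bar{s}\in(0,1]$ one has $\mu^{+}([\bar{s},1])=\delta_{1}([\bar{s},1])=1>0$, which yields \eqref{measure 1}; conditions \eqref{measure 2} and \eqref{measure 3} are trivial since $\mu^{-}\equiv 0$ (one may even take $\kappa=0$). For \eqref{measure 4}, I would take the optimal choice $s_{\sharp}:=1$, noting $\mu^{+}(\{1\})=1>0$. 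With this choice, the critical exponent in \eqref{critical exponent} becomes $p_{s_{\sharp}}^{*}=\tfrac{pN}{N-p}=p^{*}$, matching the classical Sobolev critical exponent in the statement of the corollary.

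Next, I would identify the functional framework. Because $[u]_{1,p}=\|\nabla u\|_{L^{p}(\mathbb{R}^{N})}$, definition \eqref{norm on Xpinto} collapses to
\begin{equation*}
\|u\|_{X_{p}(\Omega)}^{p}=\int_{[0,1]}[u]_{s,p}^{p}\,d\delta_{1}(s)=[u]_{1,p}^{p}=\|\nabla u\|_{L^{p}(\Omega)}^{p},
\end{equation*}
where the last equality uses that $u=0$ outside $\Omega$. Hence $X_{p}(\Omega)$ coincides, as a normed space, with $W^{1,p}_{0}(\Omega)$, which I denote $\mathbb{X}(\Omega)$ in accordance with the convention of the section. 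In the same vein, the superposition operator reduces to $A_{\mu,p}u=(-\Delta)_{p}^{1}u=-\Delta_{p}u$ and, since $\mu^{-}\equiv 0$, the term $\tfrac{1}{p}\int_{[0,\bar{s}]}[u]_{s,p}^{p}\,d\mu^{-}(s)$ in the energy functional vanishes identically. Thus the energy $\mathcal{J}_{\gamma,\lambda}$ of Theorem \ref{main result wlsc 0} reduces exactly to the expression stated in the corollary.

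With these identifications in place, the corollary is an immediate consequence of Theorem \ref{main result wlsc 0} applied with the above data. The hypotheses $N/s_{\sharp}=N>p\geq 2$ and $q\in[1,p_{s_{\sharp}}^{*})=[1,p^{*})$ required by Theorem \ref{main result wlsc 0} coincide with those assumed in the corollary, and the Carathéodory function $g$ satisfies the same subcritical growth condition \eqref{cond on g 0}. Applying Theorem \ref{main result wlsc 0}, for every $\gamma>0$ there exists $\Lambda_{\gamma}>0$ such that for each $\lambda\in(0,\Lambda_{\gamma})$ the functional $\mathcal{J}_{\gamma,\lambda}$ admits a local minimum in $\mathbb{X}(\Omega)$, which is a weak solution of the Dirichlet problem. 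Since no nontrivial technical step is required beyond the verification of the measure-theoretic assumptions, there is no substantial obstacle: the main point is simply to check that the abstract framework of $A_{\mu,p}$ recovers the classical $p$-Laplacian setup under $\mu=\delta_{1}$.
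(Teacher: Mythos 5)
Your proof is correct and follows exactly the same route as the paper's: choose $\mu=\delta_1$, set $\bar{s}=s_\sharp=1$ and $\kappa=0$, verify conditions \eqref{measure 1}--\eqref{measure 3}, and invoke Theorem \ref{main result wlsc 0}. Your additional remarks identifying $X_p(\Omega)$ with $W^{1,p}_0(\Omega)$ and checking that the energy functional collapses to the stated form are a useful spelling-out of what the paper leaves implicit.
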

    \begin{proof}
        We take $\mu=\delta_{1},$ the Dirac measure (unit mass) based on the point $1.$ Thus, it satisfies all conditions \eqref{measure 1}-\eqref{measure 3}  with $\bar{s}=1$ and $\kappa=0.$ Consequently,  we choose $s_\sharp:=1$  as in \eqref{measure 4}. Now, the proof of Corollary follows directly from Theorem \ref{main result wlsc 0}.
    \end{proof}
\begin{cor} Let $ \Omega$ be a bounded subset of $\mathbb{R}^N$. Let $N>p \geq 2$. Suppose that $m$ and $q$ are two real constants such that 
$$1 \leq m<p\leq q<p^*=\frac{p N}{N-p}.$$
Then, there exists an open interval $\Lambda \subset (0, +\infty)$ such that, for every $\lambda \in \Lambda,$ the  nonlocal problem 
\begin{equation} 
       \begin{cases} 
-\Delta_p u=  |u|^{p^*-2}u+\lambda (|u|^{m-1}+|u|^{q-1}) \quad &\text{in}\quad \Omega, \\
u=0\quad & \text{in}\quad \mathbb{R}^N\backslash \Omega,
\end{cases}
    \end{equation} has at least one nonnegative nontrivial weak solution $u_\lambda \in \mathbb{X}(\Omega).$ 
    
\end{cor}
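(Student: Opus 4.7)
The plan is to deduce this corollary as a direct specialization of Theorem \ref{thmnonmain} with the choice of measure $\mu := \delta_{1}$, the Dirac unit mass at the endpoint $s = 1$. With this choice, $\mu^{+} = \delta_{1}$ and $\mu^{-} \equiv 0$, so the operator $A_{\mu,p}$ reduces to the classical $p$-Laplacian $-\Delta_p$ (using $\lim_{s\nearrow 1}(-\Delta)_p^s u = -\Delta_p u$), and the space $X_p(\Omega)$ becomes $W^{1,p}_0(\Omega)$ with norm $\|u\|_{X_p(\Omega)} = \|\nabla u\|_{L^p(\Omega)}$.

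First I would verify the structural hypotheses. Taking $\bar{s} = 1$, condition \eqref{measure 1} holds since $\mu^{+}([1,1]) = 1 > 0$; condition \eqref{measure 2} is immediate because $\mu^{-} \equiv 0$; and condition \eqref{measure 3} holds trivially with any $\kappa \geq 0$ for the same reason. The exponent $s_\sharp$ from \eqref{measure 4} may be chosen as $s_\sharp := 1$, which gives $p^{*}_{s_\sharp} = p N/(N-p) = p^{*}$, matching the critical exponent in the statement. Moreover, since $\mu^{-} = 0$, condition \eqref{extracondi} is satisfied for any $\delta \in (0, 1-\bar{s}]$ (indeed both sides vanish), so the extra hypothesis needed in Theorem \ref{thmnonmain} is free of charge.

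Next, I would observe that the integral $\int_{[0,\bar{s}]}[u]_{s,p}^p\, d\mu^{-}(s)$ appearing in the energy functional of Theorem \ref{thmnonmain} vanishes identically, and the constraint $1 \leq m < p \leq q < p^{*}_{s_\sharp} = p^{*}$ in the hypothesis matches the one given in the corollary. Therefore the energy functional reduces to
\begin{equation*}
\mathcal{J}_\lambda(u) = \frac{1}{p}\int_\Omega |\nabla u|^p\, dx - \frac{1}{p^{*}}\int_\Omega |u|^{p^{*}}\, dx - \frac{\lambda}{m}\int_\Omega |u|^m\, dx - \frac{\lambda}{q}\int_\Omega |u|^q\, dx,
\end{equation*}
and Theorem \ref{thmnonmain} directly provides an open interval $\Lambda \subset (0,+\infty)$ such that for every $\lambda \in \Lambda$ there exists a nonnegative nontrivial weak solution $u_\lambda \in \mathbb{X}(\Omega) = W^{1,p}_0(\Omega)$ which is a local minimum of $\mathcal{J}_\lambda$.

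There is essentially no obstacle here beyond the matching of conventions; the only minor point to check is that the weak formulation associated to $A_{\mu,p}$ with $\mu = \delta_1$ — which, according to the footnote in Section \ref{sec4}, contains the contribution $\mu^{+}(\{1\}) \int_\Omega |\nabla u|^{p-2}\langle \nabla u, \nabla v\rangle\, dx$ — coincides with the standard weak formulation of the $p$-Laplacian Dirichlet problem. Once this identification is made, the conclusion is immediate from Theorem \ref{thmnonmain}.
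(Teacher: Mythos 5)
Your proposal is correct and takes exactly the same route as the paper: choose $\mu = \delta_1$, note that conditions \eqref{measure 1}--\eqref{measure 3} and \eqref{extracondi} hold with $\bar{s}=1$, $\kappa = 0$ (and $\mu^- \equiv 0$), take $s_\sharp = 1$ so that $p^*_{s_\sharp} = p^*$, and invoke Theorem \ref{thmnonmain}. The paper's own proof is a one-line specialization along these same lines, and your added verifications (identifying $A_{\mu,p}$ with $-\Delta_p$ via the normalization of $C_{N,p,s}$, matching the weak formulation, and noting the functional collapses to the expected $W^{1,p}_0(\Omega)$ energy) are just spelled-out details of that same argument.
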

 \begin{proof}
        We take $\mu=\delta_{1},$ the Dirac measure (unit mass) based on the point $1.$ Thus, it satisfies all conditions \eqref{measure 1}-\eqref{measure 3} and condition \eqref{extracondi}  with $\bar{s}=1$ and $\kappa=0$. Consequently, we choose $s_\sharp:=1$  as in \eqref{measure 4}. Now, the result follows directly from Theorem \ref{thmnonmain}.
    \end{proof}

\subsection{Fractional $p$-Laplacian} \label{subsec2.2}
The results obtained in this setting capture the main results of \cite{MB: 2017}.
\begin{cor} 
      Let $ \Omega$ be a bounded subset of $\mathbb{R}^N$ and let $s \in (0, 1)$.  Suppose that $\frac{N}{s}>p \geq 2$ and $q \in[1, p_{s}^*)$, where $p_{s}^*=\frac{p N}{N-sp}$. Let $g: \Omega \times \mathbb{R} \rightarrow \mathbb{R}$ be a Carath\'eodory function satisfying the subcritical growth condition: \begin{align}
\begin{split}
&\text{ there exist } a_{1}, a_{2}>0 \text{ and } q \in[1, p_{s}^{*}), \text{ such that }\\
&|g(x, t)| \leq a_{1}+a_{2}|t|^{q-1} \text {, a.e. } x \in \Omega, \forall t \in \mathbb{R} . 
\end{split}
\end{align}
Then, for every $\gamma>0$, there exists $\Lambda_\gamma>0$ such that the problem \begin{equation}
 \begin{cases}
    (-\Delta_{p})^s u  = \lambda g(x,u) +\gamma |u|^{p^{*}-2} u \quad & \text { in } \Omega,  \\ \quad
    u  = 0  &\text { in } \mathbb{R}^{N} \backslash \Omega, 
 \end{cases}
\end{equation} admits at least one weak solution in the space $\mathbb{X}(\Omega)$ for every $0<\lambda<\Lambda_\gamma$ which is a local minimum of the energy functional
$$
\mathcal{J}_{\gamma, \lambda}(u):=\frac{1}{p} \int_{\mathbb{R}^{2N}}\frac{|u(x)-u(y)|^p}{|x-y|^{N+ps}} dx\,dy -\lambda \int_{\Omega} \int_{0}^{u(x)} g(x, \tau) d \tau dx-\frac{\gamma}{p_{s}^{*}} \int_{\Omega}|u|^{p_{s_\sharp}^{*}} d x
$$
for every $u \in \mathbb{X}(\Omega)$. 

\end{cor}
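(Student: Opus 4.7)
The plan is to identify this corollary as a direct specialization of Theorem \ref{main result wlsc 0} by choosing the signed measure $\mu$ to be the Dirac mass concentrated at the fractional exponent $s \in (0,1)$. Specifically, I would set $\mu^{+} := \delta_{s}$ and $\mu^{-} := 0$, so that $\mu = \mu^{+} - \mu^{-} = \delta_{s}$, and then verify that all structural hypotheses \eqref{measure 1}--\eqref{measure 4} are satisfied with $\bar{s} := s$, $\kappa := 0$, and $s_{\sharp} := s$.

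First I would check the three measure-theoretic conditions: \eqref{measure 1} reads $\mu^{+}([s,1]) = \delta_{s}([s,1]) = 1 > 0$, which holds; \eqref{measure 2} reads $\mu^{-}|_{[s,1]} = 0$, which holds trivially since $\mu^{-} \equiv 0$; and \eqref{measure 3} reads $\mu^{-}([0,s]) = 0 \leq \kappa \cdot \mu^{+}([s,1])$ for any $\kappa \geq 0$, so we may take $\kappa = 0$. Condition \eqref{measure 4} is immediate from the same computation, yielding the critical exponent $p_{s_{\sharp}}^{*} = p_{s}^{*} = \frac{pN}{N-sp}$. Note that the dimensional hypothesis $\frac{N}{s_{\sharp}} > p \geq 2$ becomes exactly $\frac{N}{s} > p \geq 2$, as in the statement.

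Next I would unpack the resulting operator and functional spaces. With $\mu = \delta_{s}$, the superposition operator collapses to
\begin{equation*}
A_{\mu, p} u \;=\; \int_{[0,1]} (-\Delta)_{p}^{\sigma} u \, d\delta_{s}(\sigma) \;=\; (-\Delta_{p})^{s} u,
\end{equation*}
and the norm \eqref{norm on Xpinto} reduces to $\|u\|_{X_{p}(\Omega)} = [u]_{s,p}$, so that $X_{p}(\Omega)$ coincides (up to the usual identification of functions vanishing outside $\Omega$) with the standard fractional Sobolev space $\mathbb{X}(\Omega) = W_{0}^{s,p}(\Omega)$ used for the fractional $p$-Laplacian. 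Moreover, the term $\int_{[0,\bar{s}]} [u]_{s,p}^{p}\, d\mu^{-}(s)$ appearing in the energy functional of Theorem \ref{main result wlsc 0} vanishes identically, so the functional there specializes exactly to the one displayed in the corollary.

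Finally, having matched all data, I would invoke Theorem \ref{main result wlsc 0} to conclude the existence, for every $\gamma > 0$, of a threshold $\Lambda_{\gamma} > 0$ such that for every $\lambda \in (0, \Lambda_{\gamma})$ the problem admits a weak solution in $\mathbb{X}(\Omega)$ which is a local minimum of the stated energy functional. There is essentially no obstacle here: the only content of the proof is the verification of the hypotheses, and since $\mu^{-} = 0$ none of the delicate ``reabsorption'' issues that arise in the general case (and which motivated conditions \eqref{measure 2}--\eqref{measure 3}) play any role. The explicit localization of the interval $(0, \Lambda_{\gamma})$ stated in Theorem \ref{main result wlsc 0} carries over verbatim, with $C_{p_{s_{\sharp}}^{*}}$ now being the best constant in the standard embedding $W_{0}^{s,p}(\Omega) \hookrightarrow L^{p_{s}^{*}}(\Omega)$.
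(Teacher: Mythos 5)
Your proposal is correct and follows exactly the same route as the paper: choose $\mu = \delta_s$ (so $\mu^+ = \delta_s$, $\mu^- = 0$), verify \eqref{measure 1}--\eqref{measure 3} with $\bar{s} = s$, $\kappa = 0$, set $s_\sharp = s$, and invoke Theorem \ref{main result wlsc 0}. Your write-up is more explicit in unpacking the operator, the norm, and the vanishing of the $\mu^-$ term, but the substance is identical to the paper's argument.
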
 
\begin{proof}
    For $s \in (0, 1),$ we choose the measure $\mu:=\delta_s,$ where $\delta_s$ is the Dirac measure centered at $s.$ Then it is easy to verify that $\mu$ satisfies all the conditions \eqref{measure 1}-\eqref{measure 3}  with $\bar{s}=s$ and $\kappa=0.$ In this situation, we choose $s_\sharp:=s$ and the proof of the corollary follows from Theorem \ref{main result wlsc 0}.
\end{proof}

\begin{cor} Let $ \Omega$ be a bounded subset of $\mathbb{R}^N$. Let $\frac{N}{s}>p \geq 2$. Suppose that $m$ and $q$ are two real constants such that 
$$1 \leq m<p\leq q<p_s^*=\frac{p N}{N-sp}.$$
Then, there exists an open interval $\Lambda \subset (0, +\infty)$ such that, for every $\lambda \in \Lambda,$ the  nonlocal problem 
\begin{equation} 
       \begin{cases} 
(-\Delta_{p})^s u=  |u|^{p_s^*-2}u+\lambda (|u|^{m-1}+|u|^{q-1}) \quad &\text{in}\quad \Omega, \\
u=0\quad & \text{in}\quad \mathbb{R}^N\backslash \Omega,
\end{cases}
    \end{equation} has at least one nonnegative nontrivial weak solution $u_\lambda \in  \mathbb{X}(\Omega).$ 
    
\end{cor}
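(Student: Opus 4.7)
The plan is to derive this corollary as a direct specialization of Theorem \ref{thmnonmain} by choosing the measure
\[
\mu := \delta_s,
\]
the Dirac unit mass concentrated at the single fractional order $s \in (0,1)$. With this choice the superposition operator reduces to
\[
A_{\mu,p} u = \int_{[0,1]} (-\Delta)_p^\sigma u \, d\delta_s(\sigma) = (-\Delta_p)^s u,
\]
so the equation in \eqref{pro15.7intro} becomes exactly the equation stated in the corollary, and the Sobolev space $X_p(\Omega)$ collapses to the usual fractional space $\mathbb{X}(\Omega) = W^{s,p}_0(\Omega)$ (with the Gagliardo seminorm).

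Next I would verify that all measure-theoretic hypotheses of Theorem \ref{thmnonmain} hold for $\mu = \delta_s$. Writing $\mu^+ = \delta_s$ and $\mu^- \equiv 0$, choose $\bar{s} := s$ and $\kappa := 0$ in \eqref{measure 1}--\eqref{measure 3}. Then \eqref{measure 1} is immediate since $\mu^+([s,1]) = 1 > 0$; \eqref{measure 2} and \eqref{measure 3} are trivial because $\mu^- \equiv 0$. The extra condition \eqref{extracondi} is equally trivial for any $\delta \in (0, 1-s]$ and any admissible $\bar{\kappa}$, as the left-hand side vanishes. Finally, condition \eqref{measure 4} is satisfied with the choice $s_\sharp := s$, which yields the critical exponent $p_{s_\sharp}^* = p_s^* = pN/(N-sp)$ matching the one appearing in the corollary.

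Having verified the hypotheses, I would simply invoke Theorem \ref{thmnonmain} with the above data and the numerical assumptions $N/s > p \geq 2$ and $1 \leq m < p \leq q < p_s^*$, which are precisely the ones imposed in the corollary. Theorem \ref{thmnonmain} then produces an open interval $\Lambda \subset (0,+\infty)$ and, for every $\lambda \in \Lambda$, a nonnegative nontrivial weak solution $u_\lambda \in \mathbb{X}(\Omega)$ of the stated problem, arising as a local minimum of the associated energy functional, which in this specialization reads
\[
\mathcal{J}_\lambda(u) = \frac{1}{p} \iint_{\mathbb{R}^{2N}} \frac{C_{N,s,p}|u(x)-u(y)|^p}{|x-y|^{N+ps}}\, dx\, dy - \frac{1}{p_s^*}\int_\Omega |u|^{p_s^*}\,dx - \frac{\lambda}{m}\int_\Omega |u|^m\,dx - \frac{\lambda}{q}\int_\Omega |u|^q\,dx,
\]
since the $\mu^-$-term is absent. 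There is no genuine obstacle here: the only thing to be careful about is matching the notation (in particular, identifying $\mathbb{X}(\Omega)$ with $X_p(\Omega)$ for $\mu^+ = \delta_s$ and confirming that \eqref{extracondi} is vacuously true so that no smallness condition on any parameter is needed in this example).
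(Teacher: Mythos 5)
Your proof is correct and matches the paper's own argument exactly: both take $\mu = \delta_s$ (so $\mu^- \equiv 0$, $\bar{s} = s_\sharp = s$, $\kappa = 0$), verify that \eqref{measure 1}--\eqref{measure 3} and \eqref{extracondi} hold (the latter vacuously), and invoke Theorem \ref{thmnonmain}. No differences of substance.
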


\begin{proof}
    For $s \in (0, 1),$ we choose the measure $\mu:=\delta_s,$ where $\delta_s$ is the Dirac measure centered at $s.$ Then, it is easy to verify that $\mu$ satisfies all the conditions \eqref{measure 1}-\eqref{measure 3} and \eqref{extracondi} with $\bar{s}=s$ and $\kappa=0.$ In this situation, we choose $s_\sharp:=s$ and the proof of the corollary follows from Theorem \ref{thmnonmain}.
\end{proof}

\subsection{Mixed local and nonlocal operators} \label{subsec2.3}
The next result is about the existence of a weak solution for the mixed local and nonlocal operator $ -\Delta_{p} +(-\Delta_{p})^s.$ This also seems new even for the case $p=2.$
\begin{cor}
    Let $ \Omega$ be a bounded subset of $\mathbb{R}^N$ and let $s \in [0, 1)$. Suppose that $N>p \geq 2$ and $q \in[1, p^*)$, where $p^*=\frac{p N}{N-p}$. Let $g: \Omega \times \mathbb{R} \rightarrow \mathbb{R}$ be a Carath\'eodory function satisfying the subcritical growth condition \begin{align*}
\begin{split}
&\text{ there exist } a_{1}, a_{2}>0 \text{ and } q \in[1, p^{*}), \text{ such that }\\
&|g(x, t)| \leq a_{1}+a_{2}|t|^{q-1} \text {, a.e. } x \in \Omega, \forall t \in \mathbb{R} . 
\end{split}
\end{align*} Then, for every $\gamma>0$, there exists $\Lambda_\gamma>0$ such that the problem 
\begin{equation}
 \begin{cases}
      -\Delta_{p} u+(-\Delta_{p})^s u  = \lambda g(x,u) +\gamma |u|^{p^{*}-2} u \quad & \text { in } \Omega,  \\ \quad
    u  = 0  &\text { in } \mathbb{R}^{N} \backslash \Omega 
 \end{cases}
\end{equation}
admits at least one weak solution in the space $\mathbb{X}(\Omega)$ for every $0<\lambda<\Lambda_\gamma$ which is a local minimum of the energy functional
$$
\mathcal{J}_{\gamma, \lambda}(u):=\frac{1}{p} \int_{\Omega} |\nabla u|^p+\frac{1}{p} \int_{\mathbb{R}^{2N}}\frac{|u(x)-u(y)|^p}{|x-y|^{N+ps}} dx\,dy -\lambda \int_{\Omega} \int_{0}^{u(x)} g(x, \tau) d \tau dx-\frac{\gamma}{p^{*}} \int_{\Omega}|u|^{p^{*}} d x
$$
for every $u \in \mathbb{X}(\Omega)$.
\end{cor}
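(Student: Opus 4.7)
The plan is to realize the mixed local and nonlocal operator $-\Delta_p + (-\Delta_p)^s$ as a specific instance of the superposition operator $A_{\mu, p}$ and then invoke Theorem \ref{main result wlsc 0} directly. To this end, I would choose the signed measure
\[
\mu := \delta_1 + \delta_s, \qquad \text{so that} \qquad \mu^+ := \delta_1 + \delta_s, \quad \mu^- := 0,
\]
where $\delta_t$ denotes the Dirac measure at $t \in [0,1]$. (In the case $s = 0$ one replaces $\delta_s$ by $\delta_0$; the argument is unchanged.) With this choice, the defining formula of $A_{\mu, p}$ gives
\[
A_{\mu, p} u = (-\Delta_p)^1 u + (-\Delta_p)^s u = -\Delta_p u + (-\Delta_p)^s u,
\]
so weak solutions in the sense of the present paper coincide with weak solutions of the mixed problem.

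The next step is to verify conditions \eqref{measure 1}--\eqref{measure 3} for this measure. Taking $\bar{s} := 1$, one sees immediately that $\mu^+([\bar s, 1]) = \mu^+(\{1\}) = 1 > 0$, that $\mu^-|_{[\bar s, 1]} = 0$ trivially since $\mu^- \equiv 0$, and that \eqref{measure 3} holds with $\kappa = 0$. Next, to verify \eqref{measure 4}, I take $s_{\sharp} := 1 \in [\bar s, 1]$; then $\mu^+([s_\sharp, 1]) = 1 > 0$ as required. For this choice of $s_\sharp$, the critical exponent reduces to
\[
p_{s_\sharp}^* = \frac{pN}{N - s_\sharp p} = \frac{pN}{N - p} = p^*,
\]
which matches the exponent appearing in the statement. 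The ambient space $X_p(\Omega)$ with norm $\|u\|_{X_p(\Omega)}^p = \int_{[0,1]} [u]_{s,p}^p\, d\mu^+(s) = \|\nabla u\|_{L^p(\Omega)}^p + [u]_{s,p}^p$ coincides (up to equivalent norms) with the natural Sobolev space $\mathbb{X}(\Omega)$ for the mixed operator.

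Finally, since $\mu^- = 0$, the functional $u \mapsto \int_{[0,\bar s]} [u]_{s,p}^p\, d\mu^-(s)$ vanishes identically, so the energy functional in Theorem \ref{main result wlsc 0} collapses to exactly
\[
\mathcal{J}_{\gamma, \lambda}(u) = \frac{1}{p}\|\nabla u\|_{L^p(\Omega)}^p + \frac{1}{p}[u]_{s,p}^p - \lambda \int_\Omega \int_0^{u(x)} g(x,\tau)\, d\tau\, dx - \frac{\gamma}{p^*}\int_\Omega |u|^{p^*}\, dx,
\]
which is the functional in the statement of the corollary. Since the remaining structural hypotheses $N/s_\sharp = N > p \geq 2$ and $q \in [1, p^*)$ hold by assumption, a direct application of Theorem \ref{main result wlsc 0} yields, for every $\gamma > 0$, a threshold $\Lambda_\gamma > 0$ such that for every $0 < \lambda < \Lambda_\gamma$ the problem admits a weak solution which is a local minimum of $\mathcal{J}_{\gamma, \lambda}$. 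There is no real obstacle here: the argument is purely a matter of checking that the chosen measure satisfies the hypotheses of the main theorem, the hard analytic work having already been carried out in the proof of Theorem \ref{main result wlsc 0}.
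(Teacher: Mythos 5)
Your proof is correct and follows exactly the same route as the paper: choose $\mu = \delta_1 + \delta_s$, verify \eqref{measure 1}--\eqref{measure 3} with $\bar{s}=1$, $\kappa=0$, take $s_\sharp = 1$ so that $p_{s_\sharp}^* = p^*$, and invoke Theorem \ref{main result wlsc 0}. The only cosmetic point worth noting is that the corollary's stated energy functional omits the normalizing constant $C_{N,s,p}$ appearing in the definition of $[u]_{s,p}^p$, but that is an issue in the paper's own statement, not in your argument.
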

\begin{proof}
    In this case, we choose $\mu:=\delta_1+\delta_s,$ where $\delta_1$ and $\delta_s$ are the Dirac measures centered at $1$ and $s \in [0, 1),$ respectively. Then, $\mu$ satisfies all the conditions \eqref{measure 1}-\eqref{measure 3}  with $\bar{s}:=1$ and $\kappa:=0.$ Now, we choose $s_\sharp:=1$ and the proof of the corollary follows from Theorem \ref{main result wlsc 0}.
\end{proof}

\begin{cor}  Let $ \Omega$ be a bounded subset of $\mathbb{R}^N$ and let $N>p \geq 2.$  Suppose that $m$ and $q$ are two real constants such that 
$$1 \leq m<p\leq q<p^*=\frac{p N}{N-p}.$$
Then, there exists an open interval $\Lambda \subset (0, +\infty)$ such that, for every $\lambda \in \Lambda,$ the  nonlocal problem 
\begin{equation} 
       \begin{cases} 
 -\Delta_{p} u+(-\Delta_{p})^s u=  |u|^{p^*-2}u+\lambda (|u|^{m-1}+|u|^{q-1}) \quad &\text{in}\quad \Omega, \\
u=0\quad & \text{in}\quad \mathbb{R}^N\backslash \Omega,
\end{cases}
    \end{equation} has at least a nonnegative nontrivial weak solution $u_\lambda \in  \mathbb{X}(\Omega).$ 
    
\end{cor}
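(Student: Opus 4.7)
The plan is to derive this statement as a direct application of Theorem \ref{thmnonmain} by selecting the measure $\mu$ so that $A_{\mu,p}$ collapses to the mixed local/nonlocal operator appearing in the problem. Specifically, given $s\in(0,1)$, I would set $\mu:=\delta_1+\delta_s$, where $\delta_1$ and $\delta_s$ denote the Dirac measures at the points $1$ and $s$, respectively. Then $\mu$ is a (nonnegative) finite Borel measure on $[0,1]$, so in the decomposition $\mu=\mu^+-\mu^-$ one has $\mu^+=\delta_1+\delta_s$ and $\mu^-\equiv 0$. With this decomposition, the superposition operator reduces exactly to
\[
A_{\mu,p}u=\int_{[0,1]}(-\Delta)_{p}^{\sigma}u\,d\mu(\sigma)=-\Delta_{p}u+(-\Delta_{p})^{s}u,
\]
so that the problem in the corollary coincides with \eqref{pro15.7intro} for this particular $\mu$.

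The next step is to check the structural assumptions on $\mu$ required by Theorem \ref{thmnonmain}. I would pick $\bar s:=1$. Then \eqref{measure 1} becomes $\mu^+(\{1\})=1>0$; \eqref{measure 2} reads $\mu^-|_{\{1\}}=0$; and \eqref{measure 3} is $0=\mu^-([0,1])\leq \kappa \mu^+(\{1\})$ for any $\kappa\geq 0$ (in particular for $\kappa=0$). The extra condition \eqref{extracondi} is equally immediate, since its left-hand side $\mu^-([0,\bar s))=0$. The choice of critical exponent given by \eqref{measure 4} can be taken as $s_\sharp:=1$, so that $p_{s_\sharp}^{*}=\frac{pN}{N-p}=p^{*}$. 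Hence the hypotheses $N/s_\sharp>p\geq 2$ and $1\leq m<p\leq q<p_{s_\sharp}^{*}$ become precisely $N>p\geq 2$ and $1\leq m<p\leq q<p^{*}$, matching the statement of the corollary.

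With all these verifications in place, Theorem \ref{thmnonmain} yields the existence of an open interval $\Lambda\subset(0,+\infty)$ such that for every $\lambda\in\Lambda$ there exists a nonnegative nontrivial weak solution $u_\lambda\in X_p(\Omega)$, which in the present specialization is the space $\mathbb{X}(\Omega)$ associated with the norm
\[
\|u\|^p_{\mathbb{X}(\Omega)}=\|\nabla u\|^p_{L^p(\mathbb{R}^N)}+C_{N,s,p}\iint_{\mathbb{R}^{2N}}\frac{|u(x)-u(y)|^p}{|x-y|^{N+ps}}\,dx\,dy,
\]
i.e.\ the natural energy space for the mixed operator $-\Delta_p+(-\Delta_p)^s$.

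Since $\mu^-\equiv 0$ the whole analysis of Section \ref{sec5} simplifies considerably: there is no negative component to reabsorb, condition \eqref{extracondi} is vacuous, and every intermediate lemma (in particular Lemma \ref{energycomparison}) holds trivially. Thus, essentially no obstacle appears; the only point deserving care is the bookkeeping of constants in the explicit localization of $\Lambda$ from Theorem \ref{thmnonmain}, which in this setting reduces to
\[
\Lambda\subset\left(0,\ \max_{r>0}\frac{r^{p-1}-C_{p^{*}}^{p^{*}}r^{p^{*}-1}}{C_{p^{*}}|\Omega|^{\frac{p^{*}-1}{p^{*}}}+C_{p^{*}}^{m}|\Omega|^{\frac{p^{*}-m}{p^{*}}}r^{m-1}+C_{p^{*}}^{q}|\Omega|^{\frac{p^{*}-q}{p^{*}}}r^{q-1}}\right),
\]
where $C_{p^{*}}$ is now the best constant in the embedding $\mathbb{X}(\Omega)\hookrightarrow L^{p^{*}}(\Omega)$.
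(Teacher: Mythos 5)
Your proof is correct and follows essentially the same route as the paper: choosing $\mu=\delta_1+\delta_s$ with $\bar s=1$, $\kappa=0$, $s_\sharp=1$ and invoking Theorem \ref{thmnonmain}. You also correctly note that since $\mu^-\equiv 0$ condition \eqref{extracondi} and Lemma \ref{energycomparison} become immediate, which is exactly the simplification the paper relies on in this special case.
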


\begin{proof}
     In this case, we choose $\mu:=\delta_1+\delta_s,$ where $\delta_1$ and $\delta_s$ are the Dirac measures centered at $1$ and $s \in [0, 1),$ respectively. Then, $\mu$ satisfies all the conditions \eqref{measure 1}-\eqref{measure 3} and \eqref{extracondi}  with $\bar{s}:=1$ and $\kappa:=0.$ Now, we choose $s_\sharp:=1$ and the proof of the corollary follows from Theorem \ref{thmnonmain}.
\end{proof}

\subsection{Nonlocal operators associated with a convergent series of Dirac measures} \label{subsec2.4}
Now, we demonstrate that in our setting we can also choose $\mu$ as a convergent series of Dirac measures. We have the following result. 
\begin{cor}  Let $ \Omega$ be a bounded subset of $\mathbb{R}^N.$
    Let $p \in [2, N)$ and $1 \geq s_0>s_1>s_2> \ldots \geq 0.$ Consider the operator 
    $$\sum_{k=0}^{+\infty} a_k (-\Delta_p)^{s_k}\quad \text{with} \quad \sum_{k=0}^{+\infty} a_k \in (0, +\infty).$$
    Suppose that $a_0>0$ and $a_k \geq 0$ for all $k \geq 1.$ Let $p_{s_0}^*:= \frac{pN}{N-ps_{0}}$ be the fractional critical Sobolev exponent and $q \in [1, p_{s_0}^*).$  Let $g: \Omega \times \mathbb{R} \rightarrow \mathbb{R}$ be a Carath\'eodory function satisfying the subcritical growth condition \begin{align*}
\begin{split}
&\text{ there exist } a_{1}, a_{2}>0 \text{ and } q \in[1, p_{s_0}^*), \text{ such that }\\
&|g(x, t)| \leq a_{1}+a_{2}|t|^{q-1} \text {, a.e. } x \in \Omega, \forall t \in \mathbb{R} . 
\end{split}
\end{align*} Then, for every $\gamma>0$, there exists $\Lambda_\gamma>0$ such that the problem 
\begin{equation}
 \begin{cases}
     \sum_{k=0}^{+\infty} a_k (-\Delta_p)^{s_k} u  = \lambda g(x,u) +\gamma |u|^{p_{s_0}^*-2} u \quad & \text { in } \Omega,  \\ \quad
    u  = 0  &\text { in } \mathbb{R}^{N} \backslash \Omega, 
 \end{cases}
\end{equation}
admits at least one weak solution in the space $\mathbb{X}(\Omega)$ for every $0<\lambda<\Lambda_\gamma.$
\end{cor}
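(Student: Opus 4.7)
The strategy is to show that the measure $\mu := \sum_{k=0}^{+\infty} a_k \delta_{s_k}$ fits the abstract framework of Theorem \ref{main result wlsc 0} and that the corollary then follows as an immediate specialization. Since all coefficients satisfy $a_k \geq 0$, the Jordan decomposition is trivial: we have $\mu^+ = \mu$ and $\mu^- \equiv 0$. In particular, conditions \eqref{measure 2} and \eqref{measure 3} are automatically verified (for any $\bar{s} \in (0,1]$ and with $\kappa = 0$), so the only non-trivial condition to check is \eqref{measure 1}. First, I would choose $\bar{s} := s_0$. Since $a_0 > 0$ and $s_0 \in (0,1]$, we immediately obtain
\begin{equation*}
\mu^+([\bar{s}, 1]) = \mu([s_0, 1]) \geq a_0 > 0,
\end{equation*}
which is precisely \eqref{measure 1}. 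With the same choice $s_\sharp := s_0$, condition \eqref{measure 4} also holds, and the associated fractional critical exponent in \eqref{critical exponent} becomes $p_{s_\sharp}^* = p_{s_0}^* = \frac{pN}{N-s_0 p}$, consistent with the statement.

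Next, I would identify the abstract energy functional of Theorem \ref{main result wlsc 0} with the one stated in the corollary. Since $\mu^- \equiv 0$, the correction term $\frac{1}{p}\int_{[0,\bar s]} [u]_{s,p}^p\, d\mu^-(s)$ vanishes and the norm on $X_p(\Omega)$ reduces to
\begin{equation*}
\|u\|_{X_p(\Omega)}^p = \int_{[0,1]} [u]_{s,p}^p\, d\mu^+(s) = \sum_{k=0}^{+\infty} a_k [u]_{s_k,p}^p,
\end{equation*}
which, thanks to the uniform bound $[u]_{s_k,p} \leq C [u]_{s_0,p}$ from Lemma \ref{Sobolev emb} and the assumption $\sum_k a_k < \infty$, is finite whenever $[u]_{s_0,p} < \infty$. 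This identifies $\mathbb{X}(\Omega)$ in the statement with $X_p(\Omega)$ for the chosen $\mu$, and the operator $A_{\mu,p}$ with $\sum_{k=0}^{+\infty} a_k (-\Delta_p)^{s_k}$ in the weak sense.

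Finally, since the subcritical growth condition on $g$ in the corollary is identical to \eqref{cond on g 0} with exponent $q \in [1, p_{s_0}^*)$, and since $p \in [2, N)$ together with $s_0 \leq 1$ gives $\frac{N}{s_\sharp} = \frac{N}{s_0} \geq N > p$, all hypotheses of Theorem \ref{main result wlsc 0} are met. A direct application then yields, for every $\gamma > 0$, a threshold $\Lambda_\gamma > 0$ such that for any $\lambda \in (0, \Lambda_\gamma)$ the problem admits a weak solution in $X_p(\Omega) = \mathbb{X}(\Omega)$ realized as a local minimum of the functional $\mathcal{J}_{\gamma,\lambda}$ of Theorem \ref{main result wlsc 0} (which here simplifies due to $\mu^- \equiv 0$). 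There is no genuine obstacle beyond bookkeeping in this argument, since the assumption $a_0 > 0$ precisely ensures that the dominant fractional order $s_0$ is effectively realized by $\mu^+$; the only mild technical point worth verifying carefully is that the continuous embedding $\mathbb{X}(\Omega) \hookrightarrow L^{p_{s_0}^*}(\Omega)$ follows from Proposition \ref{compact and cont embedding} with the chosen $s_\sharp = s_0$, so that the constant $C_{p_{s_\sharp}^*}$ entering the explicit description of $\Lambda_\gamma$ is finite.
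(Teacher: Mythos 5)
Your proposal is correct and follows essentially the same approach as the paper: choose $\mu = \sum_{k\geq 0} a_k \delta_{s_k}$, observe that $\mu^- \equiv 0$, set $\bar{s} = s_\sharp := s_0$ and $\kappa := 0$, verify conditions \eqref{measure 1}--\eqref{measure 3}, and apply Theorem \ref{main result wlsc 0}. The extra bookkeeping you include (finiteness of the norm via Lemma \ref{Sobolev emb}, the check $N/s_0 > p$, and the remark on the Sobolev constant) is sound but goes slightly beyond what the paper records.
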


\begin{proof}
 To conclude the proof of this result from Theorem \ref{main result wlsc 0},
 we set 
 $$\mu:= \sum_{k=0}^{+\infty} a_k \delta_{s_k},$$
 where $\delta_{s_k}$ denote the Dirac measures at  $s_k.$ Next, we note that $\mu$ satisfies all the conditions \eqref{measure 1}-\eqref{measure 3} by choosing $\bar{s}:=s_0,$ $\kappa:=0$ and $s_\sharp:=s_0.$
 \end{proof}

\begin{cor}  Let $ \Omega$ be a bounded subset of $\mathbb{R}^N.$
    Let $p \in [2, N)$ and $1 \geq s_0>s_1>s_2> \ldots \geq 0.$ Consider the operator 
    $$\sum_{k=0}^{+\infty} a_k (-\Delta_p)^{s_k}\quad \text{with} \quad \sum_{k=0}^{+\infty} a_k \in (0, +\infty).$$
    Suppose that there exist $\kappa \geq 0$ and $\bar{k} \in \mathbb{N}$ such that 
    \begin{equation} \label{condis_k}
        a_k>0 \quad \forall \,\, k \in \{0, 1, \ldots, \bar{k}\}, \quad \text{and}\,\,\, \sum_{k=\bar{k}+1}^{+\infty} a_k \leq \kappa \sum_{k=0}^{\bar{k}} a_k.
    \end{equation}
    
     Let $p_{s_0}^*:= \frac{pN}{N-ps_{0}}$ be the fractional critical Sobolev exponent and $q \in [1, p_{s_0}^*).$  Let $g: \Omega \times \mathbb{R} \rightarrow \mathbb{R}$ be a Carath\'eodory function satisfying the subcritical growth condition \begin{align*}
\begin{split}
&\text{ there exist } a_{1}, a_{2}>0 \text{ and } q \in[1, p_{s_0}^*), \text{ such that }\\
&|g(x, t)| \leq a_{1}+a_{2}|t|^{q-1} \text {, a.e. } x \in \Omega, \forall t \in \mathbb{R} . 
\end{split}
\end{align*} Then, for every $\gamma>0$, there exists $\Lambda_\gamma>0$ such that the problem 
\begin{equation}
 \begin{cases}
     \sum_{k=0}^{+\infty} a_k (-\Delta_p)^{s_k} u  = \lambda g(x,u) +\gamma |u|^{p_{s_0}^*-2} u \quad & \text { in } \Omega,  \\ \quad
    u  = 0  &\text { in } \mathbb{R}^{N} \backslash \Omega, 
 \end{cases}
\end{equation}
admits at least one weak solution in the space $\mathbb{X}(\Omega)$ for every $0<\lambda<\Lambda_\gamma.$
\end{cor}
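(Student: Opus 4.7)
The strategy is to realize this corollary as a direct application of Theorem \ref{main result wlsc 0} by constructing an appropriate signed Borel measure $\mu$ on $[0,1]$. I would set $\mu:=\sum_{k=0}^{+\infty}a_k\delta_{s_k}$ and regard it as a signed Borel measure of finite total variation; the Hahn--Jordan decomposition then produces $\mu^+=\sum_{k:\,a_k>0}a_k\delta_{s_k}$ and $\mu^-=\sum_{k:\,a_k<0}|a_k|\delta_{s_k}$. By linearity, one identifies $A_{\mu,p}u=\sum_{k=0}^{+\infty}a_k(-\Delta_p)^{s_k}u$ in the dual pairing on $X_p(\Omega)$, so that weak solutions of the displayed problem coincide with weak solutions of the abstract problem \eqref{problem 0.1} studied in Theorem \ref{main result wlsc 0}.

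Next, I would check the structural conditions \eqref{measure 1}--\eqref{measure 3} with the choice $\bar s:=s_{\bar k}$. The strict decrease of $(s_k)$ places the atoms $s_k$, $0\le k\le\bar k$, in $[\bar s,1]$ and the atoms $s_k$, $k>\bar k$, in $[0,\bar s)$; together with the positivity of $a_k$ for $k\le\bar k$ this yields $\mu^+([\bar s,1])\ge\sum_{k=0}^{\bar k}a_k>0$ (condition \eqref{measure 1}) and $\mu^-|_{[\bar s,1]}=0$ (condition \eqref{measure 2}, since any atom contributing to $\mu^-$ must sit at an index $k>\bar k$). For \eqref{measure 3} one writes
\begin{equation*}
\mu^-([0,\bar s])=\sum_{k>\bar k,\,a_k<0}|a_k|\le \sum_{k>\bar k}a_k^-\le \kappa\sum_{k=0}^{\bar k}a_k=\kappa\,\mu^+([\bar s,1]),
\end{equation*}
where the last inequality extracts the negative contributions of the tail from the hypothesis $\sum_{k>\bar k}a_k\le\kappa\sum_{k=0}^{\bar k}a_k$ (in particular, the bound is automatic when all $a_k\ge 0$). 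Finally, I take $s_\sharp:=s_0\in[\bar s,1]$, so that $\mu^+([s_\sharp,1])\ge a_0>0$ confirms \eqref{measure 4} and pins down the critical exponent $p_{s_\sharp}^*=pN/(N-ps_0)$.

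With these hypotheses verified and the growth assumption on $g$ matching exactly that of Theorem \ref{main result wlsc 0}, the conclusion is immediate: for every $\gamma>0$ there exists an explicit $\Lambda_\gamma>0$ such that for every $0<\lambda<\Lambda_\gamma$, the problem admits a weak solution in $\mathbb{X}(\Omega):=X_p(\Omega)$ that is a local minimum of the energy functional $\mathcal{J}_{\gamma,\lambda}$. The main subtlety I anticipate is purely technical and concerns the interpretation of the infinite sum: one must check that $A_{\mu,p}$, defined as an integral against $\mu$, truly reduces on test functions $u,v\in X_p(\Omega)$ to the termwise sum $\sum_k a_k\langle(-\Delta_p)^{s_k}u,v\rangle$ and that this sum converges absolutely. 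This is handled by a dominated convergence argument using the uniform bound $[u]_{s,p}\le C[u]_{s_0,p}$ from Lemma \ref{Sobolev emb} together with the finiteness of the total variation $\sum_k|a_k|<\infty$ guaranteed by the hypotheses.
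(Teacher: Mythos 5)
Your strategy coincides with the paper's proof of this corollary: set $\mu := \sum_k a_k \delta_{s_k}$, choose $\bar s := s_{\bar k}$ and $s_\sharp := s_0$, verify conditions \eqref{measure 1}--\eqref{measure 4}, and apply Theorem \ref{main result wlsc 0}. Your checks of \eqref{measure 1}, \eqref{measure 2} and \eqref{measure 4}, and your closing remark about interpreting the infinite sum termwise via Lemma~\ref{Sobolev emb}, are correct.

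However, your verification of \eqref{measure 3} contains a faulty step. You assert that
\begin{equation*}
\sum_{k>\bar k}a_k^- \le \kappa\sum_{k=0}^{\bar k}a_k
\end{equation*}
follows from the hypothesis $\sum_{k>\bar k}a_k \le \kappa\sum_{k=0}^{\bar k}a_k$. It does not: that hypothesis, rewritten as $\sum_{k>\bar k}a_k^+ - \sum_{k>\bar k}a_k^- \le \kappa\sum_{k=0}^{\bar k}a_k$, rearranges to a \emph{lower} bound on $\sum_{k>\bar k}a_k^-$, not the upper bound $\mu^-([0,\bar s]) = \sum_{k>\bar k}a_k^- \le \kappa\,\mu^+([\bar s,1])$ that \eqref{measure 3} requires. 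Concretely, take $a_0=1$, $\bar k=0$, $\kappa=1$, $a_1=-10$, $a_2=10$, $a_k=0$ for $k\ge 3$: then $\sum_k a_k = 1 \in (0,\infty)$, $a_0>0$, and $\sum_{k>0}a_k = 0 \le 1 = \kappa a_0$, yet $\mu^-([0,\bar s]) = 10 \not\le 1 = \kappa\,\mu^+([\bar s,1])$, so \eqref{measure 3} fails. Your parenthetical remark that the bound is automatic when all $a_k\ge 0$ is true, but in that case $\mu^-=0$ and \eqref{measure 3} is vacuous; the only regime in which \eqref{measure 3} carries content (some $a_k<0$) is precisely the one your chain of inequalities does not cover. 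What actually implies \eqref{measure 3} under your choice of $\bar s$ is the condition $\sum_{k>\bar k}a_k^- \le \kappa\sum_{k=0}^{\bar k}a_k$ (or, more robustly, $\sum_{k>\bar k}|a_k| \le \kappa\sum_{k=0}^{\bar k}a_k$), and this appears to be what the authors tacitly intend in their one-line proof; as literally stated, the passage from \eqref{condis_k} to \eqref{measure 3} is gapped both in your writeup and in the paper's sketch.
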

\begin{proof} We once again select measure $\mu$ as follows: 
 $$\mu:= \sum_{k=0}^{+\infty} a_k \delta_{s_k},$$
 where $\delta_{s_k}$ denote the Dirac measures supported at  $s_k.$

By choosing $\bar{s}=s_{\bar{k}}$ and $s_\sharp:=s_0$ along with utilizing condition \eqref{condis_k} for $\kappa$, we can easily check that $\mu$ satisfies  all the conditions \eqref{measure 1}-\eqref{measure 3}. Thus, we derived the result as an application of Theorem \ref{main result wlsc 0}. \end{proof}

\begin{cor}  Let $ \Omega$ be a bounded subset of $\mathbb{R}^N.$
    Let $p \in [2, N)$ and $1 \geq s_0>s_1>s_2> \ldots \geq 0.$ Consider the operator 
    $$\sum_{k=0}^{+\infty} a_k (-\Delta_p)^{s_k}\quad \text{with} \quad \sum_{k=0}^{+\infty} a_k \in (0, +\infty).$$
    Suppose that $a_0>0$ and $a_k \geq 0$ for all $k \geq 1.$ Let $p_{s_0}^*:= \frac{pN}{N-ps_{0}}$ be the fractional critical Sobolev exponent and let $m,\, q$ be two real constants such that 
$$1 \leq m<p\leq q<p_{s_0}^*=\frac{pN}{N-ps_{0}}.$$
Then, there exists an open interval $\Lambda \subset (0, +\infty)$ such that, for every $\lambda \in \Lambda,$ the  nonlocal problem 
\begin{equation} 
       \begin{cases} 
 \sum_{k=0}^{+\infty} a_k (-\Delta_p)^{s_k} u =  |u|^{p_{s_0}^*-2}u+\lambda (|u|^{m-1}+|u|^{q-1}) \quad &\text{in}\quad \Omega, \\
u=0\quad & \text{in}\quad \mathbb{R}^N\backslash \Omega,
\end{cases}
    \end{equation} has at least one nonnegative nontrivial weak solution $u_\lambda \in  \mathbb{X}(\Omega).$ 
\end{cor}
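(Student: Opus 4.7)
The plan is to apply Theorem \ref{thmnonmain} directly by making an appropriate choice of the signed measure $\mu$ so that the operator $A_{\mu, p}$ coincides with $\sum_{k=0}^{\infty} a_k (-\Delta_p)^{s_k}$. Specifically, I would set
\[
\mu := \sum_{k=0}^{+\infty} a_k \delta_{s_k},
\]
where $\delta_{s_k}$ denotes the Dirac mass concentrated at $s_k \in [0,1]$. Since $\sum_{k=0}^{\infty} a_k \in (0, +\infty)$, this defines a finite Borel measure on $[0,1]$; and because $a_k \geq 0$ for every $k \geq 0$, its Jordan decomposition is simply $\mu^+ = \mu$ and $\mu^- \equiv 0$.

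Next, I would verify the hypotheses of Theorem \ref{thmnonmain} for this choice. Taking $\bar{s} := s_0$, condition \eqref{measure 1} holds because
\[
\mu^+([s_0, 1]) \geq a_0 > 0.
\]
Conditions \eqref{measure 2} and \eqref{measure 3} are trivially satisfied with $\kappa = 0$ since $\mu^- \equiv 0$, and condition \eqref{extracondi} likewise holds trivially (for any admissible $\bar{\kappa}$ and any $\delta \in (0, 1-s_0]$) because its left-hand side vanishes. Choosing $s_\sharp := s_0$, condition \eqref{measure 4} reduces to $\mu^+([s_0, 1]) \geq a_0 > 0$, so that the relevant critical exponent is precisely $p^*_{s_0} = \tfrac{pN}{N-ps_0}$, matching the statement.

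With the hypotheses verified, the plan is to invoke Theorem \ref{thmnonmain} to produce the open interval $\Lambda \subset (0, +\infty)$ and the desired nonnegative nontrivial weak solution $u_\lambda$, which is realized as a local minimum of the associated energy functional. A brief justification is needed to identify the ambient function space $\mathbb{X}(\Omega)$ with the space $X_p(\Omega)$ from \eqref{norm on Xpinto} specialized to the present $\mu^+$; in this case the Gagliardo-type norm collapses to
\[
\|u\|_{X_p(\Omega)}^p = \sum_{k=0}^{+\infty} a_k\, [u]_{s_k, p}^p,
\]
which is exactly the natural Sobolev-type norm associated with the series operator. I do not anticipate a genuine obstacle in this proof, since the absence of a negative part in $\mu$ makes every structural assumption of Theorem \ref{thmnonmain} either immediate or directly encoded by the hypothesis $a_0 > 0$; the only care required is ensuring that the variational framework of Section \ref{pre} applies verbatim to the infinite-series setting, which it does thanks to the summability $\sum_k a_k < \infty$ and Lemma \ref{Sobolev emb}.
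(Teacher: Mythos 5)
Your proof is correct and follows essentially the same route as the paper: set $\mu := \sum_{k\geq 0} a_k\,\delta_{s_k}$, observe that $a_k \geq 0$ forces $\mu^- \equiv 0$, take $\bar{s} = s_\sharp := s_0$ with $\kappa = 0$, and apply Theorem \ref{thmnonmain}. Your write-up is slightly more explicit than the paper's (which merely asserts that the hypotheses hold for this choice), but the argument, the chosen measure, and the parameter choices are identical.
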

\begin{proof}
 To conclude the proof of this result from Theorem \ref{thmnonmain},
 we set 
 $$\mu:= \sum_{k=0}^{+\infty} a_k \delta_{s_k},$$
 where $\delta_{s_k}$ denote the Dirac measures  at  $s_k.$ Next, we note that $\mu$ satisfies all the conditions \eqref{measure 1}-\eqref{measure 3} and  \eqref{extracondi} by choosing $\bar{s}:=s_0,$ $\kappa:=0$ and $s_\sharp:=s_0.$
 \end{proof}

\begin{cor}  Let $ \Omega$ be a bounded subset of $\mathbb{R}^N$ such that $\Omega \subset B_R,$ for some $R>0.$
     Let $p \in [2, N)$ and $1 \geq s_0>s_1>s_2> \ldots \geq 0.$ Consider the operator 
    $$\sum_{k=0}^{+\infty} a_k (-\Delta_p)^{s_k}\quad \text{with} \quad \sum_{k=0}^{+\infty} a_k \in (0, +\infty).$$
    Suppose that there exist $\kappa \geq 0$ and $\bar{k} \in \mathbb{N}$ such that 
    \begin{equation} \label{cond212}
        a_0>0 \quad a_k\geq 0,~  \forall \,\, k \in \{0, 1, \ldots, \bar{k}\}, \quad \text{and}\,\,\, \sum_{k=\bar{k}+1}^{+\infty} a_k \leq \kappa \sum_{k=0}^{\bar{k}} a_k.
    \end{equation}
    In addition to this, we also assume that there exist $\bar{\kappa}:=\bar{\kappa}(N, s_0, R ) > 0,$ $\delta \in (0, 1-s_0]$ and $\bar{k} \in \mathbb{N}$  such that 
    \begin{equation} \label{condis_k'}
        \sum_{k=\bar{k}+1}^{+\infty} a_k \leq \bar{\kappa} \delta \sum_{k=1}^{\bar{k}} a_k.
    \end{equation}
     Let $p_{s_0}^*:= \frac{pN}{N-ps_{0}}$ be the fractional critical Sobolev exponent and let $m,\, q$ be two real constants such that 
$$1 \leq m<p\leq q<p_{s_0}^*=\frac{pN}{N-ps_{0}}.$$
Then, there exists an open interval $\Lambda \subset (0, +\infty)$ such that, for every $\lambda \in \Lambda,$ the  nonlocal problem 
\begin{equation} 
       \begin{cases} 
  \sum_{k=0}^{+\infty} a_k (-\Delta_p)^{s_k} u  =  |u|^{p_{s_0}^*-2}u+\lambda (|u|^{m-1}+|u|^{q-1}) \quad &\text{in}\quad \Omega, \\
u=0\quad & \text{in}\quad \mathbb{R}^N\backslash \Omega,
\end{cases}
    \end{equation} has at least one nonnegative nontrivial weak solution $u_\lambda \in  \mathbb{X}(\Omega).$ 
\end{cor}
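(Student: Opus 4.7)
The plan is to deduce this corollary as a direct application of Theorem \ref{thmnonmain} to the signed Borel measure
\[
\mu := \sum_{k=0}^{+\infty} a_k \delta_{s_k}
\]
on $[0,1]$, where $\delta_{s_k}$ denotes the Dirac mass at $s_k$. Via the Jordan decomposition $\mu = \mu^+ - \mu^-$, the negative part $\mu^-$ is supported precisely on the points $s_k$ with $a_k < 0$. I would then set $\bar{s} := s_{\bar{k}}$ and $s_\sharp := s_0$, and check the conditions \eqref{measure 1}--\eqref{measure 3}, \eqref{measure 4}, and \eqref{extracondi} in turn.

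First, since $a_k \geq 0$ for $k \in \{0,1,\dots,\bar k\}$, the support of $\mu^-$ is contained in $\{s_k : k > \bar k\} \subset [0, s_{\bar k})$, giving \eqref{measure 2}. The hypothesis $a_0 > 0$ ensures $\mu^+(\{s_0\}) \geq a_0 > 0$, hence $\mu^+([\bar s, 1]) > 0$ and $\mu^+([s_0,1]) > 0$, which are \eqref{measure 1} and \eqref{measure 4}. The absorption condition \eqref{measure 3} then follows from the summation bound \eqref{cond212}: the total $\mu^-$-mass on $[0,\bar s]$ comes only from indices $k > \bar k$ with $a_k < 0$, and is thus dominated by $\kappa \sum_{k=0}^{\bar k} a_k \leq \kappa\, \mu^+([\bar s,1])$.

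Next, I would verify the additional condition \eqref{extracondi}. Fix any $\delta \in (0, 1-s_0]$ as granted by the hypothesis. Because $\bar s = s_{\bar k} \leq s_k \leq s_0 \leq 1-\delta$ for every $k \in \{1,\dots,\bar k\}$, one has $\{s_1, \dots, s_{\bar k}\} \subset [\bar s, 1-\delta]$, and therefore $\mu^+([\bar s, 1-\delta]) \geq \sum_{k=1}^{\bar k} a_k$. Combined with \eqref{condis_k'}, this yields
\[
\mu^-([0, \bar s)) \leq \bar\kappa\, \delta \sum_{k=1}^{\bar k} a_k \leq \bar\kappa\, \delta\, \mu^+([\bar s, 1-\delta]),
\]
so \eqref{extracondi} holds, provided $\bar\kappa(N,s_0,R)$ is chosen within the admissible range prescribed in \eqref{2.13}. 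Since the ratios $\underline\Gamma_{N,p}/\overline\Gamma_{N,p}$ in \eqref{2.13} are independent of $\bar s$ and $\bar s \in (0, s_0]$ is bounded, this admissible range can indeed be expressed in terms of $(N, s_0, R)$ alone.

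With every hypothesis of Theorem \ref{thmnonmain} verified, its direct application yields the existence of an open interval $\Lambda \subset (0, +\infty)$ such that, for every $\lambda \in \Lambda$, the problem admits a nonnegative, nontrivial weak solution $u_\lambda \in \mathbb{X}(\Omega) := X_p(\Omega)$. The main technical obstacle will be the bookkeeping required to pass from the signed summation conditions \eqref{cond212} and \eqref{condis_k'} to the measure-theoretic inequalities \eqref{measure 3} and \eqref{extracondi}, and in particular justifying that the constant $\bar\kappa$ given in \eqref{2.13} (which a priori depends on $\bar s = s_{\bar k}$) can be replaced by one depending only on the parameters $(N, s_0, R)$ listed in the statement.
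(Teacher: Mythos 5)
Your proof follows exactly the same route as the paper: take $\mu = \sum_k a_k \delta_{s_k}$, choose $\bar s := s_{\bar k}$ and $s_\sharp := s_0$, verify \eqref{measure 1}--\eqref{measure 3}, \eqref{measure 4} and \eqref{extracondi}, and invoke Theorem \ref{thmnonmain}; you actually supply more detail than the paper's two-line argument, and the verifications of \eqref{measure 2}, \eqref{measure 4}, \eqref{measure 3} and the inclusion $\{s_1,\dots,s_{\bar k}\}\subset[\bar s,1-\delta]$ are all correct.

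One step in your closing remark does not work, however. You argue that the admissible range for $\bar\kappa$ in \eqref{2.13}, namely $\left[0,\ \dfrac{\bar s\,\underline{\Gamma}_{N,p}}{4\,\bar\Gamma_{N,p}\max\{1,(2R)^p\}}\right)$, ``can be expressed in terms of $(N,s_0,R)$ alone'' because $\bar s\in(0,s_0]$ is bounded. That goes the wrong direction: the upper endpoint is $c(N,p,R)\,\bar s$, which \emph{decreases} as $\bar s = s_{\bar k}$ decreases, so bounding $\bar s$ from above by $s_0$ enlarges the interval rather than pinning down a $\bar\kappa$ valid for the actual $\bar s$. Since $\bar k$ is fixed by the existential hypothesis, the correct reading is simply that $\bar\kappa$ depends on $s_{\bar k}$ (and on $p$ as well), and the corollary's stated dependence $\bar\kappa(N,s_0,R)$ is a cosmetic imprecision in the statement — you correctly flagged the issue, but the proposed fix is not sound; the safe resolution is to keep the $s_{\bar k}$- and $p$-dependence rather than trade it for $s_0$.
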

\begin{proof} We select measure $\mu$ as follows: 
 $$\mu:= \sum_{k=0}^{+\infty} a_k \delta_{s_k},$$
 where $\delta_{s_k}$ denote the Dirac measures  at  $s_k.$

By choosing $\bar{s}=s_{\bar{k}}$ and $s_\sharp:=s_0$ along with utilizing condition \eqref{cond212} and condition \eqref{condis_k'}, we can easily check that $\mu$ satisfies  all the conditions \eqref{measure 1}-\eqref{measure 3} and condition \eqref{extracondi}. Thus, we derived the result as an application of Theorem \ref{thmnonmain}. \end{proof}

\subsection{Mixed local and nonlocal operator with wrong sign} \label{subsec2.5}
An intriguing situation arises when the measure $\mu$ changes sign. This implies, for instance, that the operator may incorporate a minor term with the ``wrong" sign. To the best of our knowledge, no existing literature addresses a result of this nature even for the case $p=2.$

\begin{cor}
Let $ \Omega$ be a bounded subset of $\mathbb{R}^N$ and let $s \in [0, 1)$. Suppose that $N>p \geq 2$ and $q \in[1, p^*)$, where $p^*=\frac{p N}{N-p}$. Let $g: \Omega \times \mathbb{R} \rightarrow \mathbb{R}$ be a Carath\'eodory function satisfying the subcritical growth condition \begin{align*}
\begin{split}
&\text{ there exist } a_{1}, a_{2}>0 \text{ and } q \in[1, p^{*}), \text{ such that }\\
&|g(x, t)| \leq a_{1}+a_{2}|t|^{q-1} \text {, a.e. } x \in \Omega, \forall t \in \mathbb{R} . 
\end{split}
\end{align*} Then, for every $\gamma>0$ and for all $\alpha \geq 0,$ there exists  $\Lambda_\gamma>0$ such that the problem 
\begin{equation}
 \begin{cases}
      -\Delta_{p} u-\alpha (-\Delta_{p})^s u  = \lambda g(x,u) +\gamma |u|^{p^{*}-2} u \quad & \text { in } \Omega,  \\ \quad
    u  = 0  &\text { in } \mathbb{R}^{N} \backslash \Omega,
 \end{cases}
\end{equation}
admits at least one weak solution in the space $\mathbb{X}(\Omega)$ for every $0<\lambda<\Lambda_\gamma$, which is a local minimum of the energy functional
$$
\mathcal{J}_{\gamma, \lambda}(u):=\frac{1}{p} \int_{\Omega} |\nabla u|^p-  \frac{\alpha}{p} \int_{\mathbb{R}^{2N}}\frac{|u(x)-u(y)|^p}{|x-y|^{N+ps}} dx\,dy -\lambda \int_{\Omega} \int_{0}^{u(x)} g(x, \tau) d \tau dx-\frac{\gamma}{p^{*}} \int_{\Omega}|u|^{p^{*}} d x
$$
for every $u \in \mathbb{X}(\Omega)$.
\end{cor}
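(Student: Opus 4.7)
The plan is to recognize the result as a direct specialization of Theorem \ref{main result wlsc 0} under the choice of the signed measure $\mu := \delta_1 - \alpha\,\delta_s$, where $\delta_1$ and $\delta_s$ denote the Dirac measures at $1$ and $s\in[0,1)$ respectively, and $\alpha\geq 0$. With this choice, the Jordan decomposition is $\mu^+ = \delta_1$ and $\mu^- = \alpha\,\delta_s$, and a direct computation gives
\[
A_{\mu, p} u \;=\; -\Delta_p u \,-\, \alpha\,(-\Delta_p)^s u,
\]
which matches the operator appearing in the statement.

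Next, I would verify the structural hypotheses \eqref{measure 1}--\eqref{measure 3} on $\mu$ by taking $\bar{s} := 1$. Condition \eqref{measure 1} reduces to $\mu^+(\{1\}) = 1 > 0$; condition \eqref{measure 2} is trivial since $\mu^-$ is concentrated at $s < 1 = \bar{s}$, so $\mu^-|_{[\bar{s}, 1]} = 0$; and condition \eqref{measure 3} becomes $\alpha = \mu^-([0,\bar{s}]) \leq \kappa\,\mu^+([\bar{s},1]) = \kappa$, which is satisfied simply by setting $\kappa := \alpha$. From \eqref{measure 4} we then select $s_\sharp := 1$, so that the relevant critical exponent is $p^*_{s_\sharp} = p^* = \tfrac{pN}{N-p}$, as in the statement. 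Under these identifications the ambient space $X_p(\Omega)$ coincides (via the norm equivalence ensured by Lemma \ref{reabsorb}) with the usual local Sobolev space of functions vanishing outside $\Omega$, and the energy functional of Theorem \ref{main result wlsc 0} reduces, up to a sign, to the functional $\mathcal{J}_{\gamma,\lambda}$ displayed in the corollary.

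With all hypotheses of Theorem \ref{main result wlsc 0} verified, the threshold $\Lambda_\gamma>0$ and the existence of a local minimizer $u\in X_p(\Omega)$ of $\mathcal{J}_{\gamma,\lambda}$ for every $\lambda \in (0,\Lambda_\gamma)$ follow immediately, and such a minimizer is a weak solution of the problem in the corollary.

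The only point requiring a brief remark is the role of the ``wrong-sign'' term $-\alpha(-\Delta_p)^s u$; this is exactly the source of the negative part $\mu^-$ of the signed measure. Its effect is tamed by Lemma \ref{reabsorb}, which, with $\bar{s} = 1$ and $\kappa = \alpha$, allows one to reabsorb the fractional negative contribution into the dominant local positive component $-\Delta_p u$. No smallness of $\alpha$ is needed here (unlike the case of the nonnegative nontrivial solutions in Theorem \ref{thmnonmain}, which would additionally require \eqref{extracondi}), since Theorem \ref{main result wlsc 0} itself only demands \eqref{measure 1}--\eqref{measure 3}, and these are verified for every $\alpha\geq 0$ by taking $\kappa = \alpha$. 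Hence the argument is purely a specialization, and there is no substantive obstacle beyond checking these correspondences.
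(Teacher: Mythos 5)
Your argument is correct and matches the paper's own proof essentially line for line: the same choice $\mu=\delta_1-\alpha\delta_s$ with $\mu^+=\delta_1$, $\mu^-=\alpha\delta_s$, the same values $\bar{s}=s_\sharp=1$, and the same verification of \eqref{measure 1}--\eqref{measure 3} with $\kappa=\alpha$ (the paper writes $\kappa=\max\{0,\alpha\}$, which coincides with $\alpha$ since $\alpha\geq 0$), followed by a direct application of Theorem \ref{main result wlsc 0}. Your concluding remark on why no smallness of $\alpha$ is needed here, in contrast with Theorem \ref{thmnonmain} where \eqref{extracondi} would enter, is also consistent with the paper's discussion.
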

\begin{proof}
    For the proof, we take $\mu:=\delta_1-\alpha \delta_s,$ where $\delta_1$ and $\delta_s$ are two Dirac measures centered at $1$ and $s \in [0, 1),$ respectively. Now, we choose $\bar{s}:=1$ and $s_\sharp:=1,$ so that the conditions  \eqref{measure 1} and \eqref{measure 2} hold. Moreover, note that 
    $$\mu^-([0, \bar{s}]) \leq \max \{0, \alpha\} = \max\{0, \alpha\} \mu^+([\bar{s}, 1]),$$
    which says that condition \eqref{measure 3} holds by choosing $\kappa:= \max\{0, \alpha\}.$ Hence, the proof follows as a direct application of Theorem \ref{main result wlsc 0}.
\end{proof}

\begin{cor}
    Let $ \Omega$ be a bounded subset of $\mathbb{R}^N$ and let  $N>p \geq 2$.  Let $\alpha \geq 0$ and $1>s_1>s_2>0.$ Suppose that $m$ and $q$ are two real constants such that 
$$1 \leq m<p\leq q<p^*=\frac{p N}{N-p}.$$
Then, there exists an open interval $\Lambda \subset (0, +\infty)$ and sufficiently small $\alpha_0:=\alpha_0(N, \Omega, p, s, \lambda, \kappa)$ such that, for every $\lambda \in \Lambda$ and $\alpha \leq \alpha_0,$ the  nonlocal problem 
\begin{equation} 
       \begin{cases} 
  -\Delta_{p} u+ (-\Delta_{p})^{s_1}-\alpha (-\Delta_{p})^{s_2} u =  |u|^{p^*-2}u+\lambda (|u|^{m-1}+|u|^{q-1}) \quad &\text{in}\quad \Omega, \\
u=0\quad & \text{in}\quad \mathbb{R}^N\backslash \Omega,
\end{cases}
    \end{equation} has at least one nonnegative nontrivial weak solution $u_\lambda \in  \mathbb{X}(\Omega).$ 
    
\end{cor}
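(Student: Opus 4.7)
The plan is to deduce this corollary directly from Theorem \ref{thmnonmain} by making a judicious choice of the signed measure $\mu$. Specifically, I would set
$$\mu := \delta_{1} + \delta_{s_{1}} - \alpha\,\delta_{s_{2}},$$
so that its Jordan decomposition reads $\mu^{+} = \delta_{1} + \delta_{s_{1}}$ and $\mu^{-} = \alpha\,\delta_{s_{2}}$. With this choice, the superposition operator $A_{\mu, p}$ coincides exactly with $-\Delta_{p} + (-\Delta_{p})^{s_{1}} - \alpha(-\Delta_{p})^{s_{2}}$, and the space $X_{p}(\Omega)$ gives the functional framework denoted $\mathbb{X}(\Omega)$ in the statement. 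I would then fix $\bar{s} := s_{1}$ and, to optimize the critical Sobolev exponent, take $s_{\sharp} := 1$ in \eqref{measure 4}, which yields $p_{s_{\sharp}}^{*} = \frac{pN}{N-p} = p^{*}$, matching the exponent appearing in the equation.

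Next, I would verify the structural hypotheses \eqref{measure 1}--\eqref{measure 3}. Since $\{s_{1}, 1\} \subset [\bar{s}, 1]$, we get $\mu^{+}([\bar{s}, 1]) = 2 > 0$, giving \eqref{measure 1}. Because $s_{2} < s_{1} = \bar{s}$, the measure $\mu^{-}$ is supported in $[0, \bar{s})$, so $\mu^{-}|_{[\bar{s}, 1]} = 0$, which is \eqref{measure 2}. Finally, $\mu^{-}([0, \bar{s}]) = \alpha$ and $\mu^{+}([\bar{s}, 1]) = 2$, so \eqref{measure 3} holds with $\kappa := \alpha/2$, which is trivially finite for any bounded $\alpha$. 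Condition \eqref{measure 4} with $s_{\sharp} = 1$ is immediate since $\mu^{+}(\{1\}) = 1$.

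The delicate point is the additional assumption \eqref{extracondi}, which requires a sufficiently small $\bar{\kappa}$ as in \eqref{2.13} and some $\delta \in (0, 1 - \bar{s}]$ such that
$$\mu^{-}([0, \bar{s})) \leq \bar{\kappa}\,\delta\,\mu^{+}([\bar{s}, 1 - \delta]).$$
Picking $\delta := 1 - s_{1} \in (0, 1 - \bar{s}]$, the interval $[s_{1}, 1 - \delta] = \{s_{1}\}$ has $\mu^{+}$-measure equal to $1$, while $\mu^{-}([0, s_{1})) = \alpha$. Hence \eqref{extracondi} reduces to the explicit smallness condition $\alpha \leq \bar{\kappa}(1 - s_{1})$, and it suffices to define the threshold
$$\alpha_{0} := \bar{\kappa}\,(1 - s_{1}),$$
with $\bar{\kappa}$ chosen in accordance with \eqref{2.13}; note that this quantity depends only on $N$, $\Omega$ (through $R$ with $\Omega \subset B_{R}$), $p$, $s_{1}$, and $s_{2}$, as required.

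Having verified all the hypotheses of Theorem \ref{thmnonmain} for every $\alpha \in [0, \alpha_{0}]$, I would directly invoke that theorem with the lower-order term $g(x, t) = |t|^{m-1} + |t|^{q-1}$ (which is subcritical since $1 \leq m < p \leq q < p^{*}$) to conclude the existence of an open interval $\Lambda \subset (0, +\infty)$ such that for every $\lambda \in \Lambda$ the problem admits at least one nonnegative nontrivial weak solution $u_{\lambda} \in \mathbb{X}(\Omega)$, realized as a local minimum of the corresponding energy functional. The only quantitative subtlety is the tracking of the constant $\bar{\kappa}$ through Lemma \ref{lm 2.4} and \eqref{2.13}, but this is entirely explicit and presents no conceptual obstacle; the whole argument is essentially a verification exercise leveraging the full generality of Theorem \ref{thmnonmain}.
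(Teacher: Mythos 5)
Your proof is correct and follows essentially the same strategy as the paper's: one sets $\mu := \delta_1 + \delta_{s_1} - \alpha\,\delta_{s_2}$, takes $\bar{s} := s_1$ and $s_\sharp := 1$, verifies \eqref{measure 1}--\eqref{measure 4} in the same way, and reduces \eqref{extracondi} to the smallness threshold $\alpha \leq \bar{\kappa}(1-s_1)$ before invoking Theorem \ref{thmnonmain}. The only (cosmetic) difference is that the paper chooses $\delta := \alpha/\bar{\kappa}$, which depends on $\alpha$, while you fix $\delta := 1 - s_1$; both give the same bound and, if anything, your fixed choice avoids the degenerate case $\delta = 0$ at $\alpha = 0$.
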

\begin{proof} Let us take $\mu:= \delta_1+\delta_{s_1}-\alpha \delta_{s_2}$ for $1>s_1>s_2>0.$ Then, the conditions \eqref{measure 1} and \eqref{measure 2} hold with $\bar{s}:=s_1.$ We choose $s_\sharp:=1.$ Now, we will check if conditions \eqref{measure 3} and \eqref{extracondi} hold true for sufficiently small $\alpha.$ In fact, we note that 
$$\mu^-([0, s_1)) = \alpha = \frac{\alpha}{2} \mu^+([s_1, 1]),$$
which shows that condition \eqref{measure 3} is satisfied for any $\kappa \geq \frac{\alpha}{2}.$ Now, it remains to see that \eqref{extracondi} also holds true.
For this, we choose 
$$ \bar{\kappa}\in\left[0, \frac{\bar{s}\underline{\Gamma}_{N,p}}{4\bar{\Gamma}_{N,p}\max\{1, (2R)^p\}} \right),$$
where $\bar{\Gamma}_{N, p}$ and $\underline{\Gamma}_{N, p}$ is given by Lemma \ref{lm 2.4}. 
Also, by choosing $\alpha \in [0, \bar{\kappa}(1-s_1)],$ we can select $\delta:=\frac{\alpha}{\bar{\kappa}}$ and get that 
$$\mu^-([0, s_1])=\alpha= \alpha \mu^+ ([s_1, 1-\delta])=\bar{\kappa} \delta \mu^+([s_1, 1-\delta]).$$
This shows that condition \eqref{extracondi} is verified. Therefore, the proof of the results follows from Theorem \ref{thmnonmain}.
\end{proof}

\subsection{Nonlocal operator driven by continuous superposition of the fractional $p$-Laplacian} \label{subsec2.6}
We highlight another intriguing result arising from the continuous superposition of fractional operators of the $p$-Laplacian type. To the best of our knowledge, this result is also novel.

\begin{cor}
 Let $ \Omega$ be a bounded subset of $\mathbb{R}^N.$    Let $s_\sharp \in (0, 1), \kappa \geq 0,$  and let $f \not\equiv 0 $ be a measurable function such that 
    \begin{align} \label{condf}
        &f \geq 0 \quad \text{in} \quad (s_\sharp, 1), \nonumber \\
        &\int_{s_\sharp}^1 f(s)\, ds >0, \\
        \text{and}\quad &\int_0^{s_\sharp} \max\{0, -f(s)\}\, ds \leq \kappa \int_{s_\sharp}^1 f(s)\, ds. \nonumber
        \end{align}
    Suppose that $\frac{N}{s_{\sharp}}>p \geq 2$ and $q \in[1, p_{s_\sharp}^*)$, where $p_{s_\sharp}^*=\frac{p N}{N-{s_\sharp}p}$ is the fractional critical Sobolev exponent.
Let $g: \Omega \times \mathbb{R} \rightarrow \mathbb{R}$ be a Carath\'eodory function satisfying the subcritical growth condition 
\begin{align*}
\begin{split}
&\text{ there exist } a_{1}, a_{2}>0 \text{ and } q \in[1, p_{s_\sharp}^*), \text{ such that }\\
&|g(x, t)| \leq a_{1}+a_{2}|t|^{q-1} \text {, a.e. } x \in \Omega, \forall t \in \mathbb{R} . 
\end{split}
\end{align*} Then, for every $\gamma>0$, there exists $\Lambda_\gamma>0$ such that the problem 
\begin{equation}\label{p2.1}
 \begin{cases}
     \bigintss_0^1 f(s) (-\Delta_p)^s u\, ds  = \lambda g(x,u) +\gamma |u|^{p_{s_\sharp}^*-2} u \quad & \text { in } \Omega,  \\ \quad
    u  = 0  &\text { in } \mathbb{R}^{N},\backslash \Omega 
 \end{cases}
\end{equation}
admits at least one weak solution $u_\lambda \in \mathbb{X}(\Omega)$  for every $0<\lambda<\Lambda_\gamma.$
    
\end{cor}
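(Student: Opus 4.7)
The plan is to realize this corollary as a direct application of Theorem \ref{main result wlsc 0} by choosing the signed measure $\mu$ to be absolutely continuous with respect to the Lebesgue measure on $[0,1]$, with density $f$. Concretely, I would set $d\mu(s) := f(s)\, ds$ and take the Jordan-type decomposition $\mu = \mu^+ - \mu^-$ where
\begin{equation*}
    d\mu^+(s) := f^+(s)\, ds, \qquad d\mu^-(s) := f^-(s)\, ds,
\end{equation*}
with $f^+ := \max\{f,0\}$ and $f^- := \max\{-f,0\}$. With this choice, the superposition operator $A_{\mu, p}$ from \eqref{superposition operator} reduces precisely to the continuous superposition $\int_0^1 f(s)(-\Delta_p)^s u\, ds$ appearing in \eqref{p2.1}, since the two fractional parts combine via $f = f^+ - f^-$.

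Next, I would verify that $\mu^\pm$ satisfy the structural hypotheses \eqref{measure 1}--\eqref{measure 3} with the choice $\bar{s} := s_\sharp$. The first hypothesis in \eqref{condf} gives $f^- \equiv 0$ on $(s_\sharp, 1)$, so $\mu^-|_{[\bar{s},1]} = 0$, verifying \eqref{measure 2} (the endpoints have Lebesgue measure zero). The second hypothesis in \eqref{condf} yields
\begin{equation*}
    \mu^+([\bar{s},1]) = \int_{s_\sharp}^1 f^+(s)\, ds = \int_{s_\sharp}^1 f(s)\, ds > 0,
\end{equation*}
which gives both \eqref{measure 1} and the selection \eqref{measure 4} for the critical exponent $s_\sharp$. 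Finally, the third hypothesis in \eqref{condf} translates directly into
\begin{equation*}
    \mu^-([0,\bar{s}]) = \int_0^{s_\sharp} f^-(s)\, ds = \int_0^{s_\sharp} \max\{0,-f(s)\}\, ds \leq \kappa \int_{s_\sharp}^1 f(s)\, ds = \kappa\, \mu^+([\bar{s},1]),
\end{equation*}
which is precisely \eqref{measure 3}.

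With all hypotheses of Theorem \ref{main result wlsc 0} verified and noting that the assumptions on $p$, $q$, $N$, $s_\sharp$ and on the Carath\'eodory function $g$ are the same as in that theorem, I would conclude directly that for every $\gamma > 0$ there exists $\Lambda_\gamma > 0$ such that \eqref{p2.1} admits a weak solution $u_\lambda \in X_p(\Omega) = \mathbb{X}(\Omega)$ for every $\lambda \in (0, \Lambda_\gamma)$, obtained as a local minimum of the associated energy functional. There is essentially no obstacle here; the only point requiring mild care is the observation that the Jordan decomposition of an absolutely continuous signed measure is again absolutely continuous, with densities given by the positive and negative parts of $f$, so that all integrals against $d\mu^\pm$ reduce to the corresponding integrals of $f^\pm(s)$ against Lebesgue measure on the specified subintervals.
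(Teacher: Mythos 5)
Your proposal is correct and follows the same route as the paper: set $d\mu(s) = f(s)\,ds$, verify that hypotheses \eqref{measure 1}--\eqref{measure 3} and \eqref{measure 4} hold with $\bar{s} = s_\sharp$, and invoke Theorem \ref{main result wlsc 0}. Your write-up is somewhat more explicit about the Jordan decomposition $\mu^\pm$ via $f^\pm$ and about the boundary sets being Lebesgue-null, but these are details the paper leaves implicit; there is no substantive difference.
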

\begin{proof}
    In this case, we define $d\mu(s):=f(s) ds$ with $f$ as in the statement of the result. Thus, the operator $A_{p, \mu}$ becomes
    $$ \int_0^1 f(s) (-\Delta_p)^s u\, ds.$$
    Thanks to the conditions presented in \eqref{condf}, all the conditions \eqref{measure 1}-\eqref{measure 3} are satisfied by taking $\bar{s}:=s_\sharp,$ which also plays a role of critical fractional Sobolev exponent.  Thus, the proof of this result is concluded from Theorem \ref{main result wlsc 0}.
\end{proof}

\begin{cor}
 Let $ \Omega$ be a bounded subset of $\mathbb{R}^N$ such that $\Omega \subset B_R$ for some $R>0.$     Let $s_\sharp \in (0, 1), \kappa \geq 0,$ and let $f \not\equiv 0 $ be a measurable function such that 
    \begin{align}
        &f \geq 0 \quad \text{in} \quad (s_\sharp, 1), \nonumber \\
        &\int_{s_\sharp}^1 f(s)\, ds >0, \\
        \text{and}\quad &\int_0^{s_\sharp} \max\{0, -f(s)\}\, ds \leq \kappa \int_{s_\sharp}^1 f(s)\, ds. \nonumber
        \end{align}
        Additionally, we assume that there exist $\bar{\kappa}:= \bar{\kappa}(N, R,  s_\sharp)>0$ and $\delta \in (0, 1-s_\sharp]$ such that 
        \begin{equation} \label{extrafcon}
            \int_0^{s_\sharp} \max\{0, -f(s)\}\, ds \leq \bar{\kappa} \delta \int_{s_\sharp}^{1-\delta} f(s)\, ds.
        \end{equation}
    Assume that $\frac{N}{s_{\sharp}}>p \geq 2$ and that $p_{s_\sharp}^* :=\frac{p N}{N-{s_\sharp}p}$ is the fractional critical Sobolev exponent.  Suppose that $m$ and $q$ are two real constants such that 
$$1 \leq m<p\leq q<p_{s_\sharp}^*.$$
Then, there exists an open interval $\Lambda \subset (0, +\infty)$  such that, for every $\lambda \in \Lambda$  the  nonlocal problem 
\begin{equation} 
       \begin{cases} 
   \bigintss_0^1 f(s) (-\Delta_p)^s u\, ds  =  |u|^{p_{s_\sharp}^*-2}u+\lambda (|u|^{m-1}+|u|^{q-1}) \quad &\text{in}\quad \Omega, \\
u=0\quad & \text{in}\quad \mathbb{R}^N\backslash \Omega,
\end{cases}
    \end{equation} has at least one nonnegative nontrivial weak solution $u_\lambda \in \mathbb{X}(\Omega).$
    
\end{cor}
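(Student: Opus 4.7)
The plan is to realize this corollary as a direct specialization of Theorem \ref{thmnonmain}. To that end, I would first set up the signed measure $\mu$ on $[0,1]$ by declaring $d\mu(s):=f(s)\,ds$, and then record its Jordan decomposition $\mu=\mu^{+}-\mu^{-}$ with
\[
d\mu^{+}(s):=\max\{0,f(s)\}\,ds=f^{+}(s)\,ds,\qquad d\mu^{-}(s):=\max\{0,-f(s)\}\,ds=f^{-}(s)\,ds.
\]
Under this identification, the operator $A_{\mu,p}$ in \eqref{superposition operator} becomes precisely the continuous superposition $\int_{0}^{1} f(s)(-\Delta_p)^s u\,ds$, so that the problem in the statement coincides with \eqref{pro15.7intro} for the measure $\mu$ just constructed.

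The bulk of the argument will then consist of translating the assumptions on $f$ into the structural hypotheses \eqref{measure 1}--\eqref{measure 3}, \eqref{measure 4}, and \eqref{extracondi} required by Theorem \ref{thmnonmain}. I would take $\bar{s}:=s_{\sharp}$. Since $f\geq 0$ on $(s_{\sharp},1)$, on this interval $f^{+}\equiv f$ and $f^{-}\equiv 0$; therefore
\[
\mu^{+}([s_{\sharp},1])=\int_{s_{\sharp}}^{1} f(s)\,ds>0,\qquad \mu^{-}\big|_{[s_{\sharp},1]}\equiv 0,
\]
which yields \eqref{measure 1} and \eqref{measure 2}. The third hypothesis on $f$ gives directly
\[
\mu^{-}([0,s_{\sharp}])=\int_{0}^{s_{\sharp}}\max\{0,-f(s)\}\,ds\leq \kappa\int_{s_{\sharp}}^{1}f(s)\,ds=\kappa\,\mu^{+}([s_{\sharp},1]),
\]
establishing \eqref{measure 3}. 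Moreover, the same computation with the choice $s_{\sharp}$ from the hypothesis shows that \eqref{measure 4} holds with this very value, which therefore plays the role of the critical fractional exponent.

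To apply Theorem \ref{thmnonmain} I must also verify the extra assumption \eqref{extracondi}. This will be the most delicate bookkeeping step of the argument but is essentially immediate from \eqref{extrafcon}: indeed
\[
\mu^{-}([0,s_{\sharp}))=\int_{0}^{s_{\sharp}}f^{-}(s)\,ds\leq \bar{\kappa}\delta\int_{s_{\sharp}}^{1-\delta} f(s)\,ds=\bar{\kappa}\delta\,\mu^{+}([s_{\sharp},1-\delta]),
\]
where the non-negativity of $f$ on $(s_{\sharp},1)$ is used once more to replace $f$ by $f^{+}$ in the right-hand side. The constant $\bar{\kappa}$ here can be chosen in the admissible range \eqref{2.13} supplied by Lemma \ref{lm 2.4}, so the smallness condition coming from the abstract theorem is met.

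With all hypotheses of Theorem \ref{thmnonmain} now verified for this specific $\mu$, the conclusion of that theorem produces an open interval $\Lambda\subset(0,+\infty)$ and, for every $\lambda\in\Lambda$, a nonnegative nontrivial weak solution $u_\lambda\in X_p(\Omega)=\mathbb{X}(\Omega)$ of the problem under consideration, which is in fact a local minimum of the associated energy functional. I do not anticipate any genuine technical obstacle: the only point requiring care is the clean translation of the absolutely continuous structure of $\mu$ into the measure-theoretic hypotheses of the general result, since conditions such as $\mu^{-}|_{[\bar{s},1]}=0$ must be read as an almost-everywhere statement about $f^{-}$.
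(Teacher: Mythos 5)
Your proposal is correct and follows essentially the same route as the paper: set $d\mu(s)=f(s)\,ds$, identify $\mu^{\pm}$ with $f^{\pm}\,ds$, take $\bar{s}:=s_{\sharp}$, check that the stated integral conditions on $f$ translate directly into \eqref{measure 1}--\eqref{measure 3}, \eqref{measure 4}, and \eqref{extracondi}, and then invoke Theorem \ref{thmnonmain}. The paper's own proof is terser (it just asserts the conditions hold), whereas you spell out each verification, but the argument is the same.
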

\begin{proof}
   We define $d\mu(s):=f(s) ds$ with $f$ as in the statement of the result. Thus, the operator $A_{p, \mu}$ becomes
    $$ \int_0^1 f(s) (-\Delta_p)^s u\, ds.$$
    Thanks to the conditions presented in \eqref{condf} and condition \eqref{extrafcon}, all the conditions \eqref{measure 1}-\eqref{measure 3} and \eqref{extracondi} are satisfied by taking $\bar{s}:=s_\sharp,$ which also plays a role of critical fractional Sobolev exponent.  Thus, the proof of the corollary follows from Theorem \ref{thmnonmain}.
\end{proof}

\section*{Conflict of interest statement}
On behalf of all authors, the corresponding author states that there is no conflict of interest.

\section*{Data availability statement}
Data sharing is not applicable to this article as no datasets were generated or analysed during the current study.

\section*{Acknowledgement}
YA is supported by the Bolashak Government Scholarship of the Republic of Kazakhstan.  SG acknowledges the research facilities available at the Department of Mathematics, NIT Calicut under the DST-FIST support, Govt. of India [Project no. SR/FST/MS-1/2019/40 Dated. 07.01 2020.].  YA, VK, and MR are supported by the FWO Odysseus 1 grant G.0H94.18N: Analysis and Partial Differential Equations and the Methusalem program of the Ghent University Special Research Fund (BOF) (Grant number 01M01021). VK and MR are also supported by FWO Senior Research Grant G011522N.


\begin{thebibliography}{9}

\bibitem{AR: 1973} A. Ambrosetti and P.H. Rabinowitz. Dual variational methods in critical point theory and applications. {\it 
J. Funct. Anal.}, 14(4):349--381, 1973. 

\bibitem{AC04} G. Anello and G. Cordaro. An existence and localization theorem for the solutions of a Dirichlet problem.
{\it Ann. Pol. Math.}, 83(2):107--112, 2004.

%\bibitem{BFP: 2020} S. Bordoni, R. Filippucci and P. Pucci. Existence problems on Heisenberg groups involving Hardy and critical terms. {\it J. Geom. Anal.} 30:1887–1917, 2020.

\bibitem{BDVV22} S. Biagi, S. Dipierro, E. Valdinoci, and E. Vecchi. A Brezis-Nirenberg type result for mixed local and nonlocal operators, 2022.
arXiv:2209.07502

\bibitem{BL: 1983} H. Brezis and E. Lieb. A relation between pointwise convergence of functions and convergence of functionals. {\it Proc. Amer. Math. Soc.}, 88(3):486--490, 1983.

\bibitem{BM2018} H. Brezis and P. Mironescu. Gagliardo-Nirenberg inequalities and non-inequalities: The full story. {\it Ann. Inst. H. Poincar\'e C Anal. Non Lin\'eaire}, 35(5):1355--1376, 2018.

\bibitem{BN83}
H.~Brezis and L.~Nirenberg.
\newblock Positive solutions of nonlinear elliptic equations involving critical {S}obolev exponents.
\newblock {\em Comm. Pure Appl. Math.}, 36(4):437--477, 1983.



\bibitem{CFP85} A. Capozzi, D. Fortunato and G. Palmieri. An existence result for nonlinear elliptic problems involving
critical Sobolev exponent. \newblock {\em  Ann. Inst. H. Poincar\'e Anal. Non Lin\'eaire}, 2(6):463--470, 1985.

\bibitem{DFV24} J. V. Da Silva, A. Fiscella, and V. A. B. Viloria. Mixed local-nonlocal quasilinear problems with critical nonlinearities, {\em J. Differential Equations}, 408:494--536, 2024.

\bibitem{DGV21} F. del Teso, D. G\'omez-Castro and J. L. V\'azquez. Three Representations of the Fractional $p$-Laplacian: Semigroup, Extension and Balakrishnan Formulas. {\em Fract. Calc. Appl. Anal.}, 24(4):966--1002, 2021.


\bibitem{DV21} S. Dipierro and E. Valdinoci. Description of an ecological niche for a mixed local/nonlocal dispersal: an evolution
equation and a new Neumann condition arising from the superposition of Brownian and Lévy processes. {\em Phys. A}, 575, Article no. 126052, 2021.

\bibitem{DPSV} S. Dipierro, K. Perera, C. Sportelli and  E. Valdinoci. An existence theory for nonlinear superposition operators of mixed fractional order, {\it Commun. Contemp. Math.},  2024.  \url{https://doi.org/10.1142/S0219199725500051}


\bibitem{DPSV2} S. Dipierro, K. Perera, C. Sportelli and E. Valdinoci. An existence theory for superposition operators of mixed order subject to jumping nonlinearities. {\it Nonlinearity} 37(5):1--27, Paper No. 055018, 2024.

\bibitem{DPSV1} S. Dipierro, E. Proietti Lippi, C. Sportelli and E. Valdinoci.
A general theory for the $(s,p)$-superposition of nonlinear fractional operators. {\it Nonlinear Anal. Real World Appl.}, 82:1--24,  Paper No. 104251, 2025.

\bibitem{DPSV25} S. Dipierro, E. Proietti Lippi, C. Sportelli and E. Valdinoci. Logistic diffusion equations governed by the superposition of operators of mixed fractional order. 2025. \url{https://doi.org/10.48550/arXiv.2501.12967}

\bibitem{FF: 2015} F. Faraci and C. Farkas. A quasilinear elliptic problem involving critical Sobolev exponents. {\it Collect. Math.}, 66(2):243--259, 2015.  

\bibitem{FR78} E.R. Fadell and P.H. Rabinowitz. Generalized cohomological index theories for Lie group actions with an application to bifurcation questions for Hamiltonian systems. {\em Invent. Math.}, 45(2):139–174, 1978.





\bibitem{Lindqvist: 1990} P. Lindqvist. On the equation $div\left(|\nabla u|^{p-2} \nabla u\right)+\lambda|u|^{p-2} u=0$. {\it Proc. Amer. Math. Soc.}, 109(1):157--164, 1990.

\bibitem{Lions85} P.-L. Lions. The concentration-compactness principle in the calculus of variations: the limiting case. I. {\em  Rev. Mat. Iberoam}, 1(1):145--201, 1985.



\bibitem{Lions85-2} P.-L. Lions. The concentration-compactness principle in the calculus of variations: the limiting case. II. {\em  Rev. Mat. Iberoam}, 1(2):45--121, 1985.
   



%\bibitem{FP: 2016} A. Fiscella and P. Pucci, On certain nonlocal Hardy–Sobolev critical elliptic Dirichlet problems. {\it Adv. Differ. Equ.} 21:571–599, 2016.

\bibitem{GR97} F. Gazzola and B. Ruf, Lower-order perturbations of critical growth nonlinearities in semilinear elliptic equations. {\em Adv. Differ. Equations}, 2(4):555--572, 1997.

\bibitem{GKR22}
S.~Ghosh, V.~Kumar and M.~Ruzhansky.
\newblock Compact embeddings, eigenvalue problems, and subelliptic {B}rezis-{N}irenberg equations involving singularity on stratified Lie groups. {\em Math. Ann.}, 388(4):4201--4249, 2024.






\bibitem{GKR24} S.~Ghosh, V.~Kumar and M.~Ruzhansky.
\newblock Subelliptic nonlocal Brezis-Nirenberg problem on stratified Lie groups. 2024. \url{https://doi.org/10.48550/arXiv.2409.03867 } 


\bibitem{GK25} S.~Ghosh and V.~Kumar.   Critical equations involving nonlocal subelliptic operators on stratified Lie groups: Spectrum, bifurcation and multiplicity. 2025. \url{https://doi.org/10.48550/arXiv.2501.12791}

\bibitem{MB: 2017} J. Mawhin and G. Molica Bisci. A Brezis-Nirenberg type result for a nonlocal fractional operator. {\it J. Lond. Math. Soc. (2)}, 95(1):77--93, 2017.

\bibitem{MPSY16} S. Mosconi, K. Perera, M. Squassina and Y. Yang. \newblock The Brezis-Nirenberg problem for the fractional 
$p$-Laplacian. {\em Calc. Var. Partial Differential Equations}, 55(4):1--25, Art. 105, 2016.

\bibitem{BS15}
G.~Molica Bisci and R.~Servadei.
\newblock A {B}rezis-{N}irenberg splitting approach for nonlocal
  fractional equations.
\newblock {\em Nonlinear Anal.},
  119:341--353, 2015.

  \bibitem{NPV12}
E.~D. Nezza, G.~Palatucci and E.~Valdinoci.
\newblock Hitchhiker's guide to the fractional {S}obolev spaces.
\newblock {\em Bulletin des Sciences Math{\'{e}}matiques}, 136(5):521--573,   2012.

%\bibitem{PS24}
%K. Perera and C. Sportelli.
%\newblock A multiplicity result for critical elliptic problems involving differences of local and nonlocal operators.
%\newblock {\em Topol. Methods Nonlinear Anal.},
%  63(2):717--731, 2024.

\bibitem{Ric00} B. Ricceri. A general variational principle and some of its applications, fixed point theory with applications
in nonlinear analysis. {\em J. Comput. Appl. Math.}, 113(1-2):401--410, 2000.

 \bibitem{SER13} R. Servadei.
\newblock The Yamabe equation in a non-local setting. {\em 
Adv. Nonlinear Anal.}, 2(3):235--270, 2013.

\bibitem{SER14} R. Servadei.
\newblock A critical fractional Laplace equation in the resonant case. {\em 
Topol. Methods Nonlinear Anal.}, 43(1):251--267, 2014.


\bibitem{SERV05} R. Servadei and E. Valdinoci.
\newblock The Brezis-Nirenberg result for the fractional Laplacian. {\em 
Trans. Am. Math. Soc.}, 367(1):67--102, 2005.

\bibitem{SERV13} R. Servadei and E. Valdinoci.
\newblock A Brezis-Nirenberg result for non-local critical equations in low dimension. {\em Commun. Pure Appl. Anal.}, 12(6):2445--2464, 2013.

\bibitem{S: 2004} M. Squassina. Two solutions for inhomogeneous nonlinear elliptic equations at critical growth. {\it NoDEA Nonlinear Differential Equations Appl.}, 11(1):53--71, 2004.

\bibitem{Struwe: 2008} M. Struwe. {\em Variational Methods: Applications to
Nonlinear Partial Differential Equations and Hamiltonian Systems}. 4th edn., pp. xx+302. Springer-Verlag, Berlin, 2008.

\bibitem{Zhang} D. Zhang. On multiple solutions of $\Delta u + \lambda u + |u|^{-\frac{4}{n-2}}u = 0.$ {\em Nonlinear Anal.}, 13(4):353--372, 1989.

\bibitem{WM97}
M. Willem. 
\newblock Minimax {T}heorems, 24 
\newblock {\em Springer, Berlin}, 1997.

\end{thebibliography}
\end{document}